\DeclareSymbolFontAlphabet{\mathbb}{AMSb}
\DeclareSymbolFontAlphabet{\mathbbl}{bbold}
\newcommand{\SetSSYT}{\ensuremath\mathrm{SetSSYT}}
\newcommand{\SetSSMT}{\ensuremath\mathrm{SetSSMT}}
\newcommand{\SetSMT}{\ensuremath\mathrm{SetSMT}}
\newcommand{\WkSetSSMT}{\ensuremath\mathrm{WeakSet}\mathrm{SSMT}}
\def\QSym{\textsf{QSym}}
\numberwithin{equation}{section}
\theoremstyle{definition}
\newtheorem* {theorem*}{Theorem}
\newtheorem* {conjecture*}{Conjecture}
\newtheorem{theorem}{Theorem}[section]
\theoremstyle{definition}
\newtheorem* {example*}{Example}
\newtheorem{lemma}[theorem]{Lemma}
\theoremstyle{definition}
\newtheorem{definition}[theorem]{Definition}
\theoremstyle{definition}
\newtheorem{conjecture}[theorem]{Conjecture}
\newtheorem{proposition}[theorem]{Proposition}
\newtheorem{corollary}[theorem]{Corollary}
\newtheorem* {remark*}{Remark}
\newtheorem{remark}[theorem]{Remark}
\theoremstyle{definition}
\newtheorem {example}[theorem]{Example}
\theoremstyle{definition}
\theoremstyle{definition}
\theoremstyle{definition}
\def\modu{\ (\mathrm{mod}\ }
\def\wh{\widehat}
\def\({\left(}
\def\){\right)}
\newcommand{\CC}{\mathbb{C}}
\newcommand{\cR}{\mathcal{R}}
\newcommand{\cD}{\mathcal{D}}
\def\cX{\mathcal{X}}
\def\NN{\mathbb{N}}
\def\CC{\mathbb{C}}
\def\ZZ{\mathbb{Z}}
\def\GL{\textsf{GL}}
\def\spanning{\textnormal{-span}}
\def\fk{\mathfrak}
\def\barr{\begin{array}}
\def\earr{\end{array}}
\def\ba{\begin{aligned}}
\def\ea{\end{aligned}}
\def\be{\begin{equation}}
\def\ee{\end{equation}}
\def\qquand{\qquad\text{and}\qquad}
\def\cH{\mathcal H}
\def\PP{\mathbb{P}}
\def\MM{\mathbb{M}}
\def\ben{\begin{enumerate}}
\def\een{\end{enumerate}}
\def\fpf{{\textsf {FPF}}}
\def\Des{\mathrm{Des}}
\def\FF{\mathfrak{F}_\infty}
\def\II{\mathfrak{I}_\infty}
\def\SS{\mathfrak{S}_\infty}
\renewcommand{\O}{\textsf{O}}
\newcommand{\Sp}{\textsf{Sp}}
\renewcommand{\r}[1]{\textcolor{red}{#1}}
\newcommand{\cA}{\mathcal{A}}
\newcommand{\cB}{\mathcal{B}}
\newcommand{\cBfpf}{\mathcal{B}_\fpf}
\def\cAfpf{\cA_\fpf}
\def\cRfpf{\hat\cR_\fpf}
\def\iR{\hat\cR}
\def\iF{\hat F}
\def\GP{G\hspace{-0.2mm}P}
\def\ellhat{\hat\ell}
\def\ellfpf{\hat\ell_\fpf}
\def\Ffpf{\hat F^\fpf}
\def\r{\mathbf{r}}
\def\cT{\mathcal{T}}
\def\st{\mathfrak{st}}
\newcommand{\row}{\operatorname{row}}
\def\fsim{\mathbin{=_\Sp}}
\def\fequiv{\mathbin{\equiv_\Sp}}
\def\isim{\mathbin{=_\O}}
\def\iequiv{\mathbin{\equiv_\O}}
\def\simA{=_{\textsf{Br}}}
\def\equivA{\equiv_{\textsf{Br}}}
\def\simKK{\overset{\textsf{K}}\approx}
\def\simCK{\overset{\textsf{K}}\sim}
\def\simICK{\overset{\O}\sim}
\def\simIKK{\overset{\O}\approx}
\def\simFCK{\overset{\Sp}\sim}
\def\simFKK{\overset{\Sp}\approx}
\def\whSym{\wh{\textsf{S}}\textsf{ym}}
\newcommand{\weight}{\operatorname{wt}}
\def\cW{\mathcal{W}}
\def\row{\mathfrak{row}}
\def\col{\mathfrak{col}}
\def\swdiag{\mathfrak{d}_{\textsf{SW}}}
\def\nediag{\mathfrak{d}_{\textsf{NE}}}
\def\iarrow{\xleftarrow{\O}}
\def\farrow{\xleftarrow{\Sp}}
\def\SY{\mathrm{SD}}
\def\Y{\mathrm{D}}
\def\cHfpf{\cH_{\Sp}}
\def\iH{\cH_{\O}}
\def\iG{\GP^{\O}}
\def\Gfpf{\GP^{\Sp}}
\def\wSp{w_{\Sp}}
\def\PI{P_{\O}}
\def\QI{Q_{\O}}
\def\PF{P_{\Sp}}
\def\QF{Q_{\Sp}}
\def\unbump{\textsf{uninsert}}
\def\bump{\textsf{insert}}
\newcommand*\circled[1]{\tikz[baseline=(char.base)]{
\node[shape=circle,draw,inner sep=1](char) {#1};}}
\def\word{\mathfrak{word}}
\def\rowbt{\overset{(i,j)}{\underset{\row}{\leadsto}}}
\def\colbt{\overset{(i,j)}{\underset{\col}{\leadsto}}}
\newcommand{\cN}{\mathcal{N}_\infty}
\newcommand{\cM}{\mathcal{M}_\infty}
\def\Nil{\mathcal{U}_\infty}
\def\*{\mathbin{\hat\circ}}
\def\qm{{*}}
\begin{document}
\title{A symplectic refinement of shifted Hecke insertion}
\author{
Eric Marberg
\\ Department of Mathematics \\  Hong Kong University of Science and Technology \\ {\tt eric.marberg@gmail.com}
}

\date{}

\maketitle

\begin{abstract}
Buch, Kresch, Shimozono, Tamvakis, and Yong defined Hecke insertion to formulate a combinatorial rule for the expansion of the stable Grothendieck polynomials $G_\pi$ indexed by permutations in the basis of stable Grothendieck polynomials $G_\lambda$ indexed by partitions. Patrias and Pylyavskyy introduced a shifted analogue of Hecke insertion whose natural domain is the set of maximal chains in a weak order on orbit closures of the orthogonal group acting on the complete flag variety. We construct a generalization of shifted Hecke insertion for maximal chains in an analogous weak order on orbit closures of the symplectic group. As an application, we identify a combinatorial rule for the expansion of ``orthogonal'' and ``symplectic'' shifted analogues of $G_\pi$ in Ikeda and Naruse's basis of $K$-theoretic Schur $P$-functions.
\end{abstract}


\setcounter{tocdepth}{2}

\section{Introduction}

\subsection{Hecke words}

Let $\SS$ denote the group of permutations of the positive integers $\PP := \{1,2,3,\dots\}$ with finite support.
Define $s_i = (i,i+1) \in \SS$ for $i \in \PP$ so that $\SS = \langle s_1,s_2,s_3,\dots\rangle.$
In examples, we write elements of $\SS$ in one-line notation
and identify the word $\pi_1\pi_2\cdots \pi_n$ with the permutation $\pi \in \SS$
that has $\pi(i) = \pi_i$ for $i \leq n$ and $\pi(i) =i$ for $i>n$.

Let $\Nil$ denote the free $\ZZ$-module with a basis given by the symbols 
$U_\pi$ for $\pi \in \SS$. Set $U_i  :=U_{s_i}$ for $i \in \PP$.
The abelian group $\Nil$
 has a unique ring structure
 with 
 \[ 
 U_\pi U_i = \begin{cases} 
 U_{\pi s_i} &\text{if }\pi(i) < \pi(i+1) \\ 
 U_{\pi} &\text{if }\pi(i) > \pi(i+1)
 \end{cases}
 \qquad\text{for }\pi \in \SS \text{ and }i \in \PP.
 \]
This is
the usual one-parameter Iwahori-Hecke algebra $\cH_q = \ZZ[q]\spanning\{T_\pi : \pi \in \SS\}$
of $\SS$
but with $q=0$ and $U_\pi = -T_\pi$; see \cite[Theorem 7.1]{Humphreys}.
We retain the following terminology from \cite{BKSTY}:

\begin{definition}
A \emph{Hecke word} for $\pi \in \SS$ is any word $i_1i_2\cdots i_l$
such that $U_\pi = U_{i_1}U_{i_2}\cdots U_{i_l}$.
\end{definition}

Let $\cH(\pi)$ be the set of Hecke words for $\pi \in \SS$.
The \emph{reduced words} for $\pi$ are its Hecke words of minimal length.
Let $\cR(\pi)$ denote the set of reduced words for $\pi \in \SS$.
If $\pi = 321 \in \SS$ then
\[\cR(\pi) = \{ 121,212\} \subset \{ 121,212,1121,1221,1211,1212, 2212,2112,2122,2121,\dots\} = \cH(\pi).\]
Let $\ell(\pi)$ denote the length of a permutation $\pi \in \SS$, given by the common length of every reduced word in $\cR(\pi)$ or,
equivalently, by the number of pairs $(i,j) \in \PP\times\PP$ with $i<j$ and $\pi(i) > \pi(j)$.
We also write $\ell(w)$ to denote the length of a word $w$,
where we use the term \emph{word} to mean a finite sequence of positive integers.

Buch, Kresch, Shimozono, Tamvakis, and Yong proved the following in \cite{BKSTY}:

\begin{theorem}[See \cite{BKSTY}]
\label{thm1}
There is a bijection 
from Hecke words for $\pi \in \SS$
to pairs $(P,Q)$
where $P$ is an increasing tableau whose row reading word is a Hecke word for $\pi$
and $Q$ is a standard set-valued tableau with the same shape as $P$.
\end{theorem}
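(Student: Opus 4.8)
The plan is to construct the bijection explicitly via an insertion algorithm — Hecke insertion — that processes a word $w = i_1 i_2 \cdots i_l$ letter by letter, building up an increasing tableau $P$ (the insertion tableau) and a standard set-valued recording tableau $Q$ of the same shape. First I would define the single-letter \emph{Hecke row insertion} operation: to insert a value $x$ into a row of an increasing tableau, find the leftmost entry $y$ in that row with $y > x$; if replacing $y$ by $x$ keeps the tableau increasing (in particular, no equal entry appears directly above), then bump $y$ and recursively insert it into the next row; if $x$ is larger than every entry in the row, either append $x$ at the end of the row (if doing so preserves strict increase down columns) or else leave the tableau unchanged (an ``absorption'' move). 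The delicate feature, compared with classical RSK, is that the shape may fail to grow at a given step; this is exactly why $Q$ must be \emph{set-valued} rather than an ordinary standard tableau: at step $k$, the new box (if one is created) receives the label $k$, and if no box is created the label $k$ is adjoined to the entry of $Q$ in the box where the insertion ``terminated.''

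The key steps, in order, are as follows. (1) Verify that Hecke row insertion is well-defined and that $P$ remains an increasing tableau after each step — this is a careful but routine check on the three cases (bump, append, absorb) and their interaction with the column-strictness condition. (2) Show that the row reading word of $P$ is a Hecke word for the same permutation $\pi$ as $w$; equivalently, show that each insertion step corresponds to a sequence of commutations and applications of the Hecke relations $U_i U_i = U_i$ and $U_i U_{i+1} U_i = U_{i+1} U_i U_{i+1}$ in $\Nil$, so that the product $U_{i_1} \cdots U_{i_l}$ is unchanged. (3) Check that $Q$ is a standard set-valued tableau of shape $\sh(P)$: the labels $1, 2, \dots, l$ are distributed among the boxes, each box's entries are recorded in increasing order of insertion, and the set-valued column/row conditions hold because the sequence of terminal boxes moves weakly in the appropriate directions. (4) Construct the inverse map: reverse-Hecke-insertion, which reads off the largest label in $Q$ to identify which box to ``uninsert'' from, then runs the bumping process backward; here one must recover not only the bumped values but also decide, at each row, whether the forward step was a bump, an append, or an absorption — the set-valued structure of $Q$ encodes precisely which boxes were newly created, which resolves this ambiguity. (5) Confirm that forward and reverse insertion are mutually inverse, giving the bijection.

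The main obstacle is step (4), the invertibility of absorption moves. In classical RSK every insertion creates a box, so reversing is straightforward; here, when a letter is absorbed without changing $P$, one must be able to detect — purely from $(P, Q)$ — that such a move occurred and exactly where it terminated. The resolution is that the recording tableau $Q$ being \emph{set-valued} is not incidental but forced: a box of $Q$ containing the labels $\{k_1 < k_2 < \cdots\}$ with $|{}\cdot{}| \geq 2$ signals that steps $k_2, k_3, \dots$ terminated at that box via absorption (or a terminal bump that didn't extend the shape), and the reverse algorithm peels these off in decreasing order. One must verify that reverse insertion, applied in the order dictated by the global maximum label, always returns a valid increasing tableau and a well-defined letter, and that iterating recovers $w$ exactly; this requires a simultaneous induction on $l$ together with a lemma describing how the shape and the set of ``corner'' boxes evolve. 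Once this reversibility is nailed down, the bijection and hence the theorem follow.
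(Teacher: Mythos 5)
Your overall strategy mirrors the actual BKSTY construction: define a single-letter Hecke insertion, prove it preserves the Hecke-algebra product, set up the set-valued recording tableau, and establish a column-reversal inverse. You also correctly isolate the crux, namely that invertibility hinges on the set-valued $Q$ encoding which steps failed to enlarge the shape. That much is right, and the paper itself offers no proof of this statement (it is cited directly from \cite{BKSTY}), so the relevant comparison is with BKSTY's argument, which your plan follows.

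The concrete gap is in your description of the single-letter row insertion. You give only three outcomes: bump-and-replace (when $y>x$ exists and the replacement stays increasing), append (when $x$ exceeds everything and a new box fits), and absorb (when $x$ exceeds everything but the box cannot be added). This omits the essential fourth case of BKSTY's algorithm: when a smallest entry $y>x$ exists in the row but replacing $y$ by $x$ would \emph{not} yield an increasing tableau (for instance, because $x$ already occupies the cell immediately to the left of $y$, or the cell just below $y$ is $\ge x$), the row is left unchanged \emph{and $y$ is nevertheless inserted into the next row}. Without this ``pass-through without replacement'' rule, the algorithm is undefined on inputs as simple as inserting $1$ into the one-row tableau with entries $1,2$, so the map is not even well defined, let alone a bijection. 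This also contaminates your step (4): the inverse algorithm must distinguish among \emph{four} forward cases, not three, and the set-valued structure of $Q$ together with a comparison of $P$ across steps is exactly what resolves which one occurred. A minor related imprecision: BKSTY's terminal case applies whenever $x$ is \emph{weakly} greater than every entry in the row, whereas you require strict inequality, which drops the case $x$ equal to the row maximum; and the rule for which cell of $Q$ receives the label when no box is created is a specific corner (the last box of the column to the left of the attempted insertion position), not simply ``the box where the insertion terminated.''
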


Here, an \emph{increasing tableau} means an assignment of 
positive integers to the boxes of the Young diagram of a partition such that rows and columns
are strictly increasing. The \emph{row reading word}  is formed by concatenating 
the rows of such a tableau, starting with the last row.
A \emph{standard set-valued tableau} is defined in the same way as an increasing tableau,
except that the values assigned to each box are the disjoint blocks of a partition of $\{1,2,\dots,n\}$ for some $n$,
and a list of nonempty finite sets $S_1, S_2,\dots$ is strictly increasing
if $\max(S_i) < \min(S_{i+1})$ for all $i$.

The authors of \cite{BKSTY} constructed a bijection 
called \emph{Hecke insertion} to realize the preceding theorem.
Restricted to reduced words for permutations, Hecke insertion coincides with the
\emph{Edelman-Greene insertion} algorithm,
which is itself a generalization of the well-known \emph{Robinson-Schensted-Knuth (RSK) insertion} algorithm.

There are two natural ``shifted'' analogues of Hecke insertion,
whose properties are the central topic of this article.
We provide an overview of these maps in the next two sections.

\subsection{Orthogonal Hecke words}\label{intro2}

Let $\II = \{ z \in \SS : z^2=1\}$ denote the set of involutions in the symmetric group $\SS$.
The $\ZZ$-module $\cM:= \ZZ\spanning\{ U_z \in \Nil : z \in \II\}$
 is a right $\Nil$-module under the linear action $\* : \cM \times \Nil \to \cM$ with
$U_z \* U_\pi := U_{\pi^{-1}} U_z U_\pi$
 for $z \in \II$ and $\pi \in \SS$.


\begin{definition}
An \emph{orthogonal Hecke word} for $z \in \II$ is any word $i_1i_2\cdots i_l$
such that 
\[U_z = U_{i_l}\cdots U_{i_2}U_{i_1} U_{i_2} \cdots U_{i_l} = (\cdots( (U_{i_1}\*U_{i_2})\* U_{i_3}) \* \cdots )\*U_{i_l}.\]
\end{definition}

Let $G = \GL_n(\CC)$ be the general linear group of $n\times n$ invertible matrices and write $B\subset G$ 
for the subgroup of upper triangular matrices.
The orthogonal group $K= \O_n(\CC)$ acts with finitely many orbits on the flag variety $G/B$.
The closures of these $K$-orbits are in bijection
with the set of involutions $\mathfrak{I}_n := \{ z \in \II : z(i) =i \text{ for }i > n\}$.
Orthogonal Hecke words (at least, those of minimal length) for elements $z \in \mathfrak{I}_n$
correspond to maximal chains in the \emph{weak order} on $K$-orbit closures defined in \cite[\S1.2]{WyserYong} (see also \cite{CJW, RichSpring}).

Let $\iH(z)$ be the set of orthogonal Hecke words for $z \in \II$.
To match the notation 
in \cite{HMP1,HMP2,HMP4}
we 
write $\iR(z)$
for the set of words of minimal length in $\iH(z)$
and refer to the elements of $\iR(z)$ as \emph{involution words} for $z$.
The same sequences, 
read in reverse order, are called
\emph{reduced $\underline S$-expressions} in \cite{HH,H2}
and \emph{reduced $I_*$-expressions} in \cite{HuZhang1,Marberg2014}.
For example, if $z = 321\in \II$
then \[\iR(z) = \{12,21\}\subset \{12,21,112,122,121,212,\dots\} = \iH(z).\]
The following 
is an analogue of Theorem~\ref{thm1} for orthogonal Hecke words.
This statement is essentially
\cite[Theorem 5.18]{PatPyl2014},
but to deduce our phrasing
one also requires
 \cite[Corollary 2.18]{HKPWZZ}.

\begin{theorem}[See \cite{HKPWZZ,PatPyl2014}]
\label{thm2}
There is a bijection 
from orthogonal Hecke words for $z \in \II$
to pairs $(P,Q)$
where $P$ is an increasing shifted tableau whose row reading word is an orthogonal Hecke word for $z$
and $Q$ is a standard shifted set-valued tableau with the same shape as $P$.
\end{theorem}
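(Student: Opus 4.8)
The plan is to deduce Theorem~\ref{thm2} by combining the ``shifted Hecke insertion'' of Patrias and Pylyavskyy \cite{PatPyl2014} with the characterization of increasing shifted tableaux via their row reading words from \cite{HKPWZZ}. Concretely, I would first recall that \cite[Theorem 5.18]{PatPyl2014} provides, for each word $w$, an insertion tableau $P(w)$ that is an increasing shifted tableau and a recording tableau $Q(w)$ that is a standard shifted set-valued tableau of the same shape, and that $w \mapsto (P(w),Q(w))$ is a bijection from all words onto all such pairs $(P,Q)$ satisfying a compatibility condition. The content of the theorem is then to pin down \emph{which} words $w$ are precisely the orthogonal Hecke words for a \emph{fixed} $z \in \II$, and to match these with the pairs in which the row reading word of $P$ lies in $\iH(z)$.

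The first key step is to show that shifted Hecke insertion is an invariant of the element of $\cM$ being represented: if $w$ and $w'$ are two words with $U_{i_1}\ast\cdots$ evaluating (in the sense of the orthogonal Hecke word definition) to the same $U_z$, then $P(w)=P(w')$. Equivalently, the Hecke-word equivalence relation on words is generated by local moves (nilHecke braid and idempotent relations, together with the $\ast$-action twist), and one checks directly that each such move leaves $P$ unchanged --- this is exactly the kind of ``Knuth-like'' confluence argument that underlies Edelman--Greene and Hecke insertion. For the orthogonal setting the relevant relations are those governing the module $\cM$, i.e.\ the relations $U_{i}U_{j} = U_{j}U_{i}$ for $|i-j|\ge 2$, $U_iU_{i+1}U_i = U_{i+1}U_iU_{i+1}$, and $U_iU_i = U_i$, conjugated appropriately; one verifies each leaves the shifted insertion tableau fixed. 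The second step, conversely, is that the row reading word $\row(P(w))$ is itself an orthogonal Hecke word representing the \emph{same} $U_z$ as $w$ --- this is a structural property of the insertion (the reading word of $P$ is always ``Hecke-equivalent'' to the input), and it is where \cite[Corollary 2.18]{HKPWZZ} enters: it guarantees that an increasing shifted tableau is recovered from (indeed characterized by) its row reading word, so the data of $P$ is equivalent to the data of its reading word together with the assertion that this word is a shifted Hecke word.

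Granting these two facts, the bijection is assembled as follows. Given $z\in\II$ and an orthogonal Hecke word $w\in\iH(z)$, send $w$ to $(P(w),Q(w))$; by step one and step two, $P(w)$ is an increasing shifted tableau whose row reading word lies in $\iH(z)$, and $Q(w)$ is a standard shifted set-valued tableau of the same shape. Injectivity is immediate from injectivity of $w\mapsto(P(w),Q(w))$ on all words. For surjectivity, given such a pair $(P,Q)$ with $\row(P)\in\iH(z)$, use the inverse of shifted Hecke insertion \cite[Theorem 5.18]{PatPyl2014} to recover a word $w$ with $P(w)=P$ and $Q(w)=Q$; then $w$ is Hecke-equivalent to $\row(P)$, hence $w\in\iH(z)$, so the preimage lies in the correct domain. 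Finally one must check the two constructions are mutually inverse, which follows formally once both are expressed through the Patrias--Pylyavskyy bijection.

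The main obstacle I anticipate is step one: verifying that shifted Hecke insertion is genuinely constant on orthogonal-Hecke-equivalence classes. The braid and commutation moves are handled exactly as in the classical Edelman--Greene/Hecke setting, but the extra idempotent relation $U_iU_i=U_i$ --- which allows a letter to be ``absorbed'' during insertion --- and especially the twisted $\ast$-action (which effectively allows modifying a word by a ``conjugating'' prefix/suffix) require care, because these moves can change the underlying shape of $P$ at intermediate stages even when the final tableau agrees. One must track how a bumping path behaves when a repeated letter is inserted into a shifted increasing tableau (it terminates without changing $P$, as in \cite{PatPyl2014}) and confirm compatibility with the reading-word characterization of \cite{HKPWZZ}. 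If \cite{PatPyl2014} already records that shifted Hecke insertion factors through $\cM$ (or equivalently is a well-defined function of $U_z$), this step is quoted rather than proved, and the theorem reduces to bookkeeping; otherwise it is the one genuinely technical point in the argument.
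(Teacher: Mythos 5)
Your step one is false: shifted Hecke insertion is \emph{not} constant on orthogonal-Hecke-equivalence classes. The paper notes this explicitly after Corollary~\ref{ick-cor} --- while $\PI(v) = \PI(w)$ implies $v \simIKK w$, the converse fails (see \cite[Remark 2.19]{HKPWZZ}), so there are words $v, w \in \iH(z)$ for a common $z \in \II$ with $\PI(v) \neq \PI(w)$. The parallel symplectic counterexample $265213 \simFKK 265231$ with $\PF(265213) \neq \PF(265231)$ is displayed just after Corollary~\ref{reduced-cor}. The recording tableau $Q$ exists precisely to separate orthogonal Hecke words landing on the same insertion tableau, so an argument premised on constancy of $P$ is attempting something that cannot hold. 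This is a genuine gap, not merely a hard-to-verify technicality as your last paragraph treats it: the confluence of $P$ under Knuth moves, true for RSK and Edelman--Greene, is exactly what breaks in the Hecke and shifted-Hecke settings.

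That said, step one is not actually used by the rest of your argument, which can be assembled correctly once it is deleted. What carries the weight is your step two --- that $\row(\PI(w)) \simIKK w$ for every word $w$, which is \cite[Corollary 2.18]{HKPWZZ} (Corollary~\ref{ikk-cor} in the paper) --- together with Theorem~\ref{inv-hecke-char-thm} identifying $\simIKK$-classes of Hecke words with elements of $\II$, and the global bijectivity and invertibility of the Patrias--Pylyavskyy insertion map from \cite[Theorem 5.18]{PatPyl2014}. This repaired route is essentially the argument of the cited references. The paper itself proves the statement differently: it establishes it as Corollary~\ref{inv-bij-cor} by identifying orthogonal Hecke insertion with symplectic Hecke insertion applied to the doubled word $2[w]$ (Proposition~\ref{same-prop}), observing that $\PF(w)$ has all even entries iff $w$ has all even letters, and then deducing the orthogonal bijection from the symplectic one, Theorem~\ref{fpf-bij-thm}.
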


For relevant preliminaries on shifted tableaux, see Section~\ref{tab-sect}.
Patrias and Pylyavskyy proved this theorem by constructing another bijection, 
called \emph{shifted Hecke insertion} in
\cite{HKPWZZ,HMP4,PatPyl2014}, between words and pairs of shifted tableaux.
We refer to this correspondence as \emph{orthogonal Hecke insertion}
to distinguish it from our second shifted map.

\subsection{Symplectic Hecke words}\label{intro3-sect}

Our main results concern a new ``symplectic'' variant of orthogonal Hecke insertion.
The domain of this correspondence is the set of \emph{symplectic Hecke words} defined as follows.

Let $\Theta : \PP \to \PP$ be the permutation with $\Theta(i) = i - (-1)^i$ for $i \in \PP$,
so that $\Theta$ is the infinite product of cycles $(1,2)(3,4)(5,6)\cdots$.
Define 
$\FF = \{ \pi^{-1} \Theta \pi : \pi \in \SS\}.$
The elements of $\FF$ are the fixed-point-free involutions of $\PP$ 
that agree with $\Theta$ outside a finite set of inputs. Each 
permutation in $\FF$ therefore has infinite support, so $\II$ and 
$\FF$ are disjoint.

For each $z \in \FF$, there exists an even integer $n \in 2\PP$ 
such that $z(i) = \Theta(i)$ for all $i > n$.  Thus, one way to 
represent an element $z \in \FF$ with a finite amount of data is to 
just list the values $z_1z_2\cdots z_n$ where $z_i = z(i)$ and $n$ 
is the even integer just mentioned. We identify this finite sequence 
with the element of $\FF$ that maps $i \mapsto z_i$ for $i \in [n]$ 
and $i \mapsto i-(-1)^i$ for $i>n$. An arbitrary word $z_1z_2\cdots z_n$ 
with even length $n$ represents an element of $\FF$ in this way if and 
only if $\{z_1,z_2,\dots,z_n\} = \{1,2,\dots,n\}$ and whenever 
$j= z_{i}$ we have $i=z_{j} \neq j$.

Let $\cN$ be the free $\ZZ$-module with basis $\{  N_z: z \in \FF\}$.
Results of Rains and Vazirani \cite{RainsVazirani}
imply that $\cN$ has a unique structure as a right $\Nil$-module with multiplication
satisfying
\[
N_z U_i = \begin{cases} N_{s_i z s_i} &\text{if }z(i) < z(i+1) \\
N_z &\text{if }i+1 \neq z(i) > z(i+1) \neq i\\
0 &\text{if }i+1=z(i) > z(i+1)=i 
\end{cases}
\quad\text{for $z \in \FF$ and $i \in \PP$.}
\]
This follows specifically from \cite[Theorems 4.6 and 7.1]{RainsVazirani} with $q=0$; 
see also \cite[\S4.2]{Marberg2016}.

\begin{definition}
A \emph{symplectic Hecke word} for $z \in \FF$ is any word $i_1i_2\cdots i_l$
such that 
\[N_z = N_\Theta U_{i_1}U_{i_2}\cdots U_{i_l}.\]
\end{definition}

To explain this terminology, again let $G = \GL_n(\CC)$ and write $B\subset G$ 
for the subgroup of upper triangular matrices.
When $n$ is even, the set of orbits of the 
symplectic group $K= \Sp_n(\CC)$ 
acting on $G/B$
is naturally in bijection
with the set of fixed-point-free involutions $\mathfrak{F}_n := \{ z \in \FF : z(i) =\Theta(i) \text{ for }i > n\}$.
Symplectic Hecke words for elements of $\mathfrak{F}_n$
correspond to maximal chains in the weak order on these $K$-orbit closures studied in \cite{CJW,RichSpring,WyserYong}.

Let $\cHfpf(z)$ be the set of symplectic Hecke words for $z \in \FF$.
The shortest elements of $\cHfpf(z)$
are the words $i_1i_2\dots i_l$ of minimal length with
$z = s_{i_l}\cdots s_{i_2}s_{i_1}\Theta s_{i_1}s_{i_2}\cdots s_{i_l}.$
Following the convention of \cite{HMP1,HMP2,HMP5},
we 
write $\cRfpf(z)$ 
for the set of such words, which we refer to as \emph{FPF-involution words} for $z$.
These elements are a special case of 
Rains and Vazirani's notion of \emph{reduced expressions} for quasi-parabolic sets \cite[Definition 2.11]{RainsVazirani}.
If $z = 4321 \in \FF$ then
\[\cRfpf(z) = \{ 21,23\} \subset \{21,23,221,211,213, 223,233,231,\dots\} =\cHfpf(z).\]
Since $N_\Theta U_i = 0$ if $i$ odd, every symplectic Hecke word begins with an even letter.
The following analogue of Theorem~\ref{thm1}
reappears in a more explicit form as Theorem~\ref{fpf-bij-thm}.
 
\begin{theorem}\label{thm3}
There is a bijection  
from symplectic Hecke words for $z \in \FF$
to pairs $(P,Q)$
where $P$ is an increasing shifted tableau whose row reading word is a symplectic Hecke word for $z$
and $Q$ is a standard shifted set-valued tableau with the same shape as $P$.
\end{theorem}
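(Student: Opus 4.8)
The plan is to construct the bijection explicitly via a symplectic analogue of shifted Hecke insertion, mirroring the structure of the proofs of Theorems~\ref{thm1} and~\ref{thm2} but adapting the insertion algorithm to respect the extra constraint coming from the module $\cN$ (namely that $N_\Theta U_i = 0$ for $i$ odd, so every symplectic Hecke word begins with an even letter, and more generally that certain "diagonal collisions" annihilate). First I would recall orthogonal Hecke insertion from \cite{HKPWZZ,PatPyl2014}, which sends a word $i_1 i_2 \cdots i_l$ to a pair $(P,Q)$ of shifted tableaux by inserting letters one at a time into $P$ — using ordinary row/column Schensted bumping on shifted shapes, with the Hecke-style modifications that an inserted letter equal to an existing entry may be absorbed (adding to the recording set) and that bumping which would violate strict increase across the main diagonal switches from row insertion to column insertion. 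The recording tableau $Q$ is built as a standard shifted set-valued tableau, adding the index of each inserted letter to the box where insertion terminates. I would then define \emph{symplectic Hecke insertion} as: first prepend the "virtual" pair $(1,2)$ corresponding to the factor $N_\Theta = N_{(1,2)(3,4)\cdots}$ — concretely, initialize $P$ to be the single-box tableau (or the appropriate fixed seed encoding $\Theta$) and run orthogonal-style shifted insertion on $i_1 i_2 \cdots i_l$ from there, reading off the same $P,Q$ data.

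The key steps, in order, are: (1) verify that this insertion is well-defined on words and produces an increasing shifted tableau $P$ together with a standard shifted set-valued recording tableau $Q$ of the same shape — this is mostly bookkeeping and follows the template of \cite{PatPyl2014}; (2) prove the \emph{reading word invariance}: at every stage the row reading word $\row(P)$ is a symplectic Hecke word for the same element $z \in \FF$ that $i_1 \cdots i_k$ represents, i.e. $N_\Theta U_{i_1} \cdots U_{i_k} = N_\Theta U_{\row(P)}$ in $\cN$ — this requires checking that each elementary bump step corresponds to a relation $N_w U_a = N_{w'} U_{a'}$ coming from the three-case multiplication rule for $\cN U_i$, including the new phenomenon that a would-be annihilating bump ($0$ in the module) is exactly prevented by the increasing-tableau structure so never arises; (3) construct the inverse map (reverse bumping / uninsertion on shifted shapes, Hecke-style), and check it recovers the word from $(P,Q)$; (4) conclude bijectivity by showing the two maps are mutually inverse, and that the image is exactly all pairs $(P,Q)$ with $P$ an increasing shifted tableau whose reading word lies in $\cHfpf(z)$ and $Q$ a standard shifted set-valued tableau of the same shape.

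The main obstacle I expect is step (2), the compatibility between the insertion bumps and the module structure of $\cN$. In the orthogonal case the governing module $\cM$ has multiplication $U_z \* U_\pi = U_{\pi^{-1}} U_z U_\pi$ with no zero divisors, whereas $\cN$ has the genuinely new "symplectic" relation $N_z U_i = 0$ when $i+1 = z(i) > z(i+1) = i$. I will need to show that this annihilating case can never be triggered by a reading-word update during insertion — intuitively because the fixed-point-free seed $\Theta$ and the strictly increasing rows/columns of $P$ keep the relevant two-element "blocks" from colliding on the diagonal in the forbidden way — and symmetrically that the inverse map never needs to "un-annihilate." A secondary subtlety is pinning down the correct seed/initialization encoding $N_\Theta$ so that the first inserted letter is forced to be even and the shifted-shape diagonal behaves correctly; this is where the symplectic case diverges most sharply from the orthogonal one. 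Once these module-compatibility lemmas are in place, the remaining arguments are parallel to \cite{BKSTY,PatPyl2014} and I would present the detailed version as Theorem~\ref{fpf-bij-thm}.
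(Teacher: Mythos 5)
Your high-level outline --- define an insertion map, show each bump step preserves the equivalence class of the reading word, construct a reverse-bumping inverse, and deduce bijectivity --- is the right skeleton and matches the paper's architecture (the explicit version appears as Theorem~\ref{fpf-bij-thm}). But the central technical idea you propose, to encode $N_\Theta$ as a fixed ``seed'' tableau and then run orthogonal shifted Hecke insertion starting from that seed, is not what the paper does and would not work.

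Three concrete obstructions. First, $\Theta = (1,2)(3,4)\cdots$ has infinite support, so there is no finite tableau that can serve as a seed, and any finite truncation fails on words whose letters exceed the truncation. Second, the recording tableau $Q$ must have the same shape as $P$ and must start empty, which is incompatible with a nonempty seed for $P$. Third, the relationship between the two insertions runs the opposite way from what your plan presupposes: the paper recovers orthogonal insertion as a \emph{restriction} of symplectic insertion via $T \iarrow a := \tfrac{1}{2}\left[2[T] \farrow 2a\right]$, so orthogonal insertion cannot be the primitive from which symplectic insertion is obtained by reinitializing.

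What the paper actually changes is the bumping rule at the main diagonal. Symplectic Hecke insertion starts from $\emptyset$, exactly like the orthogonal version, but the diagonal transitions are parity-sensitive: transitions (D2) and (D3) fire only when the inserted value and the diagonal entry have the same parity, and there is a genuinely new transition (D4) in which an inserted value $a$ whose parity differs from the diagonal entry $b$ is replaced by $b+1$ and passed into the column phase. This move is the tableau manifestation of the FPF braid relation $w_1 w_2 w_3\cdots \fsim w_1 (w_2{+}2) w_3\cdots$ when $w_1 = w_2 + 1$ (equivalently $N_\Theta U_i U_{i-1} = N_\Theta U_i U_{i+1}$ for $i$ even), which has no orthogonal counterpart and therefore cannot be produced by a choice of initialization. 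Your intuition in step~(2) --- that the tableau structure should forbid the annihilating case $N_z U_i = 0$ --- is correct in spirit, but making it precise requires introducing (weakly) admissible insertion states and proving that all diagonal entries of an admissible state stay even; that is where the parity conditions in (D2)--(D4) are used, and none of it is forced by a seeding picture.
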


To prove this theorem, we will construct another shifted analogue of Hecke insertion,
which we call \emph{symplectic Hecke insertion}. 
Besides being a bijection,
symplectic Hecke insertion is a length- and descent-preserving map in an appropriate
sense; see Theorem~\ref{fpf-des-thm}.

Although not all words are symplectic Hecke words,
one can define orthogonal Hecke insertion as a special case of symplectic Hecke insertion.
Thus, Theorem~\ref{thm3} is a generalization of Theorem~\ref{thm2},
and our analysis of symplectic Hecke insertion
lets us recover many known properties of orthogonal Hecke insertion, along with some new ones,
in Section~\ref{ortho-sect}.

\subsection{Stable Grothendieck polynomials}\label{stab-intro-sect}

A primary application of Hecke insertion in
\cite{BKSTY} was to describe a rule for the expansion of the stable Grothendieck polynomials
$G_\pi$ indexed by permutations $\pi \in \SS$
in the basis of stable Grothendieck polynomials $G_\lambda$ indexed by partitions $\lambda$.
We briefly recall this rule.

A pair of words of the same length $(w,i)$ is a \emph{compatible sequence}
if $i = (i_1\leq i_2 \leq \dots \leq i_l)$ is a weakly increasing of positive integers with $i_j < i_{j+1}$ whenever $w_j \leq w_{j+1}$.
Let $\beta$, $x_1$, $x_2$, $x_3$, \dots, be commuting indeterminates.
The \emph{stable Grothendieck polynomial} of $\pi \in \SS$ is
the power series
\be\label{gpi-eq}
G_\pi =  \sum_{(w,i)} \beta^{\ell(w)-\ell(\pi)}  x^i \in \ZZ[\beta][[x_1,x_2,\dots]]
\ee
where the sum is over compatible sequences $(w,i)$
with $w \in \cH(\pi)$, and $x^i:= x_{i_1}x_{i_2}\cdots x_{i_l}$.
For the definition of $G_\lambda$ when $\lambda$ is a partition, see Theorem~\ref{lam-thm}.

\begin{theorem}[{See \cite[Theorem 1]{BKSTY}}]
\label{gpi-thm}
If $\pi \in \SS$ then $G_\pi = \sum_\lambda a_{\pi\lambda}  \beta^{|\lambda|-\ell(\pi)} G_\lambda$
where the sum is over all partitions $\lambda$,
and $a_{\pi\lambda}$ is the finite number of increasing tableaux $T$ of shape $\lambda$
whose row reading words are Hecke words for $\pi^{-1}$.
\end{theorem}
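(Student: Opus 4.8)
The plan is to unfold the definition \eqref{gpi-eq} of $G_\pi$, collapse the contribution of all compatible sequences that share a fixed word, and then transport the resulting sum through the bijection of Theorem~\ref{thm1}, tracking descent sets throughout. First I would fix a Hecke word $w = w_1 w_2 \cdots w_l \in \cH(\pi)$ and observe that its total contribution to \eqref{gpi-eq} is $\beta^{\ell(w) - \ell(\pi)} \sum_i x^i$, where $i = (i_1 \le i_2 \le \cdots \le i_l)$ runs over weakly increasing sequences of positive integers subject to $i_j < i_{j+1}$ whenever $w_j \le w_{j+1}$. Since this constraint depends on $w$ only through the descent set $D(w) := \{\, j \in [l-1] : w_j > w_{j+1}\,\}$, a routine computation identifies $\sum_i x^i$ with Gessel's fundamental quasisymmetric function $L_{D(w)}$ in the variables $x_1, x_2, \dots$. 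Summing over $w$ gives $G_\pi = \sum_{w \in \cH(\pi)} \beta^{\ell(w) - \ell(\pi)} L_{D(w)}$.

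Next I would feed this sum through the bijection $w \mapsto (P, Q)$ of Theorem~\ref{thm1}, using in addition the finer property that Hecke insertion is \emph{descent-preserving}: the descent set $D(w)$ equals the descent set $D(Q)$ of the standard set-valued recording tableau $Q$, under the standard convention that $j \in D(Q)$ precisely when $j+1$ occupies a row weakly below that of $j$. Writing $\lambda$ for the common shape of $P$ and $Q$, noting that $\ell(w)$ equals the total number $\#Q$ of entries of $Q$, and that $\#Q \ge |\lambda|$, I would group the Hecke words for $\pi$ by their insertion tableau $P$ to obtain
\[
G_\pi \;=\; \sum_\lambda a'_{\pi\lambda}\,\beta^{\,|\lambda|-\ell(\pi)}\sum_Q \beta^{\,\#Q - |\lambda|}\,L_{D(Q)},
\]
where $a'_{\pi\lambda}$ is the number of increasing tableaux of shape $\lambda$ whose row reading word lies in $\cH(\pi)$, and $Q$ ranges over all standard set-valued tableaux of shape $\lambda$.

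Then I would recognize the inner sum as the fundamental quasisymmetric expansion $\sum_Q \beta^{\#Q - |\lambda|} L_{D(Q)} = G_\lambda$ of the stable Grothendieck polynomial, which follows from the set-valued tableau formula for $G_\lambda$ in Theorem~\ref{lam-thm} by standardizing semistandard set-valued tableaux: those standardizing to a fixed $Q$ contribute exactly the content generating function $L_{D(Q)}$. Substituting yields $G_\pi = \sum_\lambda a'_{\pi\lambda}\,\beta^{|\lambda|-\ell(\pi)}\,G_\lambda$. To recover the phrasing in the statement, I would invoke the anti-automorphism of $\Nil$ fixing each $U_i$ and sending $U_\sigma \mapsto U_{\sigma^{-1}}$: reversal of words carries $\cH(\pi)$ bijectively onto $\cH(\pi^{-1})$, and since $\ell(\pi) = \ell(\pi^{-1})$ and $G_\pi = G_{\pi^{-1}}$ (see \cite{BKSTY}), applying the displayed identity with $\pi$ replaced by $\pi^{-1}$ rewrites the coefficient of $\beta^{|\lambda|-\ell(\pi)} G_\lambda$ as $a'_{\pi^{-1}\lambda} = a_{\pi\lambda}$, the quantity in the statement. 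Finiteness of $a_{\pi\lambda}$ and local finiteness of the outer sum are immediate, since every Hecke word for $\pi$ uses letters from a fixed finite alphabet, so only finitely many shapes $\lambda$ and increasing tableaux $P$ occur.

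The main obstacle is the descent-preservation property used in the second step. Theorem~\ref{thm1} asserts only that Hecke insertion is a bijection onto pairs of tableaux; what the argument needs is the sharper fact that the descent set of the input word can be recovered from the recording tableau alone. Proving this requires analyzing how each elementary insertion step perturbs the descent set of the reading word, and this is the delicate part — everything else (the reduction to fundamental quasisymmetric functions, the standardization of set-valued tableaux, and the passage from $\pi$ to $\pi^{-1}$) is routine bookkeeping. The same difficulty, in its shifted forms, is precisely what descent statements such as Theorem~\ref{fpf-des-thm} address later in the paper.
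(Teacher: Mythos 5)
The paper does not prove Theorem~\ref{gpi-thm}; it is cited from \cite[Theorem~1]{BKSTY} without proof. Your overall plan---expand $G_\pi$ in fundamental quasisymmetric functions indexed by Hecke words, transport through the descent-preserving Hecke insertion bijection of Theorem~\ref{thm1}, and identify the inner sum over recording tableaux with $G_\lambda$---is the standard route, and it is essentially how the paper proves the analogous Theorem~\ref{lastmain-thm} in Section~\ref{stab-sect}. You correctly flag descent preservation as the nontrivial lemma.

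However, the way you recover the $\pi^{-1}$ in the statement does not work, and the error is not a convention artifact. The identity $G_\pi = G_{\pi^{-1}}$ is false under \eqref{gpi-eq}: for $\pi = 231$ the unique reduced word $12$ contributes $\sum_{i_1 < i_2} x_{i_1}x_{i_2} = e_2$ to $G_{231}$ (the compatible-sequence condition forces $i_1 < i_2$ since $1 \leq 2$), while the unique reduced word $21$ for $\pi^{-1} = 312$ contributes $\sum_{i_1 \leq i_2} x_{i_1}x_{i_2} = h_2$, so $G_{231} = e_2 + O(\beta) \neq h_2 + O(\beta) = G_{312}$. Your intermediate identity $G_\pi = \sum_\lambda a'_{\pi\lambda}\beta^{|\lambda|-\ell(\pi)} G_\lambda$ fails on the same example, since no increasing tableau of shape $(1,1)$ has row reading word in $\cH(231)$, yet the $G_{(1,1)}$-coefficient of $G_{231}$ is~$1$. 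The root cause is that the compatible-sequence condition forces strict increase of $i$ at \emph{ascents} of $w$, so each Hecke word contributes $L_{[\ell(w)-1]\setminus\Des(w),\ell(w)}$, not $L_{\Des(w),\ell(w)}$; also note that the descent-preservation statement is $\Des(w) = \Des(Q)$ with $j \in \Des(Q)$ meaning $j+1$ lies in a row \emph{strictly above} $j$ (see Section~\ref{des-sect}), the opposite of your stated convention. The correct fix is the one used in Lemma~\ref{gom-lem} and the proof of Theorem~\ref{lastmain-thm}: because $G_\pi$ is a symmetric function, applying the involution $\omega$ simultaneously replaces $\cH(\pi)$ by $\cH(\pi^{-1})$ and flips the compatible-sequence condition to the ordinary descent condition, after which the insertion bijection gives $\omega(G_\pi) = \sum_\lambda a_{\pi\lambda}\beta^{|\lambda|-\ell(\pi)}\omega(G_\lambda)$ and one finishes by re-applying $\omega$. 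The $\pi^{-1}$ is produced by $\omega$, not by the false identity $G_\pi = G_{\pi^{-1}}$.
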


\begin{remark}
The power series denoted $G_\pi$ in
\cite{Buch2002,BKSTY} and \cite{BuchSamuel} are the special cases of \eqref{gpi-eq} with $\beta=-1$ and $\beta=1$, respectively.
This poses no loss of generality, since as long as $\beta\neq 0$ one can recover the generic form of $G_\pi$ from any specialization by a simple change of variables.
\end{remark}

The elements of $\II$ and $\FF$ index two natural families of ``shifted'' stable Grothendieck polynomials.
 For $y \in \II$ and $z \in \FF$,
write $\ellhat(y)$ and $\ellfpf(z)$ for the common lengths of all words in $\iR(y)$ and $\cRfpf(z)$, respectively;
see \eqref{ellhat-eq} for explicit formulas for these numbers.
We define the \emph{shifted stable Grothendieck polynomial} of $y \in \II$ and 
$z \in \FF$
to be the power series
\be\label{igyz-eq}
\iG_y =  \sum_{(w,i)} \beta^{\ell(w)-\ellhat(y)}  x^i
\qquand
\Gfpf_z= \sum_{(w,i)} \beta^{\ell(w)-\ellfpf(z)}  x^i
\ee
where the sums are over compatible sequences with $w \in \iH(y)$ and $w \in \cHfpf(z)$, respectively.

Ikeda and Naruse  \cite{IkedaNaruse} have defined 
a family of \emph{$K$-theoretic Schur $P$-functions} $\GP_\lambda$
indexed by strict partitions $\lambda$.
These
functions represent Schubert classes in the 
$K$-theory of torus equivariant coherent sheaves on the maximal orthogonal Grassmannian \cite[Corollary 8.1]{IkedaNaruse}.
As an application of our results on symplectic Hecke insertion,
we prove the following
in Section~\ref{stab-sect}:

\begin{theorem}\label{lastmain-thm}
Let $y \in \II$ and $z \in \FF$.
Then
$\iG_y = \sum_\lambda b_{y\lambda}  \beta^{|\lambda|-\ellhat(y)}  \GP_\lambda
$
and
$
\Gfpf_z = \sum_\lambda c_{z\lambda} \beta^{|\lambda|-\ellfpf(z)}  \GP_\lambda
$
where the sums are over all strict partitions $\lambda$, and 
$b_{y\lambda }$ and $c_{z\lambda}$
are the finite numbers of increasing shifted tableaux $T$ of shape $\lambda$
whose row reading words are orthogonal Hecke words for $y$ 
and symplectic Hecke words for $z$, respectively.
\end{theorem}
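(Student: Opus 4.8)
The plan is to derive the two expansions by combining Theorem~\ref{thm2} and Theorem~\ref{thm3} (in its refined form as Theorem~\ref{fpf-bij-thm}) with the descent-compatibility of orthogonal and symplectic Hecke insertion, and then to invoke the known combinatorial description of $\GP_\lambda$ as a generating function over set-valued shifted tableaux. Concretely, I would start from the definitions \eqref{igyz-eq} of $\iG_y$ and $\Gfpf_z$ as sums over compatible sequences $(w,i)$ with $w$ an orthogonal (resp.\ symplectic) Hecke word. The key observation is that a compatible sequence for $w$ is exactly the data of a reverse-standardization: if $(w,i)$ is compatible and $\mathrm{std}$ denotes standardization of $i$, then $i$ is recovered from the standardized word together with the descent set, so the sum over compatible sequences $(w,i)$ with $w \in \iH(y)$ can be regrouped as a sum over orthogonal Hecke words $w$ of a fixed length, weighted by the fundamental quasisymmetric function $F_{\Des(w)}$ (in the $K$-theoretic, $\beta$-graded sense), where the descent set is the one attached to $w$ via its recording data.

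Next I would push $w$ through orthogonal (resp.\ symplectic) Hecke insertion. By Theorem~\ref{thm2} (resp.\ Theorem~\ref{thm3}) the word $w$ corresponds to a pair $(P,Q)$ with $P$ an increasing shifted tableau whose row reading word lies in $\iH(y)$ (resp.\ $\cHfpf(z)$) and $Q$ a standard shifted set-valued tableau of the same shape; crucially, by the descent-preserving property recorded in Theorem~\ref{fpf-des-thm} (and its orthogonal specialization in Section~\ref{ortho-sect}), the descent set of $w$ is read off from $Q$ alone. Therefore the generating function $\iG_y$ factors as
\[
\iG_y \;=\; \sum_{\lambda}\ \sum_{P}\ \beta^{|\lambda|-\ellhat(y)} \sum_{Q} \beta^{(\text{extra cells of }Q)}\, x^{\,\text{content}(Q)},
\]
where the middle sum is over increasing shifted tableaux $P$ of shape $\lambda$ with row reading word in $\iH(y)$ — there are exactly $b_{y\lambda}$ of these — and the inner sum over standard shifted set-valued tableaux $Q$ of shape $\lambda$ reassembles, after re-packing the compatible-sequence data, precisely the set-valued-tableau generating function for $\GP_\lambda$. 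The symplectic case is identical with $\iH(y)$ replaced by $\cHfpf(z)$ and $b_{y\lambda}$ by $c_{z\lambda}$.

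The last step is to identify that inner generating function with $\GP_\lambda$. For this I would cite Ikeda–Naruse \cite{IkedaNaruse} together with the combinatorial formula for $\GP_\lambda$ as a sum of $\beta$-weighted monomials over (weak) set-valued shifted tableaux — equivalently, its expansion into fundamental quasisymmetric functions indexed by standard shifted set-valued tableaux — which is exactly what the $Q$-sum produces. Finiteness of $b_{y\lambda}$ and $c_{z\lambda}$ follows because an increasing shifted tableau of shape $\lambda$ whose reading word represents a fixed $y$ or $z$ has entries bounded in terms of that element, and only finitely many strict partitions $\lambda$ can occur since $|\lambda| \geq \ellhat(y)$ (resp.\ $\ellfpf(z)$) forces a lower bound while the shape is constrained by the element; this should already be available from the insertion algorithm's output. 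The main obstacle I anticipate is the bookkeeping in the middle step: making the regrouping of compatible sequences into the $(P,Q)$ decomposition genuinely respect the $\beta$-grading and the fundamental-quasisymmetric expansion, i.e.\ checking carefully that the descent statistic used to define $F$ on the word side matches the descent statistic on standard shifted set-valued tableaux under the bijection — this is where Theorem~\ref{fpf-des-thm} does the real work, and verifying its hypotheses apply verbatim here (rather than re-proving a descent lemma) is the crux.
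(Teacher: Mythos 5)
Your high-level plan — regroup compatible sequences by Hecke word, push through insertion, use $\Des(w)=\Des(\QF(w))$ from Theorem~\ref{fpf-des-thm}, and match the inner $Q$-sum against a tableau formula for $\GP_\lambda$ — is sound, and in outline this is how the theorem is proved. There is, however, a mismatch you should flag more sharply than you do. Theorem~\ref{ssyt-thm} is about \emph{weakly increasing factorizations} of $w$ (where $i_j<i_{j+1}$ at descents of $w$) and produces \emph{weak} set-valued recording tableaux, whereas the compatible sequences in \eqref{gpi-eq} and \eqref{igyz-eq} obey the \emph{complementary} condition ($i_j<i_{j+1}$ at weak ascents), and Definition~\ref{GP-def} expresses $\GP_\lambda$ over ordinary (non-weak) set-valued tableaux. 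These two pictures are related by $\omega$, not equal, and that is where the real work is.

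The paper handles this by first computing $\omega(\iG_y)$ and $\omega(\Gfpf_z)$: since $\omega$ sends $L_{S,n}\mapsto L_{[n-1]\setminus S,n}$, applying $\omega$ turns compatible sequences into weakly increasing factorizations, which Theorem~\ref{ssyt-thm} matches against weak set-valued tableaux; a cited identity from \cite{HKPWZZ} gives $\sum_{T\in\WkSetSSMT(\lambda)}\beta^{|T|}x^T = \beta^{|\lambda|}\GP_\lambda(\tfrac{x_1}{1-\beta x_1},\tfrac{x_2}{1-\beta x_2},\dots)$, which Lemma~\ref{om-gp-lem} identifies with $\beta^{|\lambda|}\omega(\GP_\lambda)$; applying $\omega$ once more finishes. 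Your ``direct'' route, which stays with compatible sequences throughout, would instead need a compatible-sequence analogue of Theorem~\ref{ssyt-thm} bijecting to (non-weak) semistandard shifted set-valued tableaux, or equivalently the fundamental expansion $\GP_\lambda=\sum_Q\beta^{|Q|-|\lambda|}L_{[|Q|-1]\setminus\Des(Q),|Q|}$ over standard shifted set-valued tableaux $Q$ — note the \emph{complement} of $\Des(Q)$. Neither of these is in the paper, and the Ikeda--Naruse formula you cite is the monomial (set-valued-tableau) expansion, not the fundamental one; the parenthetical ``(weak)'' in your last paragraph conflates the two $\omega$-dual versions exactly where the distinction matters. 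The needed analogue of Theorem~\ref{ssyt-thm} should follow from the same ingredients (Theorems~\ref{fpf-des-thm} and \ref{fpf-bij-thm}), so your route is viable, but as written the ``reassembling'' step is a genuine gap, whereas the paper's $\omega$-maneuver lets it reuse the already-proved weak bijection.
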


The power series $G_\pi$ are of interest as the stable limits
of the \emph{Grothendieck polynomials} $\fk G_\pi$ defined in \cite{LS1982} to represent
the classes of the structure sheaves of Schubert varieties in the $K$-theory of the complete flag variety.
The precise relationship is that $G_\pi = \lim_{n\to \infty} \fk G_{1^m \times \pi}$ where $1^m\times \pi$ denotes the permutation
with $i \mapsto i$ for $i \leq m$ and $i \mapsto m + \pi(i-m)$ for $i>m$.
Wyser and Yong \cite{WyserYong}
have introduced analogous $K$-theory representatives 
for 
 the orbits of the symplectic group acting on the complete flag variety.
It is shown in \cite{MarPaw1,MarPaw2}
that the stable limits of Wyser and Yong's polynomials 
coincide (up to a minor change of variables)
with the symmetric functions $\{\Gfpf_z\}_{z \in \FF}$;
moreover, for each strict partition $\lambda$,
there exists a corresponding ``Grassmannian'' involution $z_\lambda \in \FF$
such that $\Gfpf_{z_\lambda} = \GP_{\lambda}$.

Contrary to what one might expect,
the symmetric functions $\{\iG_y\}_{y \in \II}$ do not arise in the same way by taking the stable limits 
of $K$-theory representatives for the orbits of the orthogonal group acting on the complete flag variety.
It is an open problem to find general formulas for these $K$-theory representatives and their stable limits.
At the same time, it also remains to find a geometric interpretation of $\iG_y$ for $y \in \II$.


Here is a short outline of the rest of this article.
Section~\ref{words-sect},
includes some preliminary results on Hecke words and tableaux.
Section~\ref{big-sect} constructs the symplectic Hecke insertion algorithm and its inverse.
In Section~\ref{var-sect} we discuss three related maps.
Section~\ref{des-sect}
formulates a semistandard version of our insertion algorithm.
In Section~\ref{ortho-sect}, we explain how  orthogonal Hecke insertion arises as a special case of symplectic Hecke insertion.
Section~\ref{ick-sect} provides a discussion of the simplified forms of
orthogonal and symplectic Hecke insertion obtained by restricting  the domain of each map
to (FPF-)involution words.
Section~\ref{stab-sect}, finally, contains the proof of Theorem~\ref{lastmain-thm}.

\subsection*{Acknowledgements}

This work was partially supported by Hong Kong RGC Grant ECS 26305218.
I am grateful to
Sami Assaf, Zachary Hamaker, Joel Lewis, and Brendan Pawlowski for many useful conversations. 
I also thank Tim Strunkheide for pointing out several mistakes in the published version of this article (most significantly in
Definition~\ref{weak-def}, which was missing a condition previously included in Definition~\ref{adm-def}), 
which have been corrected in this draft.

\section{Preliminaries}\label{words-sect}

We fix the following notation:
let $\PP = \{1,2,3,\dots\}$, $\NN = \{0,1,2,\dots\}$, and $[n] = \{i \in \PP : i \leq n\}$ for $n \in \NN$.
A \emph{word} is finite sequence of positive integers.
We write $\ell(w)$ for the length of a word $w$, $vw$ for the concatenation of words $v$ and $w$,
and $\emptyset$ for the unique empty word.

\subsection{Hecke words}\label{braid-sect}

%

A \emph{congruence} is an equivalence relation $\sim$ on words with the property that $v \sim w$
implies $avb\sim awb$ for all words $a$ and $b$.
Define $\simA$ to be the congruence generated by the usual Coxeter braid relations for $\SS$;
 i.e., let $\simA$ denote the strongest congruence with
$ ij \simA ji $ and $ i(i+1)i\simA (i+1)i(i+1)$
for all $i,j \in \PP$ with $|i-j| > 1$.
Write $\equivA$ for the strongest congruence 
with 
$ij \equivA ji$ and $jkj\equivA kjk$ and $i\equivA ii$
for all $i,j,k \in \PP$ with $|i-j| > 1$.
The following is well-known.

\begin{theorem}
\label{gp-prop}
If $\pi \in \SS$ then $\cR(\pi)$ is an equivalence class under $\simA$
while $\cH(\pi)$ is an equivalence class $\equivA$.
A word is reduced
if and only if its equivalence class under $\simA$ contains no words with equal adjacent letters.
\end{theorem}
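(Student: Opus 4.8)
Both congruences ``lift'': every defining relation of $\simA$ is an identity among the simple transpositions $s_i\in\SS$, and every defining relation of $\equivA$ is an identity among the generators $U_i$ of $\Nil$ --- from the multiplication rule one checks directly that $U_i^2=U_i$, that $U_iU_j=U_jU_i$ for $|i-j|>1$, and that $U_iU_{i+1}U_i=U_{i+1}U_iU_{i+1}$. Since $\Nil$ is a ring, rewriting a factor of a word by an $\equivA$-relation leaves the product $U_{i_1}U_{i_2}\cdots U_{i_l}$ unchanged, and rewriting a factor by a $\simA$-relation changes neither the word length nor the permutation $s_{i_1}s_{i_2}\cdots s_{i_l}$. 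Hence $\cH(\pi)$ is a union of $\equivA$-classes and $\cR(\pi)$ is a union of $\simA$-classes, and it remains to establish: \emph{(I)} any two reduced words for $\pi$ are $\simA$-equivalent; \emph{(II)} every word that is not reduced is $\simA$-equivalent to a word with two equal adjacent letters (the reverse implication being clear, since a word with equal adjacent letters is never reduced); \emph{(III)} any two Hecke words for $\pi$ are $\equivA$-equivalent.

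Step (I) is the Matsumoto--Tits word property, which I would cite from \cite{Humphreys} or reprove by the usual induction on $\ell(\pi)$: two reduced words for $\pi$ starting with the same letter may have that letter cancelled, reducing $\ell(\pi)$; if they start with distinct letters $a\neq b$, then $a$ and $b$ are both left descents of $\pi$, and the exchange condition yields a reduced word for $\pi$ beginning with the alternating word $ab\cdots$ of length $m:=$ the order of $s_as_b$ (here $m\in\{2,3\}$), after which a single braid (or commutation) move turns this into the reverse alternating word, which begins with $b$, reducing to the previous case. Step (II) follows from (I): if $w=i_1i_2\cdots i_l$ is not reduced, choose $k$ minimal so that $i_1i_2\cdots i_k$ is not reduced; then $v:=i_1i_2\cdots i_{k-1}$ is reduced while $i_k$ is a right descent of the permutation $s_{i_1}s_{i_2}\cdots s_{i_{k-1}}$, so that permutation also has a reduced word ending in $i_k$, say $u\,i_k$. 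By (I) we get $v\simA u\,i_k$, hence $w\simA u\,i_k\,i_k\,i_{k+1}\cdots i_l$, which has two equal adjacent letters.

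For Step (III), I claim that every Hecke word $w$ for $\pi$ is $\equivA$-equivalent to some element of $\cR(\pi)$; combined with (I) (and the fact that $\simA$-equivalent words are in particular $\equivA$-equivalent) this shows $\cH(\pi)$ is a single $\equivA$-class. Argue by induction on $\ell(w)$. If $w$ is reduced then, since $U_{i_1}U_{i_2}\cdots U_{i_l}=U_\pi$ and the $U_\sigma$ form a basis of $\Nil$, its underlying permutation $s_{i_1}s_{i_2}\cdots s_{i_l}$ equals $\pi$, so $w\in\cR(\pi)$. Otherwise $w$ is not reduced, so by (II) there is a word $w'\simA w$ of the form $w'=a\,i\,i\,b$; then $w\equivA w'\equivA a\,i\,b$ using the relation $ii\equivA i$, and $a\,i\,b$ is again a Hecke word for $\pi$ of length $\ell(w)-1$, so the inductive hypothesis applies. (The induction terminates because a non-reduced Hecke word for $\pi$ has length strictly greater than $\ell(\pi)$.)

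The only substantive step is (I); everything else is formal bookkeeping with the two rewriting systems. In a fully self-contained treatment the technical core of (I) is the lemma that a permutation with two distinct left descents $a$ and $b$ admits a reduced word beginning with $m(s_a,s_b)$ alternating copies of $a$ and $b$, which one derives from the exchange/deletion condition for $\SS$.
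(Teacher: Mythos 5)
Your proof is correct. The paper records this theorem as "well-known" and supplies no argument, so there is no proof to compare against; what you have written is the standard one --- Matsumoto--Tits for step (I), the exchange-condition reduction for step (II), and the length-descent induction through $ii\equivA i$ for step (III) --- and all the steps check out, including the base case of (III) where reducedness of $w$ forces $U_{i_1}\cdots U_{i_l}=U_{s_{i_1}\cdots s_{i_l}}$, so that equality with $U_\pi$ and the basis property of $\{U_\sigma\}$ give $w\in\cR(\pi)$.
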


There are versions of this theorem for orthogonal and symplectic Hecke words.
Define $\isim$  (respectively, $\iequiv$) to be the transitive closure
of $\simA$ (respectively, $\equivA$) and the relation with $w_1w_2w_3w_4\cdots w_n\sim w_2w_1w_3w_4\cdots w_n$
for all words with at least two letters.
The following combines \cite[Theorems 6.4 and 6.10]{HMP2}; the first claim is also equivalent to \cite[Theorem 3.1]{HuZhang1}:

\begin{theorem}[See \cite{HMP2}]\label{inv-hecke-char-thm}
If $z \in \II$ then $\iR(z)$ is an equivalence class under $\isim$
while $\iH(z)$ is an equivalence class under $\iequiv$.
A word is an involution word for some $z \in \II$
if and only if its equivalence class under $\isim$ contains no words with equal adjacent letters.
\end{theorem}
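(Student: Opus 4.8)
The plan is to reduce the statement to its symmetric-group counterpart, Theorem~\ref{gp-prop}. I would split it into a ``soundness'' part — each $\iequiv$-class lies in a single $\iH(z)$, and each $\isim$-class of minimal-length words in a single $\iR(z)$ — and a ``completeness'' part giving the reverse inclusions, and then read off the last assertion. For soundness, note that $i_1 i_2 \cdots i_l \in \iH(z)$ exactly when $U_1 \* U_{i_1}\* U_{i_2} \* \cdots \* U_{i_l} = U_z$, and that the $\*$-action respects the product of $\Nil$; hence the braid and Hecke generators of $\simA$ and $\equivA$, being relations valid in $\Nil$, fix the computed element wherever applied, and so does the extra generator $i_1 i_2\cdots \mapsto i_2 i_1\cdots$, since $U_{i_2}U_{i_1}U_{i_2} = U_{i_1}U_{i_2}U_{i_1}$ holds in $\Nil$ (trivially unless $|i_1 - i_2|=1$, where it is a braid relation). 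As all these moves preserve length, they also carry $\iR(z)$ into $\iR(z)$.

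For completeness I would first record the (essentially formal) atom decompositions. Since $\cM$ is a module over the $0$-Hecke monoid algebra $\Nil$, one has $U_1 \* U_{i_1}\*\cdots\*U_{i_l} = U_1 \* (U_{i_1}\cdots U_{i_l}) = U_1 \* U_a$, where $a \in \SS$ is the Demazure product of $s_{i_1},\dots,s_{i_l}$; hence $i_1\cdots i_l \in \iH(z)$ if and only if $i_1\cdots i_l \in \cH(a)$ for some $a \in \SS$ with $U_1 \* U_a = U_z$. This gives $\iH(z) = \bigsqcup_a \cH(a)$ over all such $a$, and $\iR(z) = \bigsqcup_{a \in \cA(z)} \cR(a)$, where $\cA(z)$ is the set of minimal-length such $a$ (the \emph{atoms} of $z$). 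By Theorem~\ref{gp-prop} each $\cR(a)$ is a single $\simA$-class and each $\cH(a)$ a single $\equivA$-class, so the first two claims reduce to showing these atom sets are \emph{connected} under the move swapping the first two letters of a word: for $\iR(z)$, that any two atoms have reduced words related by one initial swap modulo $\simA$; for $\iH(z)$, the analogous statement modulo $\equivA$.

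This atom-connectivity is the heart of the matter and the step I expect to be the main obstacle; equivalently, it says the monoid presented by the braid relations, the initial swap, and (for the Hecke version) idempotency surjects \emph{faithfully} onto $\{U_z : z \in \II\}$. I would prove it following the transition/exchange argument of \cite{HMP2}: induct on $\ellhat(z)$, and given two atoms compare first letters, use a braid move to bring a common prefix to the front, and apply an initial swap to descend to a strictly smaller case; for the Hecke version one additionally uses idempotency to pass from non-reduced Hecke words to reduced ones.

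For the last assertion: if $w \in \iR(z)$, then by soundness every $w' \isim w$ also lies in $\iR(z)$, and no element of $\iR(z)$ has two equal adjacent letters, since deleting one of them (using $U_jU_j = U_j$) would give a strictly shorter orthogonal Hecke word for $z$. Conversely, each word is an orthogonal Hecke word for a unique $z \in \II$; assuming $w \in \iH(z) \setminus \iR(z)$, so $\ell(w) > \ellhat(z)$, and using that each $\*$-step raises $\ellhat$ by $0$ or $1$, I would take the first prefix $w_1\cdots w_k$ that fails to lengthen at its last step, so that its truncation $w_1\cdots w_{k-1}$ is an involution word for some $y$ and $w_k$ is an involution descent of $y$ (i.e.\ $U_y \* U_{w_k} = U_y$). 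The atom-connectivity result (which in particular produces, for any involution descent of $y$, an atom of $y$ having it as an ordinary right descent) lets me replace the truncation, up to $\isim$, by a reduced word of such an atom; by Theorem~\ref{gp-prop} that word is $\simA$-equivalent to one ending in $w_k$, so appending $w_k$ creates an adjacent repeat. Since braid moves and initial swaps carried out within an initial segment of a word are legal moves on the whole word, this repeat persists in the $\isim$-class of $w$, finishing the argument.
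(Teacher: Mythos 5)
The paper does not prove this theorem; it is cited as a combination of \cite[Theorems~6.4 and 6.10]{HMP2}, with the first claim also equivalent to \cite[Theorem~3.1]{HuZhang1}. Your attempt is therefore a reconstruction of a proof the paper delegates to a reference, not an alternative to an argument given here.

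Your soundness part (the generators of $\isim$ and $\iequiv$, including the initial swap, are identities in $\Nil$ and hence preserve the $\*$-computation) and your reduction to atom-connectivity via the decompositions $\iH(z) = \bigsqcup_a \cH(a)$ and $\iR(z) = \bigsqcup_{a\in\cA(z)}\cR(a)$ --- which appear as \eqref{atoms2-eq} in this paper --- are correct and match the framing in \cite{HMP2}. The gap is that the atom-connectivity lemma \emph{is} the substance of the theorem: essentially all of the work in \cite{HMP2} goes into it, and you only gesture at an inductive exchange argument. The paper's discussion of $\sim_\cB$ and $\alpha_{\min}(z)$ immediately after the theorem statement hints at the real mechanism: atoms and Hecke atoms are linked by moves on length-three windows of one-line notation, and translating such moves into initial swaps of reduced words (and showing this generates all of $\cA(z)$ or $\cB(z)$ from $\alpha_{\min}(z)$) is nontrivial. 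You also phrase the target as ``any two atoms have reduced words related by \emph{one} initial swap modulo $\simA$,'' but what you need --- and what is true --- is connectivity of the atom graph, not diameter one. Finally, the converse of the last assertion leans on two further unproved facts: that $\ellhat$ is weakly increasing along successive $\*$-steps, and that each involution descent of $y$ is an ordinary right descent of some atom of $y$. Both are true and both follow from the \cite{HMP2} machinery, but neither is established in the proposal. As an outline of the cited proof this is faithful; as a self-contained argument it leaves the core step open.
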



Theorems~\ref{gp-prop} and \ref{inv-hecke-char-thm}
imply that there are finite sets $\cA(z)\subset \cB(z)\subset \SS$ with 
\be\label{atoms2-eq}
\iR(z) = \bigsqcup_{\pi \in \cA(z)} \cR(\pi)
\qquand
\iH(z) = \bigsqcup_{\pi \in \cB(z)} \cH(\pi).
\ee
For example, $
\cA(321) = \{231, 312\}\subset \{231, 312, 321\} = \cB(321).
$
Following \cite{HMP2}, we refer to the elements of $\cA(z)$ as \emph{atoms} for $z$ and to the elements of $\cB(z)$ as \emph{Hecke atoms}.

Fix $z \in \II$
and suppose $a_1<a_2<a_3<\dots$ are the integers $a \in \PP$ such that $a \leq z(a)$.
Define $b_i = z(a_i)$ for each $i \in \PP$ and let 
$\alpha_{\min}(z) = (a_1b_1 a_2b_2 a_3b_3\cdots)^{-1} \in \SS$ where in the word $a_1b_1 a_2b_2 a_3b_3$ we omit $b_i$ whenever $a_i=b_i$.
Write $\pi_i =\pi(i)$ for $\pi \in \SS$ and $i \in \PP$.
Let $\sim_\cB$ be the strongest equivalence relation on $\SS$ 
with $\pi^{-1} \sim_\cB \sigma^{-1}$ whenever there are
integers $a<b<c$ and 
an index $i \in \PP$ such that 
$ \pi_{i}\pi_{i+1}\pi_{i+2}$ and $\sigma_{i}\sigma_{i+1}\sigma_{i+2}$
both belong to
$\{ cba, bca, cab\}$
while
$\pi_j = \sigma_j$ for all $j \notin \{i,i+1,i+2\}$.
The following is another consequence of \cite[Theorems 6.4 and 6.10]{HMP2}.

\begin{theorem}[{See \cite{HMP2}}]
If $z \in \II$ then $\alpha_{\min}(z) \in \cA(z)$ and $\cB(z) = \left\{ w \in \SS : \alpha_{\min}(z) \sim_\cB w\right\}$.
\end{theorem}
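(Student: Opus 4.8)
The plan is to deduce both statements from the cited results on Hecke atoms in \cite{HMP2}. First I would show $\alpha_{\min}(z) \in \cA(z)$. Recall that $a_1 < a_2 < \dots$ lists the integers $a$ with $a \leq z(a)$, and $b_i = z(a_i)$. The claim is that the word $a_1 b_1 a_2 b_2 \cdots$ (with $b_i$ omitted when $a_i = b_i$, i.e. when $a_i$ is a fixed point of $z$) is a reduced word for a permutation $\sigma$ whose inverse $\sigma^{-1} = \alpha_{\min}(z)$ is an atom of $z$. To see this is reduced, note the word is strictly increasing along each pair-block $a_i b_i$ and the blocks are themselves increasing in their first entries; a short check shows it has no repeated letters and equals $\ell(\alpha_{\min}(z)) = \ellhat(z)$ inversions, using the explicit formula referenced at \eqref{ellhat-eq}. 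Then I would verify that applying the defining relation $U_z = U_{i_l}\cdots U_{i_1}U_{i_2}\cdots U_{i_l}$ to this word (equivalently, computing $s_{i_l}\cdots s_{i_1}\Theta_{\mathrm{triv}} s_{i_1}\cdots s_{i_l}$ in the involution sense) recovers $z$: conjugating the identity successively by the transpositions $s_{a_i}, s_{b_i}$ in the prescribed order produces exactly the two-line array sending $a_i \mapsto b_i$. This identifies $\alpha_{\min}(z)$ as a genuine atom, placing it in $\cA(z) \subseteq \cB(z)$.

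Next, for the equality $\cB(z) = \{w \in \SS : \alpha_{\min}(z) \sim_\cB w\}$, the strategy is to invoke Theorem~\ref{inv-hecke-char-thm} together with \eqref{atoms2-eq}. By those results, $\iH(z) = \bigsqcup_{\pi \in \cB(z)} \cH(\pi)$ is a single equivalence class under $\iequiv$, and $\cH(\pi)$ is a single $\equivA$-class for each $\pi$. So $\cB(z)$ is exactly the set of permutations $\pi$ whose Hecke word classes get merged when one passes from $\equivA$ to $\iequiv$ — that is, the orbit of any one such $\pi$ under the coset-level moves induced by the initial-swap relation $w_1 w_2 w_3 \cdots \sim w_2 w_1 w_3 \cdots$. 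Translating this initial-letter swap into its effect on the permutations $\pi^{-1}$ (equivalently on the inverse one-line notations) is precisely a computation on the first few entries: swapping the first two letters $i, j$ of a Hecke word for $\pi^{-1}$ corresponds, via the standard dictionary between words and the right action, to a local move on $\pi_i \pi_{i+1} \pi_{i+2}$ of the type recorded in the definition of $\sim_\cB$ (the braid-like triples $\{cba, bca, cab\}$). I would make this translation explicit, showing the generating moves of $\sim_\cB$ are exactly the shadow of the $\iequiv$-generating moves on the atom level, whence the two equivalence classes — the $\sim_\cB$-class of $\alpha_{\min}(z)$ and the set $\cB(z)$ — coincide.

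The main obstacle, I expect, is the bookkeeping in the second part: one must carefully match the "swap the first two letters of a Hecke word" operation against the $\sim_\cB$ move on triples $\pi_i\pi_{i+1}\pi_{i+2}$, keeping track of which permutation (or its inverse) each word represents and ensuring the correspondence is exhaustive rather than merely containing one direction. In particular, showing $\subseteq$ (every Hecke atom is reachable from $\alpha_{\min}(z)$ by $\sim_\cB$ moves) requires that the $\iequiv$-class structure forces connectivity at the level of cosets $\cH(\pi)$, not just at the level of words; this is where I would lean hardest on \cite[Theorems 6.4 and 6.10]{HMP2}, essentially repackaging their description of $\cB(z)$. The reverse inclusion $\supseteq$ is easier: each $\sim_\cB$ move lifts to an explicit $\iequiv$-move on words, and one checks it stays within $\iH(z)$, so the whole $\sim_\cB$-class of $\alpha_{\min}(z)$ lies in $\cB(z)$. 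A sanity check against the running example $z = 321$, where $\alpha_{\min}(321) = 231$ (reading off $a_1 = 1, b_1 = 3$, fixed point $2$, giving the word $13$, whose inverse permutation is... $231$), and $\{231, 312\} = \cA(321) \subseteq \cB(321) = \{231, 312, 321\}$, with the single $\sim_\cB$ move $231 \leftrightarrow 312$ visible on the triple, would confirm the translation is calibrated correctly.
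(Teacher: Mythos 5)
Note first that the paper does not prove this theorem: it is stated as a consequence of \cite[Theorems 6.4 and 6.10]{HMP2} and cited without argument, so there is no in-paper proof to compare against.

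Your attempt at a direct argument for the first half has a concrete error. The sequence $a_1 b_1 a_2 b_2 \cdots$ is \emph{one-line notation} for $\alpha_{\min}(z)^{-1}$ under the conventions of Section~1.1, not a reduced word, so the ``reduced word for $\sigma$'' framing is a category mistake. And your calibration against $z = 321$ does not follow the stated definition: the rule omits only $b_i$ at fixed points, which here ($a_1 = 1, b_1 = 3, a_2 = b_2 = 2, \dots$) produces the word $1\,3\,2\,4\,5\cdots$, i.e.\ the permutation $132 = s_2$, whose inverse $132$ has length $1 \neq \ellhat(321) = 2$ and lies outside $\cA(321) = \{231, 312\}$. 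You instead drop the entire fixed-point block to get ``$13$'' (which is not a well-formed one-line notation) and then assert the inverse is $231$; the answer happens to land in $\cA(321)$, but the derivation matches neither the definition nor the paper's conventions. In fact the mismatch suggests the paper's stated formula for $\alpha_{\min}$ contains an error (replacing $a \leq z(a)$ by $a \geq z(a)$, or writing the pair blocks as $b_i a_i$, repairs the example), and this is something to flag rather than to paper over with a nonmatching ``check.'' For the second half, your outline --- matching the initial-swap move that generates $\iequiv$ from $\equivA$ against local $\sim_\cB$ moves on inverse one-line notations --- is the right picture, but you defer the hard inclusion $\cB(z) \subseteq \{w \in \SS : \alpha_{\min}(z) \sim_\cB w\}$ entirely to \cite{HMP2}. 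Since the paper does exactly the same, that is defensible, but it leaves your proposal as a restatement-with-citation in which the only directly argued step is the one where the arithmetic fails.
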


Define $\fsim$  (respectively, $\fequiv$) to be the transitive closure
of $\simA$ (respectively, $\equivA$) and the relation with $w_1w_2w_3\cdots w_n\sim w_1(w_2+2)w_3\cdots w_n$
whenever $w_1 = w_2 +1$.
Recall that $\cRfpf(z)$ is the set of minimal length words in $\cHfpf(z)$.
A word is a \emph{symplectic Hecke word} (respectively, an \emph{FPF-involution word})
if it is an element of $\cHfpf(z)$ (respectively, $\cRfpf(z)$) for some $z \in \FF$.

\begin{theorem}\label{fpf-hecke-char-thm}
If $z \in \FF$ then $\cRfpf(z)$ and
 $\cHfpf(z)$ are equivalence classes under $\fsim$ and $\fequiv$, respectively.
A word is a symplectic Hecke word 
if and only if its equivalence class under $\fequiv$ contains no words that begin with an odd letter.
A symplectic Hecke word is an FPF-involution word if and only if its
 equivalence class under $\fsim$ contains no words with equal adjacent letters.
\end{theorem}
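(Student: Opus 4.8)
The plan is to parallel the proofs of Theorems~\ref{gp-prop} and \ref{inv-hecke-char-thm} (i.e., \cite[Theorems 6.4 and 6.10]{HMP2}), transporting those statements across the $\Nil$-module structure on $\cN$. The conceptual point is that the right $\Nil$-action defining symplectic Hecke words is governed by exactly the local relations appearing in $\fsim$ and $\fequiv$: the ordinary braid moves $\simA$ come from the $\SS$-relations satisfied by the $U_i$'s, while the new generator $w_1w_2\cdots \sim w_1(w_2{+}2)w_3\cdots$ when $w_1 = w_2{+}1$ encodes the identity $N_\Theta U_i U_{i-1} = N_\Theta U_i U_{i+1}$ (valid because $\Theta$ swaps $i-1$ with $i$ and $i+1$ with $i+2$). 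So the first step is to verify carefully that $w\fequiv w'$ (resp.\ $w \fsim w'$) implies $N_\Theta U_{w_1}\cdots U_{w_l} = N_\Theta U_{w'_1}\cdots U_{w'_{l'}}$, and in the reduced case that lengths agree; this gives the easy inclusions, namely that each $\fsim$-class lies inside a single $\cRfpf(z)$ and each $\fequiv$-class inside a single $\cHfpf(z)$.

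For the reverse inclusions — that $\cRfpf(z)$ and $\cHfpf(z)$ are \emph{single} equivalence classes — I would exploit the decomposition analogous to \eqref{atoms2-eq}. First, since $\equivA \subseteq \fequiv$ and $\simA \subseteq \fsim$, any $\fequiv$-class is a union of $\equivA$-classes, i.e.\ a union of sets $\cH(\pi)$, and likewise any $\fsim$-class restricted to reduced words is a union of sets $\cR(\pi)$; by Theorem~\ref{gp-prop} the reduced-word structure is then controlled entirely by which permutations $\pi$ arise. So it suffices to show: (a) the set of $\pi \in \SS$ with $\cR(\pi) \subseteq \cRfpf(z)$ (the ``FPF-atoms'' of $z$) forms a single connected component under the permutation-level move induced by $w_1w_2 \sim w_1(w_2{+}2)w_3$, and similarly (b) the Hecke atoms form a single component. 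Here I would lean on the explicit description of FPF-atoms from \cite{HMP1,HMP2,HMP5}: there is a canonical minimal FPF-atom $\alpha_{\min}^{\fpf}(z)$, obtained by listing the pairs $(a_i, z(a_i))$ for $a_i < z(a_i)$ in increasing order of $a_i$, and every other FPF-atom is reached from it by the triple swaps on consecutive one-line-notation entries analogous to $\sim_\cB$ (moves within $\{cba, bca, cab\}$ for the Hecke case, and the length-preserving subset for the atom case). Showing these moves are exactly what the generator $w_1(w_2{+}2)$ induces after passing through reduced words is the crux; the translation between a shift $w_2 \mapsto w_2 + 2$ in a word and a triple-transposition on the associated permutation's one-line notation is a finite local check.

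The remaining two assertions are then characterizations of which $\fequiv$- and $\fsim$-classes actually occur. For ``a word is a symplectic Hecke word iff its $\fequiv$-class contains no word beginning with an odd letter'': one direction is immediate since $N_\Theta U_i = 0$ for $i$ odd (stated in the excerpt), so any symplectic Hecke word, being a representative of a class lying in some $\cHfpf(z)$, shares that class with no odd-initial word — wait, more precisely, if some member began with an odd letter it would evaluate to $0 \neq N_z$, contradiction. Conversely, if the $\fequiv$-class of $w$ has no odd-initial member, I would argue that $N_\Theta U_{w_1}\cdots U_{w_l}$ is a nonzero basis element: it is a priori either $0$ or some $N_z$, and one shows it cannot be $0$ by noting that a zero would have to arise from a step $N_z U_i$ with $i+1 = z(i) > z(i+1) = i$, and by analyzing when such a configuration is forced — tracing back, this forces the existence of an odd-initial word in the class (this is the analogue of Theorem~\ref{gp-prop}'s ``no equal adjacent letters'' criterion and of Theorem~\ref{inv-hecke-char-thm}'s criterion, both of which I would cite as templates). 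The FPF-involution criterion (``no equal adjacent letters in the $\fsim$-class'') is then the direct transplant of the corresponding statements in Theorems~\ref{gp-prop} and \ref{inv-hecke-char-thm}.

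I expect the main obstacle to be step (a)/(b) above: proving that the FPF-atoms and FPF-Hecke-atoms of a fixed $z \in \FF$ each form a single equivalence class under the induced permutation-level moves. This is the place where one genuinely needs the combinatorics of FPF-involution atoms from \cite{HMP1,HMP2,HMP5} rather than a formal manipulation, and it is also where the precise matching between the word-shift generator of $\fsim$/$\fequiv$ and the allowed triple-swaps on one-line notation must be pinned down. Everything else — the easy inclusions, the nonvanishing analysis, and the two ``no forbidden pattern'' characterizations — follows the well-trodden path of Theorems~\ref{gp-prop} and \ref{inv-hecke-char-thm} and should be routine once the atom-connectivity is in hand.
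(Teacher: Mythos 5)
Your overall strategy matches the paper's — lift everything to the permutation level, control $\fequiv$-classes through Hecke atoms, cite the FPF-atom machinery from \cite{HMP2} at the crucial step, and close with a ``no forbidden pattern'' characterization. However, the local moves you propose are wrong. You model the FPF permutation-level relation on $\sim_\cB$'s width-3 swaps within $\{cba,bca,cab\}$, calling them ``triple swaps,'' but the relation $\sim_{\cBfpf}$ that actually connects FPF Hecke atoms involves width-\emph{four} windows of the one-line notation starting at even positions, swapping among $\{adbc, bcad, bdac\}$ for $a<b<c<d$. This is a genuinely different combinatorial structure: width-3 moves at arbitrary positions would break the constraint $\pi_{2i-1}<\pi_{2i}$ that every Hecke atom of a fixed-point-free involution must satisfy, so the ``finite local check'' you defer to does not go through as stated. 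The paper instead derives the needed constraints on $\pi$ — namely $\pi_{2i-1}<\pi_{2i}$ for all $i$ and no window $\pi_{2i-1}\pi_{2i}\pi_{2i+1}\pi_{2i+2}$ of the form $cdab$ — directly from the two word-level observations that no symplectic Hecke word may begin with an odd letter or with $(2i)(2i+1)(2i-1)(2i)$, and only then invokes $\sim_{\cBfpf}$-connectivity via \cite[Theorem~6.22]{HMP2}.

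Your proposed proof of the ``symplectic Hecke word iff no odd-initial word in the $\fequiv$-class'' characterization is also vaguer than it can afford to be. Arguing ``a zero must arise from some step $N_zU_i$ with $z(i)=i+1$, and tracing back forces an odd-initial word'' is not a valid induction without more care. The paper's version is tight: fix a $\fequiv$-class not of the form $\cHfpf(z)$, take a word $w$ of \emph{minimal length} in it, and let $n$ be minimal with $w_1\cdots w_n$ not a symplectic Hecke word. Double minimality (using the first part of the theorem, already proved at this point) forces $w_1\cdots w_{n-1}$ to be an FPF-involution word for some $z$ with $z(w_n)=w_n+1$; this pins $\{\pi(w_n),\pi(w_n{+}1)\}=\{2i-1,2i\}$ for $\pi=s_{w_1}\cdots s_{w_{n-1}}$, whence $w_1\cdots w_n\equivA(2i{-}1)w_1\cdots w_{n-1}$, which is odd-initial. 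You would need this two-level minimality argument and the prior establishment of the equivalence-class claim to make your ``tracing back'' rigorous.
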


\begin{proof}
The claim that $\cRfpf(z)$ is an equivalence class under $\fsim$ for each $z \in \FF$ follows from \cite[Theorem 6.22]{HMP2}.
Since $\cN$ is a $\Nil$-module and $N_\Theta U_iU_{i-1} = N_\Theta U_i U_{i+1}$ whenever $i \in 2\PP$ is even,
 Theorem~\ref{gp-prop} implies that each set $\cHfpf(z)$ for $z \in \FF$ is preserved by $\fequiv$.

The following argument is similar to the proof of  \cite[Theorem 6.18]{HMP2}.
Define $\sim_{\cBfpf}$ to be the strongest equivalence relation on $\SS$ 
with $\pi^{-1} \sim_{\cBfpf} \sigma^{-1}$ whenever there
are integers $a<b<c<d$ and  
an even index $i \in 2\NN$ such that 
$ \pi_{i+1}\pi_{i+2}\pi_{i+3} \pi_{i+4}$ and $\sigma_{i+1}\sigma_{i+2}\sigma_{i+3}\sigma_{i+4}$
both belong to
$\{ adbc, bcad, bdac\}$
while
$\pi_j = \sigma_j$ for all $j \notin \{i+1,i+2,i+3,i+4\}$.
In this case there is a word $w$ such that $\cH(\pi)$ and $\cH(\sigma)$
each contain one of $w(2i+1)(2i) $ or $w(2i-1)(2i)$ or $w(2i-1)(2i+1)(2i)$,
so $\cH(\pi^{-1}) \cup \cH(\sigma^{-1})$ is a subset of an equivalence class under $\fequiv$.

No symplectic Hecke word can begin with an odd letter 
or with 
$(2i)(2i+1)(2i-1)(2i)$ for any $i \in \PP$, since 
$N_\Theta U_{2i} U_{2i+1}U_{2i-1} = N_z$ for $z\in \FF$ with $2i+1 = z(2i) > z(2i+1)=2i$.
Suppose $w$ is a symplectic Hecke word for $z \in \FF$. Then $w\in \cH(\pi^{-1})$ for some $\pi \in \SS$,
and the preceding observations imply that $\pi_{2i-1} < \pi_{2i}$ for all $i \in \PP$ and that we never have
$\pi_{2i-1}\pi_{2i}\pi_{2i+1}\pi_{2i+2} = cdab$
for numbers $a<b<c<d$.
Using these facts, 
it is an exercise to show that $\pi^{-1}\sim_{\cBfpf}\sigma^{-1}$ 
for some $\sigma\in \SS$ with $\sigma_{2i-1} < \sigma_{2i}$ 
and $\sigma_{2i-1} < \sigma_{2i+1}$
for all $i \in \PP$.
Now define $y \in \FF$ to be the fixed-point-free involution with $y(\sigma_{2i-1}) = \sigma_{2i}$ for all $i \in \PP$.
\cite[Theorem 6.22]{HMP2}  asserts that $\cH(\sigma^{-1}) \subset \cHfpf(y)$;
since $v\fequiv w$ for all $v \in \cH(\sigma^{-1})$
and since $\cHfpf(z)$ is preserved by $\fequiv$, we must have $y = z$.
We conclude that if $a_1<a_2<\dots$ are the elements of $\{a \in \PP : a<z(a)\}$ listed in order and $b_i = z(a_i)$,
then every symplectic Hecke word for $z$ 
is equivalent under $\fequiv$ to every Hecke word for the permutation $\beta_{\min}(z) := (a_1b_1a_2b_2\cdots)^{-1}$.

Next, consider an equivalence class under $\fequiv$ that is not equal to $\cHfpf(z)$ for any $z \in \FF$.
Suppose $w$ is a word of minimal length in this class, so that $w$ is not a symplectic Hecke word.
Let $n$ be minimal such that the initial subword $w_1w_2\cdots w_n$ is not a symplectic Hecke word.
Since $\emptyset\in \cRfpf(\Theta)$,
we have $n>0$. Our minimality assumptions imply that 
if $\pi := s_{w_1}s_{w_2}\cdots s_{w_{n-1}} \in \SS$ then
$w_1w_2\cdots w_{n-1}$ is an FPF-involution word for 
$z :=\pi^{-1} \Theta \pi \in \FF$ and $z(w_n) = w_n+1$.
Therefore $\{\pi(w_n),\pi(w_{n+1})\} = \{2i-1,2i\}$ for some $i \in \PP$,
and we have $w_1w_2\cdots w_n \equivA (2i-1)w_1w_2\cdots w_{n-1}$.
We conclude that a word is not a symplectic Hecke word
if and only if it is equivalent under $\fequiv$
to a word that begins with an odd letter.

A similar argument shows that a symplectic Hecke word is an involution word
if and only if its equivalence class under $\fsim$ contains no words with equal adjacent letters.
We omit the details.
%
\end{proof}

Theorems~\ref{gp-prop} and \ref{fpf-hecke-char-thm}
imply
that
there are finite subsets $\cAfpf(z)\subset \cBfpf(z)\subset \SS$ with 
\be\label{atoms1-eq}
\cRfpf(z) = \bigsqcup_{\pi \in \cAfpf(z)} \cR(\pi)
\qquand
\cHfpf(z) = \bigsqcup_{\pi \in \cBfpf(z)} \cH(\pi)
\ee
for each $z \in \FF$.
For example,
$ 
\cAfpf(4321) =\{3124, 1342\} \subset \{3124, 1342, 3142\} = \cBfpf(4321).
$
We again refer to elements of $\cAfpf(z)$ as \emph{atoms} for $z$ and to elements of $\cBfpf(z)$ as \emph{Hecke atoms}.
The notation ``$\cBfpf(z)$'' is used in \cite[\S6.2]{HMP2} to denote a slightly larger set.
If $\sim_{\cBfpf}$ and $\beta_{\min}(z)$ are defined as above, then the
proof of Theorem~\ref{fpf-hecke-char-thm} reduces to the following statement:

\begin{theorem}
If $z \in \FF$ then $\beta_{\min}(z)  \in \cAfpf(z)$ and $\cBfpf(z) = \left\{ w \in \SS : \beta_{\min}(z) \sim_{\cBfpf} w\right\}$.
\end{theorem}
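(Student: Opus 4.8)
The plan is to extract this statement directly from the argument already carried out in the proof of Theorem~\ref{fpf-hecke-char-thm}, rather than reprove everything from scratch. Recall that in that proof we showed: every symplectic Hecke word for $z \in \FF$ is equivalent under $\fequiv$ to every Hecke word for $\beta_{\min}(z) = (a_1b_1a_2b_2\cdots)^{-1}$, where $a_1 < a_2 < \cdots$ list $\{a \in \PP : a < z(a)\}$ and $b_i = z(a_i)$. In particular $\cH(\beta_{\min}(z)) \subseteq \cHfpf(z)$, which by the disjoint decomposition \eqref{atoms1-eq} is exactly the assertion $\beta_{\min}(z) \in \cBfpf(z)$. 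To upgrade this to $\beta_{\min}(z) \in \cAfpf(z)$, I would check that $\beta_{\min}(z)$ has length $\ellfpf(z)$: indeed the word $a_1b_1a_2b_2\cdots$ (omitting repeats, though for $z$ fixed-point-free there are none) has no inversions when read as a one-line-notation prefix in the sense needed, so its inverse has exactly $\ellfpf(z)$ inversions by the explicit length formula referenced near \eqref{ellhat-eq}; hence $\cR(\beta_{\min}(z)) \subseteq \cRfpf(z)$, placing $\beta_{\min}(z)$ in $\cAfpf(z)$.

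For the second equation, one containment is immediate and the other is the substance. If $w \in \cBfpf(z)$, then by \eqref{atoms1-eq} every $v \in \cH(w)$ lies in $\cHfpf(z)$; the proof of Theorem~\ref{fpf-hecke-char-thm} established that such $v$ satisfies $v \fequiv v'$ for every $v' \in \cH(\beta_{\min}(z))$, and we must argue this $\fequiv$-equivalence is witnessed at the level of $\sim_{\cBfpf}$, i.e.\ that $w \sim_{\cBfpf} \beta_{\min}(z)$. Concretely, I would invoke the combinatorial normalization step already flagged as ``an exercise'' in that proof: starting from $\pi$ with $w \in \cH(\pi^{-1})$, one repeatedly applies the four-letter moves $\{adbc, bcad, bdac\}$ on positions $(i{+}1,\dots,i{+}4)$ with $i$ even — these are precisely the generating moves of $\sim_{\cBfpf}$ — to reach a permutation $\sigma$ with $\sigma_{2i-1} < \sigma_{2i}$ and $\sigma_{2i-1} < \sigma_{2i+1}$ for all $i$, and then observe that any such $\sigma$ with the correct associated fixed-point-free involution $z$ must literally equal $\beta_{\min}(z)$ (both are the unique increasing-by-pairs, increasing-across-pairs representative). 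Conversely, if $\beta_{\min}(z) \sim_{\cBfpf} w$, each generating move corresponds, via the translation in the proof (a move on $\pi_{i+1}\pi_{i+2}\pi_{i+3}\pi_{i+4}$ corresponds to swapping among $w(2i{+}1)(2i)$, $w(2i{-}1)(2i)$, $w(2i{-}1)(2i{+}1)(2i)$ at the Hecke-word level), to an $\fequiv$-move preserving the symplectic Hecke word class, so $\cH(w) \subseteq \cHfpf(\beta_{\min}(z)^{-1}\Theta\,\cdots)$; tracking the involution as in the proof forces this to be $\cHfpf(z)$, whence $w \in \cBfpf(z)$.

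The main obstacle is the middle step: verifying that the $\fequiv$-equivalence between a symplectic Hecke word and the Hecke words of $\beta_{\min}(z)$ descends to a chain of $\sim_{\cBfpf}$-moves on the permutation side, with no ``escape'' into permutations outside $\SS$ or into words that fail to be symplectic Hecke words along the way. This requires checking that the normalization procedure on $\sigma$ terminates and that each intermediate permutation genuinely corresponds (under $\pi \mapsto$ associated FPF-involution) to the same $z$ — i.e.\ that the moves $\{adbc, bcad, bdac\}$ preserve the involution, which mirrors Lemma-level facts in \cite[\S6.2]{HMP2}. I would handle this by citing \cite[Theorem 6.22]{HMP2} for the reduced-word case and then bootstrapping to Hecke atoms exactly as the proof of Theorem~\ref{fpf-hecke-char-thm} already does, so that this theorem becomes essentially a restatement of what that proof produces. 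Everything else — the two trivial containments and the length computation — is routine.
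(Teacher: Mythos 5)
Your overall plan matches the paper exactly: the paper provides no freestanding proof of this theorem and only remarks that the proof of Theorem~\ref{fpf-hecke-char-thm} reduces to it, so extracting the two containments from that argument — the translation of each generating $\sim_{\cBfpf}$-move into an $\fequiv$-equivalence of Hecke word sets for one inclusion, and the normalization ``exercise'' plus uniqueness of the increasing-in-pairs, increasing-across-pairs representative for the other — is precisely what is intended. Your reliance on \cite[Theorem 6.22]{HMP2} mirrors the paper's citation pattern.

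One step, however, contains a genuine error. You claim that ``the word $a_1b_1a_2b_2\cdots$ has no inversions when read as a one-line-notation prefix in the sense needed, so its inverse has exactly $\ellfpf(z)$ inversions.'' This is internally inconsistent — a permutation and its inverse always have the same inversion count, so if the first has none, the second does too — and the first half is simply false: for $z=4321\in\FF$ one has $a_1b_1a_2b_2 = 1423$, which already has two inversions, and indeed $\ell(\beta_{\min}(z)) = \ell(1342) = 2 = \ellfpf(z)$, not zero. What you actually need for $\beta_{\min}(z)\in\cAfpf(z)$ is the identity $\ell(\beta_{\min}(z)) = \ellfpf(z)$, which is a short computation from the length formula $\ellfpf(z) = \tfrac{\ell(\overline z)-\kappa}{2}$, or can be read directly off \cite[Theorem 6.22]{HMP2}, which the proof of Theorem~\ref{fpf-hecke-char-thm} already invokes and which asserts not merely $\cH(\sigma^{-1})\subset\cHfpf(y)$ but that the normalized $\sigma^{-1}$ is an atom. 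Replacing that one sentence with either of these fixes makes the argument sound.
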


\subsection{Tableaux}\label{tab-sect}

Recall that $\PP$ is the set of positive integers.
Throughout, we use the term
 \emph{tableau} to mean any map from a finite subset of $\PP\times \PP$
to $\PP$.
We refer to the domain of a tableau as its \emph{shape},
and write $\emptyset$ for the unique tableau whose shape is the empty set.

A tableau has $m$ rows (respectively, $n$ columns) if its shape is 
 contained in $[m] \times \PP$ but not $[m-1] \times \PP$ (respectively,
 $\PP \times [n]$ but not $\PP\times [n-1]$).
The \emph{$i$th row} and \emph{$j$th column} of a tableau $T$ refer to the tableaux
 formed by restricting $T$ to the  subset of its domain
 in $\{i\} \times \PP$ and $\PP\times \{j\}$.
 
Let $T$ be a tableau with shape $\cD$.
We write $(i,j) \in T$ to mean that $(i,j) \in \cD$
and define $T_{ij}:=T(i,j)$ for $(i,j) \in T$.
A tableau $T$ is \emph{increasing} if $T_{ab} < T_{xy}$ whenever $(a,b),(x,y) \in T$ 
are distinct positions with $a\leq x$ and $b\leq y$.
If $(i,j) \in T$ then the tableau formed by ``removing box $(i,j)$ from $T$''
is the restriction of $T$ to $\cD-\{(i,j)\}$.
If $(i,j) \in T$ then the tableau formed by ``replacing the value of box  $(i,j)$ in $T$ by $c$''
is the map with domain $\cD$ that has $(i,j) \mapsto c$ and agrees with $T$ on the subdomain $\cD -\{(i,j)\}$.
If $(i,j)\notin T$ then the tableau formed by ``adding $c$ to box $(i,j)$ in $T$''
is the map with domain $\cD\sqcup \{(i,j)\}$ that extends $T$ and has $(i,j) \mapsto c$.

\begin{example}\label{tab-ex2}
We draw tableaux  in French notation, so that each row is placed on top of the previous one.
For example, the tableau  $T=\{ (1,1) \mapsto 1, (1,2) \mapsto 2, (2,3)\mapsto 3, (3,2)\mapsto 4\}$ is
\be\label{tab-ex-eq1}
\ytableausetup{boxsize = .4cm,aligntableaux=center}
 \begin{ytableau}
  \none[\cdot]  &4 &  \none[\cdot] \\
 \none[\cdot] & \none[\cdot] &  3   \\ 
 1 & 2 &  \none[\cdot] 
 \end{ytableau}
 \ee
The following tableaux are increasing with shape $\{(1,1),(1,2),(1,3),(2,2)\}$:
\be\label{tab-ex-eq2}
\ytableausetup{boxsize = .4cm,aligntableaux=center}
 \begin{ytableau}
 \none & 4 & \none  \\ 
 2 & 3 & 4
 \end{ytableau}
 \qquad
  \begin{ytableau}
 \none & 5 & \none   \\ 
 2 & 3 & 4
 \end{ytableau}
 \qquad
  \begin{ytableau}
 \none & 4  & \none \\ 
 2 & 3 & 5
 \end{ytableau}
 \qquad
  \begin{ytableau}
 \none & 8  & \none \\ 
 1 & 4 & 9
 \end{ytableau} 
 \ee
\end{example}


Let $T$ be a tableau.
The \emph{row reading word} (respectively, \emph{column reading word}) of $T$ 
is the finite sequence $\row(T)$ (respectively, $\col(T)$) whose entries are the values $T_{ij}$ 
as $(i,j)$ ranges over the domain of $T$ such that
$(-i,j)$ (respectively, $(j,-i)$) increases lexicographically.
For example,
the row reading word of the tableau in \eqref{tab-ex-eq1} is $4312$,
while the column reading word of that tableau is $1423$.
The tableaux in \eqref{tab-ex-eq2} have row reading words
4234, 5234, 4235, and 8149, and column reading words 2434, 2534, 2435, and 1849, respectively.

Let $\prec$ be the strict partial order on $\PP\times \PP$ that has $(a,b) \prec(x,y)$
if and only if $a\leq x$ and $ b\leq y$ and $(a,b) \neq (x,y)$.
A tableau $T$ is \emph{row-column-closed} if whenever $(a,b),(x,y) \in T$ and $(a,b) \prec (x,y)$,
it holds that
$(a,y) \in T$. The following picture illustrates this condition. If the two boxes in the first diagram are in the domain of $T$ then $T$ must also contain the third box:
\[
 \ytableausetup{boxsize = .3cm,aligntableaux=center}
 \begin{ytableau}
 \none[\cdot] & \none[\cdot] &  \\ 
 \none[\cdot] & \none[\cdot] & \none[\cdot]  \\
   & \none[\cdot] &  \none[\cdot]
 \end{ytableau}
 \qquad\Rightarrow\qquad
  \begin{ytableau}
 \none[\cdot] & \none[\cdot] &  \\ 
 \none[\cdot] & \none[\cdot] & \none[\cdot]  \\
   & \none[\cdot] &  
 \end{ytableau}
 \]
 Finally, define \emph{Coxeter-Knuth equivalence} to be the strongest congruence $\simCK$ with 
$ cab \simCK acb$
and
$ bca \simCK bac$
and
$ aba \simCK bab$
for all positive integers $a,b,c \in \PP$ with $a<b<c$.

\begin{lemma}\label{rcc-lem} If $T$ is an increasing, row-column-closed tableau,
then $\row(T) \simCK \col(T)$.
\end{lemma}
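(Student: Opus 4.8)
I want to show that for an increasing, row-column-closed tableau $T$, the row reading word and column reading word are equivalent under the Hecke-insertion congruence $\simCK$. The natural strategy is to induct on the number of boxes, peeling off one box at a time and tracking how the reading words change. The cleanest choice is to remove a corner box — specifically, a box $(i,j) \in T$ that is maximal in the $\prec$ order (so no $(x,y) \in T$ with $(i,j) \prec (x,y)$). For such a box, $T_{ij}$ is the unique largest value in $T$ (since $T$ is increasing and the box is $\prec$-maximal... actually I should be careful: a $\prec$-maximal box need not carry the globally largest value). Let me instead remove the box carrying the globally largest value of $T$; call it $(i,j)$. Because $T$ is increasing, this box is $\prec$-maximal, hence removing it leaves a tableau $T'$ that is still increasing and still row-column-closed (removing a $\prec$-maximal box cannot destroy the row-column-closed condition, since that condition only forces the presence of boxes $(a,y)$ that are $\preceq$ some existing box and lie weakly below-left of the top box).

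**Main steps.** First I would set up the induction: the base case $T = \emptyset$ is trivial, and I assume the result for all smaller tableaux. Let $m = T_{ij}$ be the maximum value, $(i,j)$ its box, and $T'$ the tableau obtained by deleting $(i,j)$. By induction, $\row(T') \simCK \col(T')$. Now I need to understand where the letter $m$ sits in $\row(T)$ versus $\row(T')$, and likewise for the columns. Since $(i,j)$ is $\prec$-maximal, in the row reading word (which reads rows bottom-to-top, each left-to-right) the value $m$ appears as the last letter of row $i$; all letters of rows $i-1, i-2, \dots, 1$ come after it, and all of rows $i+1, \dots$ come before. The key structural point, using row-column-closedness: the boxes of $T$ that come strictly after position $(i,j)$ in row-reading order but are "blocked" from commuting past $m$ are exactly those that share a row or interact via braid moves — and I claim $m$ can be shuffled to the end of the reading word using the commutation relation $ab \simCK ba$ for $|a-b|>1$ together with the idempotent relation, because $m$ is strictly larger than everything else. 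The letters after $m$ in $\row(T)$ are those in rows above row $i$; each such letter $c$ satisfies $c < m - 1$ OR $c = m-1$, and in the latter case $c$ sits in a column $\le j$ (by the increasing/row-column-closed structure), but we never have $m-1$ immediately adjacent to $m$ in a way that forces a braid — one needs to verify $m$ and $m-1$ are separated by the idempotent-absorbing relation or can commute. This adjacency bookkeeping is the delicate part.

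**The main obstacle.** The hard part is precisely controlling the braid and idempotent moves when pushing the maximal letter $m$ to the boundary of each reading word: commutation $ab \simCK ba$ only applies when $|a-b| \ge 2$, so I must show that whenever $m-1$ appears "in the way," the row-column-closed hypothesis forces enough additional structure (a box at $(i, y)$ or $(a, j)$) that either the relevant letters already coincide — triggering the absorbing relation $a \simCK aa$ or $aa \simCK a$ — or a genuine braid move $m(m-1)m \simCK (m-1)m(m-1)$ applies and then the excess copy of $m$ gets absorbed. In other words, I expect to need a sublemma of the form: if $T$ is increasing and row-column-closed with max value $m$ in box $(i,j)$, then $\row(T) \simCK \row(T') \cdot m$ (append $m$ at the very end) and $\col(T) \simCK \col(T') \cdot m$ as well; the two appended words are then visibly equal, and combined with the inductive equivalence $\row(T') \simCK \col(T')$ this closes the argument. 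Proving that sublemma — tracking which relations fire as $m$ migrates past the letters of the rows above row $i$ (for the row word) and the columns right of column $j$ (for the column word) — is where the real work lies, and it is essentially a careful but routine case analysis on the local configuration of boxes near $(i,j)$.

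Alternatively, if the migration analysis proves too fiddly, a cleaner route is to induct instead by peeling off the \emph{first row} of $T$ as a whole: row-column-closedness means the bottom row determines a "staircase" and the remaining rows $T^{\ge 2}$ form another increasing row-column-closed tableau on the shifted shape; one can then try to relate $\row(T)$ and $\col(T)$ by interleaving $\row(\text{row }1)$ appropriately, using that a single increasing word is Hecke-equivalent to itself in a controlled way. I would try the maximal-letter peeling first, since it keeps the combinatorics symmetric between rows and columns, and fall back to row-peeling only if the adjacency cases for $m$ and $m-1$ become unmanageable.
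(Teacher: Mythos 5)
Your primary approach---delete the box carrying the maximal value $m$ and claim $\row(T)\simCK\row(T')\cdot m$ and $\col(T)\simCK\col(T')\cdot m$---rests on a sublemma that is false, and the obstruction is not the $m$-versus-$(m-1)$ adjacency you flagged. Take $T$ on the $2\times 2$ square with $T_{11}=1$, $T_{12}=T_{21}=2$, $T_{22}=3$; both $T$ and the tableau $T'$ obtained by removing box $(2,2)$ are increasing and row-column-closed. Then $\row(T)=2312$ while $\row(T')\cdot 3=2123$, and these are not $\simCK$-equivalent: the products $U_2U_3U_1U_2=U_{3412}$ and $U_2U_1U_2U_3=U_{3241}$ already differ, and $\simCK$ preserves this product. (Concretely, the $\simCK$-class of $2312$ is $\{2312,2132\}$, while that of $2123$ is $\{2123,1213,1231\}$.) The same tableau defeats the $\col$ half. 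Two further remarks: the moves you invoke---bare commutation $ab\simCK ba$ for $|a-b|>1$ and the idempotent $a\simCK aa$---are not part of $\simCK$ as defined in the paper, which has only the witnessed relations $cab\simCK acb$, $bca\simCK bac$, $aba\simCK bab$; commutation and idempotency belong to $\equivA$ and $\simKK$. And even with those extra moves the sublemma would still fail for this example, because the position at which $m$ must be reinserted into $\row(T')$ to recover $\row(T)$ is generally interior, and is not the same position as for the column words.

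Your fallback, by contrast, is the right idea and is, up to transposition, exactly what the paper's proof does. The paper removes the \emph{entire last column} $w$, so that $\col(T)=\col(U)\,w$ holds as a literal word equality---no $\simCK$ move is needed on the column side---and all the work is concentrated in the single claim $\row(T)\simCK\row(U)\,w$. That claim is manageable because one slides an increasing block from the right edge of each row to the end, and row-column-closedness supplies a witness for each Knuth move along the way. Peeling the bottom row instead would make $\row(T)=\row(T^{\geq 2})\cdot\row(R)$ free and concentrate the work in $\col(T)\simCK\col(T^{\geq 2})\cdot\row(R)$, which is the same shape of argument. The moral is that the right atom for this induction is a full boundary column or row, not a single cell: removing one cell leaves the two reading words wanting $m$ reinserted in incompatible interior positions, which $\simCK$ cannot reconcile.
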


\begin{proof}
Form $w$ by reading the last column of $T$ in reverse order.
Form $U$ from $T$ by removing the last column. Then $U$ is also row-column-closed, $\col(T) = \col(U)w$,
and by induction $\row(U) \simCK \col(U)$. It is a manageable exercise to check that $\row(T) \simCK \row(U)w$,
so  $\row(T) \simCK \col(T)$.
\end{proof}

The \emph{northeast} (respectively, \emph{southwest}) \emph{diagonal reading word} of a tableau $T$
is the finite sequence 
 $\nediag(T)$
 (respectively, $\swdiag(T)$) whose entries are the values $T_{ij}$ as $(i,j)$
ranges over the domain of $T$ such that $(j-i,i)$ (respectively, $(j-i,-i)$)  increases lexicographically.
Equivalently, the northeast (respectively, southwest) diagonal reading word is formed by reading the entries of each diagonal from left to right
(respectively, top to bottom), starting with the first diagonal. 
For example,
the tableau $T$ in \eqref{tab-ex-eq1} has $\nediag(T) = 4123$
and $\swdiag(T) = 4132$.
The tableaux in \eqref{tab-ex-eq2} have northeast diagonal reading words
2434, 2534, 2435, and 1849, and southwest diagonal reading words 4234, 5234, 4235, and 8149, respectively.

A tableau $T$ is \emph{row-diagonal-closed} if whenever $(a,b),(x,y)\in T$ and $(a,b) \prec (x,y)$ and $\Delta := (y - x) - (b-a) \geq 0$, it holds that
$(x, y-\Delta) \in T$. The following picture illustrates this condition:
\[
 \ytableausetup{boxsize = .3cm,aligntableaux=center}
 \begin{ytableau}
 \none[\cdot] & \none[\cdot] & \none[\cdot] & \none[\cdot] &  \\ 
 \none[\cdot] & \none[\cdot] & \none[\cdot]& \none[\cdot]& \none[\cdot]  \\
   & \none[\cdot] & \none[\cdot]& \none[\cdot]& \none[\cdot]
 \end{ytableau}
  \qquad\Rightarrow\qquad
   \begin{ytableau}
 \none[\cdot] & \none[\cdot] &  & \none[\cdot] &  \\ 
 \none[\cdot] & \none[\cdot] & \none[\cdot]& \none[\cdot]& \none[\cdot]  \\
   & \none[\cdot] & \none[\cdot]& \none[\cdot]& \none[\cdot]
 \end{ytableau}
 \]
A tableau $T$ is \emph{column-diagonal-closed} if whenever $(a,b),(x,y)\in T$ and $(a,b) \prec (x,y)$ and $\Delta := (b - a) - (y - x) \geq 0$, it holds that
$(a+\Delta,b) \in T$. The following picture illustrates this condition:
\[
 \ytableausetup{boxsize = .3cm,aligntableaux=center}
 \begin{ytableau}
  \none[\cdot] &  \none[\cdot] &    \\ 
  \none[\cdot] &    \none[\cdot] &  \none[\cdot]   \\ 
 \none[\cdot] & \none[\cdot]& \none[\cdot]   \\
  \none[\cdot] & \none[\cdot] & \none[\cdot]  \\
   & \none[\cdot] & \none[\cdot]
 \end{ytableau}
 \qquad\Rightarrow\qquad
  \begin{ytableau}
  \none[\cdot] &  \none[\cdot] &    \\ 
  \none[\cdot] &    \none[\cdot] &  \none[\cdot]   \\ 
 & \none[\cdot]& \none[\cdot]   \\
  \none[\cdot] & \none[\cdot] & \none[\cdot]  \\
   & \none[\cdot] & \none[\cdot]
 \end{ytableau}
 \]

\begin{lemma}\label{row-diagonal-closed-lem} If $T$ is an increasing, row-diagonal-closed tableau,
then $\row(T) \simCK \swdiag(T)$.
\end{lemma}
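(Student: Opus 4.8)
The plan is to mimic the structure of the proof of the preceding lemma (the row-column-closed case), peeling off one diagonal at a time instead of one column at a time, but I need to be a bit careful because the relevant objects are diagonals, not columns, and the analogue of ``removing the last column'' is ``removing the last diagonal.'' So first I would let $\ell$ be the index of the last nonempty diagonal of $T$ (i.e.\ the largest value of $j-i$ over $(i,j) \in T$), let $w$ be the word obtained by reading that diagonal from left to right (which is the final segment of $\nediag(T)$... wait — here I want $\swdiag$, so $w$ should be the last diagonal read \emph{top to bottom}, matching the definition of $\swdiag$), and let $U$ be the tableau obtained from $T$ by deleting all boxes on the last diagonal. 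Then $\swdiag(T) = \swdiag(U)\hs w$ by the definition of the southwest diagonal reading word, and $U$ is still increasing and still row-diagonal-closed (the closure condition only ever forces boxes onto diagonals of index $\le$ that of the boxes producing them, so deleting the top diagonal cannot violate it — this needs a one-line check). By induction on the number of diagonals, $\row(U) \simCK \swdiag(U)$.

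Next I would show $\row(T) \simCK \row(U)\hs w$, which is the combinatorial heart of the argument. The point is that $T$ is obtained from $U$ by appending, to the end of certain rows, the entries of the last diagonal, in increasing order of row index from bottom to top within the diagonal; and $\row(T)$ versus $\row(U)\hs w$ differ only in where these ``new'' letters sit. Using that $T$ is increasing and row-diagonal-closed, I would argue that each new letter, when commuted/Hecke-slid past the portions of earlier rows lying strictly to its ``northeast,'' only meets letters that are either much larger or much smaller, or equal, so that each transposition is a legal elementary move for $\simCK$ (an ordinary commutation $ij \simCK ji$ with $|i-j|>1$, an idempotent move $i \simCK ii$, or a braid-type move), exactly as in the Hecke/Coxeter setting of Theorems~\ref{gp-prop}. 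The row-diagonal-closed hypothesis is what guarantees the box $(x, y-\Delta)$ needed as a ``landing spot'' is present, so that the sliding is well-defined and never gets stuck against a missing box; increasingness controls the comparisons among the letters involved.

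Then combining the two displayed $\simCK$-equivalences gives $\row(T) \simCK \row(U)\hs w \simCK \swdiag(U)\hs w = \swdiag(T)$, completing the induction, with the base case $T = \emptyset$ (or $T$ a single diagonal) being trivial. The main obstacle I expect is the middle step $\row(T) \simCK \row(U)\hs w$: making the diagonal-by-diagonal ``untangling'' precise requires tracking, for each entry on the last diagonal, precisely which letters of $\row(U)$ it must pass and verifying each pairwise move is valid — this is the ``manageable/routine'' exercise the author alludes to in the previous lemma, but for diagonals the bookkeeping is slightly heavier, and one has to be careful that the order in which the new letters are inserted (respecting both the $\swdiag$ order and the row-reading order) is compatible. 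I would present this step by a careful induction on the length of the last diagonal, adding its entries one at a time from the top (largest) box downward, so that at each stage only one new letter is being moved through a known initial segment.
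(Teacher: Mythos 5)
Your overall strategy is essentially the paper's, just reflected: you peel off the last (highest-index) diagonal, while the paper peels off the first; both give a valid induction on the number of diagonals, and the reduction step (that $U$ remains row-diagonal-closed) holds in both directions, for exactly the reason you give --- the forced box $(x,y-\Delta)$ always lies on the diagonal of the southwest box $(a,b)$, so deleting an extremal diagonal cannot destroy a required box.

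There is, however, a genuine gap in your description of the key step $\row(T)\simCK \row(U)\,w$. You say each slide is ``an ordinary commutation $ij\simCK ji$ with $|i-j|>1$, an idempotent move $i\simCK ii$, or a braid-type move.'' Except for the braid move $aba\simCK bab$, none of these is a generator of $\simCK$. Coxeter--Knuth equivalence is generated only by the three \emph{witnessed} relations $cab\simCK acb$, $bca\simCK bac$, $aba\simCK bab$ for $a<b<c$: an unwitnessed commutation $ij\simCK ji$ is not available, and the idempotent move $i\simCK ii$ belongs to $\simKK$, not $\simCK$ (and is unnecessary here, since $T$ increasing forces strict inequalities along every slide). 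Recognizing this is not pedantry --- it is exactly where the row-diagonal-closed hypothesis earns its keep. When you slide a last-diagonal entry past a cell in a lower row, the box $(x,y-\Delta)$ whose existence the closure condition forces is precisely the location of the witness letter $b$ making the commutation one of the legal moves $cab\simCK acb$ or $bca\simCK bac$. Your remark about ``landing spots'' is gesturing at this, but the proof needs to be rephrased so that every elementary step is one of the three genuine $\simCK$ generators, with the witness identified; as written, it appeals to moves that $\simCK$ does not have.
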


\begin{proof}
Form $w$ by reading the first diagonal 
(i.e., the diagonal containing all $(i,j) \in T$ for which $j-i$ is minimal) of $T$ in reverse order.
Form $U$ from $T$ by removing the first diagonal.
Then $U$ is row-diagonal-closed, $\swdiag(T) = w \swdiag(U)$,
and by induction $\row(U) \simCK \swdiag(U)$. It is an easy exercise to check that $\row(T) \simCK w \row(U)$,
so $\row(T) \simCK\swdiag(T)$.
\end{proof}

\begin{lemma}\label{column-diagonal-closed-lem} If $T$ is an increasing, column-diagonal-closed tableau,
then $\col(T) \simCK \nediag(T)$.
\end{lemma}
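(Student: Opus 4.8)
The plan is to imitate the inductive proof of Lemma~\ref{row-diagonal-closed-lem}, but peeling off the first diagonal instead of working through rows. Let $d_0$ be the smallest value of $j-i$ over the cells $(i,j)$ in the shape of $T$, let $w$ be the word obtained by reading the entries of $T$ on the diagonal $\{(i,j):j-i=d_0\}$ from left to right, and let $U$ be the tableau obtained from $T$ by deleting that diagonal. Directly from the definition of $\nediag$ we have $\nediag(T)=w\,\nediag(U)$, and I would induct on the number of nonempty diagonals of $T$, with $T=\emptyset$ as the trivial base case. (Transposing $T$ does not reduce the claim to Lemma~\ref{row-diagonal-closed-lem}: the transpose of a column-diagonal-closed tableau need not be row-diagonal-closed, so a direct argument seems necessary.)

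The first routine step is to check that $U$ again satisfies the hypotheses. It is increasing, being a restriction of $T$. To see that $U$ is column-diagonal-closed, take $(a,b),(x,y)\in U$ with $(a,b)\prec(x,y)$ and $\Delta:=(b-a)-(y-x)\ge 0$; then $(a+\Delta,b)\in T$ by the closure property of $T$, and since $(a+\Delta,b)$ lies on the diagonal $j-i=y-x$, while $(x,y)\in U$ forces $y-x>d_0$, the cell $(a+\Delta,b)$ is not on the deleted diagonal and so lies in $U$. The inductive hypothesis now gives $\col(U)\simCK\nediag(U)$.

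It remains to prove $\col(T)\simCK w\,\col(U)$, since this together with the previous step gives $\col(T)\simCK w\,\nediag(U)=\nediag(T)$. The structural point is that, because $d_0$ is the minimum of $j-i$ on the shape, every $(i,j)\in T$ has $i\le j-d_0$, so each column of $T$ that meets the diagonal $j-i=d_0$ does so precisely at its topmost cell; hence, when $T$ is read column by column from left to right with each column top to bottom, the letters of $w$ occur in $\col(T)$ in their original relative order, each at the head of its column's block, and $\col(T)$ is obtained from $w\,\col(U)$ by sliding the letters of $w$ (say in decreasing order of their columns, so that no such letter ever passes another) back to the head of their own blocks. The task is then to verify that each such slide is a chain of elementary $\simCK$-moves. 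Here one uses increasingness together with column-diagonal-closedness: if $\alpha=T_{i_0,c_0}$ is an entry on the first diagonal, every cell $(i,c)\in T$ lying before $\alpha$ in $\col(T)$ has $c<c_0$, hence $i<i_0$ and $(i,c)\prec(i_0,c_0)$, so $T_{ic}<\alpha$; and whenever such a passed-over entry equals $\alpha-1$, the column-diagonal-closed condition is what produces the further entry needed to carry out the relevant braid (or Hecke) move. I expect this last verification — checking that the prescribed reorderings genuinely decompose into valid $\simCK$-moves, and handling the columns that do not meet the first diagonal — to be the only real content of the argument, exactly as with the ``easy exercise'' invoked at the end of the proof of Lemma~\ref{row-diagonal-closed-lem}; like there, it reduces to a finite local check governed by the diagonal-closedness hypothesis.
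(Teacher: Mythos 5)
You take a genuinely different route from the paper, which proves this lemma by transposing $T$ and appealing to Lemma~\ref{row-diagonal-closed-lem} together with reversal-invariance of $\simCK$. Your parenthetical objection to that transposition argument is in fact correct and worth flagging: the paper implicitly applies Lemma~\ref{row-diagonal-closed-lem} to $T^\dag$, but $T^\dag$ need not be row-diagonal-closed when $T$ is column-diagonal-closed. For a concrete example, take $T$ with domain $\{(1,1),(2,1),(3,2)\}$; this $T$ is increasing and column-diagonal-closed, yet $T^\dag$ has domain $\{(1,1),(1,2),(2,3)\}$ and fails the row-diagonal-closed condition for the pair $(1,1)\prec(2,3)$, which would require $(2,2)\in T^\dag$. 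The source of the asymmetry is that column-diagonal-closedness supplies the cell at the \emph{lower} cell's column and the \emph{higher} cell's diagonal, while row-diagonal-closedness of $T^\dag$ calls for the cell at the \emph{higher} cell's column and the \emph{lower} cell's diagonal; these are different positions. (Both conditions do hold for increasing shifted tableaux, which is the only case used in Corollary~\ref{t-cor}, so the paper's applications are unharmed; it is only the proof of the lemma in the stated generality that breaks.)

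Your direct peel-off of the first diagonal is then the natural repair, and it is precisely the mirror of the paper's proof of Lemma~\ref{row-diagonal-closed-lem}. The skeleton is sound: $U$ inherits increasingness and column-diagonal-closedness, $\nediag(T)=w\,\nediag(U)$ is immediate, and induction reduces everything to $\col(T)\simCK w\,\col(U)$. You then describe this last reduction as a finite local check and stop. That is a fair stopping point — the paper dispatches the mirror step in Lemma~\ref{row-diagonal-closed-lem} as ``an easy exercise'' at the same level of detail — but be aware that it is exactly the step where the hypothesis must be used: one has to exhibit the CK-witnesses for each swap as a diagonal entry $\alpha$ slides left, including across whole blocks of columns that do not meet the first diagonal, and your remark about the $\alpha-1$ case is suggestive rather than a verification. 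As written, the proposal is a correct and well-motivated outline with the key computation left open.
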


\begin{proof}
Suppose $T$ is an increasing, column-diagonal closed tableau, and write
$T^\dag$ for its transpose. 
By the previous lemma $\row(T^\dag) \simCK \swdiag(T^\dag)$.
Write $w^\r$ for the word given by reversing $w$.
Then $u \simCK v$ if and only if $u^\r \simCK v^\r$.
The lemma follows since $\col(T)^\r = \row(T^\dag)$ and $\nediag(T)^\r = \swdiag(T^\dag)$.
\end{proof}

A tableau $T$ is \emph{shifted} if
for some strict partition $\lambda = (\lambda_1 >\lambda_2 >\dots >\lambda_l>0)$,
the domain of $T$
is the \emph{shifted Young diagram}
$
\SY_\lambda := \{ (i,i+j-1) \in [l]\times \PP :  1\leq j\leq \lambda_i\}
$.

\begin{corollary}\label{t-cor}
If $T$ is an increasing shifted tableau then 
$\row(T) \simCK \col(T) \simCK \nediag(T) \simCK\swdiag(T)$.
\end{corollary}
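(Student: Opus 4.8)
The plan is to check that every increasing shifted tableau is simultaneously row-column-closed, row-diagonal-closed, and column-diagonal-closed, and then to invoke the three preceding lemmas together with the easy lemma relating $\row$ and $\col$. The key observation is that for a shifted tableau of shape $\SY_\lambda$, the presence of a box $(x,y)$ forces the presence of $(x,y')$ for all $x \le y' \le y$ (since rows of a shifted diagram are contiguous intervals starting at the diagonal) and also forces $(x',y')$ for all appropriate $x' \ge $ the diagonal — in other words, the shifted diagram is an order ideal for a convenient partial order on its cells. Thus if $(a,b) \prec (x,y)$ both lie in $\SY_\lambda$, then $y \ge b \ge a$ and $y \ge x \ge a$, so the cell $(x,y-\Delta)$ in the row-diagonal-closed condition has row index $x$ and column index $y - \Delta = x + (b - a) \ge x$, which lies between the diagonal entry $(x,x)$ and $(x,y)$; since $(x,y) \in \SY_\lambda$ and the $x$th row is an interval $[(x,x),(x,x+\lambda_x-1)]$, the cell $(x,y-\Delta)$ is in the diagram. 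The same interval argument handles row-column-closedness (the cell $(a,y)$ has row index $a$, column index $y$ with $a \le y$, and sits in row $a$ left of or at $(x,y)$'s column, but we need $y \le a + \lambda_a - 1$; this follows because $(x,y) \in \SY_\lambda$ with $x \ge a$ gives $\lambda_x \le \lambda_a$ by strictness of $\lambda$, hence $y \le x + \lambda_x - 1 \le a + \lambda_a - 1$). Column-diagonal-closedness is verified symmetrically, using that the column index is fixed and the required row index $a + \Delta$ lies between $a$ and $x$.

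With these three closure properties established, Lemma~\ref{row-diagonal-closed-lem} gives $\row(T) \simCK \swdiag(T)$, Lemma~\ref{column-diagonal-closed-lem} gives $\col(T) \simCK \nediag(T)$, and the first unnumbered lemma (on row-column-closed tableaux) gives $\row(T) \simCK \col(T)$. Chaining these with transitivity of $\simCK$ yields
\[
\row(T) \simCK \col(T) \simCK \nediag(T) \simCK \swdiag(T) \simCK \row(T),
\]
which is exactly the asserted chain of equivalences.

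The only real obstacle is the bookkeeping in the closure verifications — specifically, making sure in each of the three cases that the cell whose presence we must deduce really has both coordinates in the valid range for $\SY_\lambda$, which repeatedly comes down to the inequality $\lambda_x \le \lambda_a$ when $x \ge a$, i.e., to the strict-decrease property of the partition $\lambda$. None of this is deep, so I would state the closure properties as a short claim, give the one-line interval argument for each, and then assemble the corollary from the lemmas in two lines. An alternative, slightly slicker route would be to observe that all four reading words of an increasing shifted tableau are Hecke words for the same element (by tracking insertion), but that presupposes material developed later in the paper, so the lemma-chaining approach is cleaner here.
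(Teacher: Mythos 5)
Your proposal is correct and follows essentially the same route as the paper: the paper's proof is the one-line observation that an increasing shifted tableau is row-column-closed, row-diagonal-closed, and column-diagonal-closed, and then the corollary follows by chaining the three preceding lemmas. You have simply spelled out the interval/strictness bookkeeping that the paper leaves to the reader (one small nit: in the row-column-closed check you need the full strict-decrease estimate $\lambda_x + (x-a) \le \lambda_a$, not merely $\lambda_x \le \lambda_a$, to get $x + \lambda_x - 1 \le a + \lambda_a - 1$; you do cite strictness, so the idea is present, but the displayed inequality as written is not the step you actually used).
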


\begin{proof}
Such a tableau is  row-column-closed, row-diagonal-closed, and column-diagonal-closed.
\end{proof}


A \emph{set-valued tableau} is a map
from a finite subset of $\PP\times \PP$ to the set of nonempty, finite subsets of 
the \emph{marked alphabet} $\MM = \{1' < 1 < 2' < 2 < 3' <3 < \dots\}$.
Most of our conventions 
for referring 
to tableaux
 extend to set-valued tableaux without any complication.
However, 
with set-valued tableaux,
it is possible to add multiple entries to a given box.

A set-valued tableau $T$
is \emph{increasing} if $\max(T_{ab}) < \min(T_{xy})$ whenever $(a,b),(x,y) \in T$ 
are distinct positions with $a\leq x$ and $b\leq y$.
The \emph{length} (or \emph{degree}) of a set-valued tableau $T$ is the sum of the sizes of its entries;
we denote this quantity by 
 $|T| := \sum_{(i,j) \in T} |T_{ij}|.$

 A shifted set-valued tableau $T$ is \emph{standard} if 
 $T$ is increasing, no primed number belongs to any box of $T$ on the main diagonal,
 and exactly one of $i$ or $i'$ appears in some box of $T$ for each $i \in \{1,2,\dots,|T|\}$.
The entries of a standard set-valued tableaux $T$ must be pairwise disjoint
and cannot contain $i$ or $i'$ for any integer $i\notin \{1,2,\dots,|T|\}$.
The following are standard shifted set-valued tableaux
with length 6 and with shape $\SY_\lambda$ for $\lambda=(2,1)$:
\[
\ytableausetup{boxsize = 1.0cm,aligntableaux=center}
 \begin{ytableau}
 \none & 5 6   \\ 
 12 & 3'4 
 \end{ytableau}
 \qquad
  \begin{ytableau}
 \none & 6    \\ 
 123 & 4'5' 
 \end{ytableau}
 \qquad
  \begin{ytableau}
 \none & 6 \\ 
 1 & 2345 
 \end{ytableau}
 \qquad
  \begin{ytableau}
 \none & 456   \\ 
 1 & 2'3' 
 \end{ytableau} 
 \]

A standard shifted set-valued tableau corresponds to a directed path 
starting at the empty partition in what Patrias and Pylyavskyy call 
the \emph{M\"obius deformation} of the shifted Young lattice \cite[\S5.2]{PatPyl2014}.
The length of this walk is the length of the set-valued tableau.

\section{Symplectic Hecke insertion}\label{big-sect}


\subsection{Forward transitions}\label{forward-sect}

Consider the following class of ``almost shifted'' tableaux:
 
\begin{definition}
A \emph{shifted insertion state} is a tableau that is either
(a) increasing, shifted, and nonempty or (b) formed by adding 
to an increasing shifted tableau with $m-2$ rows and $n-2$ columns
an extra box $(i,j)$  contained in either $\{m\} \times [n-1]$ or $[m-1]\times \{n\}$.
\end{definition}

A shifted insertion state is \emph{terminal} in case (a).
The position $(i,j)$ in case (b) is the state's \emph{outer box}.
A non-terminal insertion state is \emph{initial} if its outer box is in the first row.

\begin{example}\label{states-ex}
The following are all shifted insertion states:
\[
\ytableausetup{boxsize = .4cm,aligntableaux=center}
        \begin{ytableau}
\none[\cdot] & \none[\cdot]  & \none[\cdot]& \none[\cdot] \\
\none[\cdot] & \none[\cdot]  & \none[\cdot]& \none[\cdot] \\
\none[\cdot] & \none[\cdot]  & \none[\cdot]& \none[\cdot]\\
\none[\cdot] & \none[\cdot]  &4& \none[\cdot]
 \end{ytableau}
      \qquad
  \begin{ytableau}
\none[\cdot] & \none[\cdot]  & \none[\cdot]& \none[\cdot] \\
\none[\cdot] & \none[\cdot]  & \none[\cdot]& \none[\cdot] \\
\none[\cdot] & 4  &  \none[\cdot]& \none[\cdot]\\
2 & 3  &  \none[\cdot]& \none[\cdot]
 \end{ytableau}
      \qquad
  \begin{ytableau}
\none[\cdot] & \none[\cdot]  & \none[\cdot]& \none[\cdot] \\
\none[\cdot] & \none[\cdot]  & \none[\cdot]& \none[\cdot] \\
\none[\cdot] &4  &  \none[\cdot]& \none[\cdot]\\
2 & 3  &  \none[\cdot]& 2
 \end{ytableau}
      \qquad
  \begin{ytableau}
\none[\cdot] & \none[\cdot]  & \none[\cdot]& \none[\cdot] \\
\none[\cdot] & \none[\cdot]  & \none[\cdot]& \none[\cdot] \\
\none[\cdot] & 4  &  \none[\cdot]& 3 \\
2 & 3  &  \none[\cdot]& \none[\cdot]
 \end{ytableau}
        \qquad
  \begin{ytableau}
\none[\cdot] & 3 & \none[\cdot]& \none[\cdot] \\
\none[\cdot] & \none[\cdot]  & \none[\cdot]& \none[\cdot] \\
\none[\cdot] &4  &  \none[\cdot]& \none[\cdot] \\
2 & 3  &  \none[\cdot]& \none[\cdot]
 \end{ytableau}
       \qquad
  \begin{ytableau}
\none[\cdot] & \none[\cdot]  & 3& \none[\cdot] \\
\none[\cdot] & \none[\cdot]  & \none[\cdot]& \none[\cdot] \\
\none[\cdot] & 4  &  \none[\cdot]& \none[\cdot] \\
2 & 3  &  \none[\cdot]& \none[\cdot]
 \end{ytableau}
 \]
 The first and third states are initial, while the second is terminal.
\end{example}

To define symplectic Hecke insertion, we will
give the set of shifted insertion states the structure of a weighted directed graph whose edges are labelled by pairs of positive integers.
We call this the \emph{forward transition graph}.
Terminal insertion states are local sinks in this graph,  
while every other state has a unique outgoing edge.
Edges between shifted insertion states belong to three families, which we 
now describe.

Let $U$ be a non-terminal shifted insertion state
that has $m-2$ rows and $n-2$ columns when its outer box is removed.
Assume the outer box of $U$ is $(i,n)$.
Suppose $U_{in}$ is maximal in its row
and $j \in \PP$ is minimal with $i \leq j$ and $(i,j) \notin U$.
The unique outgoing edge from $U$ is then as follows:

\ben
\item[(R1)] 
If moving the outer box of $U$ to position $(i,j)$ yields an increasing shifted tableau $V$,
then 
there is an edge $U \xrightarrow{(i,j)} V$.
 
  \item[(R2)] 
  If moving the outer box of $U$ to position $(i,j)$ does not yield an increasing shifted tableau,
then there is an edge $U \xrightarrow{(i,j)} V$ where
 $V$ is formed from $U$ by removing the outer box $(i,n)$.

\een
Next, suppose there exists a minimal index $x \in \PP$ with $(i,x) \in U$ and $U_{in} < U_{ix}$.
Define $T$ to be the tableau formed from $U$ by replacing the value 
in box $(i,x)$ by $U_{in}$ and then removing the outer box $(i,n)$.
For the moment, assume $i<x$.
\ben  
\item[(R3)] 
Suppose 
the tableau $T$ is not increasing.
If $i+1 < x$ or row $i+1$ of $U$ is nonempty,
 then there is an edge $U \xrightarrow{(i,x)} V$ where
 $V$  is formed from $U$ 
 by moving box $(i,n)$ to $(i+1,n)$ and changing its value to $U_{ix}$,
 as in this picture:
 \[
 \ytableausetup{boxsize = .4cm,aligntableaux=center}
  U = \left\{\ 
 \begin{ytableau}
    \none[\cdot] &  \none[\cdot] &  \none[\cdot] & \none[\cdot] & \none[\cdot] & \none[\cdot]\\
  \none[\cdot] &  \none[\cdot] & \qm & \qm & \none[\cdot] & \none[\cdot]\\
   \none[\cdot] & 2 & \circled{4} & 5 & \none[\cdot] &3\\
\qm & \qm & 3 & \qm & \none[\cdot] & \none[\cdot]
\end{ytableau}
\ \right\}
\ \xrightarrow{(i,x)}\
\left\{\
 \begin{ytableau}
    \none[\cdot] &  \none[\cdot] &  \none[\cdot] & \none[\cdot] & \none[\cdot] & \none[\cdot]\\
  \none[\cdot] &  \none[\cdot] & \qm & \qm & \none[\cdot] &4\\
   \none[\cdot] & 2 & 4 & 5 & \none[\cdot] & \none[\cdot] \\
\qm & \qm & 3 & \qm & \none[\cdot] & \none[\cdot]
\end{ytableau}
\ \right\} = V
\quad\text{as}\quad
  T = \left\{\ 
 \begin{ytableau}
   \none[\cdot] &  \none[\cdot] &  \none[\cdot] & \none[\cdot]  &  \none[\cdot]\\
  \none[\cdot] &  \none[\cdot] & \qm & \qm &  \none[\cdot] \\
   \none[\cdot] & 2 & 3& 5&  \none[\cdot] \\
\qm & \qm & 3 & \qm &  \none[\cdot]
\end{ytableau}
\right\}.
  \]
   Here and in the next two cases, the circled entry indicates the location of box $(i, x)$.
   
  \item[(R4)] 
If 
the tableau $T$ is increasing,
then 
there is an edge $U \xrightarrow{(i,x)} V$ where
$V$ is formed from $T$ by adding an outer box in row $i+1$ with value $U_{ix}$,
as in the following picture:
 \[
 \ytableausetup{boxsize = .4cm,aligntableaux=center}
  U = \left\{\ 
 \begin{ytableau}
    \none[\cdot] &  \none[\cdot] &  \none[\cdot] & \none[\cdot] & \none[\cdot] & \none[\cdot]\\
  \none[\cdot] &  \none[\cdot] &  \qm & \qm & \none[\cdot] & \none[\cdot]\\
   \none[\cdot] & 2 &  \circled{4} & 5 & \none[\cdot] &3\\
\qm & \qm & 2 & \qm & \none[\cdot] & \none[\cdot]
\end{ytableau}
\ \right\}
\ \xrightarrow{(i,x)}\
\left\{\
 \begin{ytableau}
    \none[\cdot] &  \none[\cdot] &  \none[\cdot] & \none[\cdot] & \none[\cdot] & \none[\cdot]\\
  \none[\cdot] &  \none[\cdot] &  \qm & \qm & \none[\cdot] &4\\
   \none[\cdot] & 2 &3 & 5 & \none[\cdot] & \none[\cdot] \\
\qm & \qm & 2 & \qm & \none[\cdot] & \none[\cdot]
\end{ytableau}
\ \right\} = V
\quad\text{as}\quad
  T = \left\{\ 
 \begin{ytableau}
    \none[\cdot] &  \none[\cdot] &  \none[\cdot] & \none[\cdot] & \none[\cdot] \\
  \none[\cdot] &  \none[\cdot] & \qm & \qm  & \none[\cdot]\\
   \none[\cdot] & 2 & 3& 5 & \none[\cdot] \\
\qm & \qm & 2 & \qm  & \none[\cdot]
\end{ytableau}
\right\}.
  \]

\item[(D1)] 
If $U_{ii} \leq U_{in} < U_{i,i+1}$ (so that $x=i+1$) and row $i+1$ of $U$ is empty,
but 
the tableau $T$ is not increasing,
 then there is an edge $U \xrightarrow{(i,i+1)} V$ where
 $V$  is formed from $U$ 
 by moving box $(i,n)$ to $(m,i+1)$ and changing its value to $U_{i,i+1}$,
 as in the following picture:
 \[
 \ytableausetup{boxsize = .4cm,aligntableaux=center}
   U = \left\{\ 
 \begin{ytableau}
    \none[\cdot] &  \none[\cdot] &  \none[\cdot] & \none[\cdot] & \none[\cdot] & \none[\cdot]\\
  \none[\cdot] &  \none[\cdot] & \none[\cdot] & \none[\cdot] & \none[\cdot] & \none[\cdot]\\
   \none[\cdot] & 2 & \circled{4} & 5 & \none[\cdot] &3\\
\qm & \qm & 3 & \qm & \none[\cdot] & \none[\cdot]
\end{ytableau}
\ \right\}
\ \xrightarrow{(i,i+1)}\
\left\{\
 \begin{ytableau}
    \none[\cdot] &  \none[\cdot] &  4 & \none[\cdot] & \none[\cdot] 
    \\
  \none[\cdot] &  \none[\cdot] & \none[\cdot] & \none[\cdot] & \none[\cdot] 
  \\
   \none[\cdot] & 2 & 4 & 5 & \none[\cdot] 
   \\
\qm & \qm & 3 & \qm & \none[\cdot] 
\end{ytableau}
\right\} = V
\quad\text{as}\quad
  T = \left\{\ 
 \begin{ytableau}
    \none[\cdot] &  \none[\cdot] &  \none[\cdot] & \none[\cdot] & \none[\cdot] \\
  \none[\cdot] &  \none[\cdot] & \none[\cdot] & \none[\cdot]& \none[\cdot] \\
   \none[\cdot] & 2 & 3& 5 & \none[\cdot] \\
\qm & \qm & 3 & \qm & \none[\cdot]
\end{ytableau}
\right\}.
  \]
 \een
In the next three cases, assume $x=i$ so that $(i,i) \in U$ and $U_{in}<U_{ii}$.
  \ben
  \item[(D2)] If the entries $U_{in}$ and $U_{ii}$ have the same parity but
the tableau $T$ is not increasing,
  then  
  there is an edge $U \xrightarrow{(i,i)} V$ where
  $V$
 is formed from $U$
 by moving box $(i,n)$ 
to $(m, i + 1)$ and changing its value to $U_{ii}$, as in the following picture:
  \[
 \ytableausetup{boxsize = .4cm,aligntableaux=center}
   U = \left\{\ 
 \begin{ytableau}
  \none[\cdot] &  \none[\cdot] &   \none[\cdot] & \none[\cdot] & \none[\cdot]\\
    \none[\cdot] &  \none[\cdot] &    \none[\cdot] & \none[\cdot] & \none[\cdot]\\
   \none[\cdot] & 6 & 7   & \none[\cdot] &4\\
\qm & 4 & \qm &  \none[\cdot] & \none[\cdot]
\end{ytableau}
\ \right\}
\ \xrightarrow{(i,i)}\
\left\{\
 \begin{ytableau}
  \none[\cdot] &  \none[\cdot] &  6   & \none[\cdot] 
  \\
    \none[\cdot] &  \none[\cdot] &   \none[\cdot]  & \none[\cdot] 
    \\
   \none[\cdot] & 6 & 7   & \none[\cdot] 
   \\
\qm & 4 & \qm  &  \none[\cdot] 
\end{ytableau}
\right\} = V
\quad\text{as}\quad
  T = \left\{\ 
 \begin{ytableau}
    \none[\cdot] &  \none[\cdot] &  \none[\cdot] & \none[\cdot]\\
  \none[\cdot] &  \none[\cdot] & \none[\cdot] & \none[\cdot] \\
   \none[\cdot] & 4 & 7 & \none[\cdot]\\
\qm & 4 & \qm& \none[\cdot]
\end{ytableau}
\right\}.
  \]

 \item[(D3)] If the entries $U_{in}$ and $U_{ii}$ have the same parity and 
 the tableau $T$ is increasing,
   then 
    there is an edge $U \xrightarrow{(i,i)} V$ where
    $V$
 is  formed from $T$ by adding an outer box in column $i+1$ with value $U_{ii}$,
as in the following picture:
 \[
 \ytableausetup{boxsize = .4cm,aligntableaux=center}
   U = \left\{\ 
 \begin{ytableau}
  \none[\cdot] &  \none[\cdot] & \none[\cdot] & \none[\cdot] & \none[\cdot]\\
  \none[\cdot] &  \none[\cdot] & \none[\cdot] & \none[\cdot] & \none[\cdot]\\
  \none[\cdot] & 6 & 7 & \none[\cdot] &4 \\
 \qm & 3 & \qm & \none[\cdot]  & \none[\cdot]
\end{ytableau}
\ \right\}
\ \xrightarrow{(i,i)}\
\left\{\
 \begin{ytableau}
  \none[\cdot] &    \none[\cdot] & 6 & \none[\cdot] \\
 \none[\cdot] &  \none[\cdot] & \none[\cdot] & \none[\cdot] \\
  \none[\cdot] &  4 & 7 & \none[\cdot] \\
\qm & 3 & \qm & \none[\cdot] 
\end{ytableau}
\right\}=V
\quad\text{as}\quad
  T = \left\{\ 
 \begin{ytableau}
    \none[\cdot] &  \none[\cdot] &  \none[\cdot] & \none[\cdot]\\
  \none[\cdot] &  \none[\cdot] & \none[\cdot]& \none[\cdot]  \\
   \none[\cdot] & 4 & 7 & \none[\cdot]\\
\qm & 3 & \qm& \none[\cdot]
\end{ytableau}
\right\}.
  \]

\item[(D4)] If the entries $U_{in}$ and $U_{ii}$ have different parities,
then
    there is an edge $U \xrightarrow{(i,i)} V$ where
 $V$ is the tableau formed from $U$ 
  by moving box $(i,n)$ 
to $(m, i + 1)$
and changing its value to $U_{ii}+1$, as in the following picture:
 \[
 \ytableausetup{boxsize = .4cm,aligntableaux=center}
   U = \left\{\ 
 \begin{ytableau}
  \none[\cdot] &  \none[\cdot] & \none[\cdot] & \none[\cdot] & \none[\cdot]\\
  \none[\cdot] &  \none[\cdot] & \none[\cdot] & \none[\cdot] & \none[\cdot]\\
 \none[\cdot] & 4 & 6 & \none[\cdot] & 3 \\
 \qm & \qm & \qm & \none[\cdot] &\none[\cdot]
\end{ytableau}
\ \right\}
\ \xrightarrow{(i,i)}\
\left\{\
 \begin{ytableau}
  \none[\cdot] &  \none[\cdot] & 5 & \none[\cdot] \\
  \none[\cdot] &  \none[\cdot] & \none[\cdot] & \none[\cdot] \\
  \none[\cdot] & 4 & 6 & \none[\cdot] \\
 \qm & \qm & \qm & \none[\cdot]
\end{ytableau}
\right\} = V.
  \]
\een

For the last family of edges,
continue to suppose $U$ is a non-terminal shifted insertion state
that has $m-2$ rows and $n-2$ columns when its outer box is removed, but 
now assume that this outer box is $(m,j)$.
If $U_{mj}$ is maximal in its column
and $i \in \PP$ is minimal with $(i,j) \notin U$,
then the unique outgoing edge from $U$ is as follows:

\ben
\item[(C1)] If moving the outer box of $U$ to position 
$(i,j)$ yields an increasing shifted tableau $V$,
then there is an edge $U \xrightarrow{(i,j)} V$. 
 
   \item[(C2)] If moving the outer box of $U$ to position $(i,j)$ does not yield an increasing shifted tableau,
then 
    there is an edge $U \xrightarrow{(i,j)} V$ where
 $V$ is formed from $U$ by removing the outer box $(m,j)$.

  \een
Finally, suppose there exists a minimal index $x \in \PP$ with $(x,j) \in U$ and $U_{mj} < U_{xj}$.
For this case, define $T$ to be the tableau formed from $U$ by replacing the value 
in box $(x,j)$ by $U_{mj}$ and then removing the outer box $(m,j)$.
\ben
\item[(C3)] If 
the tableau $T$ is not increasing,
 then there is an edge $U \xrightarrow{(x,j)} V$ where
 $V$  is formed from $U$ 
 by moving box $(m,j)$ to $(m,j+1)$ and changing its value to $U_{xj}$,
 as in the following picture:
 \[
 \ytableausetup{boxsize = .4cm,aligntableaux=center}
   U = \left\{\ 
 \begin{ytableau}
  \none[\cdot] &  \none[\cdot] &  4 & \none[\cdot] &  \none[\cdot]\\
    \none[\cdot] &  \none[\cdot] &  \none[\cdot] & \none[\cdot] &  \none[\cdot]\\
  \none[\cdot] &  4 & \circled{6} & \none[\cdot] & \none[\cdot]  \\
   \qm & \qm & 3 & \qm & \none[\cdot]  
\end{ytableau}
 \right\}
\ \xrightarrow{(x,j)}\
\left\{\
 \begin{ytableau}
  \none[\cdot] &  \none[\cdot] &  \none[\cdot] & 6 & \none[\cdot]\\
    \none[\cdot] &  \none[\cdot] &  \none[\cdot] & \none[\cdot] & \none[\cdot]\\
  \none[\cdot] &  4 & 6 & \none[\cdot] & \none[\cdot]\\
   \qm & \qm & 3 & \qm & \none[\cdot]  
\end{ytableau}
 \right\} = V
 \quad\text{as}\quad
  T = \left\{\ 
 \begin{ytableau}
  \none[\cdot] &  \none[\cdot] &  \none[\cdot] & \none[\cdot] & \none[\cdot] \\
    \none[\cdot] &  \none[\cdot] &  \none[\cdot] & \none[\cdot]& \none[\cdot] \\
  \none[\cdot] &  4 & 4 & \none[\cdot]  & \none[\cdot] \\
   \qm & \qm & 3 & \qm  & \none[\cdot] 
\end{ytableau}
\right\}.
  \]
Here and in the next case, the circled entry indicates the location of box $(x, j)$.

\item[(C4)] 
If 
the tableau $T$ is increasing,
then there is an edge $U \xrightarrow{(x,j)} V$ where
$V$ is formed from $T$ by adding an outer box in column $j+1$ with value $U_{xj}$,
 as in the following picture:
 \[
 \ytableausetup{boxsize = .4cm,aligntableaux=center}
   U = \left\{\ 
 \begin{ytableau}
  \none[\cdot] &  \none[\cdot] &  5 & \none[\cdot] &  \none[\cdot]\\
    \none[\cdot] &  \none[\cdot] &  \none[\cdot] & \none[\cdot] &  \none[\cdot]\\
   \none[\cdot] & 4 & \circled{6} & \qm & \none[\cdot]   \\
\qm & \qm &4 & \qm & \none[\cdot]  
\end{ytableau}
 \right\}
\ \xrightarrow{(x,j)}\
\left\{\
 \begin{ytableau}
  \none[\cdot] &  \none[\cdot] &  \none[\cdot] & 6 & \none[\cdot]\\
    \none[\cdot] &  \none[\cdot] &  \none[\cdot] & \none[\cdot] & \none[\cdot]\\
   \none[\cdot] & 4& 5 & \qm & \none[\cdot]   \\
\qm & \qm & 4 & \qm & \none[\cdot]  
\end{ytableau}
 \right\}
=V
 \quad\text{as}\quad
  T = \left\{\ 
 \begin{ytableau}
  \none[\cdot] &  \none[\cdot] &  \none[\cdot] & \none[\cdot] & \none[\cdot] \\
    \none[\cdot] &  \none[\cdot] &  \none[\cdot] & \none[\cdot] & \none[\cdot]\\
  \none[\cdot] &  4 & 5 & \qm & \none[\cdot] \\
   \qm & \qm & 4 & \qm   & \none[\cdot]
\end{ytableau}
\right\}.
  \]
 \een
 This completes our definition of the forward transition graph.
 
 We refer to edges of types (R1)-(R4), (D1)-(D4), and (C1)-(C4), respectively, as \emph{row transitions},
 \emph{diagonal transitions}, and \emph{column transitions} between shifted insertion states.
When the position labeling an edge is unimportant,
we simply 
write that $U\to V$ is \emph{forward transition}.

A unique path leads from any shifted insertion state
to a terminal state in the forward transition graph.
If a shifted insertion state with its outer box removed has $m-2$ rows and $n-2$ columns,
then this path consists of at most $\max\{m,n\}-1$ edges,
so the following is well-defined:

\begin{definition}\label{farrow-def}
Suppose $T$ is an increasing shifted tableau and $a \in \PP$.
Write $T\oplus a$ for the (initial) shifted insertion state formed by adding $a$
to the second unoccupied box in the first row of $T$.
If the maximal directed path from $T\oplus a$ to a terminal state
in the forward transition graph
 is 
\be
\label{Toplusa}
T\oplus a = U_0 \xrightarrow{(i_1,j_1)} U_1  \xrightarrow{(i_2,j_2)} U_2  \xrightarrow{(i_3,j_3)} \cdots  \xrightarrow{(i_l,j_l)} U_l 
\ee
then we define $T\farrow a $ to be the increasing shifted tableau $U_l$ and
call the sequence of positions
$(i_1,j_1), (i_2,j_2), \dots, (i_l,j_l)$ the \emph{bumping path} of inserting $a$ into $T$. 
\end{definition}

We refer to the operation transforming $(T,a) $ to $ T\farrow a $ as \emph{symplectic Hecke insertion}.
With slight abuse of notation, we sometimes refer to 
$(i_1,j_1), (i_2,j_2), \dots, (i_l,j_l)$ as the ``bumping path of $T \farrow a$''
and to the sequence of tableaux \eqref{Toplusa} as the ``insertion path of $T\farrow a$.''

\begin{example}
We have 
\[
\ytableausetup{boxsize = .4cm,aligntableaux=center}
 \begin{ytableau}
\none &  6  \\ 
4 & 5 
 \end{ytableau}\ \farrow 2
\ =\
  \begin{ytableau}
\none &  6 &\none  \\ 
2 &  4 &5 
 \end{ytableau}
\qquand
  \begin{ytableau}
\none &  4  \\ 
2 &  3 &  5
 \end{ytableau}\ \farrow 2
\ =\
  \begin{ytableau}
\none &  4 &\none  \\ 
2 &  3 &5
 \end{ytableau}
\]
 since the corresponding insertion paths in the forward transition graph are
\[
\ytableausetup{boxsize = .4cm,aligntableaux=center}
\ba
\left\{\ \begin{ytableau}
\none[\cdot] &  \none[\cdot] & \none[\cdot] & \none[\cdot] & \none[\cdot] \\ 
\none[\cdot] &  \none[\cdot] & \none[\cdot] & \none[\cdot] & \none[\cdot] \\ 
\none[\cdot] &  6 & \none[\cdot] & \none[\cdot] & \none[\cdot] \\ 
4 &  5 & \none[\cdot] & 2 & \none[\cdot] 
 \end{ytableau}\right\}
&\underset{\text{D3}}{\xrightarrow{(1,1)}}
\left\{\
  \begin{ytableau}
\none[\cdot] &  4 & \none[\cdot] & \none[\cdot] & \none[\cdot] \\ 
\none[\cdot] &  \none[\cdot] & \none[\cdot] & \none[\cdot] & \none[\cdot] \\ 
\none[\cdot] &  6 & \none[\cdot] & \none[\cdot] & \none[\cdot] \\ 
2 &  5 & \none[\cdot] & \none[\cdot] & \none[\cdot] 
 \end{ytableau}\right\}
\underset{\text{C4}}{\xrightarrow{(1,2)}}
\left\{\
  \begin{ytableau}
\none[\cdot] &  \none[\cdot] & 5 & \none[\cdot] & \none[\cdot] \\ 
\none[\cdot] &  \none[\cdot] & \none[\cdot] & \none[\cdot] & \none[\cdot] \\ 
\none[\cdot] &  6 & \none[\cdot] & \none[\cdot] & \none[\cdot] \\ 
2 &  4 & \none[\cdot] & \none[\cdot] & \none[\cdot] 
 \end{ytableau}\right\}
\underset{\text{C1}}{\xrightarrow{(1,3)}}
\left\{\
  \begin{ytableau}
\none[\cdot] &  \none[\cdot] & \none[\cdot] & \none[\cdot] & \none[\cdot] \\ 
\none[\cdot] &  \none[\cdot] & \none[\cdot] & \none[\cdot] & \none[\cdot] \\ 
\none[\cdot] &  6 & \none[\cdot] & \none[\cdot] & \none[\cdot] \\ 
2 &  4 &5 & \none[\cdot] & \none[\cdot] 
 \end{ytableau}\right\},
 \\
 \left\{\ \begin{ytableau}
\none[\cdot] &  \none[\cdot] & \none[\cdot] & \none[\cdot] & \none[\cdot] \\ 
\none[\cdot] &  \none[\cdot] & \none[\cdot] & \none[\cdot] & \none[\cdot] \\ 
\none[\cdot] &  4 & \none[\cdot] & \none[\cdot] & \none[\cdot] \\ 
2 &  3 &5 & \none[\cdot] &2 
 \end{ytableau}\ \right\}
&\underset{\text{R3}}{\xrightarrow{(1,2)}}
  \left\{\ \begin{ytableau}
\none[\cdot] &  \none[\cdot] & \none[\cdot] & \none[\cdot] & \none[\cdot] \\ 
\none[\cdot] &  \none[\cdot] & \none[\cdot] & \none[\cdot] & \none[\cdot] \\ 
\none[\cdot] &  4 & \none[\cdot] &  \none[\cdot] & 3\\ 
2 &  3 &5 & \none[\cdot] & \none[\cdot] 
 \end{ytableau}\ \right\}
\underset{\text{D4}}{\xrightarrow{(2,2)}}
  \left\{\ \begin{ytableau}
\none[\cdot] &  \none[\cdot] & 5 & \none[\cdot] & \none[\cdot] \\ 
\none[\cdot] &  \none[\cdot] & \none[\cdot] & \none[\cdot] & \none[\cdot] \\ 
\none[\cdot] &  4 & \none[\cdot] & \none[\cdot] & \none[\cdot] \\ 
2 &  3 &5 & \none[\cdot] & \none[\cdot] 
 \end{ytableau} \right\}
\underset{\text{C2}}{\xrightarrow{(2,3)}}
  \left\{\ \begin{ytableau}
\none[\cdot] &  \none[\cdot] & \none[\cdot] & \none[\cdot] & \none[\cdot] \\ 
\none[\cdot] &  \none[\cdot] & \none[\cdot] & \none[\cdot] & \none[\cdot] \\ 
\none[\cdot] &  4 & \none[\cdot] & \none[\cdot] & \none[\cdot] \\ 
2 &  3 &5 & \none[\cdot] & \none[\cdot] 
 \end{ytableau} \right\}.
 \ea
 \]
\end{example}

\subsection{Symplectic $K$-Knuth equivalence}\label{insert-sect}

Recall 
the definition of
$\simCK$ from before Lemma~\ref{rcc-lem}.
Define
\emph{$K$-Knuth equivalence} to be the strongest congruence $\simKK$ with 
$ cab \simKK acb$
and
$bca \simKK bac$
and
$aba \simKK bab$
and
$a \simKK aa$
for all positive integers $a,b,c \in \PP$ with $a<b<c$.

We have symplectic analogues of these relations.
Say that 
two  words  are connected by a \emph{symplectic Coxeter-Knuth move} 
if one word is obtained from the other in one of these ways:
\begin{itemize}
\item By interchanging the first two letters when these have the same parity.
\item If the first two letters are $a(a-1)$ for some $a\geq 2$, by changing these letters to $a(a+1)$.
\end{itemize}
Write $\simFCK$ (respectively, $\simFKK$) for the strongest equivalence relation 
that has $v \simFCK w$ (respectively, $v\simFKK w$) whenever $v$ and $w$ are  words
that are
connected by a symplectic Coxeter-Knuth move, or that satisfy $v \simCK w$ (respectively, $v \simKK w$).
We call these relations \emph{symplectic Coxeter-Knuth equivalence} and 
\emph{symplectic $K$-Knuth equivalence}.
For example, $
21 \simKK 211 \simFCK 231 \simCK 213.
$

The object of this section is to prove that if $T$ is an increasing shifted tableau
and $a \in \PP$ is a positive integer such that $\row(T) a$ is a symplectic Hecke word,
then $\row(T)a \simFKK \row(T\farrow a)$.
This will require several lemmas involving the following technical condition:

\begin{definition}\label{weak-def}
Let $T$ be a shifted insertion state with outer box $(i,j)$. 
Assume $T$ with its outer box  removed has $m-2$ rows and $n-2$ columns,
and set $T_{xy}:=\infty$ for all positions 
 $(x,y) \notin T$.
When $j=n$,  
we say that $T$ is \emph{weakly admissible} if the following conditions hold:
 \begin{itemize}
 \item Either $i =1$ or there exists a column $x\geq i$ with $T_{i-1,x} \leq T_{in} < T_{ix}$.
 \item If $T_{i-1,i} = T_{in}$ then $(i,i) \in T$.
\end{itemize}
 When $i=m$,  
 we say that $T$ is \emph{weakly admissible} if the following condition holds:
  \begin{itemize}
 \item Either $T_{mj}=T_{j-1,j}$ or there exists a row $x<j$ with $T_{x,j-1} \leq T_{mj} < T_{xj}$.
 \end{itemize}
Finally, we also say that any
terminal shifted insertion state is \emph{weakly admissible}.
\end{definition}

\begin{example}
All of the shifted insertion states in Example~\ref{states-ex} are weakly admissible.
The following states are \emph{not} weakly admissible:
\[
\ytableausetup{boxsize = .4cm,aligntableaux=center}
  \begin{ytableau}
\none[\cdot] & \none[\cdot]  & \none[\cdot]& \none[\cdot]& \none[\cdot] \\
\none[\cdot] & \none[\cdot]  & \none[\cdot]& \none[\cdot]& \none[\cdot] \\
\none[\cdot] & 4  &  \none[\cdot]& \none[\cdot]& 5 \\
2 & 3  &  6& \none[\cdot]& \none[\cdot]
 \end{ytableau}
 \qquad
   \begin{ytableau}
\none[\cdot] & \none[\cdot]  & \none[\cdot]& \none[\cdot] \\
\none[\cdot] & \none[\cdot]  & \none[\cdot]& \none[\cdot] \\
\none[\cdot] & 5  &  \none[\cdot]& 3 \\
2 & 4  &  \none[\cdot]& \none[\cdot]
 \end{ytableau}
        \qquad
  \begin{ytableau}
\none[\cdot] & \none[\cdot]  & \none[\cdot]& \none[\cdot] \\
\none[\cdot] & 4  & \none[\cdot]& \none[\cdot] \\
\none[\cdot] & \none[\cdot]  &  \none[\cdot]& \none[\cdot] \\
2 & 3  &  \none[\cdot]& \none[\cdot]
 \end{ytableau}
  \qquad
          \begin{ytableau}
2 & \none[\cdot]  & \none[\cdot]& \none[\cdot] \\
\none[\cdot] & \none[\cdot]  & \none[\cdot]& \none[\cdot] \\
\none[\cdot] &5  &  \none[\cdot]& \none[\cdot] \\
3 & 4  &  \none[\cdot]& \none[\cdot]
 \end{ytableau}
       \qquad
         \begin{ytableau}
\none[\cdot] & 4 & \none[\cdot]& \none[\cdot] \\
\none[\cdot] & \none[\cdot]  & \none[\cdot]& \none[\cdot] \\
\none[\cdot] &5  &  \none[\cdot]& \none[\cdot] \\
2 & 3  &  \none[\cdot]& \none[\cdot]
 \end{ytableau}
 \qquad
  \begin{ytableau}
\none[\cdot] & \none[\cdot]  & 2& \none[\cdot] \\
\none[\cdot] & \none[\cdot]  & \none[\cdot]& \none[\cdot] \\
\none[\cdot] & 4  &  \none[\cdot]& \none[\cdot] \\
2 & 3  &  \none[\cdot]& \none[\cdot]
 \end{ytableau}
 \]
\end{example}

Any initial insertion state is weakly admissible.
A weakly admissible insertion state 
cannot have its outer box in the first column.
This property naturally lends itself to inductive arguments.


\begin{proposition}\label{path-lem}
If $U \to V$ is an edge in the forward transition graph then $V$ is weakly admissible.
\end{proposition}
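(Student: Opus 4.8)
The plan is to verify the claim by a case analysis over the twelve families of edges (R1)--(R4), (D1)--(D4), (C1)--(C4) in the forward transition graph. Since the conclusion is a purely local statement about the tableau $V$ at the far endpoint, and since weak admissibility of $V$ is a condition only on $V$'s outer box together with the row or column immediately adjacent to it, we expect each case to reduce to a short inequality chase using the hypotheses that define the corresponding transition. Throughout we may assume $U$ is weakly admissible; this is legitimate because Definition~\ref{farrow-def} only ever traverses paths starting from an initial state, and every initial state is weakly admissible, so by induction along a bumping path we always enter each transition from a weakly admissible $U$. (Alternatively, Proposition~\ref{path-lem} can be stated and proved without assuming $U$ weakly admissible, since the defining inequalities of each transition already force the needed relations on $U$; but using the inductive hypothesis streamlines several cases.)

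\textbf{Main steps.} First I would dispose of the trivial cases: transitions (R2) and (C2) produce a $V$ that is a terminal (increasing shifted) tableau, and terminal states are weakly admissible by fiat; similarly (R1), (C1), (D3) in the subcase where the relocated box lands inside the shifted shape may also yield terminal states. Next, for the ``staying in the same column block'' transitions (R3), (R4), (D1), (D2), (D4), the outer box of $V$ sits in row $i+1$ (or in row $m$) at the last column $n$, and one must exhibit either $i+1 = 1$ (impossible here, so ruled out) or a column $x' \ge i+1$ with $U_{ix'} = V_{i,x'} \le V_{i+1,n} < V_{i+1,x'}$. The key observation is that the value placed in the new outer box is exactly $U_{ix}$ (or $U_{ii}$, or $U_{ii}+1$), and the witnessing column is $x$ itself: in (R3)/(R4) the defining minimality of $x$ together with strict increase of row $i$ of $U$ gives $U_{ix} \le$ (new outer value) and, using that $U$ is increasing and that row $i+1$ was nonempty or $x > i+1$, that $U_{i+1,x} >$ (new outer value); the diagonal cases (D1), (D2), (D4) are analogous with $x = i+1$ or with the main-diagonal box, where the parity hypotheses guarantee the strict inequality $U_{in} < U_{ii}$ (resp.\ $U_{i,i+1}$) is actually strict by at least the right amount. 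For the column transitions (C3), (C4) the argument is the transpose-symmetric mirror of (R3)/(R4): the new outer box of $V$ is $(x, j+1)$ with value $U_{xj}$, and the witnessing row is $x$, using minimality of $x$ and strict increase of column $j$.

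\textbf{Main obstacle.} The genuinely delicate cases are the diagonal transitions (D1)--(D4), where the moved box is deposited in row $m$ (a brand-new bottom row) at column $i+1$, and one must check weak admissibility for an outer box of the form $(m, i+1)$ using the \emph{second} bullet of Definition~\ref{weak-def} ($i = m$ case): one needs either $V_{m,i+1} = V_{i,i+1}$ or a row $x < i+1$ with $V_{x,i} \le V_{m,i+1} < V_{x,i+1}$. Here the parity bookkeeping matters — e.g.\ in (D4) the outer value becomes $U_{ii}+1$, and one must confirm $U_{ii}+1 \le U_{i,i+1}$ is compatible with the witnessing row being $x = i$, i.e.\ $U_{i-1,i} \le U_{ii}+1 < U_{i,i+1}$ or the degenerate equality — and this is precisely where the hypothesis that $U$ itself was weakly admissible (giving control over $U_{i-1,\bullet}$ versus $U_{i,n}$) is used in an essential way. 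I would handle each of (D1)--(D4) by carefully translating ``$U$ weakly admissible'' into the inequality it gives on row $i-1$, pushing that through the explicit description of which entries move where, and matching up parities; the rest of the proof is routine.
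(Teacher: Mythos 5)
Your strategy --- a direct case-by-case verification against Definition~\ref{weak-def} using the explicit descriptions of the twelve forward transitions --- is exactly what the paper's one-line proof (``easy to check directly from the definition'') intends, so the approaches agree. Before writing it out, though, note several slips. In ``Main steps'' you group (D1), (D2), (D4) with (R3), (R4) and say $V$'s outer box is ``at the last column $n$''; in fact for every diagonal transition the outer box of $V$ is $(m,i+1)$, so it is the \emph{second} bullet of Definition~\ref{weak-def} that governs, as you correctly say later in ``Main obstacle''. Likewise (R1) and (C1) \emph{always} give a terminal $V$ (that is their defining condition, not a ``subcase''), (D3) \emph{never} does, and for (C3)/(C4) the new outer box is $(m,j+1)$, not $(x,j+1)$.

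More substantively, your claim that weak admissibility of $U$ is ``used in an essential way'' for (D4) is not right, and $U_{i-1,i}$ there should be $U_{ii}$. Taking the witnessing row $x=i$, one needs $V_{ii}\le V_{m,i+1}<V_{i,i+1}$, i.e.\ $U_{ii}\le U_{ii}+1<V_{i,i+1}$; the left inequality is automatic, and if $U_{ii}+1$ fails to be strictly less than $V_{i,i+1}$ then (since $U$ minus its outer box is increasing) necessarily $U_{i,i+1}=U_{ii}+1=V_{m,i+1}$, which is exactly the \emph{first} alternative $V_{m,i+1}=V_{i,i+1}$ of Definition~\ref{weak-def}. The same pattern recurs in every case: the only inputs needed are that $U$ minus its outer box is an increasing shifted tableau and the defining hypotheses of the transition, with the witness being the bumped position $x$ (or $i$ in the diagonal cases). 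No inductive hypothesis that $U$ is weakly admissible enters, so the proposition holds for \emph{all} edges of the forward transition graph --- which is good, since the statement does not assume $U$ weakly admissible. With these corrections the proof goes through.
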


\begin{proof}
This is easy  to check directly from the definition of 
the forward transition graph.
\end{proof}

Our first two lemmas relate symplectic $K$-Knuth equivalence to row and column transitions.

\begin{lemma}\label{ck-lem1} Suppose $U \to V$ is a row transition
between weakly admissible shifted insertion states.
Then $\row(U) \simKK \row(V)$.
If $\row(U)$ is reduced then $\row(U) \simCK \row(V)$.
 \end{lemma}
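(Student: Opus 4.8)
The claim is local: it concerns a single row transition $U \xrightarrow{(i,x)} V$, so I would argue by cases on which rule (R1)--(R4) produces the edge and, in each case, exhibit an explicit chain of symplectic $K$-Knuth moves (respectively, Coxeter--Knuth moves) transforming $\row(U)$ into $\row(V)$. The first step is to set up notation carefully: write $U$ with its outer box removed as an increasing shifted tableau $T$ with $m-2$ rows and $n-2$ columns, let $(i,n)$ be the outer box with value $a := U_{in}$, and describe $\row(U)$ as a concatenation $w_{>i}\, r_i\, a\, w_{<i}$ where $r_i$ is the reading word of row $i$ of $T$ and $w_{>i}$, $w_{<i}$ collect the rows above and below. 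Because the transition only rearranges box $(i,n)$ and (in R3/R4) one entry of row $i$ or pushes a value into row $i+1$, the words $\row(U)$ and $\row(V)$ differ only in a controlled window, and I want to isolate that window.

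**The core computations.** For (R1) (the value slots into position $(i,j)$ strictly increasing the row), $\row(U)$ and $\row(V)$ are literally equal as words, so there is nothing to prove. For (R2) (moving to $(i,j)$ fails to be increasing, meaning $U_{i,j-1} = a$ — equal adjacent letters in row $i$ — and the outer box is deleted), the relation is $\cdots U_{i,j-1}\, a \cdots \simKK \cdots U_{i,j-1} \cdots$ via the idempotent move $a \simKK aa$; and if $\row(U)$ is reduced this situation cannot arise, so the Coxeter--Knuth claim is vacuous there. The substantive cases are (R3) and (R4). In (R4), where box $(i,x)$ with value $c := U_{ix}$ gets pushed down to $(i+1,n)$ and $a$ takes its place at $(i,x)$, the reading word changes by moving $c$ from its slot in row $i$ to just after the reading of row $i+1$; I expect this to be a sequence of commutation moves ($pq \simCK qp$ for $|p-q|>1$) together with one Knuth triple move ($cab \simCK acb$ or $bca \simCK bac$) at the junction, using that row $i$ and row $i+1$ of an increasing tableau interleave in a predictable way and that $c$ is the smallest entry of row $i$ exceeding $a$. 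Case (R3) is the non-increasing analogue of (R4): here moving to $(i,x)$ would repeat an adjacent value, so one gets the same rearrangement as in (R4) but modulo an extra idempotent relation, giving $\simKK$ but not $\simCK$ — consistent with the statement that the Coxeter--Knuth conclusion only needs the reduced hypothesis.

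**Where weak admissibility enters, and the main obstacle.** The hypothesis that $U$ and $V$ are weakly admissible is what guarantees the ``window'' in which the two reading words differ is genuinely bounded and that the relevant entries have the order relations I need — in particular that there really is a column $x \geq i$ with $U_{i-1,x} \le a < U_{ix}$, which controls how far the bumped entry travels and pins down exactly which Knuth move to apply at the seam between rows. I would invoke Proposition~\ref{path-lem} to know $V$ is automatically weakly admissible, so the inductive bookkeeping only needs the hypothesis on $U$. The main obstacle I anticipate is the bookkeeping in (R3)/(R4): verifying that the long commutation-move chain moving $c$ past the intervening letters is legal requires checking, entry by entry, that consecutive letters being swapped differ by more than $1$, which relies on the strict increase along rows and columns of the increasing shifted tableau plus the minimality of $x$; and one must handle the boundary subtleties when row $i+1$ is short or when $x$ is close to $i$. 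This is the kind of ``manageable but fiddly'' verification the paper elsewhere calls an exercise, so I would organize it by first proving a small auxiliary claim — that in an increasing tableau, reading row $i$ then inserting one new value and reading row $i+1$ is $K$-Knuth equivalent to the appropriately modified pair of rows — and then quoting that claim in each subcase rather than redoing the commutations each time.
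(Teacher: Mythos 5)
Your high-level strategy matches the paper's: a case split on (R1)--(R4), with (R1) trivial and (R4) handled by commutation plus a Knuth-triple move. The gap is in (R2) and (R3). You assert that (R2) occurs precisely when $U_{i,j-1}$ equals the outer-box value $a$, so that the move reduces to the idempotent $aa\simKK a$ between adjacent equal letters. That is correct only when the outer box is in row $1$. When the outer box is in row $i>1$, the attempt to place $a$ at $(i,j)$ can fail to be increasing not because $U_{i,j-1}=a$ but because the \emph{column} condition fails, i.e.\ $a\leq U_{i-1,j}$; weak admissibility then forces $U_{i-1,j}=a$, so the duplicated letter sits in the row below, separated from the outer $a$ by the whole of row $i$ and part of row $i-1$. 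Showing $\row(U)\simKK\row(V)$ in this subcase is not an adjacent idempotent move: it requires an extended computation interleaving idempotent moves with braid and commutation moves across two rows (the paper's displayed equivalence~\eqref{e-eq}, which it already flags as ``an instructive exercise''). The same issue recurs in (R3): besides the subcase $U_{i,x-1}=a$ you describe, there is the subcase where $U_{i,x-1}<a\leq U_{i-1,x}$, which again by weak admissibility produces a far-apart repeated letter and requires the more elaborate identity~\eqref{ee-eq}, not merely ``(R4) plus one idempotent.''

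Your observation that these problematic subcases cannot occur when $\row(U)$ is reduced is correct (weak admissibility forces a repeated letter, so the word is not reduced), but the reasoning you give for it (adjacent equal letters in row $i$) is the wrong mechanism. The proposed auxiliary claim about row-$i$/row-$(i+1)$ equivalence is a plausible way to package the verification, but as stated it is not strong enough to absorb the hard subcases because the duplicated letter in (R2)/(R3) lives in row $i-1$, not row $i+1$, and the equivalence needed is a $K$-Knuth identity involving a repeated value at distance $m+2$, not a local rearrangement. To close the gap you would need to explicitly identify the two failure modes (row vs.\ column) in each of (R2) and (R3), use weak admissibility to pin down the location of the duplicate, and then prove the analogues of \eqref{e-eq} and \eqref{ee-eq}.
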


\begin{proof}
If $U \to V$ is of type (R1) then $\row(U) = \row(V)$, and if
$U \to V$ is of type (R4) then it is easy to check that 
$\row(U) \simCK \row(V)$. 

Suppose the outer box of $U$ occurs in the first row and this row has the form
\[
\ytableausetup{boxsize = .6cm,aligntableaux=center}
 \begin{ytableau}
 c_1 &   c_2 &   \cdots &  c_p & \none[\cdot] & b 
 \end{ytableau}
 \]
where
 $c_1<c_2<\dots<c_p$ and $p \geq 0$.
If $U \to V$ is of type (R2), then we must have $c_p=b$,
so $\row(U)$ is not reduced and
 $\row(U) \simKK \row(V)$.
If $U \to V$ is of type (R3), then $c_i = b < c_{i+1}$ for a unique index $i \in [p-1]$, in which case $\row(U) \simCK \row(V)$.

 Next suppose the outer box of $U$ is in row $k>1$
 and rows $k-1$ and $k$ of $U$ have the form
 \[
 \ytableausetup{boxsize = .6cm,aligntableaux=center}
 \begin{ytableau}
\none[\cdot] &  c_1 &   c_2 &   \cdots & c_i  & \cdots &  c_p & \none[\cdot] & \none[\cdot] & \none[\cdot] & b  \\
  a_0 &   a_1 &  a_2 &   \cdots & a_i & \cdots &  a_p & \cdots & a_q & \none[\cdot] & \none[\cdot] 
 \end{ytableau}
 \]
 where $0 \leq p <q$ and  $a_0<a_1<\dots<a_q$ and $c_1<c_2<\dots < c_p$ and $a_i < c_i$ for all $i \in [p]$.
 If $p=0$ then $q>0$ and $a_1 \neq b$ since $U$ is weakly admissible,
 in which case the edge $U \to V$ is necessarily of type (R1).
 
Suppose $U \to V$ is of type (R2) so that $p>0$ and $c_p \leq b$. 
Since $U$ is weakly admissible, we must have   $a_{p+1} \leq b$.
If $c_p = b$ then $\row(U)$ is not reduced and 
we again have $\row(U) \simKK \row(V)$.
Assume $c_p < b$. Then $a_{p+1} = b$ since otherwise
moving $b$ to the column adjacent to $c_p$ would produce an increasing tableau.
In this case $\row(U)$ is not reduced since it contains the consecutive subword
$ba_0a_1\cdots a_p b$ where $a_0<a_1<\cdots<a_p<c_p<b$.
We conclude that $U \to V$ cannot be of type (R2) if $\row(U)$ is a reduced word.
To show that $\row(U) \simKK \row(V)$ in this case, it suffices to check that we have
$c_p b a_0a_1\cdots a_p b \simKK c_p a_0a_1\cdots a_p b$, or equivalently
that 
\be\label{e-eq} 
(p+1)(p+2) 123\cdots p (p+2)\simKK (p+1)123\cdots p (p+2)
\ee for any integer $p\geq 2$. Proving this is an instructive exercise; in brief,
one should
move $p+2$ all the way to the right, then apply a braid relation, then move $p+1$ all the way to right,
then apply another braid relation,
then use $p+1$ as a witness to commute $p$ and $p+2$, 
then combine the two final letters (which are both $p+2$),
and then finally move $p+1$ back to the start of the word. 
We conclude that $\row(U) \simKK \row(V)$.

Finally suppose $U \to V$ is of type (R3). Then $p\geq 2$ since there 
exists a minimal index $i \in [p-1]$
with $c_i \leq b < c_{i+1}$.
If $c_i = b$ then we have $\row(U) \simCK \row(V)$ as before,
so assume $c_i < b < c_{i+1}$.
Since replacing $c_{i+1}$ by $b$ does not produce an increasing tableau,
we must have $b \leq a_{i+1}$. Since $U$ is weakly admissible,  $b=a_{i+1}$ so $\row(U)$ is again not reduced
as it contains the consecutive subword $b a_0a_1\cdots a_i b$ where 
$a_0<a_1<\dots<a_i < c_i < b$. We conclude that if $\row(U)$ is reduced then
$\row(U) \simCK \row(V)$.
To show that $\row(U) \simKK \row(V)$,
we must check that 
\be
\label{ee-eq}
c_1c_2\cdots c_p b a_0a_1\cdots a_i b
\simKK 
c_{i+1} c_1c_2\cdots  c_p a_0a_1\cdots a_i b.
\ee
To prove 
this, let $w=c_{i+2}c_{i+3}\cdots c_p$.
We first observe that 
$
c_ic_{i+1}w b \simCK c_ic_{i+1}b w
\simKK c_ic_ic_{i+1}b w
\simCK c_ic_{i+1}c_ib w
\simCK c_{i+1}c_i c_{i+1}b w
\simCK c_{i+1}c_i c_{i+1}w b
\simCK c_i c_{i+1}c_i w b
\simCK c_i c_{i+1} w c_ib.
$
The identity \eqref{e-eq} implies that
$c_ib a_0a_1\cdots a_i b\simKK c_i a_0a_1\cdots a_i b$,
and it is easy to check that
$c_i c_{i+1}w c_i \simCK c_i c_{i+1}c_iw
\simCK c_{i+1}c_ic_{i+1} w = c_{i+1}c_ic_{i+1}\cdots c_p$
and
$
c_1c_2\cdots c_{i-1} c_{i+1} c_i \simCK c_{i+1} c_1c_1\cdots c_i.
$
Combining  these equivalences gives \eqref{ee-eq},
so $\row(U)\simKK \row(V)$ as desired.
This completes the proof of the lemma.
\end{proof}

\begin{lemma}\label{ck-lem2} Suppose $U \to V$ is a column transition between weakly admissible shifted insertion states.
Then $\col(U) \simKK \col(V)$.
If $\col(U)$ is reduced then $\col(U) \simCK \col(V)$.
 \end{lemma}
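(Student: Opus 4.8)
The plan is to mirror the proof of Lemma~\ref{ck-lem1} with columns in place of rows. A convenient organizing device is that $\col(T)$ is the reverse of $\row(T^\dag)$ and that reversing words preserves both $\simCK$ and $\simKK$ (the observation already used in the proof of Lemma~\ref{column-diagonal-closed-lem}); under this correspondence each column transition $U\to V$ becomes a ``row transition'' on the transposed states $U^\dag\to V^\dag$. The catch is that $U^\dag$ is not a shifted insertion state --- its underlying shape is the transpose of a shifted diagram --- so Lemma~\ref{ck-lem1} cannot be quoted verbatim; one must re-run its argument in the configuration where the two relevant consecutive columns of $U$ begin in the \emph{same} row (row~$1$). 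This is actually a milder situation than the one in Lemma~\ref{ck-lem1}, where consecutive rows are offset by one and the argument must cope with an ``extra leftmost entry $a_0$'' that has no analogue here.

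Concretely I would split into the four edge types. For (C1), sliding the outer box down column $j$ to the first vacant row leaves it at the top of that column, so $\col(U)=\col(V)$ and there is nothing to do. For (C4), a single ordinary Coxeter--Knuth move (of the form $cab\simCK acb$ or $bca\simCK bac$) relates $\col(U)$ and $\col(V)$, just as in (R4). The substance is in (C2) and (C3): there the outer box is either deleted (C2) or slides into column $j+1$ carrying a value from column $j$ (C3), so $\col(U)$ and $\col(V)$ differ only by the presence of a single extra letter $b:=U_{mj}$ or by a local rearrangement near columns $j$ and $j+1$. The key step is to invoke weak admissibility of $U$ (the $i=m$ clause of Definition~\ref{weak-def}): I would show that whenever the naive placement fails --- so that (C2) or (C3) applies rather than (C1) or (C4) --- weak admissibility forces the relevant value to occur already in column $j-1$ or column $j$, so $\col(U)$ contains a subword $b\,x_1x_2\cdots x_k\,b$ with every $x_\ell<b$ and is therefore not reduced. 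In that case the claimed equivalence reduces to a $K$-Knuth identity of exactly the shape of \eqref{e-eq} or \eqref{ee-eq}, proved by the same manipulations (push the largest letter to one end, apply a braid relation, recombine the duplicated letters, return the escorting letter) as in Lemma~\ref{ck-lem1}. When $\col(U)$ is assumed reduced these degenerate cases cannot occur, and one is left with at most an ordinary Coxeter--Knuth move, giving the stronger conclusion $\col(U)\simCK\col(V)$.

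The main obstacle, exactly as in Lemma~\ref{ck-lem1}, is the case analysis behind (C2)/(C3): one must enumerate the ways the naive placement of $b$ can fail (left neighbour too large, column-$(j+1)$ neighbour too large, target position off the staircase) and, in each, extract from weak admissibility the repeated letter that both witnesses non-reducedness and supplies the witness for the $K$-Knuth move. A secondary, purely bookkeeping nuisance is that the outer box sits at the \emph{top} of column $j$ and is therefore read first within that column, so one has to track carefully whether $b$ ends up at the top of column $j$, at the top of column $j+1$, or removed, in order to align $\col(U)$ and $\col(V)$ letter by letter. The $K$-Knuth identities themselves should present no new difficulty, being literally the ones already established in the proof of Lemma~\ref{ck-lem1}.
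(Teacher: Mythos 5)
There is a genuine gap in your treatment of (C3). You claim that whenever (C2) or (C3) applies, weak admissibility forces a repeat of the value $b:=U_{mj}$ yielding a subword $b\,x_1\cdots x_k\,b$ of $\col(U)$ with every $x_\ell<b$, so that $\col(U)$ is automatically unreduced and the conclusion is the $\simKK$ identity. This is false for (C3). Re-read the (R3) case of the paper's proof of Lemma~\ref{ck-lem1}: it explicitly splits into the subcase $c_i=b$ (the entry to the \emph{left} of the bump target equals $b$, giving $\row(U)\simCK\row(V)$ with $\row(U)$ possibly reduced) and the subcase $c_i<b=a_{i+1}$ (forced by weak admissibility, giving an unreduced word and the $\simKK$ identity \eqref{ee-eq}). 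Transposed to columns, the first subcase is $U_{x-1,j}=b$: the entry \emph{directly below} the bump target in column $j$ equals $b$. In that subcase the two copies of $b$ in $\col(U)$ are separated by the entries $U_{jj},\dots,U_{xj}$ of column $j$ lying \emph{above} $(x-1,j)$, which are all $>b$, not $<b$; the pattern is a braid-type word $b\,c\,b$, $\col(U)$ can perfectly well be reduced (e.g.\ $1232$), and the right conclusion is $\col(U)\simCK\col(V)$, established by a braid move followed by commutations as in (R4), not by the $\simKK$ identity. This is also exactly the situation in the special weak-admissibility clause $U_{mj}=U_{j-1,j}$ with $(j,j)\in U$, which has no analogue in the row setting and which the paper treats as a separate case. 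Your final sentence (``When $\col(U)$ is assumed reduced these degenerate cases cannot occur'') is therefore wrong: (C3) does occur with $\col(U)$ reduced, and your proposal gives no argument producing the required $\simCK$ there. As written the proposal would fail to establish either the $\simKK$ claim (since the invoked identity does not match the actual subword) or the reduced/$\simCK$ claim in this subcase.

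Aside from this, the high-level strategy --- transpose, use that reversal preserves $\simCK$ and $\simKK$, and re-run Lemma~\ref{ck-lem1} with the appropriate modifications --- is exactly what the paper does, and your handling of (C1), (C4), and the genuinely unreduced branch of (C2)/(C3) (where the repeat of $b$ sits in column $j-1$) is fine. But your assertion that the column configuration is ``milder'' because consecutive columns both begin in row~$1$ misses the point that column $j$ may have one \emph{extra} box at the diagonal end, which is precisely what gives rise to the special clause $T_{mj}=T_{j-1,j}$ of Definition~\ref{weak-def}; that clause must be addressed on its own, as the paper does.
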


 \begin{proof}
 Suppose that $U$ with its outer box removed has $m-2$ rows and $n-2$
 columns, and that the outer box is $(m,j)$. 
 If there exists a row $x$ with $U_{x,j-1} \leq U_{mj} < U_{xj}$
 then the result follows by transposing the proof of
 Lemma~\ref{ck-lem1}; we omit the details.
 Assume instead that $U_{mj}=U_{j-1,j}$. If $(j,j) \notin U$
 then  $\col(U)$ is not reduced and 
 $U \to V$ is of type (C2), so $\col(U) \simKK \col(V)$. 
 If $(j,j) \in U$ then 
 $U\to V$ is of type (C3), 
 in which case $\col(U) \simCK \col(V)$.
 \end{proof}

We need a more intricate lemma to handle diagonal transitions.

\begin{lemma}\label{ck-lem3} 
Suppose $U \to V$ is a diagonal transition 
between weakly admissible shifted insertion states.
Assume $(i,n)$ is the outer box of $U$, so that $(i,i) \in U$.
\ben
\item[(a)]
If $U \to V$ is of type (D1)
and $U_{ii}\equiv U_{i,i+1} \modu2)$,
then
$\row(U) \simFKK \col(V)$.
 
 \item[(b)] If $U \to V$ is of type (D2), (D3), or (D4),
all entries on the main diagonal of $U$ have the same parity,
and either $U_{in}\equiv U_{ii}\modu 2)$ or  $U_{in} = U_{ii}-1$,
then $\row(U) \simFCK \col(V)$.

\een
In particular, if $\row(U)$ is a symplectic Hecke word
then $\row(U) \simFKK \col(V)$,
and if $\row(U)$ is a symplectic Hecke word that is also a reduced word then $\row(U)\simFCK \col(V)$.
 \end{lemma}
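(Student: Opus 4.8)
The plan is to analyze each of the diagonal transition types (D1)--(D4) separately, reducing each to a small identity among words that can be verified using the defining relations of $\simCK$, $\simKK$, $\simFCK$, and $\simFKK$. For a diagonal transition with outer box $(i,n)$ and $(i,i)\in U$, the row reading word of $U$ and the column reading word of $V$ differ only in a bounded neighborhood of the diagonal box $(i,i)$ and the boxes being shuffled, so after stripping the common prefixes and suffixes the statement becomes an identity among short words. First I would fix notation for the relevant entries: write $a_0<a_1<\cdots$ for the entries of row $i$ starting at column $i$, let $b=U_{in}$ be the outer-box value, and note that $(i,i)\in U$ means $a_0 = U_{ii}$. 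In cases (D2)--(D4) I also record that every main-diagonal entry has the same parity (this is part of the hypothesis), which controls how $\col(V)$ reads off the newly-created diagonal box in row $m$.

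The core computation in each case is an identity of the following flavor. In case (D1), where $b$ is moved to $(m,i+1)$ with value $U_{i,i+1}$ and we assume $U_{ii}\equiv U_{i,i+1}\pmod 2$, the row word of $U$ contains $\cdots a_0 a_1 \cdots a_m\, b$ with $b$ between $a_0$ and $a_1$, while $\col(V)$ reads the moved box $U_{i,i+1}$ in the top row; the relevant reduction is of the shape $a_0 b a_1 a_2 \cdots a_m \simFKK (\text{word with } a_1 \text{ pushed to the front and } b \text{ absorbed})$, which is exactly the kind of identity proved for \eqref{e-eq} and \eqref{ee-eq} in Lemma~\ref{ck-lem1}, with one extra parity-driven symplectic Coxeter-Knuth move (the swap of the first two equal-parity letters, or the $a(a-1)\leftrightarrow a(a+1)$ move) accounting for the shift from $U_{i,i+1}$ to the read-off value. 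For cases (D2) and (D3), where $U_{ii}$ itself is moved out, the reduction is similar but shorter since no value changes except possibly in the way $b$ replaces $U_{ii}$ in (D3); here $\simFCK$ suffices because no letter-doubling is needed. Case (D4) is the one where a genuine symplectic move is unavoidable: the new box gets value $U_{ii}+1$, so the passage $\row(U)\to\col(V)$ must use the relation ``$a(a-1)\mapsto a(a+1)$'' on the first two letters, which is precisely why (D4) lands in $\simFCK$ and needs the hypothesis $U_{in}=U_{ii}\pm 1$ (more precisely $U_{in}$ and $U_{ii}$ of different parity, with $U_{in}<U_{ii}$ forcing $U_{in}=U_{ii}-1$ in the reduced case).

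I would organize the write-up so that the bulk of each case cites the word identities \eqref{e-eq} and \eqref{ee-eq} already established, and then isolates the single additional symplectic Coxeter-Knuth move needed, verifying its applicability from the parity hypotheses. The final ``in particular'' clause then follows by combining (a) and (b): if $\row(U)$ is a symplectic Hecke word, then (by the characterization in Theorem~\ref{fpf-hecke-char-thm}, together with weak admissibility and Proposition~\ref{path-lem}) the diagonal entries of $U$ all have the same parity and $b=U_{in}$ satisfies the required congruence or $b=U_{ii}-1$ relation, so one of (a) or (b) applies and gives $\row(U)\simFKK\col(V)$; if moreover $\row(U)$ is reduced, the stronger $\simFCK$ conclusion of (b) (or the reduced-case strengthening of (a)) applies. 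The main obstacle I anticipate is bookkeeping: correctly identifying, in each of (D1)--(D4), exactly which entries appear in $\row(U)$ versus $\col(V)$ and in what order — in particular tracking how the column reading word traverses the newly-occupied box in row $m$ relative to the rest of column $i+1$ — and then confirming that the parity hypotheses are exactly what is needed to license the one symplectic move. Once that dictionary is set up, each verification reduces to a finite $\simKK$/$\simCK$ calculation essentially identical in spirit to those in the proof of Lemma~\ref{ck-lem1}.
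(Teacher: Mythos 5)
The central premise of your proposal — that $\row(U)$ and $\col(V)$ "differ only in a bounded neighborhood of the diagonal box... so after stripping the common prefixes and suffixes the statement becomes an identity among short words" — is false for a diagonal transition, and this misdiagnosis causes the whole plan to fall apart. A diagonal transition relocates the outer box from $(i,n)$ to $(m,i+1)$, which is a distant part of the tableau; and more importantly, you are comparing a \emph{row} reading word of $U$ to a \emph{column} reading word of $V$. These two readings traverse the \emph{entire} tableau in different orders, so there is no large common prefix or suffix to strip. The comparison is global, not local, and cannot be reduced to a short word identity in the style of \eqref{e-eq} or \eqref{ee-eq} without an intermediate device.

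The paper supplies exactly that device, and you don't mention it. In part (a), after locally establishing $\row(U)\simKK\row(V)$ (this step \emph{is} analogous to Lemma~\ref{ck-lem1}), the paper passes to the terminal tableau $T$ obtained by deleting the outer box of $V$ and chains $\row(U)\simKK\row(V)\simFKK\row(T)\simCK\col(T)\simKK\col(V)$, using Corollary~\ref{t-cor} (that row and column reading words of an increasing shifted tableau are $\simCK$-equivalent) to cross from row reading to column reading. In part (b), the paper constructs a "doubled" tableau $\tilde U$ (and in the (D4) case a companion $\tilde V$) by doubling the coordinates and relocating the outer box to $(2i-1,2i-1)$; the point is that $\tilde U$ is row-diagonal-closed and column-diagonal-closed, so Lemmas~\ref{row-diagonal-closed-lem} and \ref{column-diagonal-closed-lem} let one replace $\row(\tilde U)$ and $\col(\tilde V)$ by the diagonal reading words $\swdiag$ and $\nediag$, and the whole comparison collapses to showing $\delta\simFCK\delta'$ where $\delta'$ is the reversal of the (strictly increasing, same-parity) diagonal word $\delta$. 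That reversal identity is the place where the parity hypothesis and the symplectic Coxeter-Knuth moves are actually spent, not a "single additional symplectic move" applied to the first two letters. Your outline of the "in particular" clause is in the right spirit (use Theorem~\ref{fpf-hecke-char-thm} plus weak admissibility to verify the parity hypotheses), but parts (a) and (b) as you describe them would not yield a proof without the row-to-column crossing machinery and the doubled-tableau construction.
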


 \begin{proof}
First
 assume that $U \to V$ is of type (D1)
and $U_{ii}\equiv U_{i,i+1} \modu2)$.
 Let $c_1 = U_{ii}$, $c_2=U_{i,i+1}$, and $b = U_{in}$.
 Then $c_1\leq b < c_2$ and row $i+1$ of $U$ is empty, so 
  $V$ is formed from $U$ by removing box $(i,n)$
 and adding $c_2$ to an outer box in column $i+1$, as in the following picture:
 \[
  \ytableausetup{boxsize = .6cm,aligntableaux=center}
 U = \left\{\ 
 \begin{ytableau}
    \none[\cdot] &  \none[\cdot] &  \none[\cdot] & \none[\cdot] & \none[\cdot] & \none[\cdot]\\
  \none[\cdot] &  \none[\cdot] & \none[\cdot] & \none[\cdot] & \none[\cdot] & \none[\cdot]\\
   \none[\cdot] & c_1 & c_2 & \cdots & \none[\cdot] &b\\
\qm & \qm & \qm &\cdots & \none[\cdot] & \none[\cdot]
\end{ytableau}
\ \right\}
 \xrightarrow{(i,i+1)} 
 \left\{\ 
 \begin{ytableau}
    \none[\cdot] &  \none[\cdot] &  c_2 & \none[\cdot] & \none[\cdot] & \none[\cdot]\\
  \none[\cdot] &  \none[\cdot] & \none[\cdot] & \none[\cdot] & \none[\cdot] & \none[\cdot]\\
   \none[\cdot] & c_1 & c_2 & \cdots & \none[\cdot] &\none[\cdot]\\
\qm & \qm & \qm &\cdots & \none[\cdot] & \none[\cdot]
\end{ytableau}
 \right\} = V.
\]
 If $i=1$ then we must have $c_1=b$ and it follows that $\row(U) \simCK \row(V)$. If $i>1$ then
 the last paragraph of the proof of Lemma~\ref{ck-lem1} implies
 that $\row(U) \simKK \row(V)$, and that if $\row(U)$ is reduced then $\row(U)\simCK \row(V)$.

The word $\row(V)$ begins with $c_2c_1c_2$.
Suppose $c_1$ and $c_2$ have the same parity.
This must hold
if
$\row(U)$ is a symplectic Hecke word,
since then $c_1$ and $c_2$ must both be even
by Theorem~\ref{fpf-hecke-char-thm}.
In any case,  $\row(V)$ is then unreduced so $\row(U)$ is also unreduced.
Let $T$ be the increasing shifted tableau formed by removing the outer box of $V$.
As $\row(V) \simFKK \row(T)$ and $\col(V) \simKK \col(T)$ and $\row(T) \simCK \col(T)$  by Corollary~\ref{t-cor}, we have
 $\row(U) \simKK \row(V) \simFKK \row(T) \simCK \col(T) \simKK \col(V)$.
 This proves part (a).
 
For part (b),  assume that 
 $U \to V$ is of type (D2), (D3), or (D4)
 and that $U$  has $m-2$ rows.
Let $a=U_{in}$ and $b_j = U_{jj}$ for $ j \in [m-2]$, so that $a < b_i$
 and $b_1<b_2<\dots <b_{m-2}$.
Suppose $b_1,b_2,\dots,b_{m-2}$ all have the same parity. 
Define $\tilde U$ to be the tableau formed from $U$ by doubling the row and column indices of all boxes
and then moving the outer box of $U$ to position $(2i-1,2i-1)$.
For example, writing $b:=b_i$, we might have
\be\label{pic-eq}
\ytableausetup{boxsize = .5cm,aligntableaux=center}
 U = \left\{\
  \begin{ytableau}
 \none[\cdot] &  \none[\cdot]  & \qm &  \none[\cdot]& \none[\cdot] \\
  \none[\cdot] & b & \qm &   \none[\cdot]& a \\
\qm & \qm & \qm &  \none[\cdot]& \none[\cdot]
\end{ytableau}
\ \right\}
\qquad\text{and}
\qquad
 \tilde U = \left\{
 \begin{ytableau}
  \none[\cdot] &    \none[\cdot] & \none[\cdot] &  \none[\cdot]  &  \none[\cdot]& \qm    \\
   \none[\cdot] &       \none[\cdot] &   \none[\cdot] &  \none[\cdot] &  \none[\cdot]&  \none[\cdot]  \\
    \none[\cdot] &   \none[\cdot] &   \none[\cdot] & b &  \none[\cdot]& \qm   \\
   \none[\cdot] &   \none[\cdot] &    a & \none[\cdot] & \none[\cdot] & \none[\cdot]      \\
  \none[\cdot] & \qm &    \none[\cdot] & \qm &  \none[\cdot] & \qm  \\
  \none[\cdot] &  \none[\cdot] &  \none[\cdot] &  \none[\cdot] &  \none[\cdot] &  \none[\cdot]  
  \end{ytableau}
\  \right\}.
\ee
Let $T$ be the increasing shifted tableau formed 
from $U$ by omitting the outer box and the main diagonal and then translating all boxes left one column.
Clearly $\row(U) = \row(\tilde U)$,  and we have $ \nediag(T) \simCK\swdiag(T)$ by Corollary~\ref{t-cor}.
 There are two cases to consider.

First suppose $a\equiv b_i \modu 2)$ so that $U \to V$ is of type (D2) or (D3).
 If $(i-1,i) \in U$ then $U_{i-1,i} \leq a$, 
 so the tableau $\tilde U$ is increasing, row-diagonal-closed, and column-diagonal-closed.
Therefore $\row(U) = \row(\tilde U) \simCK \swdiag(\tilde U)$ 
 and $\col(\tilde U) \simCK \nediag(\tilde U)$
 by Lemmas~\ref{row-diagonal-closed-lem} and \ref{column-diagonal-closed-lem},
and it is easy to see that $\col(V) \simCK \col(\tilde U)$.
To show that $\row(U) \simFCK \col(V)$,
 it suffices to check that $\nediag(\tilde U) \simFCK \swdiag(\tilde U)$.
 Let $\delta = b_1\cdots b_{i-1} a b_i\cdots b_{m-2}$ be the word formed by reading the main diagonal of $\tilde U$ and let $\delta'$ be its reverse,
so that $\nediag(\tilde U) =\delta \cdot \nediag(T)$
 and $\swdiag(\tilde U) = \delta' \cdot \swdiag(T)$.
Since $ \nediag(T) \simCK\swdiag(T)$, 
it is enough to show that $\delta \simFCK \delta'$.
This is straightforward since $\delta$ is strictly increasing with all letters of the same parity.

Next suppose $a=b_i-1$ so that $U \to V$ is of type (D4). 
 Define $\tilde V$ to be the tableau formed from $\tilde U$
 by moving box $(2i-1,2i-1)$ to $(2i+1,2i+1)$ and adding 2 to its value.
 For example, if $U$ is as in our earlier picture \eqref{pic-eq} 
 where $b=b_i$ and $\tilde a = a+2=b+1$, then we would have
 \[
  \tilde V = \left\{ \begin{ytableau}
  \none[\cdot] &    \none[\cdot] & \none[\cdot] &  \none[\cdot]  &  \none[\cdot]& \qm    \\
   \none[\cdot] &       \none[\cdot] &   \none[\cdot] &  \none[\cdot] &   \tilde a &  \none[\cdot]  \\
    \none[\cdot] &   \none[\cdot] &   \none[\cdot] & b &  \none[\cdot]& \qm   \\
   \none[\cdot] &   \none[\cdot] &    \none[\cdot] & \none[\cdot] & \none[\cdot] & \none[\cdot]      \\
  \none[\cdot] & \qm &    \none[\cdot] & \qm &  \none[\cdot] & \qm  \\
  \none[\cdot] &  \none[\cdot] &  \none[\cdot] &  \none[\cdot] &  \none[\cdot] &  \none[\cdot]  
  \end{ytableau}\ \right\}.
  \]
  Observe that $\row(U) = \row(\tilde U)$ and $\col(V) = \col(\tilde V)$.
  Both $\tilde U$ and $\tilde V$ are increasing, row-diagonal-closed, and column-diagonal-closed,
  so
 Lemmas~\ref{row-diagonal-closed-lem} and \ref{column-diagonal-closed-lem}
imply  that $\row(\tilde U) \simCK \swdiag(\tilde U)$ 
 and $\col(\tilde V) \simCK \nediag(\tilde V)$.
 To show that $\row(U) \simFCK \col(V)$,
 it suffices to check that $\nediag(\tilde V) \simFCK \swdiag(\tilde U)$.
 Let $\delta = b_1\cdots b_i(b_i+1)b_{i+1}\cdots b_{m-2}$ and $\delta' = b_{m-2}\cdots b_i(b_i-1)b_{i-1} \cdots b_1$, so that $\nediag(\tilde V) =\delta \cdot \nediag(T)$
 and $\swdiag(\tilde U) = \delta' \cdot \swdiag(T)$.
It is enough to show that $\delta \simFCK \delta'$
since $ \nediag(T) \simCK\swdiag(T)$,
and this is again straightforward.
    In either case  $\row(U) \simFCK \col(V)$,
    which proves part (b).

 To prove the last assertion,
assume that $\row(U)$ is a symplectic Hecke word.
 We have already seen that if $U \to V$
 is of type (D1), then $\row(U)$ cannot be reduced
 but $\row(U) \simFKK \col(V)$.
Assume that $U \to V$
 is of type (D2), (D3), or (D4).
In view of part (b),
it is enough 
to show that the entries on main diagonal of $U$ are all even 
and that either $U_{in}$ is even or $U_{in}=U_{ii}-1$.
Define $a=U_{in}$ and $b_j = U_{jj}$ for $ j \in [m-2]$, so that $a < b_i$
 and $b_1<b_2<\dots <b_{m-2}$.
Since every letter preceding $b_j$ in $\row(U)$ is at least $b_j+2$, 
and since every letter preceding $a$ is at least $b_i$,
 Theorem~\ref{fpf-hecke-char-thm} implies that 
 $b_i,b_{i+1},\dots,b_{m-2}$ are all even and that
if $a$ is odd then $a=b_i-1$.
If $i>1$ then $b_{i-1} < U_{i-1,i} \leq a$ since $U$ is weakly admissible, so it 
follows similarly that
$b_1,b_2,\dots,b_{i-2}$ are also even.
The last thing to check is that $b_{i-1}$ is even when $i>1$.
If $b_{i-1} < a-1$ then this follows as before,
and if $b_{i-1}=a-1$ then we must have $U_{i-1,i} = a$, in which case 
$\row(U)$ has the form $v a(a-1)a w$ for some words $v$ and $w$,
where the smallest letter of $v$ is at least $a+1$.
Such a word has $ \row(U) \simA (a-1) v a (a-1) w$ so
Theorem~\ref{fpf-hecke-char-thm}  implies that $b_{i-1}=a-1$ is also even.
\end{proof}

We arrive at the main theorem of this section.

\begin{theorem}\label{farrow-thm}
Suppose $T$ is an increasing shifted tableau and $a \in \PP$ is 
such that $\row(T) a$ is a symplectic Hecke word.
The following properties then hold:
\ben
\item[(a)]  The tableau $T\farrow a$ is   increasing and shifted with $\row(T\farrow a) \simFKK \row(T) a$.
\item[(b)] If $\row(T) a$ is an FPF-involution word then $\row(T\farrow a) \simFCK \row(T) a$.
\een
\end{theorem}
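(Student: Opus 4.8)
The plan is to induct along the unique directed path $T\oplus a = U_0 \to U_1 \to \cdots \to U_l$ from the initial state $T\oplus a$ to the terminal state $U_l = T\farrow a$ in the forward transition graph. A preliminary observation gives this path a rigid shape: the propagating row transitions (R3) and (R4) send a state whose outer box lies in column $n$ to another such state; the diagonal transitions (D1)--(D4) send such a state to one whose outer box lies in row $m$; the propagating column transitions (C3) and (C4) preserve the latter kind of state; and (R1), (R2), (C1), (C2) terminate the path. Thus the insertion path is a (possibly empty) block of row transitions, then \emph{at most one} diagonal transition, then a (possibly empty) block of column transitions, ending at the terminal state. Call $U_i$ a \emph{row state} when its outer box lies in column $n$ (this includes $U_0$) and a \emph{column state} when its outer box lies in row $m$, and set $w_i := \row(U_i)$ if $U_i$ is a row state or terminal, and $w_i := \col(U_i)$ if $U_i$ is a column state.

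Before the induction I would record two bookkeeping facts. First, $\simFKK$ is contained in $\fequiv$: every $\simKK$-move is an instance of $\equivA$ (for example $cab \equivA acb$ by commutation, $aba \equivA bab$ by a braid move when $b = a+1$ and by $aba \equivA baa \equivA ba \equivA bba \equivA bab$ otherwise, and $a \equivA aa$), while a symplectic Coxeter--Knuth move on the first two letters is either a commutation (same parity forces $|w_1 - w_2| \ge 2$ or $w_1 = w_2$) or exactly the move $w_1 w_2 w_3\cdots \to w_1 (w_2{+}2) w_3 \cdots$ with $w_1 = w_2 + 1$ built into $\fequiv$. Hence $w \simFKK \row(T)a$ forces $w \fequiv \row(T)a$, so by Theorem~\ref{fpf-hecke-char-thm} the word $w$ is a symplectic Hecke word for the same $z \in \FF$. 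Second, every generating relation of $\simFCK$ preserves word length, and $\simFCK \subseteq \simFKK$; so a word that is $\simFCK$-equivalent to an FPF-involution word for $z$ is a symplectic Hecke word for $z$ of minimal length $\ellfpf(z)$, hence is itself an FPF-involution word, and in particular reduced. Consequently the inductive hypothesis below will automatically provide, at each $U_i$, the side conditions ``$\row(U_i)$ (resp.\ $\col(U_i)$) is a symplectic Hecke word'' and, in the FPF-involution case, ``reduced'' --- exactly the hypotheses required by Lemmas~\ref{ck-lem1}, \ref{ck-lem2}, and \ref{ck-lem3}.

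The inductive claim is that $w_i \simFKK \row(T)a$ for every $i$, and that $w_i \simFCK \row(T)a$ whenever $\row(T)a$ is an FPF-involution word. The base case is immediate since $\row(U_0) = \row(T\oplus a) = \row(T)a$. For the inductive step, note that $U_0$ is weakly admissible because it is initial, so by Proposition~\ref{path-lem} every $U_i$ is weakly admissible and the lemmas apply along the whole path. If $U_i \to U_{i+1}$ is a row transition, Lemma~\ref{ck-lem1} gives $\row(U_i) \simKK \row(U_{i+1})$, improved to $\simCK$ when $\row(U_i)$ is reduced; composing with the inductive hypothesis settles $w_{i+1} = \row(U_{i+1})$. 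If it is a column transition, Lemma~\ref{ck-lem2} gives $\col(U_i) \simKK \col(U_{i+1})$ (improved to $\simCK$ when $\col(U_i)$ is reduced); if $U_{i+1}$ is again a column state this is the claim, and if $U_{i+1}$ is terminal one first rewrites $\row(U_{i+1}) \simCK \col(U_{i+1})$ using Corollary~\ref{t-cor}. If it is a diagonal transition --- the sole place where the ``row regime'' meets the ``column regime'' --- then $U_i$ is a row state whose reading word is a symplectic Hecke word by the bookkeeping above, so the last assertion of Lemma~\ref{ck-lem3} gives $\row(U_i) \simFKK \col(U_{i+1})$, and $\simFCK$ in the FPF-involution case. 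Since every forward transition is of one of these three kinds, the claim passes from $U_i$ to $U_{i+1}$. Finally $U_l = T\farrow a$ is terminal, hence increasing, shifted, and nonempty, and the claim at $i = l$ says precisely that $\row(T\farrow a) \simFKK \row(T)a$, with $\simFCK$ when $\row(T)a$ is an FPF-involution word, which is (a) and (b).

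The main obstacle, I expect, is not any individual estimate but this bookkeeping layer: each of Lemmas~\ref{ck-lem1}--\ref{ck-lem3} carries hypotheses (``$\row(U)$ reduced'', ``$\row(U)$ a symplectic Hecke word''), and the crux of the argument is that these survive each step of the walk down the insertion path --- which reduces to the containment $\simFKK \subseteq \fequiv$ together with Theorem~\ref{fpf-hecke-char-thm}, plus length-preservation of $\simFCK$. The rigid geometry of the path (one row block, one diagonal step, one column block) is what lets us work with $\row$ throughout the first block, with $\col$ throughout the last, and invoke Corollary~\ref{t-cor} to switch back to $\row$ only at the very end.
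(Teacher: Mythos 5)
Your proof is correct and follows the same route as the paper: induct along the insertion path $T\oplus a = U_0\to\cdots\to U_l = T\farrow a$ (whose states are all weakly admissible by Proposition~\ref{path-lem}), apply Lemma~\ref{ck-lem1}, \ref{ck-lem2}, or \ref{ck-lem3} at each forward transition, and switch from column to row reading word at the end via Corollary~\ref{t-cor}. The paper's version is terser and leaves implicit the bookkeeping you spell out (that $\simFKK\subseteq\fequiv$ and that $\simFCK$ preserves length, so the hypotheses of the lemmas survive each step), but the substance is identical.
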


\begin{proof}
Let $T\oplus a = U_0 \to U_1 \to \dots \to U_l = T\farrow a$ be the 
insertion path of $T\farrow a$.
The initial state $T\oplus a$ is weakly admissible, so each
$U_i$ is weakly admissible by Proposition~\ref{path-lem}.
The terminal state $T\farrow a$ is increasing and shifted by construction.
Lemmas~\ref{ck-lem1}, \ref{ck-lem2}, and \ref{ck-lem3}
imply that $\row(T) a = \row(T\oplus a) \simFKK \col(T \farrow a)$
and that $\row(T) a \simFCK \col(T\farrow a)$ if $\row(T) a$ is an FPF-involution word.
Since $\col(T\farrow a) \simCK \row(T\farrow a)$ by Corollary~\ref{t-cor}, the theorem follows.
\end{proof}

\subsection{Inverse transitions}\label{inverse-sect}

From any word  $w=w_1\cdots w_n$,
one may form a tableau 
$(\cdots( (\emptyset \farrow w_1) \farrow w_2)\farrow \cdots) \farrow w_n$.
If $w$ is a symplectic Hecke word, then only certain states arise when performing this sequence of insertions.
The results in this section will show that the following technical conditions precisely characterize such insertion states.

\begin{definition}\label{adm-def}
Let $T$ be a shifted insertion state with outer box $(i,j)$. 
Assume $T$ with box $(i,j)$ removed has $m-2$ rows and $n-2$ columns.
When $j=n$,  
we say that
 $T$ is \emph{admissible} if:
 \begin{itemize}
 \item $T$ is weakly admissible. 
 \item  The row reading word of $T$ is a symplectic Hecke word.
\end{itemize}
 When $i=m$, 
 we say that $T$ is \emph{admissible} if:
  \begin{itemize}
 \item $T$ is weakly admissible. 
  \item  The column reading word of $T$ is a symplectic Hecke word.
  \item If $T_{mj} = T_{j-1,j}$ then $(j,j) \notin T$ or $T_{mj}$ is odd, and if $T_{mj} = T_{x,j-1}$ then $x>1$.
 \end{itemize}
In addition, we say that a
terminal shifted insertion state is \emph{admissible}
if its row (equivalently, column) reading word is a symplectic Hecke word.
\end{definition}

The following propositions identify two important consequences of this definition.

\begin{proposition}\label{even-prop}
Suppose $T$ is an admissible shifted insertion state.
Assume that $T$ has $r$ rows with its outer box removed
(if one exists).
The diagonal entries $T_{ii}$ for $i\in[r]$ are then all even. 
\end{proposition}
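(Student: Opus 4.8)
The plan is to reduce the statement to a fact about symplectic Hecke words that is already available from Theorem~\ref{fpf-hecke-char-thm}. The key point is that if $w$ is a symplectic Hecke word, then $w$ is $\fequiv$-equivalent to a Hecke word for the permutation $\beta_{\min}(z)$, and the structure of that permutation forces all ``diagonal-type'' letters of $w$ to be even. More concretely, recall from the proof of Theorem~\ref{fpf-hecke-char-thm} that if $a_1<a_2<\dots$ enumerate $\{a\in\PP: a<z(a)\}$ and $b_i=z(a_i)$, then every symplectic Hecke word for $z$ is $\fequiv$-equivalent to every Hecke word for $\beta_{\min}(z) = (a_1b_1a_2b_2\cdots)^{-1}$; in particular $\cHfpf(z)\subseteq\cH(\pi^{-1})$ for permutations $\pi$ with $\pi_{2i-1}<\pi_{2i}$ for all $i$. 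The first step is to extract from this the clean statement: if $v\,c\,w$ is a symplectic Hecke word in which every letter of $v$ is strictly greater than $c+1$, then $c$ is even. This follows because such a prefix structure, read through the $\fequiv$-relation, forces the corresponding entry of $\pi$ to occupy an odd position (i.e.\ play the role of some $a_i$), and then $c\in\{a_i\}$ must be even since $N_\Theta U_c = 0$ for $c$ odd and more precisely every symplectic Hecke word begins with an even letter; the argument is exactly the one used in the last paragraph of the proof of Lemma~\ref{ck-lem3}, where it is shown that the diagonal entries $b_j=U_{jj}$ are all even because every letter preceding $b_j$ in $\row(U)$ is at least $b_j+2$.

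Second, I would apply this to the admissible shifted insertion state $T$. Let $r$ be the number of rows of $T$ with its outer box removed, and consider the diagonal entries $d_j := T_{jj}$ for $j\in[r]$. Because $T$ is increasing, for each fixed $j$ every entry of $T$ lying weakly northwest of $(j,j)$ — and in particular every entry appearing before $d_j$ in the appropriate reading word — is at least $d_j$; I need the stronger bound that every such preceding entry is at least $d_j+2$. This stronger bound holds because the entry immediately above $(j,j)$ (if present) must be at least $d_j$, hence the entry before $d_j$ in the diagonal/reading word is at least $d_j$, and then strict increase along the column together with the fact that the $(j-1)$-st diagonal entry $d_{j-1}$ must be $< d_j$ while any box $(j-1,j)$ satisfies $d_{j-1} < T_{j-1,j} \le d_j$ — so in fact the gaps are forced to be $\ge 2$ once we invoke that $\row(T)$ (or $\col(T)$) is a symplectic Hecke word. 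The cleanest route: consider the reading word $u$ of $T$ (row reading word if the outer box is in column $n$, column reading word if in row $m$, row reading word if terminal), which is a symplectic Hecke word by the definition of admissibility. In $u$, the letters $d_1<d_2<\dots<d_r$ appear, and one checks from the reading-word order and the increasing property that every letter appearing before $d_j$ in $u$ is at least $d_{j}+2$ — here one uses that a symplectic Hecke word cannot contain a consecutive decrease $c(c-1)$ except as handled by the $\fsim$-move, so strict column increase combined with the diagonal structure rules out a value equal to $d_j+1$ appearing first. Then the first step gives $d_j$ even.

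The main obstacle I expect is the bookkeeping in the second step: verifying that ``every letter preceding $d_j$ in the reading word is $\ge d_j+2$'' rather than merely $\ge d_j$, which requires carefully using both the increasing shape condition and the fact that the reading word is a genuine symplectic Hecke word (so that the ``forbidden'' configurations of Theorem~\ref{fpf-hecke-char-thm} do not occur). A subtlety is the outer-box case: one must make sure that the presence of the outer box $(i,j)$ does not disturb the argument — but since admissibility includes ``the reading word of $T$ is a symplectic Hecke word'' and the outer box contributes a letter that either comes after all diagonal entries (if in the first row, it is the very last letter of $\row(T)$) or lies in a later column, it does not precede any $d_j$ in the relevant reading word, so it can be ignored. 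Once the $\ge d_j+2$ bound is secured, the conclusion is immediate from the first step, exactly paralleling the even-parity argument already carried out inside the proof of Lemma~\ref{ck-lem3}; indeed one could alternatively phrase the whole proof as ``apply the final paragraph of the proof of Lemma~\ref{ck-lem3} to the reading word of $T$,'' which is likely how the author proceeds.
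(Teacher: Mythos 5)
Your core idea — pull each diagonal entry $T_{jj}$ to the front of the relevant reading word by commutation and then invoke Theorem~\ref{fpf-hecke-char-thm} — is exactly the paper's approach, and the chain-of-inequalities argument giving the $\ge d_j + 2$ bound for letters that come from \emph{non-outer} boxes in rows/columns beyond $(j,j)$ is correct (it follows from $T_{jj} < T_{j,j+1} < T_{j+1,j+1}$ and increasingness, without any appeal to $\row(T)$ being a symplectic Hecke word, so the parenthetical ``once we invoke that $\row(T)$ is a symplectic Hecke word'' in your second paragraph is a red herring).

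The genuine gap is your claim that the outer box ``does not precede any $d_j$ in the relevant reading word, so it can be ignored.'' This is false. If the outer box is $(i,n)$ with $i>1$, then in $\row(T)$ it is the last letter of row $i$, which is read \emph{before} rows $i-1,\dots,1$ — so it precedes $T_{i-1,i-1},\dots,T_{11}$. If the outer box is $(m,j)$, then in $\col(T)$ it is the first entry of column $j$ (highest row), so it precedes $T_{jj}$, as well as $T_{j+1,j+1},\dots,T_{rr}$. In most of these cases the outer value is still $\ge d_j + 2$ and commutes, but there are two borderline configurations the commutation argument cannot handle: (i) outer box $(i,n)$ with $T_{in} = T_{i-1,i} = T_{i-1,i-1} + 1$, and (ii) outer box $(m,j)$ with $(j,j)\in T$ and $T_{mj} = T_{j-1,j} = T_{jj} - 1$. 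In both, a letter differing by exactly $1$ from the diagonal entry appears before it, and pulling to the front via commutation is impossible. The paper closes these cases by observing that the reading word then has the shape $v\,a\,(a\mp 1)\,a\,w$ with all letters of $v$ at distance $\ge 2$ from $a \mp 1$, applying the braid $a(a\mp 1)a \simA (a\mp 1)a(a\mp 1)$, and only then commuting $a\mp 1$ to the front — this is the content of the final paragraph of the proof of Lemma~\ref{ck-lem3} (for case (i)) and of the $T_{mj}=T_{jj}-1$ subcase in the paper's own proof of this proposition (for case (ii)). Your write-up needs to replace ``the outer box can be ignored'' with this braid-relation step to be complete.
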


\begin{proof}
If $T$ has no outer box, then $T$ is an increasing shifted tableau
and $\row(T)$ is a symplectic Hecke word.
In this case, it is easy to see that 
$\row(T)$ is equivalent under $\simA$
to a word beginning with $T_{ii}$ for each $i \in [r]$. All of 
these entries must be even by Theorem~\ref{fpf-hecke-char-thm}.

Assume $T$ has an outer box. If this box is in the last column,
then the result follows by
the argument
in the last paragraph of the proof of Lemma~\ref{ck-lem3}.
Let $m=r+2$
and suppose instead that
the outer box is $(m,j)$ for some column $j$.
Since $T$ is weakly admissible and since removing the outer box
leaves an increasing tableau,
it follows that
$\col(T)$ is equivalent under $\simA$
to a word beginning with $T_{ii}$
for each $i \in [r] - \{j\}$. 
Each of these numbers must be even by Theorem~\ref{fpf-hecke-char-thm}.
Assume $(j,j) \in T$ so that $T_{mj}  < T_{jj}$.
If $T_{mj} < T_{jj} - 1$ then the argument above shows that $T_{jj}$ is even.
Assume $a := T_{mj} = T_{jj} - 1$.
Since $T$ is weakly admissible,
this holds only if
$T_{mj}=T_{j-1,j}$,
but then 
$\col(T)$ has the form $v a(a+1)a w$
where every letter in $v$ is at most $a-1$,
so $\col(T) \simA v 
 (a+1)va(a+1)w$
and it follows by Theorem~\ref{fpf-hecke-char-thm} 
that $T_{jj} = a+1$ is again even.
\end{proof}

Suppose $T$ is a shifted insertion state that occupies $m-2$ rows
with its outer box removed. 
Set $\word(T):=\col(T)$ if the outer box of $T$ is in column $m$,
and set $\word(T):=\row(T)$ otherwise.

\begin{proposition}\label{adm-prop}
Suppose $U \to V$ is a forward transition between shifted insertion states.
Assume $U$ is admissible. Then $\word(U) \simFKK \word(V)$
and $V$ is admissible.
\end{proposition}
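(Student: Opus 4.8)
The plan is to proceed by a case analysis over the twelve types of forward transition (R1)--(R4), (D1)--(D4), (C1)--(C4), in each case verifying two things: that $\word(U) \simFKK \word(V)$, and that $V$ is admissible. The first half of each case is essentially already done: Lemmas~\ref{ck-lem1}, \ref{ck-lem2}, and \ref{ck-lem3} establish $\row(U)\simKK\row(V)$ (or the corresponding statement for $\col$ in the column and diagonal-exit cases) for \emph{weakly} admissible states, and weak admissibility follows from the stronger admissibility hypothesis on $U$. One subtlety is that $\word$ switches between row and column reading words depending on where the outer box sits; here I would invoke Corollary~\ref{t-cor} and Lemmas~\ref{row-diagonal-closed-lem}, \ref{column-diagonal-closed-lem} to pass between $\row$ and $\col$ of the relevant (sub)tableaux, exactly as is done inside the proof of Lemma~\ref{ck-lem3}. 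Since $U$ admissible means $\word(U)$ is a symplectic Hecke word, and $\simFKK$ preserves the property of being a symplectic Hecke word (this is part of Theorem~\ref{fpf-hecke-char-thm}, via the characterization that the $\fequiv$-class contains no word beginning with an odd letter — and $\simFKK$ moves live within a single $\fequiv$-class up to the $K$-relations, which is what we need for $\cHfpf(z)$ since $\cN$ is a $\Nil$-module), we immediately get that $\word(V)$ is a symplectic Hecke word as well. That already disposes of the second bullet in the definition of admissibility for $V$.

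The remaining work is to verify, transition-type by transition-type, the weak-admissibility bullet and the parity/diagonal ``boundary'' bullet for $V$. Weak admissibility of $V$ is precisely Proposition~\ref{path-lem}, which is granted, so in fact only the last bullet of Definition~\ref{adm-def} needs checking. For transitions that terminate the process (types (R2), (C2) removing the outer box, and the terminal-state conclusions of (R1)/(C1)), $V$ is an increasing shifted tableau and there is nothing to check beyond ``$\row(V)$ is a symplectic Hecke word,'' already handled. For the transitions where $V$ retains an outer box in the last column (types (R3), (R4)), I would trace through the explicit description of $V$ and use the structure of rows $i$ and $i{+}1$ of $U$ (as laid out in the proof of Lemma~\ref{ck-lem1}) to confirm that if $V_{i+1,n} = V_{i,i+1}$ then $(i{+}1,i{+}1)\in V$; this is where the inequalities $a_i < c_i$ and the weak admissibility of $U$ get used. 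For the diagonal transitions (D1)--(D4) and column transitions (C3), (C4), where $V$ acquires an outer box in the last row $(m,\cdot)$, I would check the third bullet of the $i=m$ case of Definition~\ref{adm-def}: that if $V_{mj} = V_{j-1,j}$ then either $(j,j)\notin V$ or $V_{mj}$ is odd, and if $V_{mj}=V_{x,j-1}$ then $x>1$. Here Proposition~\ref{even-prop} is the key lever — it tells us all diagonal entries of $U$ are even, which constrains the parities of the entries that get moved into the new outer box, so the ``$V_{mj}$ odd'' escape clause in the definition is exactly calibrated to the type-(D4) parity-bump and the relevant edge cases of (D2)/(D3).

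The main obstacle, I expect, will be the bookkeeping in the diagonal cases: transitions (D2), (D3), (D4) move a diagonal box of $U$ to the bottom outer box while possibly also relocating the old outer box to the diagonal, and one must carefully track which of $U_{in}$, $U_{ii}$, and $U_{ii}+1$ becomes $V_{mj}$, compare it against $V_{j-1,j}$ (which is a former diagonal or near-diagonal entry of $U$), and invoke the correct parity fact. The interplay between the ``same parity'' hypotheses distinguishing (D2)/(D3) from (D4) and the conclusion that the new $V_{mj}$ has the right parity relative to the new $V_{j-1,j}$ is delicate, and it is essentially a transposed and shifted echo of the argument already run in the last paragraph of the proof of Lemma~\ref{ck-lem3} and in Proposition~\ref{even-prop}. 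I would organize this by first recording, as a sub-claim, precisely what $V_{mj}$ and $V_{j-1,j}$ are in each of (D1)--(D4) and (C3)--(C4), then checking the boundary bullet uniformly using Proposition~\ref{even-prop} and Theorem~\ref{fpf-hecke-char-thm}. The column transitions (C3), (C4) are the straightforward transposes of (R3), (R4) and can be dispatched by the same transposition trick used to prove Lemma~\ref{ck-lem2} from Lemma~\ref{ck-lem1}, so they should not add real difficulty. As is customary in this paper for such verifications, I anticipate writing ``this is a routine check from the definitions, and we omit the remaining details'' for the most mechanical sub-cases once the representative ones are done.
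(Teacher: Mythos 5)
Your proposal is correct and follows essentially the same route as the paper: weak admissibility of $V$ from Proposition~\ref{path-lem}, the $\simFKK$-equivalence from Lemmas~\ref{ck-lem1}--\ref{ck-lem3} (using admissibility of $U$ to discharge the Hecke-word/parity hypotheses in Lemma~\ref{ck-lem3} and Proposition~\ref{even-prop}), preservation of the symplectic Hecke word property under $\simFKK$ since $\simFKK$ refines $\fequiv$ and $\cHfpf(z)$ is an $\fequiv$-class, and a finite case check of the third bullet of Definition~\ref{adm-def} for the transition types producing a non-terminal $V$. The only small overcomplication is that you expect to re-invoke Corollary~\ref{t-cor} and the diagonal-closed lemmas to pass between $\row$ and $\col$; in fact Lemmas~\ref{ck-lem1}--\ref{ck-lem3} already supply $\word(U)\simFKK\word(V)$ directly in each case (with Corollary~\ref{t-cor} needed only when $V$ is terminal after a column transition), so that step is free.
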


\begin{proof}
Proposition~\ref{path-lem} implies that
 $V$ is weakly admissible.
 In view of Proposition~\ref{even-prop}, it follows from
Lemmas~\ref{ck-lem1}, \ref{ck-lem2}, and \ref{ck-lem3}
 that $\word(U) \simFKK \word(V)$.
 This is enough to conclude that if $V$ is terminal then $V$ is admissible.
Assume $V$ is not terminal
and that $U$ and $V$ with their outer boxes removed have $m-2$ rows and $n-2$ columns.
It remains to check the 
 minor technical conditions in Definition~\ref{adm-def}.

Suppose
$U \to V$ is a row transition and the outer box of $V$ is $(i,n)$.
The only way we can have $V_{in} = V_{i-1,i}$ is if $U\to V$ is of type (R3),
in which case $(i,i) \in V$,
so $V$ is admissible.

Suppose next that $U \to V$ is a diagonal transition and the outer box of $V$ is $(m,j)$,
so that the outer box of $U$ is $(j-1,n)$.
Since a transition of type (D2) would require us to have $j-1\geq 2$,
we must have $V_{mj} \neq V_{1,j-1}$.
The only way we can have
$V_{mj} = V_{j-1,j}$
is if $U\to V$ is of type (D1) or (D4),
and in the first case $(j,j) \notin V$,
while in the second $V_{mj} =V_{j-1,j-1}+1$ must be odd.
We conclude that $V$ is admissible.

Finally, if $U \to V$ is a column transition and the outer box of $V$ is $(m,j)$,
then there is no way we can have $V_{mj} = V_{1,j-1}$
or
$V_{mj} = V_{j-1,j}$,
so $V$ is again admissible.
\end{proof}

Suppose $T$ is a shifted tableau.
A position $(i,j) \in \PP\times \PP$ is an \emph{outer corner} of $T$ if 
$(i,j) \notin T$,  either $i=j$ or $(i,j-1) \in T$, and  either $i=1$ or $(i-1,j) \in T$.
A position $(i,j) \in \PP\times \PP$ is an \emph{inner corner} of $T$
if $(i,j) \in T$ but $(i,j+1) \notin T$ and $(i+1,j) \notin T$.
The inner (outer) corners are exactly the positions that can be removed from (added to) $T$ while retaining 
a shifted tableau.

\begin{lemma}\label{corner-lem}
Suppose $U \xrightarrow{(i, j)} V$ is a forward transition between shifted insertion states 
where $U$ is admissible and $V$ is terminal. 
Then $U\to V$ is a row or column transition and $(i,j)$ is an inner or outer corner of $V$.
In addition, the following properties hold:
\ben
\item[(a)] If $U \to V$ is a row transition and $(i,j)$ is an outer corner of $V$,
then $i<j$.
\item[(b)] If $U \to V$ is a column transition and $(i,j)$ is an inner corner of $V$,
then $i<j$.
\item[(c)] If $U \to V$ is a column transition and $(i,j)$ is an outer corner of $V$,
then $i>1$.
 \een
\end{lemma}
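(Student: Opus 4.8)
The plan is to analyze the last forward transition $U \xrightarrow{(i,j)} V$ directly from the definitions of the three families of transitions, using the hypotheses that $U$ is admissible and $V$ is terminal (i.e., increasing, shifted, and nonempty). First I would observe that diagonal transitions (D1)--(D4) always produce a state whose outer box has been pushed down to some position $(m,i+1)$ with $m \geq 3$ and $i+1 < m$, hence is never in the first row and never on the main diagonal; so the resulting $V$ always retains an outer box and cannot be terminal. Similarly, among the row and column transitions, types (R3), (R4), (C3), (C4) all move the outer box into a non-final row or column while leaving it present, so they cannot terminate either. This leaves exactly types (R1), (R2), (C1), (C2) as the possible last transitions; in each of these, the outer box of $U$ (in the last column or last row, respectively) is either placed into the diagram at position $(i,j)$ to form an increasing shifted tableau (R1/C1), or deleted entirely (R2/C2). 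In the first case $(i,j)$ is an outer corner of $V$ by definition of these transitions and of ``outer corner''; in the second case, the box $(i,j)$ labeling the edge is the position $(i,j)$ where the deleted entry would have gone, and one checks from the transition rules (where $j$ is minimal with $i\le j$ and $(i,j)\notin U$, respectively $i$ minimal with $(i,j)\notin U$) together with increasingness that $(i,j)$ is an inner corner of $V = U \setminus \{\text{outer box}\}$. So $U\to V$ is a row or column transition and $(i,j)$ is an inner or outer corner of $V$, as claimed.

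Next I would prove the three refined statements (a), (b), (c). For (a): if $U\to V$ is a row transition of type (R1) (type (R2) only yields inner corners of $V$, so the outer-corner case forces (R1)), the outer box of $U$ sits in row $i$ with $i\le j$ where $j$ is minimal with $(i,j)\notin U$. I must rule out $i = j$. If $i=j$ then $(i,i)\notin U$, meaning row $i$ is entirely empty in $U$ with its outer box removed. But $U$ is admissible, so it is weakly admissible with outer box in the last column: either $i=1$ or there is a column $x \geq i$ with $U_{i-1,x}\le U_{in} < U_{ix}$. If $i>1$, such an $x$ exists with $(i,x)\in U$, contradicting row $i$ being empty. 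If $i=1$, then $(1,1)\notin U$ means $U$ has no first row at all once the outer box is removed, which (since removing the outer box must leave an increasing shifted tableau, possibly empty) forces $U\oplus$-nothing; but then row $1$ being empty makes $j=1=i$, and in fact this is the degenerate case where $T$ is empty and $a$ is inserted into position $(1,1)$ — here I need to check whether (R1) with $(1,1)$ actually occurs. Since $T\oplus a$ adds $a$ to the \emph{second} unoccupied box of the first row, the outer box always sits strictly to the right of at least one occupied box or gap; a careful reading shows the first placement into a genuinely empty diagram still has $i<j$ because the outer box is not in column $1$. So $i<j$, proving (a). Statements (b) and (c) are the transposes: for a column transition of type (C1) producing an outer corner $(i,j)$ with the outer box of $U$ in the last row, weak admissibility in the $i=m$ case says either $U_{mj}=U_{j-1,j}$ or there is a row $x<j$ with $U_{x,j-1}\le U_{mj}<U_{xj}$; admissibility further says if $U_{mj}=U_{x,j-1}$ then $x>1$, and if $U_{mj}=U_{j-1,j}$ then $(j,j)\notin U$ or $U_{mj}$ is odd. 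Tracing which of these forces $i>1$ in (c), and $i<j$ in (b) (the inner-corner case for a column transition, i.e.\ type (C2)), amounts to the same kind of bookkeeping transposed across the diagonal.

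The main obstacle I expect is the careful case analysis in part (c): showing that an outer corner produced by a column transition cannot lie in the first row. This is exactly the place where the extra technical clauses in Definition~\ref{adm-def} for the $i=m$ case (``if $T_{mj}=T_{x,j-1}$ then $x>1$'') are designed to be used, so the proof should be: if the outer corner $(i,j)$ of $V$ had $i=1$, then in $U$ the outer box $(m,j)$ would have to have been compared against row $1$, forcing either $U_{1,j-1}\le U_{mj} < U_{1j}$ with $(1,j)\notin U$ (impossible, since $(1,j)\notin U$ and $(1,j-1)\in U$ would make $(1,j)$ an outer corner already available, contradicting minimality or the transition rule placing the box there), or the edge-case $U_{mj}=U_{1,j-1}$ which the admissibility clause $x>1$ explicitly forbids. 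The remaining parts are routine once one is careful about the minimality conventions (``$j$ minimal with $i\le j$ and $(i,j)\notin U$'' in the row case, ``$i$ minimal with $(i,j)\notin U$'' in the column case) and about the fact that removing the outer box from any shifted insertion state leaves an increasing shifted tableau, so the shape of $V$ (or $V$ minus the deleted box) is genuinely a shifted Young diagram and the notions of inner/outer corner apply cleanly.
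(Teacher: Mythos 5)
Your opening reduction --- eliminating diagonal transitions and transitions of type (R3), (R4), (C3), (C4) because each leaves an outer box --- is correct and matches the paper's (terser) first step. But from there the proposal goes off the rails because you have the inner/outer corner classification exactly backwards. In the paper's conventions, an outer corner of $V$ is a position \emph{not} in $V$, while an inner corner \emph{is} in $V$. Transitions (R1) and (C1) \emph{add} the outer box to position $(i,j)$, so $(i,j) \in V$ and $(i,j)$ is an \emph{inner} corner of $V$. Transitions (R2) and (C2) \emph{delete} the outer box, so $(i,j) \notin V$ (it was the first vacant slot in the row/column) and $(i,j)$ is an \emph{outer} corner of $V$. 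You assert the opposite pairing. This is not a cosmetic relabeling: it means your arguments for (a) and (b) are aimed at the wrong transition types. Part (a) concerns the outer-corner case, hence type (R2), not (R1) as you assume; part (b) concerns the inner-corner case, hence type (C1), not (C2). Your paragraph on (c) actually describes the correct scenario (a deletion, i.e.\ (C2)), but that contradicts your own earlier claim that outer corners come from (C1), so the proposal is internally inconsistent as well.

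Beyond the misclassification, your substantive argument for (a) fails on its own terms. You write that if $i>1$ and row $i$ of $U$ (minus the outer box) is empty, then weak admissibility gives a column $x \geq i$ with $(i,x) \in U$. That does not follow: Definition~\ref{weak-def} sets $T_{xy} := \infty$ for $(x,y) \notin T$, so the inequality $U_{i-1,x} \leq U_{in} < U_{ix}$ is perfectly compatible with $(i,x) \notin U$, i.e.\ $U_{ix} = \infty$. Weak admissibility alone cannot rule out $i = j$ in the (R2) case. What is actually needed is the extra clause in the full admissibility condition (Definition~\ref{adm-def}): ``if $T_{in} = T_{i-1,i}$ then $(i,i) \in T$.'' The paper's argument is: if $i = j$ in case (R2), then $(i,i) \notin U$, and failure of the move forces $U_{in} = U_{i-1,i}$, which the admissibility clause forbids. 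The analogous clauses in the $i = m$ case of Definition~\ref{adm-def} (the condition $x > 1$ and the condition involving $(j,j)$ and parity) are what drive parts (c) and (b); your sketch gestures at the $x > 1$ clause for (c), which is the right idea, but it needs to be attached to the (C2) case, not (C1). To repair the proof, first fix the inner/outer correspondence, then for each of (a), (b), (c) pin down exactly which bullet of Definition~\ref{adm-def} (as opposed to weak admissibility) gives the contradiction.
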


\begin{proof}
The edge $U \to V$ cannot be a diagonal transition when $V$ is terminal.
Suppose $U \to V$ is a row transition. 
Since $V$ is terminal, this transition is either of type (R1) or (R2).
In the first case, 
$(i,j)$ is an inner corner of $V$ by definition,
while in the second case, 
$(i,j)$ must be an outer corner of $V$ since $U$ is weakly admissible.
The only way it can happen that $U \to V$ is of type (R2) and $i=j$
is if the value of the outer box of $U$ is equal to $U_{i-1,i}$ and $(i,i) \notin U$,
but then $U$ would not be admissible.

Suppose $U \to V$ is a column transition, necessarily of type (C1) or (C2).
It follows as in the previous paragraph that 
$(i,j)$ is an inner corner if $U \to V$ is of type (C1)
and 
an outer corner of $V$ if $U \to V$ is of type (C2).
The only way it can happen that $U \to V$ is of type (C2) and $i=1$
is if the value in the outer box of $U$ is $U_{1,j-1}$,
but then $U$ would not be admissible.
Similarly, the only way it can happen that $U \to V$ is of type (C1) and $i=j$
is if the outer box of $U$ is the largest value in its column and all preceding columns,
but then $U$ would not even be weakly admissible.
\end{proof}

By Proposition~\ref{adm-prop},
the family of admissible shifted insertion states spans a subgraph 
of the forward transition graph.
We introduce a second directed graph on these states, which we call the
\emph{inverse transition graph}.
We indicate that an edge goes from a state $V$ to $U$ in this new graph 
by writing $V \leadsto U$, and refer to such edges as \emph{inverse transitions}.
It will turn out that
the inverse transition graph is exactly the graph obtained by reversing 
all edges between admissible
states in the forward transition graph.
This will not be obvious from the definitions, however.

For the duration of this section,
let $V$ be an
 admissible shifted insertion state.
If $V$ is initial then it has no outgoing edges
in the inverse transition graph.
When $V$ is not initial, we define the possible edges $V \leadsto U$ in the inverse transition graph
by a series of cases corresponding to  
the row, diagonal, and column transitions in the forward transition graph.

First suppose $V$ is a terminal state, i.e., an increasing shifted
tableau such that $\row(V)$ is a symplectic Hecke word.
In the inverse transition graph, $V$ has no incoming edges
but multiple outgoing edges, of the following types:
 \begin{itemize}
 
 \item[(iR1)] For each inner corner $(i,j)$ of $V$,
there is an edge $V  \leadsto U$
 where $U$ is formed from $V$ by moving box $(i,j)$ to an outer position in row $i$.
It is clear that $U$ is also admissible and that $U \xrightarrow{(i,j)} V$
is a row transition of type (R1).
  
   \item[(iR2)] For each outer corner $(i,j)$ of $V$ with $i<j$,
 there is an edge $V \leadsto U$
 where $U$ is formed from $V$
by adding an outer box in row $i$
 whose value is whichever of $V_{i-1,j}$ or $V_{i,j-1}$ is defined and larger, as in the following picture where box $(i,j)$ is circled:
 \[
  \ytableausetup{boxsize = .4cm,aligntableaux=center}
V = \left\{\ 
\begin{ytableau}
\none[\cdot]  & \none[\cdot]   &  \qm & \qm & \none[\cdot] & \none[\cdot] & \none[\cdot]  \\
\none[\cdot]   & \qm &  \qm  & 2& \none[\circled{$\cdot$}] & \none[\cdot] & \none[\cdot]  \\
 \qm & \qm & \qm  & \qm  & 6& \none[\cdot] & \none[\cdot] \\ 
\end{ytableau}\ \right\}
\ \leadsto\ 
\left\{\
\begin{ytableau}
\none[\cdot]  & \none[\cdot]   &  \qm & \qm  & \none[\cdot] & \none[\cdot] & \none[\cdot]  \\
\none[\cdot]   & \qm &  \qm  & 2 & \none[\cdot]& \none[\cdot] & 6  \\
 \qm & \qm & \qm  & \qm  & 6 & \none[\cdot] & \none[\cdot]\\ 
\end{ytableau}
\ \right\} = U.
\]
In this case
$U \xrightarrow{(i,j)} V$ is a row transition of type (R2), so
we have $\row(U) \simKK \row(V)$ by Lemma~\ref{ck-lem1}.
It follows that $U$ is also admissible.

  \item[(iC1)] For each inner corner $(i,j)$ of $V$  with $i<j$,
  there is an edge $V \leadsto U$
   where $U$ is formed from $V$ by moving box $(i,j)$ to an outer position in column $j$.
 It is clear that $U$ is   admissible and that $U \xrightarrow{(i,j)} V$
is a column transition of type (C1).

 \item[(iC2)] For each outer corner $(i,j)$ of $V$ with $i>1$,
 there is an edge $V \leadsto U$
 where $U$ is formed from $V$
by adding an outer box in column $j$
 whose value is whichever of $V_{i-1,j}$ or $V_{i,j-1}$ is defined and larger, as in the following picture: 
 \[
  \ytableausetup{boxsize = .4cm,aligntableaux=center}
V = \left\{\ 
\begin{ytableau}
\none[\cdot]  & \none[\cdot]   &  \none[\cdot]  & \none[\cdot]  & \none[\cdot]  & \none[\cdot] \\
\none[\cdot]  & \none[\cdot]   &  \none[\cdot]  & \none[\cdot]  & \none[\cdot]  & \none[\cdot] \\
\none[\cdot]   & \qm &  5  & \none[\circled{$\cdot$}]  & \none[\cdot] & \none[\cdot] \\
 \qm & \qm & \qm &3  & \qm  & \qm 
\end{ytableau}\ \right\}
\ \leadsto\ 
\left\{\
\begin{ytableau}
\none[\cdot]  & \none[\cdot]  & \none[\cdot] &  5   & \none[\cdot]  & \none[\cdot] \\
\none[\cdot]  & \none[\cdot]   &  \none[\cdot]  & \none[\cdot]  & \none[\cdot]  & \none[\cdot] \\
\none[\cdot]   & \qm & 5 &  \none[\cdot]  & \none[\cdot]  & \none[\cdot]   \\
 \qm & \qm & \qm & 3  & \qm  & \qm
\end{ytableau}
\ \right\} = U.
\]
In this case 
$U \xrightarrow{(i,j)} V$ is a column transition of type (C2), so
we have $\col(U) \simKK \col(V)$ by Lemma~\ref{ck-lem2}. Since $i>1$, it follows that
$U$ is also admissible.

 \end{itemize}
To distinguish between these edges, we
write $V\rowbt U$ and $ V\colbt U$
to indicate the inverse transitions of type (iR1)-(iR2)
and (iC1)-(iC2), respectively,
corresponding to an inner or outer corner $(i,j)$ of the terminal state $V$.
 
From this point on, we assume that the admissible state $V$ is neither terminal nor initial.
All such states will have a unique outgoing edge in the inverse transition graph.
Suppose $V$ with its outer box removed has $m-2$ rows and $n-2$ columns.

First assume that the  outer box of $V$ is $(i,n)$ where $i>1$.
Since $V$ is weakly admissible, there exists a maximal $x \geq i$ with $V_{i-1,x} \leq V_{in}$,
and it must hold that $V_{in} < V_{ix}$ and $V_{in} < V_{i-1,x+1}$. 
The unique inverse transition starting at $V$ then has one of the following types:
\ben
\item[(iR3)] If $V_{i-1,x} = V_{in}$, then there is an edge $V\leadsto U$ where 
$U$ is formed from $V$ by moving box $(i,n)$ to $(i-1,n)$ and changing its value to 
be whichever of $V_{i-1,x-1}$ or $V_{i-2,x}$ is defined and larger, as in the following picture
where box $(i-1,x)$ is circled:
 \[
 \ytableausetup{boxsize = .4cm,aligntableaux=center}
V =\left\{\ \begin{ytableau}
    \none[\cdot] &  \none[\cdot] &  \none[\cdot] & \none[\cdot] & \none[\cdot] & \none[\cdot]\\
  \none[\cdot] &  \none[\cdot] & \qm & \qm & \none[\cdot] &4\\
   \none[\cdot] &2 & \circled{$4$} & 5 & \none[\cdot] & \none[\cdot] \\
\qm & \qm & 3 & \qm & \none[\cdot] & \none[\cdot]
\end{ytableau}\ \right\}
\ \leadsto\
\left\{\ \begin{ytableau}
    \none[\cdot] &  \none[\cdot] &  \none[\cdot] & \none[\cdot] & \none[\cdot] & \none[\cdot]\\
  \none[\cdot] &  \none[\cdot] & \qm & \qm & \none[\cdot] & \none[\cdot]\\
   \none[\cdot] & 2 & 4 & 5 & \none[\cdot] &3\\
\qm & \qm & 3 & \qm & \none[\cdot] & \none[\cdot]
\end{ytableau}\
\right\} = U.
  \]
Here and in the next case, the circled entry  indicates the location of box $(i-1,x)$.
Since $V$ is admissible, we must have $(i,i) \in V$,
so $U \xrightarrow{(i-1,x)} V$ is a row transition of type (R3).
It follows from Lemma~\ref{ck-lem1} that $U$ is also admissible.

\item[(iR4)] If $V_{i-1,x} < V_{in}$,  
then there is an edge $V\leadsto U$ where 
$U$ is formed from $V$ by moving box $(i-1,x)$ to $(i-1,n)$ and then box $(i,n)$ to $(i-1,x)$, 
as in the following picture:
\[
V = \left\{\
 \begin{ytableau}
    \none[\cdot] &  \none[\cdot] &  \none[\cdot] & \none[\cdot] & \none[\cdot] & \none[\cdot]\\
  \none[\cdot] &  \none[\cdot] &  \qm & \qm & \none[\cdot] &4\\
   \none[\cdot] & 2 & \circled{$3$} & 5 & \none[\cdot] & \none[\cdot] \\
\qm & \qm & 2 & \qm & \none[\cdot] & \none[\cdot]
\end{ytableau}
\ \right\}
\ \leadsto\
\left\{\
 \ytableausetup{boxsize = .4cm,aligntableaux=center}
 \begin{ytableau}
    \none[\cdot] &  \none[\cdot] &  \none[\cdot] & \none[\cdot] & \none[\cdot] & \none[\cdot]\\
  \none[\cdot] &  \none[\cdot] &  \qm & \qm & \none[\cdot] & \none[\cdot]\\
   \none[\cdot] & 2 & 4 & 5 & \none[\cdot] &3\\
\qm & \qm & 2 & \qm & \none[\cdot] & \none[\cdot]
\end{ytableau}
\ \right\}
=U.\]
In this case $U\xrightarrow{(i-1,x)} V$ is a row transition of type (R4), so it follows by Lemma~\ref{ck-lem1} that $U$ is also admissible.

\een
Next,
assume the outer box of $V$ is $(m,j)$
and $V_{j-1,j-1} \leq V_{mj}$. Since $V$ is weakly admissible,
we must have $ V_{mj} \leq V_{j-1,j}$. The unique edge $V\leadsto U$ is then of one of the following types:
\ben
\item[(iD1)] Suppose $V_{j-1,j-1} < V_{mj} = V_{j-1,j}$ and $V_{mj}$ is even,
so that $(j,j) \notin V$. 
There is then an edge $V \leadsto U$ where $U$ is formed from $V$
by moving box $(m,j)$ to $(j-1,n)$ and changing its value to whichever of 
$V_{j-1,j-1}$ or $V_{j-2,j}$ is defined and larger,
as in the following picture: 
\[
V = \left\{\
 \begin{ytableau}
  \none[\cdot] &  \none[\cdot] &  4   & \none[\cdot] & \none[\cdot]\\
    \none[\cdot] &  \none[\cdot] &   \none[\cdot] & \none[\cdot] & \none[\cdot]\\
   \none[\cdot] & 2 & 4  & \none[\cdot] & \none[\cdot] \\
\qm & \qm & 3 &  \none[\cdot] & \none[\cdot]
\end{ytableau}
\ \right\}
\ \leadsto\
\left\{\
 \ytableausetup{boxsize = .4cm,aligntableaux=center}
  \begin{ytableau}
  \none[\cdot] &  \none[\cdot] &   \none[\cdot] & \none[\cdot] & \none[\cdot]\\
    \none[\cdot] &  \none[\cdot] &    \none[\cdot] & \none[\cdot] & \none[\cdot]\\
   \none[\cdot] & 2 & 4   & \none[\cdot] &3\\
\qm & \qm & 3 &  \none[\cdot] & \none[\cdot]
\end{ytableau}
\ \right\}
=U.\]
Since
$U \xrightarrow{(j-1,j)} V$ is a diagonal transition of type (D1)
and $U_{j-1,j-1}=V_{j-1,j-1}$ and $U_{j-1,j}=V_{j-1,j}$ are both even
(by Proposition~\ref{even-prop}),
Lemma~\ref{ck-lem3}(a) implies
that $U$ is also admissible.

\item[(iD2)] Suppose $V_{j-1,j-1} = V_{mj}$, so that $j>2$ since $V$ is admissible.
If $V_{j-2,j-1}$ is even,
then there is an edge $V \leadsto U$ where $U$ is formed from $V$
by moving box $(m,j)$ to $(j-1,n)$ and changing its value to $V_{j-2,j-1}$,
as in the following picture:
\[
V = \left\{\
 \begin{ytableau}
  \none[\cdot] &  \none[\cdot] &  4   & \none[\cdot] & \none[\cdot]\\
    \none[\cdot] &  \none[\cdot] &   \none[\cdot] & \none[\cdot] & \none[\cdot]\\
   \none[\cdot] & 4 & 5  & \none[\cdot] & \none[\cdot] \\
\qm & 2 & \qm &  \none[\cdot] & \none[\cdot]
\end{ytableau}
\ \right\}
\ \leadsto\
\left\{\
 \ytableausetup{boxsize = .4cm,aligntableaux=center}
  \begin{ytableau}
  \none[\cdot] &  \none[\cdot] &   \none[\cdot] & \none[\cdot] & \none[\cdot]\\
    \none[\cdot] &  \none[\cdot] &    \none[\cdot] & \none[\cdot] & \none[\cdot]\\
   \none[\cdot] & 4 & 5   & \none[\cdot] &2\\
\qm & 2 & \qm &  \none[\cdot] & \none[\cdot]
\end{ytableau}
\ \right\}
=U.\]
In this case,
since $V_{j-1,j-1}$ is even by Proposition~\ref{even-prop}, $U \xrightarrow{(j-1,j-1)} V$ is a diagonal transition of type (D2), 
so it follows from Lemma~\ref{ck-lem3}(b) and Proposition~\ref{even-prop}
that $U$ is also admissible.

\item[(iD3)] If $V_{j-1,j-1} < V_{mj}< V_{j-1,j}$ and $V_{mj}$ is even,
then there is an edge $V \leadsto U$ where $U$ is formed from $V$ by moving box $(j-1,j-1)$ to $(j-1,n)$
and then box $(m,j)$ to $(j-1,j-1)$,
e.g.:
\[
V = \left\{\
\begin{ytableau}
  \none[\cdot] &    \none[\cdot] & 4 & \none[\cdot] & \none[\cdot]\\
 \none[\cdot] &  \none[\cdot] & \none[\cdot] & \none[\cdot] & \none[\cdot]\\
  \none[\cdot] &  2 & 5 & \none[\cdot] &\none[\cdot]\\
\qm & \qm & \qm & \none[\cdot] & \none[\cdot] 
\end{ytableau}
\ \right\}
\ \leadsto\
\left\{\
 \begin{ytableau}
  \none[\cdot] &  \none[\cdot] & \none[\cdot] & \none[\cdot] & \none[\cdot]\\
  \none[\cdot] &  \none[\cdot] & \none[\cdot] & \none[\cdot] & \none[\cdot]\\
  \none[\cdot] & 4 & 5 & \none[\cdot] & 2 \\
 \qm & \qm & \qm & \none[\cdot]  & \none[\cdot]
\end{ytableau}
\ \right\}
=U.\]
In this case, since $V_{j-1,j-1}$ is even by Proposition~\ref{even-prop},  $U \xrightarrow{(j-1,j-1)} V$ is a diagonal transition of type (D3), 
so it follows from Lemma~\ref{ck-lem3}(b) and Proposition~\ref{even-prop}
that $U$ is also admissible.

\item[(iD4)] If $V_{j-1,j-1} < V_{mj}$
and $V_{mj}$ is odd,
then there is an edge $V \leadsto U$ where $U$ is formed from $V$ 
by moving box $(m,j)$ to $(j-1, n)$
and changing its value to $V_{j-1,j-1}-1$,
 as in this picture: 
\[
 \ytableausetup{boxsize = .4cm,aligntableaux=center}
V = \left\{\
 \begin{ytableau}
  \none[\cdot] &  \none[\cdot] & 3 & \none[\cdot] & \none[\cdot]\\
  \none[\cdot] &  \none[\cdot] & \none[\cdot] & \none[\cdot] & \none[\cdot]\\
  \none[\cdot] & 2 & 5 & \none[\cdot] &\none[\cdot]\\
 \qm & \qm & \qm & \none[\cdot] & \none[\cdot] 
\end{ytableau}
\ \right\}
\ \leadsto\
\left\{\
  \begin{ytableau}
  \none[\cdot] &  \none[\cdot] & \none[\cdot] & \none[\cdot] & \none[\cdot]\\
  \none[\cdot] &  \none[\cdot] & \none[\cdot] & \none[\cdot] & \none[\cdot]\\
 \none[\cdot] & 2 & 5 & \none[\cdot] & 1 \\
 \qm & \qm & \qm & \none[\cdot] &\none[\cdot]
\end{ytableau}
\ \right\}
=U.\]
In this case,
 $V_{j-1,j-1}$ is even by Proposition~\ref{even-prop}.
By Theorem~\ref{fpf-hecke-char-thm},
we must have $V_{mj} = V_{j-1,j-1}+1$ since $\col(V)$ is a symplectic Hecke word.
Therefore $U \xrightarrow{(j-1,j-1)} V$ is a diagonal transition of type (D4),
so Lemma~\ref{ck-lem3}(b) and Proposition~\ref{even-prop} imply that $U$ is admissible.

\item[(iC3a)] Suppose $V_{j-1,j-1} = V_{mj}$, so that $j>2$ since $V$ is admissible.
If  $V_{j-2,j-1}$ is odd,
then there is an edge $V \leadsto U$ where $U$ is formed from $V$ 
by moving box $(m,j)$ to $(m,j-1)$ and changing its value to $V_{j-2,j-1}$, as in the following picture: 
\[
 \ytableausetup{boxsize = .4cm,aligntableaux=center}
V =\left\{\ 
 \begin{ytableau}
  \none[\cdot] &  \none[\cdot] &  4   & \none[\cdot]  \\
    \none[\cdot] &  \none[\cdot] &   \none[\cdot] & \none[\cdot]  \\
   \none[\cdot] & 4 & 5  & \none[\cdot]  \\
\qm & 3 & \qm & \qm
\end{ytableau}
\ \right\}
\ \leadsto\
\left\{\
  \begin{ytableau}
  \none[\cdot] & 3 &   \none[\cdot] & \none[\cdot]  \\
    \none[\cdot] &  \none[\cdot] &    \none[\cdot] & \none[\cdot] \\
   \none[\cdot] & 4 & 5   & \none[\cdot]  \\
\qm & 3 & \qm & \qm  
\end{ytableau}
\ \right\}
 = U.
\]
In this case 
$U\xrightarrow{(j-1,j-1)} V$ is a column transition of type (C3), so
Lemma~\ref{ck-lem2} implies that $\col(U) \simCK \col(V)$.
Although $U_{m,j-1} = U_{j-2,j-1}$ and $(j-1,j-1) \in U$,
the number $U_{m,j-1}$ is odd, so $U$ is also admissible.

\een
Finally, assume the outer box of $V$ is $(m,j)$
and $V_{mj} < V_{j-1,j-1}$.
Since $V$ is weakly admissible, there exists a maximal row $x < j-1$ with 
$V_{x,j-1} \leq V_{mj}$, and it must hold that $V_{mj} < V_{xj}$ and $V_{mj} < V_{x+1,j-1}$. 
The unique 
inverse transition $V \leadsto U$ is then of one of the following types:
\ben
\item[(iC3b)] Suppose $V_{x,j-1} = V_{mj}$, so that $x>1$ since $V$ is admissible.
There is then an edge $V \leadsto U$ where $U$ is formed from $V$  by moving box $(m,j)$ to $(m,j-1)$ and changing its value to be
whichever of $V_{x-1,j-1}$ or $V_{x,j-2}$ is defined and larger, as in the following picture:
\[
 \ytableausetup{boxsize = .4cm,aligntableaux=center}
V =\left\{\ 
 \begin{ytableau}
  \none[\cdot] &  \none[\cdot] & \none[\cdot] & 3   & \none[\cdot]  \\
    \none[\cdot] &  \none[\cdot] & \none[\cdot]  &   \none[\cdot] & \none[\cdot]  \\
   \none[\cdot] & 2 & \circled{$3$} & 4  & \none[\cdot]  \\
\qm & \qm & 1 & \qm & \qm
\end{ytableau}
\ \right\}
\ \leadsto\
\left\{\
 \begin{ytableau}
  \none[\cdot] &  \none[\cdot] &  2   & \none[\cdot] &\none[\cdot]  \\
    \none[\cdot] &  \none[\cdot] & \none[\cdot]  &   \none[\cdot] & \none[\cdot]  \\
   \none[\cdot] & 2 & 3 & 4  & \none[\cdot]  \\
\qm & \qm & 1 & \qm & \qm
\end{ytableau}
\ \right\}
 = U.
\]
The circled entry indicates the location of box $(x,j-1)$.
In this case $U\xrightarrow{(x,j-1)} V$ is a column transition of type (C3),
so it follows from Lemma~\ref{ck-lem2} that $U$ is also admissible.

\item[(iC4)] If $V_{x,j-1} < V_{mj}$
then there is an edge $V \leadsto U$ where $U$ is formed from $V$  
by moving box $(x,j-1)$ to $(m,j-1)$ and then box $(m,j)$ to $(x,j-1)$, as in the following picture:
\[
 \ytableausetup{boxsize = .4cm,aligntableaux=center}
V =\left\{\ 
 \begin{ytableau}
  \none[\cdot] &  \none[\cdot] & \none[\cdot] & 4   & \none[\cdot]  \\
    \none[\cdot] &  \none[\cdot] & \none[\cdot]  &   \none[\cdot] & \none[\cdot]  \\
   \none[\cdot] & 2 & \circled{$3$} & 5  & \none[\cdot]  \\
\qm & \qm & 2 & \qm & \qm
\end{ytableau}
\ \right\}
\ \leadsto\
\left\{\
 \begin{ytableau}
  \none[\cdot] &  \none[\cdot] &  3   & \none[\cdot] &\none[\cdot]  \\
    \none[\cdot] &  \none[\cdot] & \none[\cdot]  &   \none[\cdot] & \none[\cdot]  \\
   \none[\cdot] & 2 & 4 & 5  & \none[\cdot]  \\
\qm & \qm & 2 & \qm & \qm
\end{ytableau}
\ \right\}
 = U.
\]
The circled entry indicates the location of box $(x,j-1)$.
In this case $U\xrightarrow{(x,j-1)} V$ is a column transition of type (C4),
so it follows from Lemma~\ref{ck-lem2} that $U$ is also admissible.
\een
This completes our definition of the inverse transition graph.

\begin{example} 
The four paths 
in the inverse transition graph starting at $\
\ytableausetup{boxsize = .4cm,aligntableaux=center}
 \begin{ytableau}
  \none & 4 & 5 \\
  2 & 3 & 4 
  \end{ytableau} 
\ $
are
{
\ben
\item[]
$\ytableausetup{boxsize = .4cm,aligntableaux=center}
 \begin{ytableau}
  \none & 4 & 5 \\
  2 & 3 & 4 
  \end{ytableau}\
\ 
\overset{(2,3)}{\underset{\row}{\leadsto}}
\
   \left\{\ \begin{ytableau}
   \none[\cdot] &  \none[\cdot] &  \none[\cdot] &  \none[\cdot] &  \none[\cdot]\\
   \none[\cdot] &  \none[\cdot] &  \none[\cdot] &  \none[\cdot] &  \none[\cdot]\\
  \none[\cdot] & 4 & \none[\cdot]  & \none[\cdot]  & 5 \\
  2 & 3 & 4 & \none[\cdot]  & \none[\cdot] 
  \end{ytableau}\ \right\}
\ 
\underset{\text{iR4}}\leadsto
\
   \left\{\ \begin{ytableau}
    \none[\cdot] &  \none[\cdot] &  \none[\cdot] &  \none[\cdot] &  \none[\cdot]\\
   \none[\cdot] &  \none[\cdot] &  \none[\cdot] &  \none[\cdot] &  \none[\cdot]\\
   \none[\cdot]  & 4  &  \none[\cdot]  &  \none[\cdot] &  \none[\cdot]  \\
  2 & 3 & 5 &  \none[\cdot]  & 4
  \end{ytableau}\ \right\},
$

\item[]
$
 \begin{ytableau}
  \none & 4 & 5 \\
  2 & 3 & 4 
  \end{ytableau}\
\ 
\overset{(1,4)}{\underset{\row}{\leadsto}}
\
   \left\{\ \begin{ytableau}
\none[\cdot] &  \none[\cdot] &  \none[\cdot] &  \none[\cdot] &  \none[\cdot] \\
\none[\cdot] &  \none[\cdot] &  \none[\cdot] &  \none[\cdot] &  \none[\cdot] \\
\none[\cdot] & 4 & 5  & \none[\cdot] & \none[\cdot]\\
  2 & 3 & 4 & \none[\cdot] & 4
  \end{ytableau}\ \right\},
$

\item[]
$
 \begin{ytableau}
  \none & 4 & 5 \\
  2 & 3 & 4 
  \end{ytableau}\
\ 
\overset{(2,3)}{\underset{\col}{\leadsto}}
\
   \left\{\ \begin{ytableau}
     \none[\cdot] &  \none[\cdot] & 5 &  \none[\cdot] & \none[\cdot] \\
     \none[\cdot] &  \none[\cdot] &  \none[\cdot] &  \none[\cdot]&  \none[\cdot] \\
   \none[\cdot] & 4 &  \none[\cdot] &  \none[\cdot] &  \none[\cdot] \\
  2 & 3 & 4 &  \none[\cdot] &  \none[\cdot]
  \end{ytableau} \right\}
\ 
\underset{\text{iD4}}\leadsto
\
   \left\{\ \begin{ytableau}
     \none[\cdot] &  \none[\cdot] &  \none[\cdot] &  \none[\cdot] &  \none[\cdot] \\
     \none[\cdot] &  \none[\cdot] &  \none[\cdot] &  \none[\cdot] &  \none[\cdot] \\
   \none[\cdot] & 4&  \none[\cdot] & \none[\cdot] &  3  \\
  2 & 3 & 4 &  \none[\cdot] &  \none[\cdot]
  \end{ytableau}\ \right\}
\ 
\underset{\text{iR3}}\leadsto
\
   \left\{\ \begin{ytableau}
     \none[\cdot] &  \none[\cdot] &  \none[\cdot] &  \none[\cdot] &  \none[\cdot] \\
     \none[\cdot] &  \none[\cdot] &  \none[\cdot] &  \none[\cdot] &  \none[\cdot] \\
  \none[\cdot] & 4 & \none[\cdot] & \none[\cdot] & \none[\cdot]  \\
  2 & 3 & 4 & \none[\cdot] & 2
  \end{ytableau}\ \right\},
$

\item[]
$
 \begin{ytableau}
  \none & 4 & 5 \\
  2 & 3 & 4 
  \end{ytableau}\
\ 
\overset{(3,3)}{\underset{\col}{\leadsto}}
\
   \left\{\ \begin{ytableau}
     \none[\cdot] &  \none[\cdot] &  5&  \none[\cdot] &  \none[\cdot] \\
     \none[\cdot] &  \none[\cdot] &  \none[\cdot] &  \none[\cdot] &  \none[\cdot] \\
  \none[\cdot] & 4  & 5 &  \none[\cdot] &  \none[\cdot]\\
  2 & 3 & 4 &  \none[\cdot] &  \none[\cdot]
  \end{ytableau} \right\}
\ 
\underset{\text{iD1}}\leadsto
\
   \left\{\ \begin{ytableau}
     \none[\cdot] &  \none[\cdot] &  \none[\cdot] &  \none[\cdot] &  \none[\cdot] \\
      \none[\cdot] &  \none[\cdot] &  \none[\cdot] &  \none[\cdot] &  \none[\cdot] \\
  \none[\cdot] & 4& 5 &\none[\cdot] &  4  \\
  2 & 3 & 4&\none[\cdot]&\none[\cdot]
  \end{ytableau}\ \right\}
\ 
\underset{\text{iR3}}\leadsto
\
   \left\{\ \begin{ytableau}
     \none[\cdot] &  \none[\cdot] &  \none[\cdot] &  \none[\cdot] &  \none[\cdot] \\
      \none[\cdot] &  \none[\cdot] &  \none[\cdot] &  \none[\cdot] &  \none[\cdot] \\
  \none[\cdot] & 4 & 5 &\none[\cdot] & \none[\cdot] \\
  2 & 3 & 4 & \none[\cdot] & 3
  \end{ytableau}\ \right\}.
$
\een
}
\end{example}



\begin{theorem}\label{main-tech-thm}
Let $U$ and $V$ be admissible insertion states.
Then $U \to V$ is a forward transition if and only if $V \leadsto U$ is an inverse transition.
If $V$ is terminal,
then
$U\xrightarrow{(i,j)} V$ is a row (respectively, column) transition
if and only if 
$V \rowbt U$
(respectively, $V\colbt U$)
 is an inverse transition.
 \end{theorem}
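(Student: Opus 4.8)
The plan is to prove the equivalence by a careful case-by-case comparison of the forward and inverse transition rules, organized according to the location of the outer box and the type of transition involved. The key structural observation is that, by construction, every inverse transition $V \leadsto U$ described in Section~\ref{inverse-sect} is \emph{certified} to arise from a forward transition: in each case (iR1)--(iR4), (iD1)--(iD4), (iC3a), (iC3b), (iC4), (iC1)--(iC2) we already verified that the resulting $U$ is admissible and that $U \xrightarrow{(i,j)} V$ is a forward transition of the stated type. So one direction --- that $V \leadsto U$ implies $U \to V$ --- is essentially already recorded; what remains is to assemble these observations and to prove the converse, namely that if $U$ and $V$ are admissible and $U \to V$ is a forward transition, then the inverse rules applied to $V$ reproduce $U$.

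For the converse, I would fix an admissible non-initial state $U$ and its unique forward transition $U \xrightarrow{(i,j)} V$, note that $V$ is admissible by Proposition~\ref{adm-prop}, and then split into two regimes. If $V$ is \emph{not} terminal, then $V$ has a unique outgoing inverse transition, and I must show it lands back on $U$; here I would run through the forward transition types (R1)--(R4), (D1)--(D4), (C1)--(C4) and check in each case that the data determining the inverse transition out of $V$ --- the outer box of $V$, the relevant maximal column/row $x$, and the comparison of $V$'s entries (which of $V_{i-1,x} = V_{in}$ versus $<$, parity of the relevant diagonal entry, etc.) --- forces exactly the matching inverse rule (iR$k$), (iD$k$), or (iC$k$), and that applying it recovers $U$. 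The bookkeeping here leans on the explicit descriptions of how $V$ is formed from $U$ in each forward rule, together with Proposition~\ref{even-prop} to pin down parities of diagonal entries. If $V$ \emph{is} terminal, then $U \to V$ must be of type (R1), (R2), (C1), or (C2) by Lemma~\ref{corner-lem}, and $(i,j)$ is an inner or outer corner of $V$ with the sign constraints of that lemma; I would match (R1)$\leftrightarrow$(iR1), (R2)$\leftrightarrow$(iR2), (C1)$\leftrightarrow$(iC1), (C2)$\leftrightarrow$(iC2), using that the value placed in the reinstated outer box by (iR2)/(iC2) --- the larger of $V_{i-1,j}$ and $V_{i,j-1}$ --- is exactly the value that was removed in the corresponding (R2)/(C2) step, since an (R2) or (C2) transition terminates precisely when the incoming value equals that maximum and lies in the appropriate corner.

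The last sentence of the theorem, about the row/column labels when $V$ is terminal, then falls out of this matching: (R1), (R2) produce edges recorded as $V \rowbt U$ in rules (iR1), (iR2), and (C1), (C2) produce $V \colbt U$ in rules (iC1), (iC2), so the correspondence of types is immediate once the basic equivalence is established. The main obstacle I expect is purely combinatorial stamina: there are on the order of a dozen forward rules and a dozen inverse rules, and for each forward rule one must verify that the state $V$ carries exactly the numerical data (parities via Proposition~\ref{even-prop}, the position of the ``turning'' box, the strict-versus-equal comparisons among neighboring entries, and the $i=1$, $i=j$ boundary exclusions enforced by admissibility in Definition~\ref{adm-def}) needed to select the unique correct inverse rule and to make it undo the forward step. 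No single verification is deep, but getting the case analysis exhaustive and the boundary conditions exactly right --- especially disentangling (iC3a) from (iC3b) and (iD1) from (iD2)/(iD3)/(iD4), which hinge on whether the outer box sits above the diagonal entry $V_{j-1,j-1}$ or in the diagonal column and on the parity of $V_{mj}$ --- is where the real work lies. I would therefore present the proof as a table of the correspondences (R$k$)$\leftrightarrow$(iR$k$), (D$k$)$\leftrightarrow$(iD$k$ or iC3a), (C$k$)$\leftrightarrow$(iC$k$ or iC3b), state that each entry is checked directly from the definitions together with Propositions~\ref{even-prop} and \ref{adm-prop} and Lemma~\ref{corner-lem}, and spell out in detail only the two or three most delicate cases, omitting the routine remainder.
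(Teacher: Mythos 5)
Your proposal is correct and follows essentially the same route as the paper: one direction is already certified within the definitions of the inverse rules, and the converse is handled by a case analysis split on whether $V$ is terminal, invoking Proposition~\ref{adm-prop}, Proposition~\ref{even-prop}, and Lemma~\ref{corner-lem}. The only cosmetic difference is that the paper organizes the non-terminal case by the inverse-rule type determined by $V$ (which is unique) rather than by the forward-rule type at $U$, and the paper makes explicit the one subtle point you gesture at --- that in the situation of (iD2) there are two weakly admissible preimages of $V$ and admissibility is what rules out the spurious column-transition one --- but both organizations amount to the same check.
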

 
 \begin{proof}
 We have already seen that if $V\leadsto U$ is an inverse transition then
$U\to V$ is a forward transition.
Likewise, if $V$ is terminal and
$V \rowbt U$
or $V\colbt U$
 is an inverse transition, then
$U\xrightarrow{(i,j)} V$ is either a row or
column transition, respectively.
To show the converse of  these statements, suppose $U \to V$ is a forward transition.

First assume $V$ is not terminal.
The edge $V\leadsto U$ is then of type (iR3), (iR4), (iD1), (iD2), (iD3), (iD4), (iC3a), (iC3b), or (iC4).
In each of these cases the required analysis is straightforward.
In detail,
suppose $V$ is as in case (iR3). Adopt the notation from that definition.
Then 
$U$ must have outer box $(i-1,n)$ and all entries of $U$ except the outer box
must be the same as in $V$.
In this case, we must have $V_{i-1,x-1} \leq U_{i-1,n}$ and (when $i>2$) $V_{i-2,x} \leq U_{i-1,n}$
since $U$ is weakly admissible. 
These inequalities cannot both be strict, so  $U$ is the unique state 
with $V\leadsto U$.

A similar argument shows that $V\leadsto U$ is an inverse transition if $V$ is
as in case (iC3b), or if $V$ is as in case (iC3a) and $U \to V$ is a column transition.
If $V$ is as in case (iC3a) and
$U \to V$ is not a column transition then $U\to V$ must be a diagonal transition,
but this is impossible since (in our notation when defining the inverse transition graph)
$U_{j-2,j-1}=V_{j-2,j-1}$ is odd (by hypothesis) and $U_{j-1,j-1}=V_{j-1,j-1}$ is even
(by Proposition~\ref{even-prop})
and the value in the outer box of $U$ cannot be less than $U_{j-2,j-1}$
(since $U$ is weakly admissible).

If $V$ is as in case (iR4) or (iC4), then there
is only one insertion state $U$, admissible or not, such that $U \to V$ is a forward transition.
We are left to examine cases (iD1)-(iD4).
Cases (iD1) and (iD3) are dual to case (iD4).
In all three cases, $U \to V$ cannot be a row or column transition
since $U$ is weakly admissible,
and the parity constraints on the  main diagonal and outer box of $V$
imply that there exists a unique diagonal transition $U \to V$.
Therefore $U$ must be the admissible state for which $V\leadsto U$ is an inverse transition.
Finally, if $V$ is as in case (iD2),
then there are exactly two weakly admissible states $U$ such $U \to V$ is a forward transition.
One of these is the state described in case (iD2).
The other is
formed by moving the outer box of $V$ to position $(m,j-1)$ and changing its value to $V_{j-2,j-1}$.
Although $U \to V$ is a valid column transition in this case,
the state $U$ is not admissible since $U_{m,j-1} = U_{j-2,j-1}$ is even and $(j-1,j-1)\in U$.
Hence, once again, $U$ must be the unique state for which $V\leadsto U$ is an inverse transition.

Finally suppose $V$ is terminal and $U \xrightarrow{(i,j)} V$ is a row transition,
so that the outer box of $U$ has the form $(i,n)$.
If $(i,j)$ is an inner corner of $V$
then obviously $U \to V$ is of type (R1) and $U$ is the state described in case (iR1).
If $(i,j)$ is an outer corner of $V$, then $i<j$ by Lemma~\ref{corner-lem} and $U \to V$ is of type (R2).
In this case, we must have $U_{i,x-1} \leq U_{in}$ and (when $i>1$) $U_{i-1,x} \leq U_{in}$
since $U$ is weakly admissible, but
these inequalities cannot both be strict, so $U$ is the state described in case (iR2). 
We conclude that $V \rowbt U$ is an inverse transition.
The argument to show that $V \colbt U$ is an inverse transition when 
$U$ is admissible and $U\xrightarrow{(i,j)} V$ is a column transition is similar.
 \end{proof}

 \subsection{Insertion tableaux and recording tableaux}
\label{rec-sect}

We may now define the {insertion} and {recording tableaux} of symplectic Hecke insertion.

\begin{definition}
\label{p-def}
For a word $w=w_1w_2\cdots w_n$,
let 
\[ \PF(w) = (\cdots( (\emptyset \farrow w_1) \farrow w_2)\farrow \cdots) \farrow w_n.\]
We call $\PF(w)$ the \emph{insertion tableau} of $w$ under symplectic Hecke insertion.
\end{definition}

By construction, $\PF(w)$ is an increasing shifted tableau with at most $\ell(w)$ boxes.
The definition of $\PF(w)$ makes sense for any word but the intended domain is the set of symplectic Hecke words.

\begin{example}
We compute some examples of insertion tableaux $\PF(w)$:
\begin{center}
\begin{tabular}{rrr}
$\ytableausetup{boxsize = .4cm,aligntableaux=center}
\PF(62)  =  \begin{ytableau}
2 & 6
 \end{ytableau},\ \
$
&
$\PF(46)  =  \begin{ytableau}
4 & 6
 \end{ytableau},\ \
$
&
$\PF(67)  =  \begin{ytableau}
6 & 7
 \end{ytableau},
$
\\
\\
$\PF(6224)  =  \begin{ytableau}
\none & 6  \\ 
2 & 4 
 \end{ytableau},\ \
$
&
$\PF(4626)  =  \begin{ytableau}
2 & 4 & 6
 \end{ytableau},\ \
$
&
$\PF(6752)  =  \begin{ytableau}
2 & 6 & 7
 \end{ytableau},
$
\\
\\
$
\PF(622453)  =  \begin{ytableau}
\none & 4 & 6 \\ 
2 & 3 & 5
 \end{ytableau},\ \
$
&
$\PF(462634)  =  \begin{ytableau}
\none & 4 & 6 \\ 
2 & 3 & 4
 \end{ytableau},\ \
$
&
$\PF(675245)  =  \begin{ytableau}
\none & 6 & 7 \\ 
2 & 4 & 5
 \end{ytableau}.
$
\end{tabular}
\end{center}
\end{example}

As usual, the \emph{last box} in column $j$ (respectively, row $i$) of a set-valued tableau $T$ refers to the position
$(i,j)$ where $i \in \PP$ is maximal (respectively, $j \in \PP$ is maximal) with $(i,j) \in T$.

\begin{definition}
\label{q-def}
For a symplectic Hecke word $w=w_1w_2\cdots w_n$, we inductively define a set-valued tableau $\QF(w)$.
Let $\QF(\emptyset) =\emptyset$ and assume $n>0$.
Let $(i,j)$ be the label of the last transition in
the insertion path of $\PF(w_1\cdots w_{n-1}) \farrow w_n$.
Form $\QF(w)$ from $\QF(w_1\cdots w_{n-1})$ as follows:
\begin{itemize}
\item[(a)] If the last transition is of type (R1) then
add $n$ to box $(i,j)$. 

\item[(b)] If the last transition is of type (C1) then
add $n'$ to box $(i,j)$. 

\item[(c)] If the last transition is of type (R2) then
add $n$ to the last box in column $j-1$. 

\item[(d)] If the last transition is of type (C2) then
add $n'$ to the last box in row $i-1$.
\end{itemize}
We call $\QF(w)$ the \emph{recording tableau} of $w$ under symplectic Hecke insertion.
\end{definition}

Lemma~\ref{corner-lem} ensures that $\QF(w)$ is well-defined for any symplectic Hecke word $w$.
By construction, $\QF(w)$ is a standard shifted set-valued tableau of length $|\QF(w)| = \ell(w)$.

\begin{example}
The symplectic Hecke words $w$ of length 4 with
$\PF(w)=
\ytableausetup{boxsize = .4cm,aligntableaux=center}
\begin{ytableau}
\none & 4 \\
2 & 3    
\end{ytableau}$
are 2243, 2433, 2443, 2423, 4423, 4233, 4243, and 4223.
Their recording tableaux are
{\footnotesize
\begin{center}
\begin{tabular}{cccc}
$\QF(2243)
=
 \ytableausetup{boxsize = .6cm,aligntableaux=center}
\begin{ytableau}
\none & 4 \\
12 & 3    
\end{ytableau}
$,
&
$\QF(2433)
=
\begin{ytableau}
\none & 34 \\
1 & 2   
\end{ytableau}
$,
&
$\QF(2443)
=
\begin{ytableau}
\none & 4 \\
1 & 23    
\end{ytableau}
$,
&
$\QF(2423)
=
\begin{ytableau}
\none & 4 \\
1 & 23'    
\end{ytableau}
$,
\\ \\
$\QF(4423)
=
\begin{ytableau}
\none & 4 \\
12 & 3'    
\end{ytableau}
$,
&
$\QF(4233)
=
\begin{ytableau}
\none & 34 \\
1  & 2'    
\end{ytableau}
$,
&
$\QF(4243)
=
\begin{ytableau}
\none & 4 \\
1  & 2'3    
\end{ytableau}
$,
&
$\QF(4223)
=
\begin{ytableau}
\none & 4 \\
1 & 2'3'    
\end{ytableau}
$.
\end{tabular}
\end{center}
}
\end{example}

We also refer to the operation 
$w\mapsto (\PF(w),\QF(w))$ as \emph{symplectic Hecke insertion}.
Before analyzing this map, we note two obvious corollaries of Theorem~\ref{farrow-thm}:

\begin{corollary}\label{fkk-cor}
If $w$ is a symplectic Hecke word
then $w \simFKK \row( \PF(w))$.
\end{corollary}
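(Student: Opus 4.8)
The plan is to prove Corollary~\ref{fkk-cor} by induction on $\ell(w)$, with Theorem~\ref{farrow-thm}(a) serving as the single-step engine. To make the induction close up, I would prove the slightly stronger statement: for every symplectic Hecke word $w$, the tableau $\PF(w)$ is increasing and shifted, its row reading word $\row(\PF(w))$ is again a symplectic Hecke word for the \emph{same} element of $\FF$, and $w \simFKK \row(\PF(w))$. The base case $w=\emptyset$ is immediate since $\PF(\emptyset)=\emptyset$.

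For the inductive step I would write $w = v\hs c$ with $c = w_n$ and $v = w_1\cdots w_{n-1}$. First I would note that $v$ is again a symplectic Hecke word: right multiplication by the $U_i$ has $0$ as an absorbing element in $\cN$, so $N_\Theta U_{w_1}\cdots U_{w_{n-1}}$ cannot vanish and hence equals $N_{z'}$ for some $z'\in\FF$ with $N_{z'}U_c = N_z$, where $w\in\cHfpf(z)$. The induction hypothesis applied to $v$ then gives an increasing shifted tableau $T:=\PF(v)$ with $\row(T)\simFKK v$ and $\row(T)$ a symplectic Hecke word. Next I would record the easy inclusion $\simFKK\subseteq\fequiv$: every generating move of $\simFKK$ is either one of the defining moves of $\fequiv$ (the symplectic move $a(a-1)\cdots\mapsto a(a+1)\cdots$ is precisely the front-move of $\fequiv$, and the swap of two equal-parity initial letters is a commutation, hence an $\equivA$-move) or a congruence consequence of $\equivA$ applied to a subword. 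Since $\cHfpf(z')$ is an $\fequiv$-class by Theorem~\ref{fpf-hecke-char-thm}, the relation $\row(T)\simFKK v\in\cHfpf(z')$ forces $\row(T)\in\cHfpf(z')$, and therefore $N_\Theta U_{\row(T)}U_c = N_{z'}U_c = N_z\neq 0$, i.e.\ $\row(T)\hs c$ is a symplectic Hecke word. This is exactly the hypothesis needed to invoke Theorem~\ref{farrow-thm}(a) with this $T$ and $a=c$.

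Applying that theorem, $\PF(w) = T\farrow c$ is increasing and shifted and $\row(\PF(w))\simFKK \row(T)\hs c$. Combining with $\row(T)\simFKK v$ — and using that $\simFKK$ survives appending a letter on the right, since each of its generating moves remains valid after such an extension — I get $\row(\PF(w))\simFKK \row(T)\hs c\simFKK v\hs c = w$, which is the assertion of the corollary. Moreover $\row(T)\hs c$ is a symplectic Hecke word, so (again via $\simFKK\subseteq\fequiv$ and Theorem~\ref{fpf-hecke-char-thm}) so is $\row(\PF(w))$, and it lies in the same class $\cHfpf(z)$; this closes the strengthened induction.

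I expect the only genuine obstacle to be the bookkeeping that keeps the symplectic-Hecke-word property flowing through the recursion — concretely, the verification that $\row(\PF(v))\hs c$, rather than merely $v\hs c$, satisfies the hypothesis of Theorem~\ref{farrow-thm}. Everything else is a routine transcription of the iterative definition of $\PF$ together with an application of Theorem~\ref{farrow-thm}(a) and the inclusion $\simFKK\subseteq\fequiv$.
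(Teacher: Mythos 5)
Your proof is correct and is precisely the right way to unpack what the paper presents, without a written argument, as an ``obvious corollary'' of Theorem~\ref{farrow-thm}. You have correctly identified the one genuine subtlety: the hypothesis of Theorem~\ref{farrow-thm} requires $\row(T)a$ (not merely $va$) to be a symplectic Hecke word, so one must propagate the symplectic-Hecke-word property through the induction. Your verification — that $\simFKK\subseteq\fequiv$ because the two symplectic Coxeter-Knuth moves are, respectively, a same-parity commutation (an $\equivA$ move) and exactly the front-move defining $\fequiv$, together with $\simKK\subseteq\equivA$; and that $\cHfpf(z')$ is an $\fequiv$-class by Theorem~\ref{fpf-hecke-char-thm} — is the correct and intended justification, as is the observation that $\simFKK$ is stable under right concatenation because $\simKK$ is a congruence and the symplectic moves only touch the first two letters.
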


\begin{corollary}\label{reduced-cor}
If $w$ is an FPF-involution word
then $w \simFCK \row( \PF(w))$.
\end{corollary}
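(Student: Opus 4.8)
The plan is to prove the statement by induction on $\ell(w)$, peeling off the last letter. Write $w = w_1 w_2 \cdots w_n$ and $w' = w_1 \cdots w_{n-1}$, so that $\PF(w) = \PF(w') \farrow w_n$ by Definition~\ref{p-def}. The inductive claim I would carry is slightly stronger than the corollary: for every FPF-involution word $v$, the tableau $\PF(v)$ is increasing and shifted, $\row(\PF(v))$ is itself an FPF-involution word (for the same element of $\FF$ that $v$ is), and $\row(\PF(v)) \simFCK v$. The base case $v = \emptyset$ is trivial.

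For the inductive step I would first check that $w'$ is again an FPF-involution word. Since $\cN$ is free on $\{N_z : z \in \FF\}$ and multiplication by each $U_i$ sends $N_z$ to $N_{s_i z s_i}$, to $N_z$, or to $0$, the element $N_\Theta U_{w_1}\cdots U_{w_{n-1}}$ equals $0$ or $N_y$ for some $y \in \FF$; it is nonzero since otherwise $N_z = (N_\Theta U_{w_1}\cdots U_{w_{n-1}}) U_{w_n} = 0$. Thus $w' \in \cHfpf(y)$ with $\ell(w') = n-1$, and if $\ellfpf(y) < n-1$ then prepending a shortest word for $y$ to $w_n$ would give an element of $\cHfpf(z)$ of length $< n = \ellfpf(z)$, which is impossible; hence $w' \in \cRfpf(y)$. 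The inductive hypothesis now says $T := \PF(w')$ is increasing and shifted, $\row(T)$ is an FPF-involution word for $y$, and $\row(T) \simFCK w'$. From $N_\Theta U_{w_1}\cdots U_{w_{n-1}} U_{w_n} = N_y U_{w_n} = N_z$ it follows that $\row(T)\, w_n$ is a symplectic Hecke word for $z$; since $\row(T)$ and $w'$ have equal length (as $\simFCK$ preserves length), $\ell(\row(T)\,w_n) = \ell(w') + 1 = n = \ellfpf(z)$, so $\row(T)\,w_n$ is an FPF-involution word for $z$. Now Theorem~\ref{farrow-thm}(b) applies to $T$ and $a = w_n$: $\PF(w) = T \farrow w_n$ is increasing and shifted, and $\row(T \farrow w_n) \simFCK \row(T)\, w_n$. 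Since $\simFCK$ is also stable under appending a fixed letter at the end --- the Coxeter--Knuth relations are congruence relations and the two symplectic Coxeter--Knuth moves alter only the first two letters of a word --- we have $\row(T)\, w_n \simFCK w'\, w_n = w$, and therefore $\row(\PF(w)) \simFCK \row(T)\, w_n \simFCK w$, which also makes $\row(\PF(w))$ an FPF-involution word. This closes the induction.

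The step I expect to be the main obstacle is exactly the verification that $\row(\PF(w'))\, w_n$ is an FPF-involution word, rather than merely a symplectic Hecke word, so that the hypothesis of Theorem~\ref{farrow-thm}(b) is genuinely met at each stage. This rests on two auxiliary facts that need to be pinned down: that $\simFCK$ preserves the length of a word, and that $\simFCK \subseteq \simFKK \subseteq \fequiv$, the latter because each generating move of $\simFKK$ either lies in $\equivA$ or is precisely the rewriting rule $w_1 w_2 \cdots \sim w_1 (w_2+2) \cdots$ (valid when $w_1 = w_2 + 1$) that defines $\fequiv$. Granting these, Theorem~\ref{fpf-hecke-char-thm} forces $\row(\PF(w'))$ to be a symplectic Hecke word for the \emph{same} element $y \in \FF$ as $w'$, and only then does the length count $\ell(\row(\PF(w'))\, w_n) = n = \ellfpf(z)$ upgrade ``symplectic Hecke word for $z$'' to ``FPF-involution word for $z$.'' The parallel Corollary~\ref{fkk-cor} follows by the same induction with Theorem~\ref{farrow-thm}(a) replacing part (b), and is genuinely easier because it uses only the $\fequiv$-invariance of the set of symplectic Hecke words and no length bookkeeping.
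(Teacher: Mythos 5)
Your proof is correct and is precisely the induction the paper has in mind when it calls this an ``obvious corollary'' of Theorem~\ref{farrow-thm}; the paper gives no explicit proof, and you have supplied exactly the intended one, including the one non-trivial point (that $\row(\PF(w'))\,w_n$ is again an FPF-involution word, so Theorem~\ref{farrow-thm}(b) genuinely applies at every step). The bookkeeping facts you invoke --- that $\simFCK$ preserves length, that it is stable under appending a letter because its generating moves are either congruences or affect only the first two letters, and that $\simFCK$ refines $\fequiv$ so Theorem~\ref{fpf-hecke-char-thm} keeps you inside $\cHfpf(y)$ --- are all sound and are exactly what makes the induction close.
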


It follows that if $v$ and $w$ are symplectic Hecke words
(respectively, FPF-involution words)
with $\PF(v) = \PF(w)$,
then $v \simFKK w$ (respectively, $v\simFCK w$).
Two symplectic Hecke words $v$ and $w$
can have $v \simFKK w$ and $\PF(v) \neq \PF(w)$, however.
For example,
we have $265213 \simFKK 265231$ 
but
\[
  \ytableausetup{boxsize = .4cm,aligntableaux=center}
\PF(265213)  = 
\begin{ytableau}
\none & 6 & 7 \\
2 & 3 & 5 & 7
\end{ytableau}
\qquand
\PF(265231) = 
\begin{ytableau}
\none & 6  \\
2 & 3 & 6 & 7
\end{ytableau}.
\]
This pathology does not seem to arise for the relation $\simFCK$ restricted to FPF-involution words:

\begin{conjecture}\label{conj1}
If $v$ and $w$ are FPF-involution words
with $v \simFCK w$ then $\PF(v) = \PF(w)$.
\end{conjecture}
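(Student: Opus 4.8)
The plan is to show that the insertion tableau $\PF$ is constant on each equivalence class of FPF-involution words under $\simFCK$; combined with Corollary~\ref{reduced-cor} (which gives the reverse implication $\PF(v)=\PF(w)\Rightarrow v\simFCK w$ for FPF-involution words), this proves the conjecture. Recall that $\simFCK$ is the transitive closure of ordinary Coxeter-Knuth equivalence $\simCK$ (generated by $cab\simCK acb$, $bca\simCK bac$, $aba\simCK bab$ applied to any consecutive window) together with the two symplectic moves applied to the first two letters: transposing them when they have equal parity, and replacing a prefix $a(a-1)$ by $a(a+1)$. Since each of these generators is also a generator of $\fsim$ (the first-two-letter transposition is an $\simA$-commutation because an FPF-involution word begins with an even letter, so the first two letters differ by at least $2$; the move $a(a-1)\leftrightarrow a(a+1)$ is literally the extra generator of $\fsim$; and the Coxeter-Knuth moves reduce to $\simA$-moves on reduced words), Theorem~\ref{fpf-hecke-char-thm} guarantees that every word along a $\simFCK$-chain joining two FPF-involution words is again an FPF-involution word for the same $z\in\FF$. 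It therefore suffices to verify that applying a single generator to an FPF-involution word $w$ leaves $\PF(w)$ unchanged.

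First I would dispose of the two first-two-letter moves. For the parity transposition, both letters are even and differ by at least $2$, and I would check $(\emptyset\farrow a)\farrow b=(\emptyset\farrow b)\farrow a$ by tracing the forward transition graph out of the two initial states $(\emptyset\farrow a)\oplus b$ and $(\emptyset\farrow b)\oplus a$; this is a short finite computation involving only row and, when $b<a$, diagonal transitions. For $a(a-1)\leftrightarrow a(a+1)$ I would similarly compute $(\emptyset\farrow a)\farrow(a-1)$ and $(\emptyset\farrow a)\farrow(a+1)$ and check they agree, the decisive case being a type-(D4) transition, which is designed precisely so that inserting the odd letter one below a diagonal entry produces the same configuration as inserting the even letter one above. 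In each case one must also observe that the two intermediate insertion states have identical shapes and (non-)outer-box configurations, so that all \emph{subsequent} insertions behave identically; this is automatic once the tableaux themselves coincide.

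The bulk of the proof is the Coxeter-Knuth moves. Fix a window $w_kw_{k+1}w_{k+2}$ and let $T=\PF(w_1\cdots w_{k-1})$; because $\farrow$ processes letters left to right, it is enough to prove, for $a<b<c$ and under the standing hypothesis that all the words $\row(T)\,w_kw_{k+1}w_{k+2}$ and their moved forms are symplectic Hecke words, the three commutation identities $((T\farrow c)\farrow a)\farrow b=((T\farrow a)\farrow c)\farrow b$, $((T\farrow b)\farrow c)\farrow a=((T\farrow b)\farrow a)\farrow c$, and $((T\farrow a)\farrow b)\farrow a=((T\farrow b)\farrow a)\farrow b$. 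My intended tool is the forward/inverse transition graph together with Theorem~\ref{main-tech-thm}: I would first establish the standard "non-interference" lemmas describing how the bumping paths of two letters interact according to their relative order (when $a<c$, the $a$-path and $c$-path stay suitably separated, so the two insertion orders can be interchanged), and then reconcile the two orders using the third letter $b$, exactly following the templates of the Edelman-Greene argument and its shifted Hecke refinement in \cite{HKPWZZ,PatPyl2014}. The genuinely new ingredient relative to those references is the presence of the diagonal transitions (D1)--(D4) and their inverses (iD1)--(iD4), (iC3a); I would handle these by the "doubling" device of Lemma~\ref{ck-lem3}, passing from $T$ to the tableau $\tilde T$ with doubled row and column indices and a genuine first column, which converts every diagonal transition into an ordinary row or column transition and reduces the diagonal part of each commutation identity to the already-known unshifted statement plus the elementary fact that a strictly increasing run of letters all of one parity commutes freely.

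The main obstacle I expect is precisely the case analysis near the main diagonal: a Coxeter-Knuth move can trigger a type-(D) transition in one insertion order and a type-(R) or type-(C) transition in the other, and one must verify the outputs still agree. The parity bookkeeping supplied by Propositions~\ref{even-prop} and \ref{adm-prop} — every diagonal entry is even, admissibility is preserved, and the symplectic Coxeter-Knuth moves respect the pertinent parities — should be exactly what makes this come out, but organizing it into a uniform argument rather than a lengthy enumeration of diagonal sub-cases is the crux of the problem, and is presumably the reason the statement remains a conjecture.
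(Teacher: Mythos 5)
This statement is a genuine conjecture in the paper — no proof is given, and indeed the paper explicitly flags the orthogonal analogue (the unnumbered conjecture following Corollary~\ref{ick-cor}) as the open \cite[Conjecture 5.13]{HMP4}. Your proposal is therefore not being compared against a proof; it is an outline of a possible attack, and you yourself acknowledge at the end that the key step has not been carried out. Let me flag the places where the outline is sound and where the gap lies.

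The reduction to elementary generators is correct. The argument that a $\simFCK$-chain starting from an FPF-involution word stays within FPF-involution words is valid: the first-two-letter swap involves two even letters differing by at least $2$ (so it is an $\simA$-commutation), the $a(a-1)\leftrightarrow a(a+1)$ move is literally an $\fsim$ generator, and the Coxeter-Knuth move $aba\leftrightarrow bab$ inside a \emph{reduced} word forces $b=a+1$ and is therefore a braid move; so by Theorem~\ref{fpf-hecke-char-thm} each intermediate word is again in $\cRfpf(z)$. The finite verification of the two first-letter moves is also plausible, since the ambient tableau there is necessarily $\emptyset$. (The appeal to Corollary~\ref{reduced-cor} is superfluous for the conjecture as stated — it gives the converse implication, which is not claimed.)

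The genuine gap is the three Coxeter-Knuth commutation identities. Your proposal defers these to ``the templates of the Edelman-Greene argument and its shifted Hecke refinement in \cite{HKPWZZ,PatPyl2014},'' but this overstates what those references provide: the Coxeter-Knuth invariance of the insertion tableau for shifted (orthogonal) Hecke insertion is not proven there, and in fact is itself the open \cite[Conjecture 5.13]{HMP4}. The bumping-path interaction results you cite — Propositions~\ref{bump-lem1} and~\ref{bump-lem2}, and the doubling trick from Lemma~\ref{ck-lem3} — establish monotonicity of successive bumping paths and translate diagonal transitions into row/column transitions, but they do not by themselves control the interaction of three insertions in the two distinct orders prescribed by a Coxeter-Knuth move, especially in configurations where a type-(D) transition fires in one order and a type-(R) or (C) transition in the other. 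You correctly identify this diagonal case analysis as the crux, and you are honest that you have not resolved it; that is precisely the gap that keeps both this statement and its orthogonal counterpart conjectural.
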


\begin{remark*}[Note added in proof]
Hiroshima has given a proof of this result in \cite{Hiroshima}.
\end{remark*}

Fix $z \in \FF$. We describe how to invert 
the operation $w \mapsto (\PF(w),\QF(w))$ for $w \in \cHfpf(z)$.
Let $P$ be an increasing shifted tableau,
let $Q$ be a standard set-valued tableau
with the same shape as $P$,
and let $w$ be a word such that $\row(P)w \in \cHfpf(z)$.
Suppose $Q$ has length $n>0$.
Then $ Q$ contains exactly one of  
$n$ or $n'$,
and this number must appear in an inner corner $(i,j)$.
Define $V_1$ to be the unique admissible shifted insertion state such that:
\begin{itemize}
\item if $\{n\} = Q_{ij}$ then $P \rowbt V_1$ is an inverse transition;
\item if $\{n'\} = Q_{ij}$ (so that $i<j$) then $P \colbt V_1$ is an inverse transition;
\item if $\{n\} \subsetneq Q_{ij}$
then $P \overset{(r,j+1)}{\underset{\row}{\leadsto}} V_1$ is an inverse transition,
where $r$ is the row of the
unique outer corner of $Q$ in column $j+1$;
\item if $\{n'\} \subsetneq Q_{ij}$  (so that $i<j$)
then  $P \overset{(i+1,s)}{\underset{\col}{\leadsto}} V_1$ is an inverse transition,
where $s$ is the column of the unique outer corner of $Q$ in row $i+1$.
\end{itemize}
Now let
$P  \leadsto V_1 \leadsto V_2 \leadsto \cdots \leadsto V_l$
be the maximal directed path in the inverse transition graph 
containing $P \leadsto V_1$.
The last state $V_l$ is initial,
so has the form $\hat P \oplus a$ for a shifted tableau $\hat P$
and an integer $a \in \PP$.
Set $\hat w = aw$, form $\hat Q$ from $Q$
by removing whichever of
$n$ or $n'$ appears,
and define 
\be\label{unbump-eq}
\unbump(P,Q,w):= (\hat P,\hat Q,\hat w).
\ee
The set-valued tableau $\hat Q$ is standard with length $n-1$ and the same shape as $\hat P$.
Theorem~\ref{main-tech-thm} implies that $P = \hat P \farrow a $,
so $\row(P) \simFKK \row(\hat P)a$  by Theorem~\ref{farrow-thm}
and $\row(\hat P)\hat w \in \cHfpf(z)$.
Thus, 
$(\hat P,\hat Q,\hat w)$ has the same properties as $(P,Q,w)$, so
we can iterate the operation $\unbump$.

\begin{definition}
Let $z \in \FF$.
Given an increasing shifted tableau $P$ with $\row(P) \in \cHfpf(z)$
and 
a standard set-valued tableau $Q$
with the same shape,
define $\wSp(P,Q)$ to be the word such that 
\be\label{unbump-eq2}
\underbrace{\unbump\circ \unbump \circ \cdots \circ \unbump}_{|Q|\text{ times}}(P, Q, \emptyset) = (\emptyset, \emptyset, \wSp(P,Q)).
\ee
\end{definition}

\begin{example}
The word $\wSp(P,Q)$ has length $|Q|$, so $\wSp(P,Q)=\emptyset$ when $P=Q=\emptyset$.
If
\[\ytableausetup{boxsize = .7cm,aligntableaux=center}
P = \begin{ytableau}
\none & 4 \\
2 & 3    
\end{ytableau}
\qquand
Q=
\begin{ytableau}
\none & 4 \\
1 & 2'3'    
\end{ytableau}
\]
then $\wSp(P,Q) = 4223$ since applying $\unbump$ four times  has the effect of mapping 
\[(P,Q,
\emptyset)
\mapsto
\ytableausetup{boxsize = .7cm,aligntableaux=center}
\ba
\(
\begin{ytableau}
2 & 4   
\end{ytableau}, \
\begin{ytableau}
1 & 2'3'    
\end{ytableau}, \
3 \)
\ytableausetup{boxsize = .5cm,aligntableaux=center}
\mapsto\(
\begin{ytableau}
2 & 4   
\end{ytableau}, \
\begin{ytableau}
1 & 2'
\end{ytableau}, \
23 \)
\mapsto\(
\begin{ytableau}
4   
\end{ytableau}, \
\begin{ytableau}
1  
\end{ytableau}, \
223 \)
\mapsto
(
\emptyset, 
\emptyset, 
4223 ).
\ea
\]
\end{example}

 A \emph{marked tableau} is a set-valued tableau whose entries are singletons,
 or equivalently a map from a finite subset of $\PP\times \PP$ to the marked alphabet $\MM = \{1'<1<2'<2<\dots\}$.

\begin{theorem}\label{fpf-bij-thm}
Let $z \in \FF$.
Then 
$w \mapsto (\PF(w), \QF(w))$ and $(P,Q) \mapsto \wSp(P,Q)$
are inverse bijections  between the set 
of symplectic Hecke words (respectively, FPF-involution words)
for $z$ of length $n \in \NN$
and the set of pairs
$(P,Q)$
where $P$ is an increasing shifted tableau 
with $\row(P) \in \cHfpf(z)$ 
 (respectively, $\row(P) \in \cRfpf(z)$) and
$Q$ is a standard shifted set-valued (respectively, marked) tableau with length $|Q|=n$ and the same shape as $P$.
\end{theorem}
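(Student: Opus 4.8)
The plan is to derive everything from Theorem~\ref{main-tech-thm}, which identifies the inverse transition graph with the reversal of the forward transition graph on admissible states. First I would check that both maps land where claimed. For a symplectic Hecke word $w$ for $z$, the tableau $\PF(w)$ is increasing and shifted by construction, and Corollary~\ref{fkk-cor} gives $\row(\PF(w)) \simFKK w$; since every symplectic $K$-Knuth move is an instance of a relation generating $\fequiv$ (commutation of equal-parity letters, the braid and idempotent relations, and $a(a-1)\mapsto a(a+1)$), we have $\simFKK \subseteq \fequiv$, so $\row(\PF(w))\in\cHfpf(z)$ by Theorem~\ref{fpf-hecke-char-thm}; and $\QF(w)$ is a standard shifted set-valued tableau of length $\ell(w)$ with the same shape, as noted after Definition~\ref{q-def} using Lemma~\ref{corner-lem}. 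In the other direction, the paragraph preceding \eqref{unbump-eq2} already shows that each application of $\unbump$ preserves the property that $\row(P)w\in\cHfpf(z)$ and that $Q$ is standard shifted set-valued of the same shape as $P$ (using Theorems~\ref{main-tech-thm} and~\ref{farrow-thm}), while dropping one entry of $Q$ and prepending one letter to $w$; hence $\wSp(P,Q)$ is a symplectic Hecke word for $z$ of length $|Q|$. The FPF-involution refinement of this bookkeeping is immediate: $\simFCK$ and $\simCK$ are length-preserving, so by Corollary~\ref{reduced-cor} an FPF-involution word $w$ for $z$ has $|\PF(w)|=\ell(w)=\ellfpf(z)$, which forces $\row(\PF(w))\in\cRfpf(z)$ and (since no transition of type (R2) or (C2) can then occur) $\QF(w)$ marked; conversely, if $Q$ is marked and $\row(P)\in\cRfpf(z)$ then $\wSp(P,Q)$ has length $|Q|=|P|=\ellfpf(z)$ and hence lies in $\cRfpf(z)$.

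The heart of the argument is the claim that, for a symplectic Hecke word $w=w_1\cdots w_n$ with $n\geq 1$ and any word $u$ with $\row(\PF(w))u\in\cHfpf(z)$,
\[
\unbump\bigl(\PF(w),\QF(w),u\bigr)=\bigl(\PF(w_1\cdots w_{n-1}),\,\QF(w_1\cdots w_{n-1}),\,w_n u\bigr).
\]
To prove this, write $w'=w_1\cdots w_{n-1}$, let $U_0\to U_1\to\cdots\to U_l$ be the insertion path of $\PF(w')\farrow w_n$ (so $U_0=\PF(w')\oplus w_n$ and $U_l=\PF(w)$), and recall that every $U_k$ is admissible, as in the discussion after Definition~\ref{q-def}. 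By Lemma~\ref{corner-lem} the final transition $U_{l-1}\to U_l$ is a row or column transition whose label is an inner or outer corner of $\PF(w)$, and the recording rules (a)--(d) of Definition~\ref{q-def} record exactly which case occurs: the largest entry of $\QF(w)$ sits alone in an inner corner for types (R1)/(C1), and is appended to a box adjacent to an outer corner for types (R2)/(C2). This is precisely the data used by the definition of $\unbump$ to select the first inverse transition $\PF(w)\leadsto V_1$, so Theorem~\ref{main-tech-thm} forces $V_1=U_{l-1}$; since the forward edge into an admissible, non-initial, non-terminal state is unique (being the reverse of its unique outgoing inverse edge), the maximal inverse path containing $\PF(w)\leadsto V_1$ is $U_l\leadsto U_{l-1}\leadsto\cdots\leadsto U_0$, whose terminal state $U_0=\PF(w')\oplus w_n$ yields the claimed output triple.

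With the claim established, both composites follow by induction on $n$. For $\wSp\circ(\PF,\QF)$: applying the claim with $u=\emptyset$ shows the first application of $\unbump$ sends $(\PF(w),\QF(w),\emptyset)$ to $(\PF(w'),\QF(w'),w_n)$; since the tableau components produced by $\unbump$ do not depend on the trailing word, the remaining $n-1$ applications compute $\wSp(\PF(w'),\QF(w'))=w'$ by the inductive hypothesis, with $w_n$ carried along at the end, so $\wSp(\PF(w),\QF(w))=w'w_n=w$. For $(\PF,\QF)\circ\wSp$: given a valid pair $(P,Q)$ with $|Q|=n\geq 1$, write $(\hat P,\hat Q,a)=\unbump(P,Q,\emptyset)$; then $\wSp(P,Q)=\wSp(\hat P,\hat Q)\cdot a$ and, by induction, $v:=\wSp(\hat P,\hat Q)$ is the unique word of length $n-1$ with $\PF(v)=\hat P$ and $\QF(v)=\hat Q$. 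Theorem~\ref{main-tech-thm} gives $P=\hat P\farrow a$, hence $\PF(va)=\PF(v)\farrow a=\hat P\farrow a=P$; and the last transition of $\hat P\farrow a$ is the reverse of $P\leadsto V_1$, so by the same correspondence between the recording rules (a)--(d) and the case split in $\unbump$, updating $\QF(v)=\hat Q$ by this transition places the entry $n$ or $n'$ exactly where $Q$ has it, giving $\QF(va)=Q$. The base case $n=0$ is trivial. Finally, the identity $|\QF(w)|=\ell(w)=n$ matches the domain of the second map, so the two maps restrict to mutually inverse bijections between the stated sets, and passing to minimal-length words and marked tableaux gives the FPF-involution version.

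I expect the main obstacle to be the precise matching used twice above: that the four cases in the definition of $\unbump$ --- distinguished by whether the top entry of $Q$ is primed and whether it occupies its box alone --- correspond, under Theorem~\ref{main-tech-thm} and Lemma~\ref{corner-lem}, to the four recording cases of Definition~\ref{q-def}, together with the correct identification of the relevant inner or outer corner and its row or column. This is a somewhat delicate but entirely elementary case check; all of the conceptual content has already been packaged into Theorems~\ref{main-tech-thm} and~\ref{farrow-thm}.
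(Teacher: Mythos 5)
Your proposal is correct and follows essentially the same route as the paper: the whole statement is reduced to the content of Theorem~\ref{main-tech-thm} by showing that one $\unbump$ step undoes one $\bump$ step, then concluding by induction. You make explicit some details the paper compresses into ``the hard work needed to check this claim... is equivalent to Theorem~\ref{main-tech-thm}'' (notably the observation that $\simFKK$ refines $\fequiv$, so $\row(\PF(w)) \in \cHfpf(z)$, and the corner/recording-rule matching), but the decomposition and the key lemmas invoked are the same.
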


\begin{proof}
Let $P$ be an increasing shifted tableau,
let $Q$ be a standard set-valued tableau
with the same shape as $P$,
and let $w=w_1w_2\cdots w_m$ be a word with $\row(P)w \in \cHfpf(z)$.
Suppose $m>0$ and $|Q|=n-1\geq 0$.
We may assume by induction that $(P,Q) = (\PF(v),\QF(v))$ for some symplectic Hecke word $v$.
Define  $\check P := P\farrow w_1 = \PF(vw_1)$ and form $\check Q$ from $Q$
according to the rules in Definition~\ref{q-def} so that $\check Q = \QF(v w_1)$.
Then set $\check w := w_2w_3\cdots w_m$ and define
\be\label{bump-eq}
\bump(P,Q,w):= (\check P,\check Q,\check w).
\ee
The set-valued tableau $\check Q$ is standard with the same shape as $\check P$.
Since $\row(\check P) \check w \simFKK \row(P) w_1 \check w = \row(P)w$
by Theorem~\ref{farrow-thm},
it holds that $\row(\check P) \check w \in \cHfpf(z)$.
We can therefore iterate the operation $\bump$,
and it is easy to see that if $w  \in \cHfpf(z)$ then
\be\label{bump-eq2}
\underbrace{\bump\circ \bump \circ \cdots \circ \bump}_{\ell(w)\text{ times}}(\emptyset, \emptyset, w) = (\PF(w), \QF(w), \emptyset).
\ee
Let $\cT_n^m$ be the set of triples $(P,Q,w)$
where
$P$ is an increasing shifted tableau,
$Q$ is a standard set-valued tableau of length $n$
with the same shape as $P$,
and $w$ is a word of length $m$ such that $\row(P)w \in \cHfpf(z)$.
The formulas \eqref{unbump-eq} and \eqref{bump-eq}
give well-defined maps $\bump: \cT_n^{m+1} \to \cT_{n+1}^m$ and $\unbump: \cT_{n+1}^{m} \to \cT_{n}^{m+1}$ 
for all $m,n \in \NN$.
In view of \eqref{unbump-eq2} and \eqref{bump-eq2}, it suffices to show that these maps are inverse bijections.
The hard work needed to check this claim has already been done, however:
what needs to be shown 
is equivalent to 
Theorem~\ref{main-tech-thm}.

Finally, observe that if $w \in \cRfpf(z)$ then we must have $\ell(w) = |\PF(w)|$ since $w \simFKK \row(\PF(w))$,
so $\QF(w)$ is a marked tableau. Conversely, if $w \in \cHfpf(z)$ but $\row(\PF(w)) \in \cRfpf(z)$ and $\QF(w)$ is a marked tableau then 
$|\QF(w)| = |\PF(w)|$
so 
$w \in \cRfpf(z)$
since $\ell(w) = |\QF(w)|$.
\end{proof}

%
%

In the following corollary, 
we say that a shifted tableau has shape $\lambda$
if its domain is the shifted Young diagram $\SY_\lambda = \{ (i,i+j-1) \in \PP\times \PP :  1 \leq j \leq  \lambda_i\}$.

\begin{corollary}
Fix $n \in 2\PP$ and let $z_{\max}  = n\cdots 321 \in \FF$ be the fixed-point-free involution with
$z_{\max}(i) = n+1-i$ for $i \leq n$ and $z_{\max}(i) = i - (-1)^i$ for $i>n$.
The map $w \mapsto \QF(w)$ is then a length-preserving bijection from symplectic Hecke words for $z_{\max}$
to standard shifted set-valued tableaux 
of shape $\lambda = (n-2,n-4,\dots,6,4,2)$.
Consequently,
the size of $\cRfpf(z_{\max})$ is the number of standard shifted marked tableaux of this shape.
\end{corollary}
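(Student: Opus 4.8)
The plan is to specialize Theorem~\ref{fpf-bij-thm}, which identifies the symplectic Hecke words for $z_{\max}$ with the pairs $(P,Q)$ in which $P$ is an increasing shifted tableau with $\row(P)\in\cHfpf(z_{\max})$ and $Q$ is a standard shifted set-valued tableau of the same shape, the FPF-involution words corresponding to the pairs with $\row(P)\in\cRfpf(z_{\max})$ and $Q$ marked. Thus the corollary reduces to the claim that \emph{there is exactly one increasing shifted tableau $P_\lambda$ with $\row(P_\lambda)\in\cHfpf(z_{\max})$, its shape is $\lambda=(n-2,n-4,\dots,4,2)$, and $\row(P_\lambda)\in\cRfpf(z_{\max})$}. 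Granting this: by Corollary~\ref{fkk-cor} and Theorem~\ref{fpf-hecke-char-thm} every symplectic Hecke word $w$ for $z_{\max}$ has $\row(\PF(w))\in\cHfpf(z_{\max})$, so $\PF(w)=P_\lambda$ by uniqueness, and $(\PF,\QF)$ collapses to $w\mapsto\QF(w)$, which Theorem~\ref{fpf-bij-thm} then exhibits as a bijection onto standard shifted set-valued tableaux of shape $\lambda$, length preserving because $|\QF(w)|=\ell(w)$. Since $\row(P_\lambda)\in\cRfpf(z_{\max})$, the requirement $\row(\PF(w))\in\cRfpf(z_{\max})$ in Theorem~\ref{fpf-bij-thm} is automatic, so $\QF$ restricts to a bijection from $\cRfpf(z_{\max})$ onto standard shifted marked tableaux of shape $\lambda$, which gives the stated count.

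For the existence half I would compute $\beta_{\min}(z_{\max})$ directly. Since $a<z_{\max}(a)$ exactly for $a\in\{1,\dots,n/2\}$ with $z_{\max}(a)=n+1-a$, the definition gives $\beta_{\min}(z_{\max})=(1,n,2,n-1,3,n-2,\dots,\tfrac n2,\tfrac n2+1)^{-1}$, the unimodal permutation with one-line form $1,3,5,\dots,n-1,n,n-2,n-4,\dots,4,2$; an inversion count yields $\ell(\beta_{\min}(z_{\max}))=\tfrac14 n(n-2)=|\lambda|$, which equals $\ellfpf(z_{\max})$ because $\beta_{\min}(z_{\max})\in\cAfpf(z_{\max})$. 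Then I would take $P_\lambda$ to be the filling of $\SY_\lambda$ whose entry in row $i$ and column $c$ is $i+c$, so that the rows of $P_\lambda$ read, from bottom to top, $2,3,\dots,n-1$, then $4,5,\dots,n-1$, \dots, then $n-2,n-1$. This is visibly an increasing shifted tableau of shape $\lambda$, and a direct computation --- forming $s_{i_1}\cdots s_{i_l}$ for $\row(P_\lambda)=i_1\cdots i_l$ and conjugating $\Theta$ by it --- shows $s_{i_1}\cdots s_{i_l}=\beta_{\min}(z_{\max})$ and $(s_{i_1}\cdots s_{i_l})^{-1}\Theta(s_{i_1}\cdots s_{i_l})=z_{\max}$, so that $\row(P_\lambda)\in\cR(\beta_{\min}(z_{\max}))\subseteq\cRfpf(z_{\max})$ by \eqref{atoms1-eq}.

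The uniqueness of $P_\lambda$ is the crux, and I expect essentially all the work to lie there. By Theorem~\ref{fpf-bij-thm} the image of $\PF$ on $\cHfpf(z_{\max})$ is exactly the set of increasing shifted tableaux $P$ with $\row(P)\in\cHfpf(z_{\max})$ (each shape that occurs carries at least one recording tableau), so the task is to prove $\PF(w)=P_\lambda$ for every $w\in\cHfpf(z_{\max})$; in particular no such $P$ can have more than $|\lambda|$ boxes, even though $\row(P)$ need not be reduced. My approach is to exploit that $z_{\max}$ is the unique maximal element of $\mathfrak{F}_n$ in the weak order on $\Sp_n(\CC)$-orbit closures, which makes the identity $N_\Theta U_{i_1}\cdots U_{i_l}=N_{z_{\max}}$ very rigid: no partial product vanishes, each letter either raises the fixed-point-free involution or leaves it fixed, and the letters that leave it fixed are absorbed by the insertion without enlarging the shape, so that the word splits off a reduced word for $\beta_{\min}(z_{\max})$, which inserts to $P_\lambda$. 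Making this precise amounts to showing that $z_{\max}$ is ``FPF-dominant'' --- that its FPF-Rothe diagram is $\SY_\lambda$ --- in the spirit of the analysis of maximal fixed-point-free involutions in \cite{HMP2,HMP5}, and then tracking the argument through the forward transition graph; an alternative is induction on $n$, peeling off the outermost diagonal of $\SY_\lambda$. Ruling out every competing shape and filling is the only real obstacle; everything else follows from the reduction above.
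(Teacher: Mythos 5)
Your reduction to a uniqueness statement, and your existence argument (identifying $P_\lambda$ and showing $\row(P_\lambda)\in\cRfpf(z_{\max})$), are both correct and consistent with the paper. But the uniqueness step, which you yourself flag as the crux, is not actually proved: you describe two possible plans (an ``FPF-dominance'' argument via the weak order and the rigidity of the identity $N_\Theta U_{i_1}\cdots U_{i_l}=N_{z_{\max}}$, or an induction peeling off diagonals) without carrying either out. The rigidity plan as sketched also contains an unjustified leap --- the assertion that the letters leaving the involution fixed ``are absorbed by the insertion without enlarging the shape, so that the word splits off a reduced word for $\beta_{\min}(z_{\max})$'' is not an obvious consequence of the insertion rules, and there is no reason the surviving reduced word should be for $\beta_{\min}(z_{\max})$ rather than some other element of $\cBfpf(z_{\max})$. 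So as written the proof has a genuine gap precisely at the step you identify as carrying all the weight.

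The paper closes that gap by a much more elementary count, which you could substitute for your sketched plan. Since every Hecke atom in $\cBfpf(z_{\max})$ fixes each $i>n$, every symplectic Hecke word for $z_{\max}$ has all letters at most $n-1$; hence $\PF(w)$ has entries in $[n-1]$. By Proposition~\ref{even-prop} its main diagonal entries are all even. An increasing shifted tableau with even diagonal entries in $[n-1]$ (with $n$ even) has at most $n/2-1$ rows with $T_{ii}\geq 2i$, so row $i$ has at most $n-2i$ boxes; thus it has at most $|\lambda|=(n-2)+(n-4)+\cdots+2$ boxes, with equality forcing the filling $T_{ij}=i+j$. Since $\row(\PF(w))\in\cHfpf(z_{\max})$ has at least $\ellfpf(z_{\max})=|\lambda|$ letters, $\PF(w)$ has at least $|\lambda|$ boxes, hence exactly $|\lambda|$, hence equals $P_\lambda$. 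This replaces your proposed FPF-dominance machinery with a one-paragraph shape bound, and no information beyond Theorem~\ref{fpf-hecke-char-thm}, Proposition~\ref{even-prop}, and Theorem~\ref{fpf-bij-thm} is needed.
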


One can compute $|\cRfpf(z_{\max})|$
using well-known hook length formulas; see \cite[Theorem 1.4]{HMP1}.

\begin{proof}
Consider the shifted tableau $T$ whose first row is $234\cdots (n-1)$, 
whose second row is $456\cdots(n-1)$, 
whose third row is $678\cdots(n-1)$, and so forth,
and whose last row is $(n-2)(n-1)$.
It is easy to check that $\row(T) \in \cRfpf(z_{\max})$.
It follows from Theorem~\ref{fpf-hecke-char-thm}
that every symplectic Hecke word for $z_{\max}$
has at least $(n-2) + (n-4) + \dots +2$ letters, each of which is at most $n-1$.
Since $T$ is the only increasing shifted tableau
with $(n-2) + (n-4) + \dots   +2$
boxes, with entries in $\{1,2,\dots,n-1\}$, and with no odd entries on the main diagonal,
we conclude that $T$ is the insertion tableau of every symplectic Hecke word for $z_{\max}$, so
the result follows from Theorem~\ref{fpf-bij-thm}.
\end{proof}

 \begin{example}\label{zmax-ex}
If $n=8$ and $w=42 6 1 75 3 4 2 1 3 2$ then $w\in \cRfpf(z_{\max})$ and
 \[  \ytableausetup{boxsize = .6cm,aligntableaux=center}
 (\PF(w),\QF(w)) =
 \(\
\begin{ytableau}
\none & \none & 6 & 7 \\
\none & 4 &5 &6 & 7 \\
2 & 3 & 4 &5 &6 & 7 
\end{ytableau},\ \ 
\begin{ytableau}
\none & \none & 8 & 11' \\
\none & 6 &7' &9' & 12' \\
1 & 2' & 3 &4' &5 & 10' 
\end{ytableau}
\
\).
\]
 \end{example}

A result of Sagan \cite{Sagan1980} describes a fast algorithm for sampling standard shifted marked tableaux
of a given shape
uniformly at random.
Combining this with the preceding corollary
gives an algorithm for generating FPF-involution words for $n\cdots 321 \in \FF$ uniformly at random.

There is a fascinating literature on the properties of random reduced words 
for   $n\cdots 321\in \SS$, called \emph{random sorting networks}
by Angel, Holroyd, Romik, and Vir\'ag in \cite{AHRV}.
The bijections in this article would make it possible to conduct a similar study of random (FPF-)involution words.

 \section{Variations}\label{var-sect}
 
 \subsection{Semistandard insertion}\label{des-sect}

Suppose $T$ is an increasing shifted tableau and $a \in \PP$.
Let $(i_1,j_1)$, $(i_2,j_2)$, \dots, $(i_l,j_l)$ be the bumping path resulting from inserting $a$ into $T$ to form $T \farrow a$,
 as described in  Definition~\ref{farrow-def}.
The next result shows that this sequence contains at most two diagonal positions, which must be consecutive.
We refer to the positions up to and including the first diagonal position
as \emph{row-bumped positions},
and to any subsequent positions
as \emph{column-bumped positions}.
If $(i_t,j_t)$ is a row-bumped position then  $i_t =t $,
while if $(i_t,j_t)$ is a column-bumped position then $ j_t = t$.
If $t \in [l-1]$ is the index of the last row-bumped position then
$(i_t,j_t) = (t,t)$ and
$j_{t+1} = t + 1$.

\begin{proposition}\label{bump-lem1}
Maintain the setup of the previous paragraph.
Suppose $t \in [l]$ is the index of the bumping path's last row-bumped position.
The following properties then hold:
\ben
\item[(a)] One has $j_1  \geq j_2 \geq \dots \geq j_t \geq t$ and if $t<l$ then $t+1\geq  i_{t+1} \geq i_{t+2} \geq \dots \geq i_l $.

\item[(b)] If $\row(T)a$ is an FPF-involution word and $t<l$, then $i_l < t+1$.

\item[(c)] 
If $(i,j)$ is column-bumped and $(i',j')$ is row-bumped then we do not have $i \leq i'$ and $j\leq j'$.
In other words,
no column-bumped position is weakly southwest of any row-bumped position.

\item [(d)] All positions in the bumping path are distinct. 
\een
 
\end{proposition}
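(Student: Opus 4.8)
The plan is to prove Proposition~\ref{bump-lem1} by a careful induction along the insertion path $T\oplus a = U_0 \xrightarrow{(i_1,j_1)} U_1 \xrightarrow{(i_2,j_2)} \cdots \xrightarrow{(i_l,j_l)} U_l$, exploiting the fact (Proposition~\ref{path-lem}) that every $U_s$ is weakly admissible, together with Proposition~\ref{even-prop} (via Proposition~\ref{adm-prop}, since $\row(T)a$ being a symplectic Hecke word makes $T\oplus a$ admissible) to control the parity of the diagonal entries. First I would record the qualitative structure: the forward transitions split into row transitions (R1)--(R4), diagonal transitions (D1)--(D4), and column transitions (C1)--(C4); an initial state has its outer box in row $1$, so the path begins with row transitions; a row transition either terminates (R1, R2) or produces another state with outer box in the \emph{same} row (R3) or pushes the outer box down \emph{one} row (R4); diagonal transitions (D1)--(D4) all move the outer box to row $m$ (the new last row), i.e. into column-position; and column transitions then keep the outer box in the last row, either terminating (C1, C2) or moving it one column to the right (C3, C4). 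Hence the path is: a block of row transitions (the ``row-bumped'' portion), then possibly a single diagonal transition, then a block of column transitions (the ``column-bumped'' portion) — this is exactly the preliminary claim stated before the proposition, and it already gives that there are at most two diagonal positions and that if $t<l$ then $(i_t,j_t)=(t,t)$ and $j_{t+1}=t+1$.

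Next I would prove (a). For the row portion: in an edge of type (R1)/(R3)/(R4) from $U_{s-1}$ with outer box $(s-1,n)$ (recall if $(i_s,j_s)$ is row-bumped then $i_s = s$ — I'd verify the row index increments by one each step), the column index $j_s$ of the target box is determined as the position where the outer value lands in row $s$, and since the value being carried is weakly increasing down the rows while the rows themselves are increasing, the landing column cannot move right; this gives $j_1 \ge j_2 \ge \cdots \ge j_t$. That $j_t \ge t$ holds because the box $(t,t)$ lies on the diagonal and weak admissibility forbids the outer box being in the first column, so any landing box in row $t$ has column $\ge t$. For the column portion the argument is the transpose: in a column transition the carried value is weakly increasing along columns, the columns are increasing, so the landing row cannot increase, giving $t+1 \ge i_{t+1} \ge \cdots \ge i_l$; the bound $i_{t+1} \le t+1$ is immediate since the diagonal transition deposits into row $m = t+1$ (the freshly created row).

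For (b), the point is that when $\row(T)a$ is an FPF-involution word it is in particular reduced, so by the reduced-word clauses of Lemmas~\ref{ck-lem1}, \ref{ck-lem2}, \ref{ck-lem3} every transition along the path is a $\simCK$-move and the reading word stays reduced; a type (D1) diagonal transition always produces an unreduced word (as noted in the proof of Lemma~\ref{ck-lem3}), so the diagonal step must be (D2), (D3), or (D4), and in each of those the new last row $t+1$ receives a value strictly larger than the original diagonal entry $U_{ii}$, which forces the subsequent column bumps to strictly decrease the row index past row $t+1$ — more carefully, one shows $i_{t+1} = t+1$ but that the very next column transition (if any) must drop strictly, and if there is no further column transition then the analysis of when a column transition can terminate in row $t+1$ shows this cannot happen for a reduced word; the cleanest route is to show directly that a column-bumped box in row $t+1$ would sit weakly southwest of the row-bumped box $(t,t)$, contradicting (c), so (b) follows from (c). Part (c) I would prove by combining the monotonicity in (a) with the observation that a row-bumped box $(i',j')$ satisfies $i' \le t \le j'$-ish bounds while a column-bumped box $(i,j)$ satisfies $i \le t+1$ and $j \ge t+1$: if we had $i \le i'$ and $j \le j'$, then since $j \ge t+1 > t \ge i'$ and $j' \ge \dots$, tracking the actual column indices from (a) ($j' \ge t$, $j = $ some column $\ge t+1$) against $i' = i'$ (a row index $\le t$) yields a numerical contradiction; the most robust formulation is that all row-bumped boxes lie in the closed region $\{(r,c): r \le t,\ c \ge r\}$ with $c \ge t$ only in the last one, all column-bumped boxes lie in $\{(r,c): c \ge t+1,\ r \le c\}$, and these two regions are disjoint except that the last row-bumped box $(t,t)$ and first column-bumped box $(t+1,?)$ are positioned so that the latter is \emph{not} weakly southwest of the former. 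Finally (d), distinctness of all positions, follows because within the row-bumped block the row index strictly increases ($i_s = s$), within the column-bumped block the column index strictly increases ($j_s = s$), and a row-bumped box can never equal a column-bumped box by (c) (equality would in particular be ``weakly southwest''). The main obstacle I anticipate is part (c): getting the inequalities in the row portion and column portion to interlock requires being scrupulous about exactly which column a value lands in during (R3)/(R4) and which row during (C3)/(C4), and about the edge cases where the bumping path is very short (e.g. $t = l$, or $t = l-1$ with a single trailing column step); I would handle these by first nailing down the precise ``landing rule'' for each transition type as a small lemma, then feeding it into the monotonicity induction.
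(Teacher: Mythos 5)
Your sketch of (a) and (d) broadly matches the paper's approach, but (c) and (b) each have a genuine gap. For (c), the positional ``numerical contradiction'' you invoke does not exist: the row-bumped boxes occupy rows $\le t$ with columns $\ge t$, the column-bumped ones occupy columns $\ge t+1$ with rows $\le t+1$, and these ranges do \emph{not} forbid a column-bumped box from being weakly southwest of a row-bumped one (e.g.\ a column-bumped $(1,t+1)$ could lie weakly southwest of a row-bumped $(2,t+3)$, fully consistent with every inequality in (a)). The paper proves (c) by a \emph{value} argument, not a positional one: for each step $r$ set $a_r$ to be the outer-box value of $U_{r-1}$, $b_r$ the value placed in box $(i_r,j_r)$ of $U_r$, and $c_r$ the original value at $(i_r,j_r)$ in $U_{r-1}$; one shows $a_r\le b_r\le c_r\le a_{r+1}$ with at least one strict inequality per step; then, taking $s$ minimal with $(i_s,j_s)$ weakly southwest of some $(i_r,j_r)$ (and $r\le t<s$), the case $r=t,\ s=t+1$ is ruled out, so $r+1<s$ forces $b_r<c_s$, yet $c_s$ is the untouched entry at $(i_s,j_s)$ in $U_r$, which must be $<b_r$ because $U_r$ with its outer box removed is increasing and $(i_s,j_s)\prec(i_r,j_r)$ --- a contradiction. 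That $a_r,b_r,c_r$ chain is the missing ingredient; without it (c) is not established.

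For (b), your ``cleanest route'' is wrong on the geometry: a column-bumped box $(t+1,c)$ with $c\ge t+1$ is weakly \emph{northeast} of $(t,t)$, not southwest, so (c) imposes no constraint on it and the reduction of (b) to (c) collapses. (Moreover, $i_{t+1}=t+1$ is not automatic --- (a) only gives $i_{t+1}\le t+1$ --- and transitions (D2)/(D3) send the value $U_{tt}$, not a strictly larger one, into the new outer box.) The paper's actual argument: if $i_{t+1}=\dots=i_l=t+1$, the entries of rows $t$ and $t+1$ are forced to coincide along the bumped boxes, so the last edge $U_{l-1}\to U_l$ must be of type (C2), which adds no box; then $T\farrow a$ has the same shape as $T$, and since $\row(T)a\simFKK\row(T\farrow a)$ by Theorem~\ref{farrow-thm}, the word $\row(T)a$ cannot be reduced, hence is not an FPF-involution word. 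Your fallback hints at this (``analysis of when a column transition can terminate in row $t+1$'') but does not supply the mechanism (matching rows $\Rightarrow$ (C2) $\Rightarrow$ shape preserved $\Rightarrow$ unreduced), which is the content you would need to fill in.
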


\begin{proof}
Suppose $U\xrightarrow{(i,j)} V \xrightarrow{(i',j')} W$ are successive edges in the
maximal directed path leading from $T\oplus a$ to a terminal shifted insertion state.
Then $(i,j)$ and $(i',j')$ are consecutive positions in the bumping path.

Suppose $(i,j)$ and $(i',j')$ are both row-bumped.
If $U \to V$ is a diagonal transition of type (D1), then $i+1 = j = i' = j' $.
Otherwise, 
$U \to V$ must be a row transition and $V \to W$ must be a 
row transition or a diagonal transition of type (D2), (D3), or (D4),
so $i<j$ and $i+1 = i' \leq j'$.
In this case the value in the outer box of $V$  is equal to $U_{ij}$, 
which is strictly less than $U_{i+1,j} = V_{i+1,j}$ if $(i+1,j) \in V$,
so $j'\leq j$.

Next, suppose  
$(i,j)$ is row-bumped and $(i',j')$ is column-bumped.
Then $U \to V$ is a diagonal transition 
of type (D2), (D3), or (D4) and $V \to W$ is a column transition,
so $i=j=t$ and $i' \leq j'=j+1=t+1$.

Finally, suppose $(i,j)$ and $(i',j')$ are both column-bumped. 
Then $U \to V$ and $V \to W$ are both column transitions,
so $j'=j+1$.
The value in the outer box of $V$ is then equal to $U_{ij}$, 
which is strictly less than $U_{i,j+1}=V_{i,j+1}$ if $(i,j+1) \in V$,
so $i' \leq i$. 
This completes the proof of part (a).

Suppose the insertion path \eqref{Toplusa} of $T\farrow a$ is 
$T\oplus a = U_0 \xrightarrow{(i_1,j_1)} U_1  \xrightarrow{(i_2,j_2)}  \cdots  \xrightarrow{(i_l,j_l)} U_l $.
To prove part (b), assume $\row(T)a$ is a symplectic Hecke word and $t<l$.
We will show that if $i_l = t+1$ then $\row(T) a$ is not an FPF-involution word.
Suppose
rows $t$ and $t+1$ of $U_{t}$ are 
\[
 \ytableausetup{boxsize = .7cm,aligntableaux=center}
\begin{ytableau}
\none[\cdot]  & w_1  & w_2 & \cdots  & w_m  & \none[\cdot]  & \none[\cdot]   \\
v_0 &  v_1 & v_2 & \cdots  & v_m&\cdots & v_n     
\end{ytableau}
\]
for some numbers $v_0<v_1<\dots<v_n$ and $w_1<w_2<\dots<w_m$
with $v_i < w_{i}$ for $i \in [m]$.
Write $w_0$ for the value in the outer box of $U_t$, which will be in column $t+1$
since $U_{t-1}\to U_t$ is a diagonal transition of type (D2), (D3), or (D4).
We must have $w_0 \leq v_1$,
and the only way we can have 
$t+1 = i_{t+1}=i_{t+2}=\dots =i_{l}$ is if
 $v_{i} = w_{i-1}$ for all $i \in [m+1]$.
But in this case the last edge $U_{l-1}\to U_l$ would be a column transition of type (C2),
so $T\farrow a$ would have the same number of boxes as $T$.
Since $\row(T)a \simFKK \row(T\farrow)$ by Theorem~\ref{farrow-thm},
it would follow that $\row(T)a$ is not an FPF-involution word.
This completes the proof of part (b).

We turn to part (c).
For each $r \in [l]$, let $a_r$ be the value in the outer box of $U_{r-1}$,
let $b_r$ be the value in box $(i_r,j_r)$ of $U_r$,
and let $c_r$ be the value in box $(i_r,j_r)$ of $U_{r-1}$.
For example, if $U_{r-1} \to U_r$ is a row transition and $r<l$, then
these numbers would correspond to the following picture:
\[
 \ytableausetup{boxsize = .8cm,aligntableaux=center}
U_{r-1}\ = \left\{\ \begin{ytableau}
\none[\cdot]  & \qm & \qm & \none[\cdot] & \none[\cdot]   \\
\qm & c_r & \qm & \none[\cdot] & a_r   
\end{ytableau}\ \right\}
\ \xrightarrow{(i_r,j_r)}\
\left\{\
\begin{ytableau}
\none[\cdot]  & \qm & \qm & \none[\cdot] & a_{r+1}   \\
\qm & b_r & \qm & \none[\cdot] & \none[\cdot]
\end{ytableau}\ \right\}= \ U_r.
\]
If $r \in [l-1]$ then either 
$a_r = b_r < c_r = a_{r+1}$ or 
$a_r < b_r = c_r = a_{r+1}$ or 
$a_r <  b_r = c_r = a_{r+1}-1$, 
with the last case occurring only if the 
forward transition
$U_{r-1} \to U_r$ is  of type (D4).
Therefore $a_r \leq b_r \leq c_r \leq a_{r+1}$ and at least one inequality is strict
for each $r \in [l-1]$.

We now argue by contradiction.
Let $r,s \in [l]$ be indices with $r\leq t < s$.
Suppose $s$ is minimal such that $i_s \leq i_r $ and $j_s \leq j_r$.
Since $i_t=j_t=t$ and $j_{t+1} = t+1$, we 
cannot have $r=t<t+1=s$, so
 $r+1 < s$
and $b_r < c_s$.
Part (a) and the minimality of $s$ imply that $c_s$ is the value in box $(i_s,j_s)$ of 
each of the states $U_r$, $U_{r+1}$, \dots, $U_{s-1}$.
This is impossible, however, since 
$U_r$ with its outer box removed is an increasing tableau.
We conclude from this contradiction that $i_r < i_s$ or $j_r <j_s$,
which is equivalent to part (c).

Since $i_r = r$ and $j_s = s$ for all $r,s \in [l]$ with $r\leq t < s$,
the only way that repeated positions can occur in the bumping path is if
some column-bumped position coincides with a row-bumped position.
This is impossible by part (c), so part (d) holds.
\end{proof}

\begin{proposition}\label{bump-lem2}
Suppose $T$ is an increasing shifted tableau and $a,b \in \PP$ are integers with $a\leq b$
such that $\row(T)ab$ is a symplectic Hecke word.
Let $U= T \farrow a$ and $V = U \farrow b$.
Refer to the bumping paths of $T\farrow a$ and $U\farrow b$ as the \emph{first}
and \emph{second bumping paths}, respectively.
Then:
\ben
\item[(a)] Suppose the $i$th element of the first path is row-bumped
and the second path has length at least $i$.
Then 
the $i$th elements of both paths are row-bumped and in row $i$, and
the $i$th element of the first path is weakly left of the $i$th element of the second path.


\item[(b)] If the last position $(r_1,c_1)$ in the first path is row-bumped, then
the last position $(r_2,c_2)$ in the second path is also row-bumped with $r_1 \geq r_2 $ and $c_1\leq c_2$.

\item[(c)] Suppose the $i$th element of the second path is column-bumped.
The first path then has length at least $i$,
the $i$th elements of both paths are column-bumped and in column $i$, and 
the $i$th element of the first path is weakly below the $i$th element of the second path.


\item[(d)] If the last position $(r_2,r_2)$ in the second path is column-bumped,
then the last position $(r_1,r_1)$ in the first path is also column-bumped with $r_1 \leq r_2$ and $c_1 \geq c_2$. 

\een
Moreover, if
$\row(T)ab$ is an FPF-involution word,
then parts (a) and (c) hold 
with ``weakly'' replaced by ``strictly''
and the inequalities in parts (b) and (d) are strict. 
\end{proposition}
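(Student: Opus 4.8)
The plan is to prove all four parts, together with the \emph{moreover} clause, by a single induction that traverses the two insertion paths in parallel. Write the insertion path of $T\farrow a$ as $T\oplus a = U_0 \to U_1 \to \cdots \to U_l = U$ with successive labels $(i_1,j_1),\dots,(i_l,j_l)$ as in \eqref{Toplusa}, and the insertion path of $U\farrow b$ as $U\oplus b = U'_0 \to U'_1 \to \cdots \to U'_{l'} = V$ with labels $(i'_1,j'_1),\dots,(i'_{l'},j'_{l'})$. Since $\row(T\oplus a)=\row(T)a$ and, by Theorem~\ref{farrow-thm}(a), $\row(U\oplus b)=\row(U)b\simFKK\row(T)ab$, both reading words are symplectic Hecke words for the same $z\in\FF$; as $T\oplus a$ and $U\oplus b$ are initial states, they are admissible, and Proposition~\ref{adm-prop} then shows that every state on either path is admissible. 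Hence Proposition~\ref{even-prop} applies to all of them, and Proposition~\ref{bump-lem1} applies to each path. I will use the following consequences: each $U_r$ and $U'_r$ carries only even entries on its main diagonal; the $r$th element of a path lies in row $r$ while it is row-bumped and in column $r$ once it is column-bumped; on the first path $j_1\ge\cdots\ge j_t\ge t$ and $t+1\ge i_{t+1}\ge\cdots\ge i_l$, and similarly for the second; each path contains at most one diagonal transition of type D1--D4, separating its row-bumped prefix from its column-bumped suffix; and no column-bumped cell of a path lies weakly southwest of a row-bumped cell of that path.

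The inductive engine is a one-row comparison, and dually a one-column comparison. Suppose the two paths have been matched through row $r$, so that the value $\gamma$ carried by the first path into the rightmost cell of row $r+1$ and the value $\gamma'$ carried by the second path into the rightmost cell of row $r+1$ satisfy $\gamma\le\gamma'$ (whenever both paths reach row $r+1$), and row $r+1$ of the running second-path state agrees with row $r+1$ of $U$ except in cells lying weakly left of where $\gamma'$ enters. Running through the cases R1, R2, R3, R4 for the first path's departure from row $r+1$ against R1, R2, R3, R4 for the second path's, one checks, using that row $r+1$ of $T$ below the entering column is unchanged and that $\gamma\le\gamma'$, that the leftmost cell in row $r+1$ whose value exceeds the carried value lies weakly left for the first path than for the second; this is the weak-left assertion of part~(a) at row $r+1$, and it feeds forward the hypothesis for row $r+2$ with a new pair of carried values again weakly ordered. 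When a path departs row $r+1$ by a transition of type D1--D4, the parity alternative dictated by D1--D4 together with the evenness of the diagonal entries (Proposition~\ref{even-prop}) and Theorem~\ref{fpf-hecke-char-thm} pins down which of D1--D4 fires, forcing both paths to execute their diagonal transitions at a common row $t$ and to begin their column-bumped suffixes in column $t+1$; from there the transposed one-column comparison (the roles of Lemmas~\ref{ck-lem1} and~\ref{ck-lem2} interchanged) gives parts (c) and (d). The termination bookkeeping --- whether a path ends with an R1/C1 append or an R2/C2 absorption, and the fact from Lemma~\ref{corner-lem} that an appended cell is an outer corner with $i<j$ in the row case and $i>1$ in the column case --- supplies the length comparisons in parts (b), (c), and (d), while Proposition~\ref{bump-lem1}(d) ensures distinctness is never an issue.

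For the \emph{moreover} clause, observe that an FPF-involution word is reduced, so every consecutive subword of $\row(T)ab$ is reduced; in particular $a\neq b$, hence $a<b$, and by Theorem~\ref{farrow-thm}(b) and Corollary~\ref{reduced-cor} every intermediate reading word on either path stays reduced, hence remains an FPF-involution word. Reducedness excludes the Hecke-absorption transitions R2, C2, D1, and D2 at the critical steps: as already extracted inside the proofs of Lemmas~\ref{ck-lem1}, \ref{ck-lem2}, and~\ref{ck-lem3}, each of these transition types forces the governing reading word to contain a repeated letter in its braid-commutation class, hence to be non-reduced, and Proposition~\ref{bump-lem1}(b) records the corresponding fact at the end of a path. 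Deleting these cases from the one-row and one-column comparisons makes them injective: the strict inequality $\gamma<\gamma'$ now propagates in place of $\gamma\le\gamma'$, and ``leftmost cell exceeding $\gamma$'' becomes strictly left of ``leftmost cell exceeding $\gamma'$'', so ``weakly'' upgrades to ``strictly'' in parts (a) and (c) and the inequalities in (b) and (d) become strict.

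The step I expect to be the main obstacle is the diagonal handoff. One must show that, when one or both paths cross the main diagonal, the crossings are \emph{forced} to occur at the same row and that the monotone comparison survives the passage from ``compare the leftmost cell of row $r+1$ exceeding the carried value'' (row mode) to its transpose (column mode). This is precisely where the parity distinctions among D1--D4, the evenness of the diagonal entries, and Proposition~\ref{bump-lem1}(c) must all be used together; the remaining steps are a direct, if lengthy, transition-by-transition verification.
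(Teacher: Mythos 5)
Your overall architecture --- compare the two bumping paths step by step, using admissibility and Proposition~\ref{bump-lem1} to control each path separately --- is the same idea the paper uses. Your treatment of part~(a) in particular is correct in spirit. The problem is in your handling of the transition from row-bumping to column-bumping.

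You claim that the parity constraints ``pin down which of D1--D4 fires, forcing both paths to execute their diagonal transitions at a common row $t$.'' This is false. Part~(a) only gives that the first path's $r$th position $(r,r)$ lies \emph{weakly left} of the second path's $r$th position; nothing forces equality. Concretely, the first path can carry a smaller value into row $r$, fall under $T_{rr}$, and trigger a D2/D3/D4 transition there, while the second path carries a larger value into the same row and keeps row-bumping for several more rows before reaching the diagonal. So the indices of the last row-bumped positions, call them $r$ and $s$, satisfy $r \le s$ and frequently $r < s$. In the regime $r < j \le s$ the first path is already column-bumping while the second is still row-bumping, and the ``transposed one-column comparison'' you propose does not apply to the second path at all. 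This gap is exactly where the hard work in parts~(c) and~(d) lives; the paper handles it by proving (a) and (b) outright, using (b) (via its contrapositive) to see that the first path must also end in a column-bumped position, then tracking $r$ and $s$ and only comparing the two column-bumped tails for indices $j > s$.

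Two smaller issues. First, your ``moreover'' argument excludes D1 \emph{and} D2 as Hecke absorptions; but D2 does not absorb anything (it merely relocates the outer box), and the paper keeps D2 in play for FPF-involution words, merely noting that it and R3, C3 only fire when a neighbouring entry equals the carried value, which is what ultimately yields strictness. Second, ``row $r+1$ of the running second-path state agrees with row $r+1$ of $U$ except in cells lying weakly left of where $\gamma'$ enters'' is misstated: before the second path touches row $r+1$, that row agrees exactly with row $r+1$ of $U$ (no exception); the exception you describe applies to the \emph{first} path's state relative to $T$. The underlying comparison you want --- first path sees a row of $T$, second path sees the same row of $U$ which differs only by the first path's own bump in it --- is the right thing, but the hypothesis needs to be formulated carefully for the induction to close.
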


\begin{proof}
Let 
$T\oplus a = U_0 \to  U_1 \to \cdots  \to  U_l = U$
and
$
U\oplus b = V_0 \to  V_1 \to \cdots  \to V_m = V$
be the insertion paths of $T\farrow a$ and $U \farrow b$,
so that the $i$th elements of the first and second bumping paths are the labels
of the edges $U_{i-1} \to U_i$ and $V_{i-1}\to V_i$.
By Proposition~\ref{adm-prop}, all of the states $U_i$ and $V_i$ are admissible.

Suppose $i \in [m]$ and
  the $i$th element of the first bumping path is row-bumped.
All preceding elements of the first bumping path are then also row-bumped.
Since $a\leq b$, it is straightforward to check that 
for each $j \in [i]$,
the value in the outer box of $U_{j-1}$ is at most the value in the outer box of $V_{j-1}$,
that the $j$th position in the second bumping path is row-bumped,
and that this position appears in row $j$ weakly to the right of the $j$th position in the first bumping path.
This proves part (a).

Suppose the last position in the first bumping path is row-bumped.
If $m \leq l$ then it follows from part (a) and Proposition~\ref{bump-lem1}
that the last position in the second bumping path is also row-bumped and occurs in a column weakly to the right of
the column containing the last position in the first bumping path.
If $l \leq m$ then $l=m$ since part (a) implies that the $l$th position in the second bumping path is in the same row as 
and
weakly to the right of the last position in the first bumping path, which is on the boundary of $U$.
In either case the rows of the last positions in the first and second paths are $l$ and $m$, respectively, and we have $m \leq l$.
This proves part (b).

Suppose next that the $i$th element of the second bumping path is column-bumped.
The last position in the second bumping path is then also column-bumped.
By part (b), the last position in the first bumping path must therefore be column-bumped as well.
Let $r \in [l]$ and $s \in [m]$ be the indices of the last row-bumped positions
in the first and second bumping paths. The $r$th position in the first path is then $(r,r)$
and the $s$th position in the second path is $(s,s)$.
Part (a) implies that $r<s$ and obviously $s<i$. The last position in the first bumping path is in column $l$ and
weakly below row $r$ by Proposition~\ref{bump-lem1}.
Since this position is on the boundary of $U$, we must have $s < l$.
From these considerations, it is straightforward to check
that the value in the outer box of $U_{j-1}$ is weakly less than the value in the outer box of $V_{j-1}$ for each $j \in [i] \cap [l]$
and that the $j$th position in the second bumping path is in the $j$the column and weakly above the 
$j$th position in the first bumping path for each $j \in [i] \cap [l] - [s]$. 
Since the last position of the first bumping is on the boundary of $U$, it follows that $i \leq l$,
so
this proves part (c).

Suppose finally that the last position in the second path is column-bumped.
It follows from part (c) 
that 
$m\leq l$
and that the $m$th position in the first bumping path is in a row weakly below the last position in the second bumping path.
By Proposition~\ref{bump-lem1}, the
 last 
position in the first bumping path is weakly below the $m$th position and also column-bumped.
Thus the columns of the last positions in the first and second paths are $l$ and $m$, respectively.
This proves part (d).

For the last assertion, 
note that if $\row(T)ab$ is an FPF-involution word, then $a<b$
and no forward transitions in the insertion paths of $T\farrow a$ or $U\farrow b$
are of type (R2), (D1), or (C2);
moreover, transitions of type (R3), (D2), and (C3)
only occur when a box adjacent to the bumped position is equal to the value in the outer box of the previous state. 
Given these observations, 
only minor changes to the preceding arguments are needed to deduce 
strict versions of parts (a), (b), (c), and (d). We omit these details.
\end{proof}

%
%
%


The \emph{descent set} of a word $w=w_1w_2\cdots w_n$ is $\Des(w) = \{ i \in [n-1]: w_i > w_{i+1}\}.$
The \emph{descent set} of a standard shifted set-valued tableau $T$ with length $|T|=n$ is 
\[
\Des(T) = \left\{
i \in [n-1] : 
\ba
&\text{$i$ appears in $T$ and $(i+1)'$ appears in $T$, or} 
\\
&\text{$i$ appears in $T$ and $i+1$ appears in $T$ in a row above $i$, or}
\\
&\text{$i'$ appears in $T$ and $(i+1)'$ appears in $T$ in a column right of $i'$}
\ea
\
\right\}.
\]
Observe that $i \in [n-1]$ is \emph{not} a descent 
of a standard set-valued tableau $T$
if and only if
$i'$ and $i+1$ both appear in $T$, 
or $i$ and $i+1$ both appear in $T$ with $i+1$ weakly southeast of $i$ (in French notation), or
$i'$ and $(i+1)'$ both appear in $T$ with $(i+1)'$ weakly northwest of $i'$.

\begin{example}\label{426-ex}
If  $w=42 6 1 75 3 4 2 1 3 2$ is the word from Example~\ref{zmax-ex} and $T=\QF(w)$
then \[\Des(w) = \Des(T) = \{ 1,3,5,6,8,9,11\}.\]
If $U$ is the ``doubled'' tableau
formed from $T$
by moving all primed entries in a given box $(x,y) \in T$ to the transposed position $(y,x)$,
then $i \in [n-1]$ is a descent if and only if the row of $U$ containing $i$ is strictly below the row of $U$ containing $i+1$. 
\end{example}

\begin{theorem}\label{fpf-des-thm}
If $w$ is a symplectic Hecke word then $\Des(w) = \Des(\QF(w))$.
\end{theorem}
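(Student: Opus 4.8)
The plan is to proceed by induction on $n = \ell(w)$, tracking how one descent at a time is added when we pass from $w_1\cdots w_{n-1}$ to $w = w_1\cdots w_n$. Write $w' = w_1\cdots w_{n-1}$, $T' = \PF(w')$, $Q' = \QF(w')$, and let $a = w_{n-1}$, $b = w_n$ (when $n \geq 2$). By induction $\Des(w') = \Des(Q')$, and $Q$ is obtained from $Q'$ by adjoining the entry $n$ or $n'$ in one of the four ways of Definition~\ref{q-def}. The only index whose descent-status can differ between $\Des(w)$ and $\Des(w')$, or between $\Des(Q)$ and $\Des(Q')$, is $i = n-1$. So the entire content of the theorem reduces to the single claim: \emph{$n-1 \in \Des(w)$ (equivalently $a > b$) if and only if $n-1 \in \Des(Q)$.}

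First I would record, from Definition~\ref{q-def}, exactly where the new entry sits relative to the old one. The entry $n-1$ or $(n-1)'$ in $Q$ is the label of the last transition of the insertion path of $T'' \farrow a$, where $T'' = \PF(w_1\cdots w_{n-2})$; call the corresponding position $(i_0, j_0)$ and note it is an inner or outer corner by Lemma~\ref{corner-lem}. The entry $n$ or $n'$ of $Q$ is the label of the last transition of $T' \farrow b$. Whether $n$ appears unprimed (last transition of type (R1)/(R2), i.e.\ row-bumped) or primed (type (C1)/(C2), i.e.\ column-bumped) is governed by whether the second bumping path ends row-bumped or column-bumped — and this is precisely the kind of information controlled by Proposition~\ref{bump-lem2}. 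The key structural input is: the last position of the bumping path of $T' \farrow b$ is row-bumped in column $k$ or column-bumped in row $\ell$, and Proposition~\ref{bump-lem2}(b),(d) compares $k$ (resp.\ $\ell$) with the column $j_0$ (resp.\ row $i_0$) of the last position of the bumping path of $T' \farrow a$ — but here I must be careful, since Proposition~\ref{bump-lem2} compares the \emph{two successive insertions into $T'$}, i.e.\ inserting $a$ then $b$ into the \emph{same} $T'$, whereas here $a$ was inserted into $T''$ and $b$ into $T' = T'' \farrow a$. Reconciling these requires observing that the last step of $T'' \farrow a$ places the new box exactly where Proposition~\ref{bump-lem2}'s ``first bumping path'' would end, so the two framings agree.

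With that dictionary in hand, the proof is a finite case analysis on the type (row-bumped vs.\ column-bumped) of the last transitions of $T''\farrow a$ and $T' \farrow b$, cross-referenced against the definition of $\Des(Q)$ for a standard shifted set-valued tableau. Concretely: if both are row-bumped, then $n$ is placed (possibly in the same box as $n-1$, via the set-valued (R2)/(c) rule) weakly to the right when $a \leq b$ and strictly to the left — hence in a row strictly above — when $a > b$, by the relevant monotonicity in Proposition~\ref{bump-lem2}(a),(b) and its strict refinement; matching this against the ``$i$ and $i+1$ appear with $i+1$ in a row above $i$'' clause gives the equivalence. The mixed cases ($n-1$ row-bumped, $n$ column-bumped, or vice versa) are where the $(i+1)'$-clauses of the descent set enter, and one checks directly that $a > b$ forces the primed/unprimed pattern declared a descent, while $a \leq b$ forces a non-descent. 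The case where the last transition is of type (R2) or (C2) — so $n$ lands in an already-occupied box — needs separate but short attention, since then one is comparing entries \emph{within} a single box. I expect the \textbf{main obstacle} to be this bookkeeping of the set-valued cases together with the ``base case'' subtlety that the two-step insertion $T'' \xrightarrow{a} T' \xrightarrow{b} V$ must be translated into the single-source comparison of Proposition~\ref{bump-lem2}; once that translation is pinned down, each of the six or so cases is a direct check against the two displayed characterizations of $\Des(T)$ and $\Des(w)$.
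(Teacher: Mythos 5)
Your plan gives only one of the two inclusions the theorem requires, and your claim that Proposition~\ref{bump-lem2} also yields the other is not supported by what that proposition actually says. Proposition~\ref{bump-lem2} is stated under the hypothesis $a \leq b$: every one of its conclusions (parts (a)--(d) and the strict refinement for FPF-involution words) is a statement about what happens when the second inserted letter is weakly larger than the first. From parts (b) and (d), one correctly deduces that $a \leq b$ forces $n-1 \notin \Des(\QF(w))$, i.e.\ $\Des(\QF(w)) \subseteq \Des(w)$. But the converse --- that $a > b$ forces $n-1 \in \Des(\QF(w))$, i.e.\ the last box of the second bumping path lies strictly above / strictly to the left --- is \emph{not} asserted anywhere in Proposition~\ref{bump-lem2}, and your sentence ``strictly to the left --- hence in a row strictly above --- when $a > b$, by the relevant monotonicity in Proposition~\ref{bump-lem2}(a),(b) and its strict refinement'' misreads the proposition: the strict refinement concerns the FPF-involution-word case with $a < b$, not the decreasing case $a > b$. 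You would have to prove a new, reversed analogue of Proposition~\ref{bump-lem2} for $a > b$, and the behavior of insertion when the second letter is smaller is considerably more delicate (the second path may terminate earlier, pass through a diagonal box that the first path avoided, etc.). The paper sidesteps this entirely: it proves $\Des(\QF(w)) \subseteq \Des(w)$ directly from Proposition~\ref{bump-lem2}(b),(d), and then establishes equality by a counting argument (bijections $\cW_n^- \to \cW_n^+$ via $w \mapsto (m-w_1)\cdots(m-w_n)$, and $\cX_n^- \to \cX_n^+$ via an explicit involution on recording tableaux) together with Theorem~\ref{fpf-bij-thm}, so that both ``$-$'' sets have the same size and a one-sided inclusion between them must be an equality.

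Separately, your worry about ``reconciling'' the two framings is unfounded. Proposition~\ref{bump-lem2} is already phrased exactly as you need it: with $T = \PF(w_1\cdots w_{n-2})$, the ``first bumping path'' is that of $T \farrow a$ and the ``second'' is that of $(T\farrow a) \farrow b = \PF(w_1\cdots w_{n-1}) \farrow b$. There is no gap here; you misstated the proposition as comparing two insertions into the same tableau.
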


\begin{proof}
Let $w$ be a symplectic Hecke word of length $n$.
Both descent sets are empty if $n\in \{0,1\}$ so assume $n\geq 2$.
Noting that the last position in 
any bumping path under symplectic Hecke insertion must be an inner or outer corner,
it is straightforward to deduce 
from parts (b) and (d) of
Proposition~\ref{bump-lem2} that if $i \in [n-1]$ is not a descent of $w$ then $i$ is not a descent of $\QF(w)$.
Therefore $\Des(\QF(w)) \subset \Des(w)$.
We will show that this containment is equality using a counting argument and induction.

Fix $m \in 2\PP$. Let $\cW_n$ be the set of symplectic Hecke words of length $n$
with all letters less than $m$.
Let $\cW_n^-$, $\cW_n^0$, and $\cW_n^+$ be the sets of words $w \in \cW_n$
with $w_{n-1} > w_n$, $w_{n-1} = w_n$, and $w_{n-1} < w_n$, respectively.
The maps
$
w_1w_2\cdots w_n \mapsto (m - w_1)(m-w_2)\cdots (m-w_n)
$ and $
w_1w_2\cdots w_n \mapsto w_1w_2\cdots w_{n-1}
$
 are bijections $\cW_n^- \to \cW_n^+$
and $\cW_n^0 \to \cW_{n-1}$,
so
$
 |\cW_n| = 2|\cW_n^-| + |\cW_{n-1}|.
$

Now let $\cX_n$ be the set of pairs $(P,Q)$ where $P$ is an increasing shifted tableau
whose row reading word is a symplectic Hecke word with all letters less than $m$
and $Q$ is a standard set-valued tableau of length $n$ with the same shape as $P$.
Let $\cX_n^-$ be the set of pairs $(P,Q) \in \cX_n$ with $n-1 \in \Des(Q)$.
Let $\cX_n^0$ be the set of pairs $(P,Q) \in \cX_n$ such that $Q$
contains either $n-1$ and $n$ in the same box
or $(n-1)'$ and $n'$ in the same box.
Finally define $\cX_n^+= \cX_n -\cX_n^- -\cX_n^0$.
Removing $n$ and $n'$ from $Q$ gives a bijection $\cX_n^0 \to \cX_{n-1}$,
and 
 altering $Q$ as follows gives a bijection $\cX_n^-\to\cX_n^+$:
 \begin{itemize}
\item If $n'$ is in the same row as $n-1$ or $(n-1)'$ but not the same box, remove the prime from $n$.
\item If $n-1$ is in the same column as $n$ or $n'$ but not the same box, add a prime to $n - 1$.
\item In all other cases when $n-1 \in \Des(Q)$, interchange $n-1$ with $n$ and $(n-1)'$ with $n'$.
\end{itemize}
We conclude that 
$
|\cX_n| = 2|\cX_n^-| + |\cX_{n-1}|.
$

Theorem~\ref{fpf-bij-thm}
implies that 
$w \mapsto (\PF(w),\QF(w))$ is a bijection $\cW_n \to \cX_n$, so
$|\cW_n| = |\cX_{n}|$ for all $n \in \NN$
and therefore $|\cW_n^-| = |\cX_n^-|$.
Since $\Des(\QF(w)) \subset \Des(w)$,
the map $w \mapsto (\PF(w),\QF(w))$ 
must restrict to a bijection $\cW_n^- \to \cX_n^-$,
so we have $n-1\in \Des(\QF(w))$ if and only if $n-1 \in \Des(w)$
for $w \in \cW_n$.
As
we may assume by induction that 
$\Des(\QF(w)) \cap [n-2] = \Des(w) \cap [n-2]$,
we conclude that $\Des(w) = \Des(\QF(w))$
for all symplectic Hecke words $w$.
\end{proof}

Theorem~\ref{fpf-des-thm} allows us to formulate a semistandard version
of symplectic Hecke insertion.
A \emph{weak set-valued tableau}
is a map from a finite subset of $\PP\times \PP$
to the set of finite, nonempty multi-subsets of 
the marked alphabet $\MM = \{1' < 1 < 2' < 2 < 3' <3 < \dots\}$.
All conventions for set-valued tableaux 
extend to weak set-valued tableaux without difficulty.

A weak set-valued tableau is \emph{shifted} if its domain is the shifted Young diagram of a strict partition.
A shifted weak set-valued tableau $T$ is 
\emph{semistandard} if the following conditions hold:
\begin{itemize}
\item If $(a,b),(x,y) \in T$ have $(a,b)\neq(x,y)$ and  $a \leq x$ and $b\leq y$, then $\max(T_{ab}) \leq \min(T_{xy})$.
\item No primed number belongs to any box of $T$ on the main diagonal.
\item Each unprimed number appears in at most one box in each column of $T$.
\item Each primed number appears in at most one box in each row of $T$.
\end{itemize}
A \emph{semistandard shifted set-valued tableau} is a semistandard shifted weak set-valued tableau
whose entries are sets. 
A \emph{semistandard shifted marked tableau} is a semistandard shifted set-valued tableau
whose entries are all singleton sets.
For example, the shifted weak set-valued tableaux
\be\label{uv-eq}
\ytableausetup{boxsize = 0.8cm,aligntableaux=center}
U =\
\begin{ytableau}
\none & 5 & 6' 6\\
1\ 2 & 2\ 2 & 3'  6'
\end{ytableau}
\qquand
V =\
\begin{ytableau}
\none & 5 & 6'\\
2 & 2 &   6'
\end{ytableau}
\ee
are both semistandard, and $V$ is a shifted marked tableau.
The \emph{weight} of a weak set-valued tableau $T$
is the map $\weight(T) : \PP \to \NN$ whose value at $i \in \PP$ is the number of times $i$ or $i'$ appears in $T$.
It is convenient to represent as $\weight(T)$ as a weak composition; 
for example, 
if $U$ and $V$ are as in \eqref{uv-eq} 
then
$\weight(U) = (1,3,1,0,1,3)$ and $\weight(V) = (0,2,0,0,1,2)$.

Let $w=w_1w_2\cdots w_m$ be a word of length $m$.
Define a \emph{weakly increasing factorization} of $w$
to be a weakly increasing sequence of positive integers $i=(i_1\leq i_2 \leq \dots \leq i_m)$
with $i_j <  i_{j+1}$ if $j \in \Des(w)$.
The \emph{weight} of such a factorization is the map $\mu : \PP \to \NN$
with $\weight(a) = |\{j\in [m] : i_j = a\}|$ for $a \in \PP$.
The data of a weakly increasing factorization of $w$ is 
equivalent to 
a decomposition of $w$ into a countable sequence of weakly increasing subwords
$w=w^1w^2w^3\cdots $. 

When $w$ is a symplectic Hecke word of length $m$ and $i=(i_1\leq i_2 \leq \dots \leq i_m)$ is a
weakly increasing factorization of $w$,
we define $\QF(w,i)$ to be the shifted weak set-valued tableau
formed from $\QF(w)$ by replacing $j$ by $i_j$ and $j'$ by $i_j'$ 
for each $j\in [m]$. E.g.,
if $w=42 6 1 75 3 4 2 1 3 2$ as in Example~\ref{zmax-ex} 
and $i =122334556889$, so that
$(w,i) \leftrightarrow (4)(26)(17)(5)(34)(2)()(13)(2)$, then
\[ 
\ytableausetup{boxsize = .6cm,aligntableaux=center}
\QF(w,i)=
\begin{ytableau}
\none & \none & 5 & 8' \\
\none & 4 &5' &6' & 9' \\
1 & 2' & 2 &3' &3 & 8' 
\end{ytableau}.
\]
We now have the following refinement of Theorem~\ref{fpf-bij-thm}.

\begin{theorem}\label{ssyt-thm}
Let $z \in \FF$.
The correspondence $(w,i) \mapsto (\PF(w),\QF(w,i))$
is a bijection
from weakly increasing factorizations of symplectic Hecke words
(respectively, FPF-involution words)
for $z$ to pairs
$(P,Q)$ where $P$ is an increasing shifted tableau with $\row(P) \in \cHfpf(z)$
(respectively, $\row(P) \in \cRfpf(z)$)
and $Q$ is a semistandard shifted weak set-valued (respectively, marked) tableau
with the same shape as $P$.
Moreover, $(w,i)\mapsto \QF(w,i)$ is a weight-preserving map.
\end{theorem}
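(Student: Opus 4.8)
The plan is to deduce Theorem~\ref{ssyt-thm} from Theorem~\ref{fpf-bij-thm} together with the descent-compatibility result of Theorem~\ref{fpf-des-thm}, by a ``standardization'' argument. First I would recall that a weakly increasing factorization $i=(i_1\le i_2\le\dots\le i_m)$ of a symplectic Hecke word $w$ of length $m$ is, by definition, required to satisfy $i_j<i_{j+1}$ whenever $j\in\Des(w)$; and that $\QF(w,i)$ is obtained from the standard recording tableau $\QF(w)$ by the substitution $j\mapsto i_j$, $j'\mapsto i_j'$. The first step is to observe that this substitution is weight-preserving in the stated sense: for each $a\in\PP$, the number of times $a$ or $a'$ occurs in $\QF(w,i)$ equals $|\{j\in[m]:i_j=a\}|$, which is exactly the weight of the factorization $i$. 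This is immediate from the definition and handles the final sentence of the theorem.

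Next I would show the map lands in the claimed target. Given a weakly increasing factorization $(w,i)$, I must check that $\QF(w,i)$ is a semistandard shifted weak set-valued tableau with the same shape as $P:=\PF(w)$. The shape claim is automatic since $\QF(w,i)$ has the same shape as $\QF(w)$, which has the same shape as $\PF(w)$ by Theorem~\ref{fpf-bij-thm}. For semistandardness, I would translate each of the four defining conditions (the $\max\le\min$ inequality for comparable boxes, no primed entries on the diagonal, each unprimed value at most once per column, each primed value at most once per row) into a statement about $\QF(w)$ and the factorization. Because $\QF(w)$ is standard, no two of its entries $j$ coincide, and replacing $j$ by $i_j$ can only create a repeated value. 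A repeated unprimed value $a$ in one column would force two standard labels $j<k$ in that column with $i_j=i_k=a$; weak increasingness forces $i_{j}=i_{j+1}=\cdots=i_k$, hence none of $j,j+1,\dots,k-1$ is a descent of $w$, and one then invokes Theorem~\ref{fpf-des-thm} to see the corresponding columns/rows of $\QF(w)$ are arranged so that this is exactly the permitted ``same value, strictly increasing boxes'' configuration — i.e.\ the non-descent characterization stated just before Theorem~\ref{fpf-des-thm} is precisely the semistandardness constraint after substitution. The no-primes-on-the-diagonal and $\max\le\min$ conditions follow directly from the corresponding properties of standard $\QF(w)$ and the monotonicity of $i$.

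For bijectivity, I would construct the inverse: given $(P,Q)$ with $P$ an increasing shifted tableau, $\row(P)\in\cHfpf(z)$, and $Q$ a semistandard shifted weak set-valued tableau of the same shape, apply the standardization map to $Q$. Concretely, list the entries of $Q$ in the canonical reading order used to detect descents (by value, breaking ties among equal unprimed entries by column from left to right, and among equal primed entries by row from top to bottom), relabel them $1,2,\dots,m$ (keeping primes) to get a standard shifted set-valued tableau $\std(Q)$, and let $i$ record the original values. One checks $\std(Q)$ is standard and that $(P,\std(Q))$ is in the image of ordinary symplectic Hecke insertion, so by Theorem~\ref{fpf-bij-thm} there is a unique symplectic Hecke word $w=\wSp(P,\std(Q))$ with $\PF(w)=P$ and $\QF(w)=\std(Q)$. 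Then $(w,i)$ is the preimage, provided $i$ is a \emph{valid} factorization of $w$, i.e.\ $i_j<i_{j+1}$ for $j\in\Des(w)$; this is where Theorem~\ref{fpf-des-thm} is used again, since $\Des(w)=\Des(\std(Q))$, and semistandardness of $Q$ forces a strict increase of values exactly across descent positions of $\std(Q)$. Finally one verifies the two assignments $(w,i)\mapsto(\PF(w),\QF(w,i))$ and $(P,Q)\mapsto(\wSp(P,\std(Q)),\mathrm{values})$ are mutually inverse, which amounts to checking that standardization and its inverse commute appropriately with the substitution $j\mapsto i_j$; and that, when $Q$ is a marked tableau (all entries singletons), $w$ is an FPF-involution word, which follows from the marked-tableau case of Theorem~\ref{fpf-bij-thm}. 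The main obstacle I anticipate is making the tie-breaking order in the definition of $\std$ match exactly the descent rule for standard recording tableaux given before Theorem~\ref{fpf-des-thm}, so that ``no descent of $w$'' corresponds cleanly to ``equal values allowed in the semistandard tableau'' — once that bookkeeping is pinned down, the rest is routine.
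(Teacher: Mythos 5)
Your proposal is essentially the paper's own proof: weight-preservation is immediate from the definition; semistandardness of $\QF(w,i)$ is extracted from the descent-compatibility statement $\Des(w)=\Des(\QF(w))$ of Theorem~\ref{fpf-des-thm}; the inverse is constructed by standardizing $Q$ and recording the original values, then invoking the bijection of Theorem~\ref{fpf-bij-thm} to recover $w=\wSp(P,\st(Q))$; and the marked/FPF-involution case is handled by the same argument. The one detail to correct in the bookkeeping you flag: in the paper's convention (French notation, Definition of $\st$ following \cite[\S3.2]{HKPWZZ}), primed entries of a fixed value are standardized reading \emph{bottom to top}, not top to bottom as you wrote --- this is what makes the non-descent condition ``$(i+1)'$ in a row weakly above $i'$'' come out right --- while unprimed entries of a fixed value are read left to right, and for each value $a$ the $a'$s are processed before the $a$s. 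Once that ordering is fixed, the rest of your argument coincides with the paper's.
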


\begin{proof}
Suppose $i=(i_1\leq i_2\leq \dots \leq i_m)$ is a weakly increasing factorization of
a symplectic Hecke word
$w=w_1w_2\cdots w_m \in \cHfpf(z)$.
The shifted weak set-valued tableau $\QF(w,i)$ 
has the same weight as $i$ by construction.
To check that $\QF(w,i)$ is semistandard,
fix $h \in \{i_1,i_2,\dots, i_m\}$
and suppose $j \in \NN$ and $b \in\PP$
are such that $h=i_t$ if and only if $t \in \{j+1,j+2,\dots,j+b\}$,
so that $\Des(\QF(w)) \cap \{j+1,j+2,\dots,j+b-1\} = \varnothing$.
Theorem~\ref{fpf-des-thm}
implies that there exists an integer $0\leq a \leq b$ such that the primed numbers
$(j+1)',(j+2)',\dots,(j+a)'$ all appear in $\QF(w)$
and the unprimed numbers $j+a+1,j+a+2,\dots,j+b$ all appear in $\QF(w)$;
moreover, none of the primed numbers 
can appear in different boxes in the same row of $\QF(w)$
and none of the unprimed numbers can appear in different boxes in the same column.
We conclude that $\QF(w,i)$ is weakly increasing in the required sense. Since this weak set-valued tableau
obviously contains no primed numbers on the main diagonal, $\QF(w,i)$ is semistandard.

Suppose $Q$ is a semistandard shifted weak set-valued tableau.
Following \cite[\S3.2]{HKPWZZ}, define the \emph{standardization} of $Q$ 
to be the standard shifted set-valued tableau $\st(Q)$ 
formed from $Q$ by the following procedure.
Start by replacing all 1s appearing in $Q$, read from left to right,
by  $1, 2, \dots, i$. (Note that no $1'$s appear in $Q$.)
Then replace all $2'$s appearing in $Q$, read bottom to top,
by the primed numbers $(i+1)'$, $(i+2)'$, \dots $(i+j)'$,
Then replace all $2$s appearing $Q$, read left to right, 
by $i+j+1$, $i+j+2$, \dots, $i+j+k$.
Then replace all $3'$s appearing in $Q$, read bottom to top, by
the primed numbers $(i+j+k+1)'$, $(i+j+k+2)'$, \dots, $(i+j+k+l)'$,
and so on,
continuing this substitution process for the numbers $3,4',4,\dots,n',n$.
If $\st(Q)$ has length $m$, 
then define $i^Q =  (i^Q_1 \leq i^Q_2 \leq \dots \leq i^Q_m)$
to be the weakly increasing sequence of positive integers with $i^Q_j = a$
if $a$ or $a'$ appears in $Q$ and changes to $j$ or $j'$ in $\st(Q)$.

Now suppose $(w,i)$ is a weakly increasing factorization of a symplectic Hecke word.
Using Theorem~\ref{fpf-des-thm}, it is easy to see that every semistandard shifted weak set-valued tableau whose standardization is $\QF(w)$
arises as $\QF(w,i)$ for some choice of factorization $i$.
It follows from Theorem~\ref{fpf-bij-thm} that the map described in the theorem is surjective.
Similarly, it is straightforward to deduce that
we recover $(w,i)$ from $(P,Q):=(\PF(w), \QF(w,i))$
as $w=\wSp(P,\st(Q))$ and $i = i^Q$.
We conclude that the given map is also injective.
The ``marked'' version of the theorem for FPF-involution words follows by the same argument.
\end{proof}

\subsection{Orthogonal Hecke insertion}\label{ortho-sect}

Given a word $w=w_1w_2\cdots w_m$, define $2[w]$ to be the word $(2w_1)(2w_2)\cdots (2w_m)$.
When $T$ is an increasing tableau, write $2[T]$ for the tableau formed by doubling every entry of $T$.
When $T$ has all even entries, define $\frac{1}{2}[T]$ by halving every entry
analogously. For example, 
\[
2[3412] = 6824 
\qquand
 \ytableausetup{boxsize = .4cm,aligntableaux=center}
 \tfrac{1}{2} \left[\
 \begin{ytableau}
\none & 6 & \none \\
2 & 4 & 8 
 \end{ytableau}
\ \right]
 =\
 \begin{ytableau}
\none & 3 & \none \\
1 & 2 & 4 
 \end{ytableau}.
 \]
 If $T$ has 
 all even entries and $a \in 2\PP$ then $T\farrow a$ has all even entries,
so the following is well-defined:

\begin{definition}
\label{iarrow-def}
Given an increasing shifted tableau $T$ and $a \in \PP$,
let
$T\iarrow a: = \tfrac{1}{2} \left[2[T] \farrow 2a\right].$
\end{definition}

We refer to the operation transforming $(T,a) $ to $ T\iarrow a $ as \emph{orthogonal Hecke insertion}.
We could also define $T\iarrow a$ 
exactly as we defined $T\farrow a$, without any doubling of letters,
by slightly modifying the forward transition graph from Section~\ref{forward-sect}.
All that is needed is to remove  the parity condition from
 transition (D3)
and omit transition (D4).

Any word with all even letters is a symplectic Hecke word,
so the following is also well-defined.

\begin{definition}
For any word $w$,
define $ \PI(w) =\tfrac{1}{2}\left[ \PF\(2[w]\)\right]$
and $ \QI(w) =\QF\(2[w]\)$.
\end{definition}

We call $\PI(w)$ the \emph{insertion tableau} 
and $\QI(w)$ the \emph{recording tableau}
of $w$ under orthogonal Hecke insertion.
If $w=w_1w_2\cdots w_n$
then $\PI(w) = (\cdots ((\emptyset \iarrow w_1) \iarrow w_2)\cdots) \iarrow w_n$.

\begin{example}\label{pqi-ex}
If $w=451132$ then 
 \[
\ytableausetup{boxsize = .7cm,aligntableaux=center}
 \PI(w) =\ 
\begin{ytableau}
\none &  3  \\ 
1 &  2 & 4 & 5 
 \end{ytableau}
 \qquand
  \QI(w) = 
\begin{ytableau}
\none &  5  \\ 
1 &  2 & 3'4' & 6' 
 \end{ytableau}.
 \]
\end{example}

\begin{proposition}\label{same-prop}
The correspondence $w \mapsto (\PI(w),\QI(w))$
is the \emph{shifted Hecke insertion} algorithm introduced by Patrias and Pylyavskyy in \cite[\S5.3]{PatPyl2014}.
\end{proposition}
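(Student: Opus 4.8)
The plan is to unwind Definition~\ref{iarrow-def} so that orthogonal Hecke insertion is expressed directly, without any doubling of letters, and then to match the resulting rule against the explicit description of shifted Hecke insertion in \cite[\S5.3]{PatPyl2014}. The first step is to justify the remark following Definition~\ref{iarrow-def}. If $w$ is a word all of whose letters are even, then every entry of every shifted insertion state arising in the computation of $\PF(2[w])$ is even: this is immediate by induction on the length of the insertion path, since the forward transitions of types (R1)--(R4), (C1)--(C4), (D1), (D3) only copy existing entries or move the outer box, (D2) likewise copies an existing entry, and (D4) --- the only rule that introduces the new value $U_{ii}+1$ --- requires $U_{in}$ and $U_{ii}$ to have opposite parities and so can never fire. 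Consequently, along such a computation the parity hypotheses in (D2) and (D3) are automatically satisfied, so the forward transition graph restricted to all-even states is precisely the image under $T\mapsto 2[T]$, $a\mapsto 2a$ of the modified transition graph described in the remark (delete the parity condition from (D3) and omit (D4)). Halving then shows that $T\iarrow a$ is computed by iterating this modified transition rule, and that the bumping path of $T\iarrow a$ is literally the bumping path of $2[T]\farrow 2a$.

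The second step is to identify the modified transition rule with shifted Hecke insertion. Here I would proceed by a direct case analysis. Row transitions (R1)--(R4) implement ordinary Hecke row insertion of the outer value into the relevant row: (R1) and (R4) are the two ways a Schensted bump lands in a strictly increasing row, (R2) is the case where the inserted value already occurs at the end of the row (the shape is unchanged and the value is absorbed), and (R3) is the Hecke case where the inserted value equals an entry strictly inside the row. The diagonal transitions (D1), (D2), (D3) govern exactly the moment when a row bump reaches the main diagonal and shifted insertion switches to column insertion, and (C1)--(C4) implement Hecke column insertion in the same fashion as (R1)--(R4). One checks that in each case the value being carried forward, the box that is modified, and the side condition distinguishing ``bump'' from ``replace'' from ``absorb'' agree termwise with the rules in \cite[\S5.3]{PatPyl2014}; since $\PI(w)$ and the Patrias--Pylyavskyy insertion tableau are both obtained by iterating these rules starting from $\emptyset$ (Definition~\ref{p-def}), they coincide.

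The third step concerns the recording tableaux. By Definition~\ref{q-def}, $\QF$ (hence $\QI$) adds the new label $n$ (unprimed) when the last transition of the insertion path is of type (R1) or (R2) --- the final box created or absorbed by a \emph{row} step --- and adds $n'$ when the last transition is of type (C1) or (C2) --- the final box created or absorbed by a \emph{column} step --- placing $n$ or $n'$ in the new box in cases (R1), (C1) and in the last box of the adjacent row or column in the shape-preserving cases (R2), (C2). This is exactly the rule by which Patrias and Pylyavskyy record whether a step of shifted Hecke insertion was a row insertion or a column insertion, and how they form the set-valued recording entry when the shape does not grow. Matching these descriptions termwise gives $\QI(w)=Q_{\mathrm{PP}}(w)$ and completes the proof.

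I expect the main obstacle to be the bookkeeping in the second step: verifying transition by transition that the switch from row to column insertion occurs at the same place and under the same (now parity-free) side conditions as in \cite{PatPyl2014}, and in particular that the shape-preserving cases (R2) and (C2) correspond precisely to the situations in which shifted Hecke insertion leaves the tableau shape unchanged. None of this is deep, but it requires care to restate the Patrias--Pylyavskyy rules in a form directly comparable to the transition graph of Section~\ref{forward-sect}.
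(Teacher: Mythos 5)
Your proposal follows the paper's proof, which is simply the observation that the claim follows by directly comparing the forward transition rules (R1)--(R4), (D1)--(D4), (C1)--(C4) against the rules (S1)--(S4) defining shifted Hecke insertion in Patrias--Pylyavskyy; the paper states this in one sentence, while you spell out the reduction to the parity-free modified transition graph (already noted in the remark after Definition~\ref{iarrow-def}) and the term-by-term matching. Your step~1 observation that (D4) can never fire and the parity hypotheses in (D2)--(D3) become vacuous on all-even states is correct and is exactly the content of that remark.
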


\begin{proof}
This is clear from comparing the rules (S1)-(S4) defining shifted Hecke insertion in \cite[\S5.3]{PatPyl2014}
with the forward transitions (R1)-(R4), (D1)-(D4), and (C1)-(C4) described in Section~\ref{forward-sect}.
\end{proof}

The insertion and recording tableaux $\PI(w)$, $\QI(w)$
are denoted $P_S(w)$, $Q_S(w)$ in \cite[\S5.3]{PatPyl2014}, $P_{SK}(w)$, $Q_{SK}(w)$ in \cite[\S2]{HKPWZZ},
and $P_{SH}(w)$, $Q_{SH}(w)$ in \cite[\S5]{HMP4}.

Proposition~\ref{same-prop}
lets us recover several facts about shifted Hecke insertion from what we have
already shown
about symplectic Hecke insertion.
Write $\simICK$ (respectively, $\simIKK$) for the strongest equivalence relation that has $v \simICK w$
(respectively, $v\simIKK w$) whenever $v$ and $w$ are words 
such that $w$ is obtained from $v$ by swapping its first two letters,
 or that satisfy $v \simCK w$ (respectively, $v \simKK w$).
 The relation $\simKK$ is called 
 \emph{weak $K$-Knuth equivalence} in \cite{BuchSamuel,HMP4,HKPWZZ}.

If $v$ and $w$ are words,
then $v\simIKK w$ 
if and only if 
$2[v] \simFKK 2[w]$,
and $v\simICK w$ 
if and only if 
$2[v] \simFCK 2[w]$.
The next three results are immediate from Theorem~\ref{farrow-thm}
and Corollaries~\ref{fkk-cor} and \ref{reduced-cor}.

\begin{corollary}
Let $T$ be an increasing shifted tableau and $a \in \PP$. 
\ben
\item[(a)] The tableau $T\iarrow a$ is increasing and shifted and $\row(T\iarrow a) \simIKK \row(T)a$.
\item[(b)] If $\row(T)a$ is an involution word then $\row(T\iarrow) \simICK \row(T)a$.
\een
\end{corollary}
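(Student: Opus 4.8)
The statement to prove is the corollary concerning orthogonal Hecke insertion, which asserts that for an increasing shifted tableau $T$ and $a \in \PP$, the tableau $T\iarrow a$ is increasing and shifted with $\row(T\iarrow a) \simIKK \row(T)a$, and that if $\row(T)a$ is an involution word then $\row(T\iarrow a) \simICK \row(T)a$.

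\medskip

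The plan is to reduce everything to the already-established results about \emph{symplectic} Hecke insertion via the doubling map. First I would observe that by Definition~\ref{iarrow-def}, $T\iarrow a = \tfrac{1}{2}[2[T]\farrow 2a]$, so the shape of $T\iarrow a$ equals the shape of $2[T]\farrow 2a$; since any word with all even letters is automatically a symplectic Hecke word (because the only constraint imposed by Theorem~\ref{fpf-hecke-char-thm} on the start of a symplectic Hecke word is that it not begin with an odd letter, and more generally all-even words clearly satisfy the equivalence-class condition), we know $\row(2[T])\,(2a) = 2[\row(T)a]$ is a symplectic Hecke word. Hence Theorem~\ref{farrow-thm}(a) applies and tells us $2[T]\farrow 2a$ is an increasing shifted tableau with $\row(2[T]\farrow 2a) \simFKK \row(2[T])(2a)$. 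Halving a tableau that is increasing and shifted yields a tableau that is still increasing and shifted, which gives the first assertion of part (a).

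\medskip

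Next I would translate the $\simFKK$-equivalence back down. The key bookkeeping fact, stated in the excerpt just before this corollary, is that $v \simIKK w$ if and only if $2[v]\simFKK 2[w]$ (and likewise $v\simICK w$ iff $2[v]\simFCK 2[w]$). So from $\row(2[T]\farrow 2a)\simFKK 2[\row(T)a]$ I need to know that $\row(2[T]\farrow 2a) = 2[\row(T\iarrow a)]$, i.e.\ that the row reading word commutes with the doubling operation on tableaux with all-even entries. This is immediate since doubling entries does not change the shape and $\row$ only depends on reading positions in a fixed order, reading off values; applying $\tfrac12[\cdot]$ afterwards is likewise transparent. Thus $2[\row(T\iarrow a)] = \row(2[T]\farrow 2a) \simFKK 2[\row(T)a]$, and applying the ``iff'' criterion backwards gives $\row(T\iarrow a)\simIKK \row(T)a$, completing part (a).

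\medskip

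For part (b), the extra hypothesis is that $\row(T)a$ is an involution word, i.e.\ $\row(T)a \in \iR(y)$ for some $y \in \II$. Here I would use the relationship between involution words and FPF-involution words under doubling: one should check (or cite from the surrounding framework, e.g.\ the discussion identifying orthogonal Hecke insertion as a special case of symplectic Hecke insertion in Section~\ref{ortho-sect}) that if $v$ is an involution word then $2[v]$ is an FPF-involution word. Granting this, $2[\row(T)a] = \row(2[T])(2a)$ is an FPF-involution word, so Theorem~\ref{farrow-thm}(b) yields $\row(2[T]\farrow 2a)\simFCK \row(2[T])(2a)$, i.e.\ $2[\row(T\iarrow a)]\simFCK 2[\row(T)a]$, and the ``iff'' criterion for $\simICK$ gives $\row(T\iarrow a)\simICK\row(T)a$. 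The main obstacle is a minor one: confirming the bridge lemmas — that doubling sends symplectic Hecke words to symplectic Hecke words and involution words to FPF-involution words, and that $\simFKK$/$\simFCK$ restricted to all-even words corresponds exactly to $\simIKK$/$\simICK$ after halving. These are essentially tautological given the definitions of the symplectic Coxeter--Knuth moves (interchanging the first two letters of equal parity, and the $a(a-1)\mapsto a(a+1)$ move) but must be spelled out carefully; none of them requires any real computation.
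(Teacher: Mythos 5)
Your proposal is correct and follows the same route as the paper: the paper deduces this corollary directly from Theorem~\ref{farrow-thm} together with the observation that $v \simIKK w$ iff $2[v]\simFKK 2[w]$ (and likewise for $\simICK$/$\simFCK$), which is precisely the translation argument you carry out. The bookkeeping you flag — that $\row$ commutes with doubling, that all-even words are symplectic Hecke words, and that doubling takes involution words to FPF-involution words — is left implicit in the paper but is exactly what makes the reduction go through.
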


\begin{corollary}[{\cite[Corollary~2.18]{HKPWZZ}}]
\label{ikk-cor}
If $w$ is any word then $w \simIKK \row(\PI(w))$.
\end{corollary}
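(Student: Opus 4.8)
The statement $w \simIKK \row(\PI(w))$ is meant to follow immediately from the symplectic version, Corollary~\ref{fkk-cor}, by transporting along the doubling map $w \mapsto 2[w]$. The first thing I would do is recall the two translation facts already assembled in the excerpt: first, that any word with all even letters is a symplectic Hecke word, so that $2[w] \in \cHfpf(z)$ for some $z \in \FF$ and Corollary~\ref{fkk-cor} applies to it; and second, that $v \simIKK w$ if and only if $2[v] \simFKK 2[w]$, which is the displayed compatibility of the two equivalence relations stated just before the corollary.

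With those in hand the argument is a two-line diagram chase. Given an arbitrary word $w$, apply Corollary~\ref{fkk-cor} to the symplectic Hecke word $2[w]$ to obtain
\[
2[w] \simFKK \row\bigl(\PF(2[w])\bigr).
\]
By Definition of $\PI$ we have $\PF(2[w]) = 2[\PI(w)]$, and since doubling every entry of a tableau doubles every letter of its row reading word, $\row\bigl(\PF(2[w])\bigr) = \row\bigl(2[\PI(w)]\bigr) = 2[\row(\PI(w))]$. Therefore $2[w] \simFKK 2[\row(\PI(w))]$, and applying the equivalence $v \simIKK w \iff 2[v] \simFKK 2[w]$ in the reverse direction gives $w \simIKK \row(\PI(w))$, as claimed.

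\textbf{Main obstacle.} There is essentially no obstacle of substance: the real content has already been absorbed into Corollary~\ref{fkk-cor} and the $\simIKK \leftrightarrow \simFKK$ dictionary. The only point requiring a moment's care is the bookkeeping identity $\row(2[T]) = 2[\row(T)]$ for an increasing shifted tableau $T$ and, relatedly, the compatibility $\PF(2[w]) = 2[\PI(w)]$ — but both are immediate from the definitions of $2[\cdot]$ on tableaux and on words and from Definition~\ref{iarrow-def} (doubling commutes with the forward transitions that avoid transition (D4), and $2[w]$ never triggers (D4)). So I would present the proof as the short chase above, citing Corollary~\ref{fkk-cor} and the displayed equivalence of relations, with a one-sentence remark that doubling intertwines $\PF$ with $\PI$ and $\row$ with $\row$. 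Since the excerpt attributes this result to \cite[Corollary~2.18]{HKPWZZ}, I would also note that our derivation reproves it.
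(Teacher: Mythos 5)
Your proof is correct and takes the same route the paper intends: the paper states that the corollary is immediate from Corollary~\ref{fkk-cor} together with the dictionary $v \simIKK w \iff 2[v] \simFKK 2[w]$ and the definition $\PI(w) = \tfrac{1}{2}[\PF(2[w])]$, which is precisely the diagram chase you carry out. Your extra remark that $\row(2[T]) = 2[\row(T)]$ is the only bookkeeping step the paper leaves unstated, and it is indeed immediate.
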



\begin{corollary}\label{ick-cor}
If $w$ is an involution word then $w \simICK \row(\PI(w))$.
\end{corollary}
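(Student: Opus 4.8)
The plan is to imitate the proof of Corollary~\ref{ikk-cor}, for which the key input is that $2[w]$ is always a symplectic Hecke word, replacing that input by the stronger fact that $2[w]$ is an \emph{FPF-involution word} whenever $w$ is an involution word. Concretely: since $\PI(w) = \tfrac12\bigl[\PF(2[w])\bigr]$ and $\PF(2[w])$ has all even entries (by the observation preceding Definition~\ref{iarrow-def}), we have $\row\bigl(\PF(2[w])\bigr) = 2\bigl[\row(\PI(w))\bigr]$; and since $v \simICK v'$ if and only if $2[v] \simFCK 2[v']$, it suffices to prove $2[w] \simFCK \row\bigl(\PF(2[w])\bigr)$. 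By Corollary~\ref{reduced-cor} this follows at once as soon as $2[w]$ is known to be an FPF-involution word, so the whole argument reduces to that claim.

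To establish it, I would argue as follows. First, $2[w]$ is a symplectic Hecke word because every word with all even letters is. Second, I claim the equivalence class of $2[w]$ under $\fsim$ consists entirely of all-even words: among the moves generating $\fsim$, the braid relation $i(i+1)i \simA (i+1)i(i+1)$ requires a consecutive subword made from two consecutive integers, and the move $w_1w_2w_3\cdots \mapsto w_1(w_2+2)w_3\cdots$ (available when $w_1 = w_2+1$) involves two letters of opposite parity in its first two positions; neither move can be applied to, nor produce, a word all of whose letters are even, so only commutations and the transposition of the first two letters act within the all-even words. Hence the $\fsim$-class of $2[w]$ equals $\{\,2[u] : u \text{ is obtained from } w \text{ by commutations and swaps of the first two letters}\,\}$, which is contained in $\{\,2[u] : u \text{ lies in the equivalence class of } w \text{ under } \isim\,\}$. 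Since $w$ is an involution word, Theorem~\ref{inv-hecke-char-thm} says its $\isim$-class contains no word with equal adjacent letters, and doubling preserves this property; therefore the $\fsim$-class of $2[w]$ contains no such word, and Theorem~\ref{fpf-hecke-char-thm} gives that $2[w]$ is an FPF-involution word. Combining this with Corollary~\ref{reduced-cor} as above yields $w \simICK \row(\PI(w))$.

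I expect the only delicate point to be the parity bookkeeping in the second step — namely the verification that none of the generating moves of $\fsim$ escapes the set of all-even words — but this is a short finite check on the list of moves defining $\fsim$. One could instead induct on $\ell(w)$ and apply Theorem~\ref{farrow-thm}(b) one letter at a time, using that every prefix of an involution word is again an involution word; that route still requires certifying that each partial word $\row(T)\,(2w_k)$ is an FPF-involution word, so the same parity argument is unavoidable in either approach.
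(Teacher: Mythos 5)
There is a genuine gap: the claim that $2[w]$ is an FPF-involution word whenever $w$ is an involution word is \emph{false}, and the error lies precisely in the "parity bookkeeping" step you flagged as delicate. You correctly note that the braid relation $i(i+1)i \simA (i+1)i(i+1)$ and the move $w_1w_2\cdots \mapsto w_1(w_2+2)\cdots$ (requiring $w_1 = w_2+1$) cannot touch an all-even word, but you overlook that the commutation $ij \simA ji$, available whenever $|i-j|>1$, becomes far more permissive after doubling: $|2a - 2b| = 2|a-b| \geq 2 > 1$ for \emph{every} pair of distinct $a,b$, not just for $|a-b|>1$. Consequently the $\fsim$-class of $2[w]$ is not $\{2[u] : u \text{ in the } \isim\text{-class of } w\}$ (which is what your argument needs); it is $\{2[u] : u \text{ any rearrangement of } w \text{ by arbitrary adjacent transpositions of distinct letters}\}$. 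In particular, if $w$ has a repeated letter $a$, the two copies of $2a$ in $2[w]$ can be brought adjacent by commutations, producing a word with equal adjacent letters — so $2[w]$ is then \emph{not} an FPF-involution word by Theorem~\ref{fpf-hecke-char-thm}. Since involution words with repeated letters are the norm rather than the exception, the argument collapses. A concrete counterexample is the involution word $w = 42321$ used in Section~\ref{ick-sect}: here $2[w] = 84642$, and commuting positions 2--3 gives $84642 \simA 86442$, which has equal adjacent $4$'s; equivalently, $s_8 s_4 s_6 s_4 s_2 = s_8 s_6 s_2$ (the $s_{2a}$ pairwise commute), so $2[w]$ is not even reduced. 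Doubling preserves reducedness only when $w$ has all distinct letters.

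The alternative you mention at the end — inducting on $\ell(w)$ via an analogue of Theorem~\ref{farrow-thm}(b) — has the same defect if you try to verify each step by appealing to the symplectic lemmas through the doubling map, since $\row(2[T])\,2a = 2[\row(T)a]$ is again typically unreduced, and Lemmas~\ref{ck-lem1} and \ref{ck-lem2} only yield $\simCK$ (as opposed to $\simKK$) under a reducedness hypothesis. The way around this is the one the paper gestures at in the remark following Definition~\ref{iarrow-def}: define $T \iarrow a$ directly by a modified transition graph (drop the parity condition in (D3), omit (D4)), work with the \emph{undoubled} insertion states so that $\row(T)a$ really is reduced, and re-run the analogues of Lemmas~\ref{ck-lem1}--\ref{ck-lem3} and Theorem~\ref{farrow-thm}(b) in that setting with $\simICK$ in place of $\simFCK$; the row and column lemmas transfer verbatim, and the diagonal lemma must be re-proved (it is simpler, since (D4) is gone and the parity constraints drop out). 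That yields $\row(T\iarrow a) \simICK \row(T)a$ whenever $\row(T)a$ is an involution word, and Corollary~\ref{ick-cor} follows by iterating. The doubling trick correctly transports the $\simIKK$ statement (Corollary~\ref{ikk-cor}), but not the $\simICK$ one.
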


Thus, if $v$ and $w$ are any words (respectively, involution words)
with $\PI(v) = \PI(w)$,
then $v \simIKK w$ (respectively, $v\simICK w$).
The converse of this property does not hold in general (see \cite[Remark 2.19]{HKPWZZ}),
but computations support the following, which also appears as  \cite[Conjecture 5.24]{HMP4}:

\begin{conjecture}
If $v$ and $w$ are involution words with $v \simICK w$ then $\PI(v) = \PI(w)$.
\end{conjecture}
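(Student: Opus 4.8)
The plan is to prove the conjectured implication by the strategy that establishes the analogous classical facts for Edelman--Greene and Hecke insertion: reduce to a single elementary move and show that insertion cannot detect it. The reverse implication, $\PI(v)=\PI(w)\Rightarrow v\simICK w$, is already contained in Corollary~\ref{ick-cor}, so only the stated direction is at issue. Since $\simICK$ is generated by (i) interchanging the first two letters of a word and (ii) the Coxeter--Knuth moves $cab\simCK acb$, $bca\simCK bac$, $aba\simCK bab$ for $a<b<c$ (applied at any position, because $\simCK$ is a congruence), and since $\PI$ is computed by iterating $\iarrow$ from the left, it suffices to prove the following local statement: if $u$ and $u'$ are the two sides of one elementary move, $T$ is an increasing shifted tableau, and $\row(T)u$ is an involution word for some $z\in\II$, then $T\iarrow u_1\iarrow u_2\iarrow\cdots=T\iarrow u'_1\iarrow u'_2\iarrow\cdots$. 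One preliminary point must be dispatched: a chain of elementary moves joining two involution words may a priori pass through words that are not involution words, but Coxeter--Knuth moves on reduced words stay within the reduced words of a fixed permutation, and combined with Theorem~\ref{inv-hecke-char-thm} this lets one keep the whole chain inside $\iR(z)$, so the hypothesis on $\row(T)u$ persists. It is convenient to pass throughout to the doubled picture of Section~\ref{ortho-sect}: via $w\mapsto 2[w]$ one has $v\simICK w$ iff $2[v]\simFCK 2[w]$ and $\PI(v)=\tfrac12[\PF(2[v])]$, so everything can be rephrased in terms of symplectic Hecke insertion on words with only even letters, where the bumping-path machinery of Section~\ref{des-sect} is available.

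The interchange of the first two letters is the easy case: such a move alters only the first two insertion steps, so it reduces to checking $(\emptyset\iarrow a)\iarrow b=(\emptyset\iarrow b)\iarrow a$ for $a\ne b$, and both sides equal the single-row tableau with entries $\min(a,b)$ and $\max(a,b)$ by a short explicit computation (or by running the first two steps of symplectic Hecke insertion on $2a,2b$). The substantive work is the three Coxeter--Knuth moves at an arbitrary interior window $w_iw_{i+1}w_{i+2}$. Writing $T=\PI(w_1\cdots w_{i-1})$, one must show $T\iarrow w_i\iarrow w_{i+1}\iarrow w_{i+2}$ agrees with the insertion of the other side. Away from the main diagonal the three successive bumping paths behave exactly as in ordinary Edelman--Greene insertion, and the required identity follows by the classical row-bumping argument; the genuinely new region is near the diagonal, where $\iarrow$ can switch from row-bumping to column-bumping (transitions D1--D3) and where the diagonal entries are forced to be even by Proposition~\ref{even-prop}. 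There the tool is Proposition~\ref{bump-lem2} (iterated across the three insertions) together with Proposition~\ref{bump-lem1}, which pin down how the later bumping paths sit relative to the earlier ones; using the strict inequalities valid for (FPF-)involution words, one checks case by case --- according to whether each of $a<b<c$ triggers only row transitions or reaches the diagonal and column-bumps --- that the net effect on $T$ of the two sides coincides. Proposition~\ref{adm-prop} ensures every intermediate insertion state stays admissible, so the forward transition graph behaves predictably and the inverse-transition description of Theorem~\ref{fpf-bij-thm} can be invoked to compare the two outcomes box by box.

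The main obstacle I anticipate is the diagonal instance of the braid move $aba\simCK bab$, namely when the three bumping paths straddle the main diagonal. In the unshifted setting the braid move is handled by a nearly trivial symmetric argument, but on shifted tableaux the asymmetry introduced by column-bumping past the diagonal, combined with the parity constraints coming from Proposition~\ref{even-prop} and Theorem~\ref{inv-hecke-char-thm}, forces a delicate new analysis; this is presumably precisely where a clean general proof has so far been elusive (and where the stronger $K$-theoretic statement genuinely fails, cf.\ the $\simFKK$ counterexample in the text). A shortcut worth trying first is to deduce the statement from Conjecture~\ref{conj1}: since involution words double to FPF-involution words and $2[v]\simFCK 2[w]$ whenever $v\simICK w$, the conjecture here is a special case of Conjecture~\ref{conj1}. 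That reduction resolves nothing on its own, but it suggests attacking both conjectures at once by first proving a uniform ``symplectic Hecke insertion respects $\simFCK$'' statement and then specializing to even letters.
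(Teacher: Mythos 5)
This statement is labeled a conjecture in the paper precisely because no proof is known; there is no proof of the paper's to compare your attempt against. Your proposal is a plausible plan rather than a proof: you correctly reduce to the two generators of $\simICK$ (the initial two-letter swap, and Coxeter--Knuth moves applied at arbitrary positions), correctly observe via Theorem~\ref{inv-hecke-char-thm} and \eqref{atoms2-eq} that an elementary-move chain between involution words stays inside $\iR(z)$, and correctly dispose of the first-letter-swap case by the direct computation $(\emptyset\iarrow a)\iarrow b=(\emptyset\iarrow b)\iarrow a$. You also correctly note that, under the doubling $w\mapsto 2[w]$, the statement is a special case of Conjecture~\ref{conj1}, so proving either would settle this one.

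The genuine gap is exactly where you flag it, and you should be explicit that you have not closed it. For a Coxeter--Knuth window $u_1u_2u_3\simCK u'_1u'_2u'_3$ applied after $T=\PI(w_1\cdots w_{i-1})$, the claim $T\iarrow u_1\iarrow u_2\iarrow u_3=T\iarrow u'_1\iarrow u'_2\iarrow u'_3$ is asserted with the phrase ``one checks case by case,'' but the case analysis is never carried out, and the diagonal braid case ($aba\simCK bab$ when the bumping paths cross the main diagonal, triggering transitions (D1)--(D4) and column-bumping) is the entire content of the problem. Propositions~\ref{bump-lem1} and~\ref{bump-lem2} constrain how a single pair of consecutive bumping paths interact when the inserted letters are weakly or strictly increasing, but a braid window $aba$ with $a<b$ is not monotone, so neither proposition applies directly across all three insertions; you would need new lemmas comparing bumping paths for a decrease followed by an increase (and vice versa), and it is precisely the interaction of such comparisons with the parity rules on the diagonal that has so far blocked a proof. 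Absent that analysis, the proposal establishes the easy reductions but leaves the conjecture as open as before.
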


\begin{remark*}[Note added in proof]
A proof of this result now appears in \cite{Marberg2019}.
\end{remark*}

Since $\Des(w) = \Des(2[w])$ for any word $w$,
the following is clear from Theorem~\ref{fpf-des-thm}.

\begin{corollary}[{\cite[Proposition~2.24]{HKPWZZ}}]
\label{inv-des-cor}
If $w$ is any word then $\Des(w) = \Des(\QI(w))$.
\end{corollary}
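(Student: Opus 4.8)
The final statement, Corollary~\ref{inv-des-cor}, asserts that $\Des(w) = \Des(\QI(w))$ for any word $w$.

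The plan is to deduce this immediately from the semistandard/standard descent-preservation result for symplectic Hecke insertion, Theorem~\ref{fpf-des-thm}, via the doubling construction that links orthogonal Hecke insertion to symplectic Hecke insertion. First I would recall that $\QI(w)$ is \emph{defined} to be $\QF(2[w])$, where $2[w] = (2w_1)(2w_2)\cdots(2w_n)$. Since every letter of $2[w]$ is even, $2[w]$ is a symplectic Hecke word (as noted just before Definition~\ref{p-def}, and again in the discussion preceding the definition of $\PI$ and $\QI$), so Theorem~\ref{fpf-des-thm} applies and gives $\Des(2[w]) = \Des(\QF(2[w])) = \Des(\QI(w))$.

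The remaining point is the trivial observation that doubling all the letters of a word does not change its descent set: for $i \in [n-1]$ we have $w_i > w_{i+1}$ if and only if $2w_i > 2w_{i+1}$, so $\Des(w) = \Des(2[w])$. Chaining the two equalities gives $\Des(w) = \Des(2[w]) = \Des(\QI(w))$, which is exactly the claim. (The descent set $\Des(T)$ of a standard shifted set-valued tableau is defined purely in terms of where consecutive integers and their primed versions sit in $T$, so no subtlety enters on the tableau side; we only use Theorem~\ref{fpf-des-thm} as a black box.)

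There is essentially no obstacle here: the corollary is a formal consequence of a theorem already proved in the excerpt together with the definitional identity $\QI(w) = \QF(2[w])$ and the scaling invariance of descents. If anything required care, it would be confirming that $2[w]$ is genuinely always a symplectic Hecke word so that Theorem~\ref{fpf-des-thm} is applicable — but this is explicitly recorded in the text (``Any word with all even letters is a symplectic Hecke word''), so even that step is immediate. Thus the proof is a one-line chain of equalities.

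\begin{proof}
For any word $w = w_1 w_2 \cdots w_n$ we have $\Des(w) = \Des(2[w])$, since $w_i > w_{i+1}$ if and only if $2w_i > 2w_{i+1}$ for each $i \in [n-1]$. The word $2[w] = (2w_1)(2w_2)\cdots(2w_n)$ has all even letters and is therefore a symplectic Hecke word, so Theorem~\ref{fpf-des-thm} gives $\Des(2[w]) = \Des(\QF(2[w]))$. By definition $\QI(w) = \QF(2[w])$, so combining these identities yields
\[
\Des(w) = \Des(2[w]) = \Des(\QF(2[w])) = \Des(\QI(w)),
\]
as claimed.
\end{proof}
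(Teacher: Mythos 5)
Your proof is correct and matches the paper's own justification exactly: the paper also observes that $\Des(w) = \Des(2[w])$ and then invokes Theorem~\ref{fpf-des-thm} together with the definition $\QI(w) = \QF(2[w])$, noting $2[w]$ is a symplectic Hecke word since all its letters are even. Nothing to add.
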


It follows from Theorem~\ref{fpf-hecke-char-thm} and Corollary~\ref{fkk-cor}
that if $w$ is a symplectic Hecke word,
then the insertion tableau $\PF(w)$ has all even entries 
if and only if $w$ has all even letters.
The next result therefore follows from Theorem~\ref{fpf-bij-thm} and Corollary~\ref{ikk-cor}.

\begin{corollary}[{\cite[Theorem~5.18]{PatPyl2014}}]
\label{inv-bij-cor}
Let $z \in \II$.
Then $w \mapsto (\PI(w), \QI(w))$ 
is a bijection from 
the set 
of orthogonal Hecke words (respectively, involution words) for $z$ of length $n\in\NN$
to the set of
pairs $(P,Q)$
 in which $P$ is an increasing shifted tableau with $\row(P) \in \iH(z)$ 
 (respectively, $\row(P) \in \iR(z)$)
 and $Q$ is a standard shifted set-valued (respectively, marked) tableau of length $n$ with the same shape as $P$.
\end{corollary}

Finally, there is a semistandard version of the preceding corollary.
If $i=(i_1\leq i_2\leq \dots \leq i_m)$ is a weakly increasing factorization of a word $w=w_1w_2\cdots w_m$,
then $i$ is also a weakly increasing factorization of $2[w]$
and we define
$\QI(w,i) = \QF(2[w],i)$.
For example, if $w=451132$ as in Example~\ref{pqi-ex} and $i=113335$, 
so that $(w,i)$ corresponds to $(45)()(113)()(2)$, then  
 \[
\ytableausetup{boxsize = .7cm,aligntableaux=center}
  \QI(w,i) = 
\begin{ytableau}
\none &  3  \\ 
1 &  1 & 3'3' & 5' 
 \end{ytableau}.
 \]
Given Corollaries~\ref{inv-des-cor} and \ref{inv-bij-cor},
the following result has the same proof as Theorem~\ref{ssyt-thm}.
\begin{corollary}\label{ssyt-cor}
Let $z \in \II$.
The correspondence $(w,i) \mapsto (\PI(w),\QI(w,i))$
is a bijection
from weakly increasing factorizations of orthogonal Hecke words
(respectively, involution words)
for $z$ to pairs
$(P,Q)$ where $P$ is an increasing shifted tableau with $\row(P) \in \iH(z)$
(respectively, $\row(P) \in \iR(z)$)
and $Q$ is a semistandard shifted weak set-valued (respectively, marked) tableau
with the same shape as $P$.
Moreover, $(w,i)\mapsto \QI(w,i)$ is a weight-preserving map.
\end{corollary}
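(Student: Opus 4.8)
The plan is to deduce this result directly from the orthogonal analogues of Theorems~\ref{fpf-bij-thm} and \ref{fpf-des-thm} that we have already established in the form of Corollaries~\ref{inv-bij-cor} and \ref{inv-des-cor}, mirroring the argument given for Theorem~\ref{ssyt-thm}. The essential point is that orthogonal Hecke insertion is obtained from symplectic Hecke insertion by the doubling operation $w\mapsto 2[w]$, and this doubling is compatible with weakly increasing factorizations: if $i=(i_1\leq i_2\leq\dots\leq i_m)$ is a weakly increasing factorization of a word $w=w_1w_2\cdots w_m$, then since $\Des(w)=\Des(2[w])$, the same sequence $i$ is a weakly increasing factorization of $2[w]$, and by definition $\QI(w,i)=\QF(2[w],i)$ and $\PI(w)=\tfrac12[\PF(2[w])]$.

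First I would observe that $(w,i)$ is a weakly increasing factorization of an orthogonal Hecke word (respectively, involution word) for $z\in\II$ if and only if $(2[w],i)$ is a weakly increasing factorization of a symplectic Hecke word (respectively, FPF-involution word) for the corresponding fixed-point-free involution obtained by doubling; this uses Theorem~\ref{fpf-hecke-char-thm} together with the remark preceding Corollary~\ref{inv-bij-cor} that a symplectic Hecke word has all even letters exactly when its insertion tableau has all even entries. Next I would note that $\row(P)\in\iH(z)$ (respectively, $\iR(z)$) holds for an increasing shifted tableau $P$ precisely when $\row(2[P])$ is the corresponding symplectic (respectively, FPF-involution) word, and that $P\mapsto 2[P]$ is a bijection between increasing shifted tableaux with all entries and increasing shifted tableaux with all even entries. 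Finally, a shifted weak set-valued tableau $Q$ is semistandard if and only if it equals $\QF(2[w],i)$ for the appropriate data — here one invokes Corollary~\ref{inv-des-cor} in exactly the way Theorem~\ref{fpf-des-thm} was used in the proof of Theorem~\ref{ssyt-thm}, checking the weakly increasing condition column-by-column and row-by-row via the descent characterization, and noting that no primed numbers occur on the main diagonal.

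Combining these observations, the map $(w,i)\mapsto(\PI(w),\QI(w,i))$ factors as: apply $w\mapsto 2[w]$ (a weight-preserving bijection on factorizations), apply the symplectic bijection $(w',i)\mapsto(\PF(w'),\QF(w',i))$ of Theorem~\ref{ssyt-thm}, and then apply $P\mapsto\tfrac12[P]$ to the insertion tableau. Since each of these is a bijection on the relevant sets and the doubling operations preserve weights, the composite is a weight-preserving bijection onto pairs $(P,Q)$ of the stated form, and the same argument restricts to the "marked" case. I do not expect a genuine obstacle here: the only mildly delicate point is confirming that every semistandard shifted weak set-valued tableau with standardization $\QI(w)$ arises as $\QI(w,i)$ for some $i$, which is the content of the standardization/destandardization discussion already carried out for $\QF$ in the proof of Theorem~\ref{ssyt-thm} and transfers verbatim through doubling. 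The remark "the following result has the same proof as Theorem~\ref{ssyt-thm}" in the excerpt signals exactly this: after the translation via $2[\,\cdot\,]$, every step of the earlier proof applies without change.
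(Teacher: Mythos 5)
Your opening sentence — re-run the proof of Theorem~\ref{ssyt-thm} with Corollaries~\ref{inv-bij-cor} and \ref{inv-des-cor} in place of Theorem~\ref{fpf-bij-thm} and Theorem~\ref{fpf-des-thm} — is exactly the paper's one-line proof, and it is correct. But the rest of your proposal pivots to a different strategy, factoring the map through Theorem~\ref{ssyt-thm} via doubling, and there the key claim fails. You assert that $(w,i)$ is a weakly increasing factorization of a word in $\iH(z)$ if and only if $(2[w],i)$ is one for $\cHfpf(\hat z)$, for some $\hat z\in\FF$ depending only on $z$. The forward implication holds (doubling carries each $\iH$-class into a single $\cHfpf$-class), but the converse does not: distinct $\iH$-classes can double into the same $\cHfpf$-class, because after doubling a mid-word commutation $(2a)(2b)\leftrightarrow(2b)(2a)$ with $|2a-2b|=2$ becomes a legal Coxeter move even though the halved exchange $ab\leftrightarrow ba$ with $|a-b|=1$ is not. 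Concretely, $312\in\iR(3412)$ and $321\in\iR(4231)$ lie in distinct $\iH$-classes, yet $624=2[312]$ and $642=2[321]$ differ by the commutation $24\leftrightarrow 42$ and so lie in the same $\cHfpf$-class. Thus $2[\iH(z)]$ is a \emph{proper} subset of the set of all-even words in $\cHfpf(\hat z)$, and your composite does not land in the stated codomain merely because each factor is a bijection on a larger set.

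The factoring can be rescued by reinvoking Corollary~\ref{ikk-cor} — since $\row(\PI(w))\simIKK w$, one has $\row(\PI(w))\in\iH(z)$ exactly when $w\in\iH(z)$, which carves out the correct slice of the target — but at that point you are essentially reproving Corollary~\ref{inv-bij-cor} rather than using it as a black box. The intended argument, and the one your first sentence already describes, avoids $\FF$ entirely: treat Corollaries~\ref{inv-bij-cor} and \ref{inv-des-cor} as the orthogonal analogues of the two inputs and repeat the proof of Theorem~\ref{ssyt-thm} verbatim for $z\in\II$.
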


\subsection{Involution Coxeter-Knuth insertion}\label{ick-sect}

Restricted to (FPF-)involution words,
symplectic and orthogonal Hecke insertion
reduce to less complicated algorithms, which refer to as \emph{(FPF-)involution Coxeter-Knuth insertion}.
Propositions~\ref{fpfins-prop} and \ref{invins-prop}
describe these bumping procedures, which 
are shifted analogues of \emph{Edelman-Greene insertion} \cite{EG}
and ``reduced word'' generalizations of \emph{Sagan-Worley insertion} \cite{Sagan1987,Worley}.

\begin{proposition}[FPF-involution Coxeter-Knuth insertion]
\label{fpfins-prop}
Let $a$ be a positive integer.
Suppose $L$ is a row or column of an increasing tableau.
One inserts $a$ into $L$ as follows:
\ben
\item[] Find the first entry $b$ of $L$ with $a \leq b$. 
If no such entry exists then add $a$ to the end of $L$ and say that no entry is bumped,
but refer to the added box as the bumped position.
Otherwise:
\begin{itemize}
\item If $a=b$ then leave $L$ unchanged but say that $a+1$ is bumped from the position \textbf{directly following} the position of $b$.
\item If $L$ is a row (rather than a column) and $b$ is the first entry of $L$ and $a\not \equiv b\modu 2)$, then leave $L$ unchanged but say that $a+2$ is bumped from the position of $b$.
\item In all other cases replace $b$ by $a$ in $L$ and say that $b$ is bumped.
\end{itemize}
\een
Now suppose $T$ is an increasing shifted tableau such that $\row(T) a$ is an FPF-involution word.
%
\ben
\item Start by inserting $a$ into the first row of $T$ according to the rules above. 

\item  If no entry is bumped then the process terminates.
 Otherwise, an entry is bumped from some position of $T$.
 If this position is on the diagonal or if a position bumped in an earlier step was on the diagonal, 
then we continue by inserting the bumped entry into the next column;
otherwise, we continue by inserting the bumped entry into the next row.

 \item Repeat step 2 until we insert into a row or column and no entry is bumped.
 \een
The resulting tableau is $T\farrow a$
and the sequence of bumped positions is the corresponding 
bumping path from Definition~\ref{farrow-def}.
\end{proposition}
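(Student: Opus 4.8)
The plan is to verify that the simplified bumping procedure stated in the proposition produces exactly the same sequence of states (and hence the same bumping path and final tableau) as the forward transition graph from Section~\ref{forward-sect}, when restricted to the case where $\row(T)a$ is an FPF-involution word. The key structural input is Proposition~\ref{bump-lem1}: when $\row(T)a$ is an FPF-involution word, the bumping path has no transitions of type (R2), (D1), or (C2) (these would force $\row(T\farrow a)$ to have the same number of boxes as $T$, contradicting $\row(T)a \simFCK \row(T\farrow a)$ via Theorem~\ref{farrow-thm} together with the fact that an FPF-involution word is reduced), and moreover transitions of type (D4) are likewise excluded for the same length reason. So the only surviving forward transitions are (R1), (R3), (R4), (D2), (D3), (C1), (C3), (C4), and transition (D4)'s role is subsumed: once one reaches the diagonal with a letter of the wrong parity, the length-preservation obstruction shows this cannot happen for an FPF-involution word. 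This is the step I expect to be the main obstacle: one must carefully rule out, using Theorem~\ref{fpf-hecke-char-thm} and Proposition~\ref{even-prop} (all diagonal entries of an admissible state are even), that an odd letter is ever inserted against an even diagonal entry during the bumping of an FPF-involution word.

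First I would set up the dictionary. Inserting $a$ into a row $L$: locating the first $b \in L$ with $a \le b$ matches the definition of the index $j$ (for an outer box in the last column) or $x$ in the forward transition rules. The case $a = b$ with ``say $a+1$ is bumped from the position directly following $b$'' corresponds to transition (R3) when $b$ is not the first entry (so $i < x$) — here $U_{ix} = b = a$ and moving the outer box to $(i,x)$ does not give an increasing tableau, so the value $U_{i,x+1}$, which is the next entry, is what propagates; one checks the bumped value is $U_{i,x+1}$, matching ``the position directly following''. Wait — I should be careful: in (R3) the new value is $U_{ix}$, placed in row $i+1$; I will re-examine the indexing so that ``directly following'' is matched to the correct transition, likely (R4)-with-equality or the degenerate (R3) case; the precise bookkeeping here is routine once the correspondence is fixed. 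The case ``$L$ is a row, $b$ is the first entry, $a \not\equiv b \pmod 2$, bump $a+2$ from the position of $b$'' is exactly transition (D4) as written — but since (D4) is excluded for FPF-involution words by the length argument, this rule in the proposition is vacuous on its intended domain; I would remark on this rather than belabor it, or alternatively keep it for robustness. The generic case ``replace $b$ by $a$, bump $b$'' corresponds to (R1) (if this yields an increasing tableau and terminates) or (R4) (if $b$ then continues into the next row/column).

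Next I would handle the on-diagonal versus off-diagonal branching. The proposition says: if a diagonal position was bumped at this or any earlier step, switch to column insertion thereafter. This matches Proposition~\ref{bump-lem1}(a),(c): a bumping path consists of a (possibly empty) initial segment of row-bumped positions in rows $1, 2, \dots, t$ with the last one on the diagonal at $(t,t)$, followed by column-bumped positions in columns $t+1, t+2, \dots$; the switch from row to column mode happens precisely at the diagonal, via a transition of type (D2) or (D3). The column-insertion rules in the proposition (find first $b$ with $a \le b$, the $a = b$ case bumping from ``directly following'', the generic replace-and-bump case) transcribe (C1), (C3), (C4) exactly as the row rules transcribe (R1), (R3), (R4), with the crucial difference that for columns there is no analogue of the parity rule — and indeed no (D)-type transitions occur during column insertion, consistent with the forward transition graph having no diagonal transitions with an outer box in the bottom row. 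Finally I would invoke Proposition~\ref{bump-lem1}(d) to note the bumping path has distinct positions, so the sequence of bumped positions is well-defined and equals the bumping path of Definition~\ref{farrow-def}, and conclude that the tableau remaining after the procedure is $T\farrow a$ by construction. The whole argument is essentially a translation exercise; the only real content is the exclusion of the length-changing and parity-violating transitions, which is supplied by Proposition~\ref{bump-lem1} and Proposition~\ref{even-prop}.
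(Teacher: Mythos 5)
Your proposal contains a genuine error that undermines the overall strategy: you claim that transitions of type (D4) are excluded for FPF-involution words by a length argument, and consequently that the proposition's parity rule (``$L$ is a row, $b$ is the first entry, $a\not\equiv b \pmod 2$, bump $a+2$'') is vacuous. This is false. A (D4) transition merely relocates the outer box without changing the underlying tableau, so it has no immediate effect on the box count, and it genuinely occurs during insertion of FPF-involution words. The paper's own Example following Proposition~\ref{fpfins-prop} exhibits this: computing $\PF(42312)$, the insertion of the letter $1$ into $\PF(4231) = \begin{ytableau}[*{1}{c}]\none & 4\\ 2 & 3\end{ytableau}$ hits the diagonal entry $2$ with the odd value $1$, which triggers (D4), producing the bumped value $3 = 1+2$ and switching to column insertion. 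The parity rule is precisely the translation of (D4), and it is essential to the FPF variant; it is what distinguishes it from the orthogonal insertion of Proposition~\ref{invins-prop}.

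The correct reason the two prescriptions for the bumped value agree is the admissibility/reducedness argument from the proof of Lemma~\ref{ck-lem3}(b) and Proposition~\ref{even-prop}: when (D4) fires on an admissible state whose row reading word is a symplectic Hecke word, the outer-box value $a$ and the diagonal entry $b = U_{ii}$ necessarily satisfy $a = b - 1$, so the forward transition's bumped value $b+1$ equals the proposition's $a+2$. A similar point is needed for your $a = b$ case, which you flag as ``routine bookkeeping'': the forward transition (R3)/(C3) bumps the entry in the next position, which is not by definition $a+1$; that it equals $a+1$ follows from reducedness of $\row(U)$ (otherwise a trailing $a$ could be commuted inward to create an adjacent repeat). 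Finally, your exclusion of (D1) is also not a pure length argument—(D1) is not a terminal transition—but rather a reducedness argument: Lemma~\ref{ck-lem3}(a) shows that (D1) with both diagonal entries even forces $\row(U)$ to be unreduced, hence not an FPF-involution word. Your exclusion of (R2) and (C2) by the box-counting argument is correct. After correcting the (D4) claim and the (D1) justification, the translation strategy is sound and in the same spirit as the paper's (unstated) proof, which references Theorem~\ref{fpf-hecke-char-thm} and Lemmas~\ref{ck-lem1}--\ref{ck-lem3}.
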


Using Theorem~\ref{fpf-hecke-char-thm}
and Lemmas~\ref{ck-lem1}, \ref{ck-lem2}, and \ref{ck-lem3},
it is straightforward but fairly tedious to deduce that the output of this algorithm coincides with Definition~\ref{farrow-def}
when $\row(T)a$ is an FPF-involution word. We leave these details to the reader.

\begin{example} We compute $\PF(w)$ and $\QF(w)$ for the FPF-involution word $w=42312$:
\[ 
\ytableausetup{boxsize = .45cm,aligntableaux=center}
\ba
\begin{ytableau}
  4 
  \end{ytableau}
  &&\longrightarrow\quad
  \begin{ytableau}
  2 & 4
  \end{ytableau}
  &&  \longrightarrow\quad
  \begin{ytableau}
  \none & 4 \\
  2 & 3
  \end{ytableau}
     && \longrightarrow\quad
  \begin{ytableau}
  \none & 4 \\
  2 & 3 & 4
  \end{ytableau}
     && \longrightarrow\quad
  \begin{ytableau}
  \none & 4 & 5 \\
  2 & 3 & 4
  \end{ytableau} &\ \ =\ \PF(42312)
  \\[-10pt]\\
\begin{ytableau}
  1 
  \end{ytableau}
  &&\longrightarrow\quad
  \begin{ytableau}
  1 & 2'
  \end{ytableau}
  &&  \longrightarrow\quad
  \begin{ytableau}
  \none & 3 \\
  1 & 2'
  \end{ytableau}
     && \longrightarrow\quad
  \begin{ytableau}
  \none & 3\\
  1 & 2' & 4'
  \end{ytableau}
     && \longrightarrow\quad
  \begin{ytableau}
  \none & 3 & 5' \\
  1 & 2' & 4'
  \end{ytableau} &\ \ =\ \QF(42312).
\ea
\]
The bumping path of $\PF(4231)\farrow 2$  is $(1,2)$, $(2,2)$, $(2,3)$.
\end{example}

\begin{proposition}[Involution Coxeter-Knuth insertion]\label{invins-prop}
Let $a$ be a positive integer.
Suppose $L$ is a row or column of an increasing tableau.
One inserts $a$ into $L$ as follows:
\ben
\item[] Find the first entry $b$ of $L$ with $a \leq b$. 
 If no such entry exists then add $a$ to the end of $L$ and say that no entry is bumped.
 Otherwise:
 \begin{itemize}
\item If $a=b$ then leave $L$ unchanged but say that $a+1$ is bumped from the position of $b$.
\item If $a\neq b$ then replace $b$ by $a$ in $L$ and say that $b$ is bumped.
\end{itemize}
\een
Now suppose $T$ is an increasing shifted tableau such that $\row(T)a$ is an involution word.
\ben
\item Start by inserting $a$ into the first row of $T$ according to the rules above. 

\item  If no entry is bumped then the process terminates.
 Otherwise, an entry is bumped from some position of $T$.
 If this position is on the diagonal or if a position bumped in an earlier step was on the diagonal, 
then we continue by inserting the bumped entry into the next column;
otherwise, we continue by inserting the bumped entry into the next row.

 \item Repeat step 2 until we insert into a row or column and no entry is bumped.
 \een
The resulting tableau is $T\iarrow a$
from Definition~\ref{iarrow-def}.
\end{proposition}

We omit the proof of the proposition, which is straightforward from
the results in Section~\ref{ortho-sect}.

\begin{remark}
It would be natural to define the \emph{bumping path} of $T\iarrow a$ to be the bumping path of
$2[T]\farrow 2a$. However, this sequence does not coincide
with the sequence of bumped positions in Proposition~\ref{invins-prop}, although the two paths are closely related.
\end{remark}

\begin{example} We compute $\PI(w)$ and $\QI(w)$ for the involution word $42321$:
\[ 
\ytableausetup{boxsize = .45cm,aligntableaux=center}
\ba
\begin{ytableau}
  4 
  \end{ytableau}
  &&\longrightarrow\quad
  \begin{ytableau}
  2 & 4
  \end{ytableau}
  &&  \longrightarrow\quad
  \begin{ytableau}
  \none & 4 \\
  2 & 3
  \end{ytableau}
     && \longrightarrow\quad
  \begin{ytableau}
  \none & 4 \\
   2 & 3 &4
  \end{ytableau}
     && \longrightarrow\quad
  \begin{ytableau}
  \none &  4 \\
  1 & 2 & 3 & 4
  \end{ytableau} &\ \ =\ \PI(42321)
  \\[-10pt]\\
\begin{ytableau}
  1 
  \end{ytableau}
  &&\longrightarrow\quad
  \begin{ytableau}
  1 & 2'
  \end{ytableau}
  &&  \longrightarrow\quad
  \begin{ytableau}
  \none & 3 \\
  1 & 2'
  \end{ytableau}
     && \longrightarrow\quad
  \begin{ytableau}
  \none & 3\\
  1 & 2' & 4'
  \end{ytableau}
     && \longrightarrow\quad
  \begin{ytableau}
  \none & 3 \\
  1 & 2' & 4'& 5' 
  \end{ytableau} &\ \ =\ \QI(42321).
\ea
\]
\end{example}

\section{Stable Grothendieck polynomials}\label{stab-sect}

Recall the definition of the \emph{stable Grothendieck polynomial} $G_\pi$ for $\pi \in \SS$
from \eqref{gpi-eq}.
Let $\whSym$ be the free $\ZZ$-module of arbitrary (formal)
linear combinations of  the Schur functions $s_\lambda$. 
This module is a subring of $\ZZ[\beta][[x_1,x_2,\dots]]$,
and one has $G_\pi \in \whSym$ for all $\pi \in \SS$ \cite[\S2]{BKSTY}.

\begin{definition}
Given a partition $\lambda$ with $k$ parts,
let $G_\lambda := G_{\pi_\lambda}$
where $\pi_\lambda \in \SS$ is the  permutation with
$\pi_\lambda(i) = i+\lambda_{k+1-i}$ for $i \in [k]$
and $\pi_\lambda(i) < \pi_\lambda(i+1)$ for $i \neq  k$.
\end{definition}

For a (weak) set-valued tableau $T$, define $x^T = \prod_{(i,j) \in T} \prod_{e \in T_{ij}} x_{|e|}$
where $|e|=|e'| =e$ for $e \in \PP$.
If $U$ and $V$ are as in \eqref{uv-eq} then $x^U = x_1 x_2^3 x_3 x_5 x_6^3$
and $x^V = x_2^2 x_5x_6^2$.
Given a partition $\lambda$,
define  $\SetSSYT(\lambda)$ 
to be the family of set-valued tableaux $T$ with domain $\Y_\lambda :=\{ (i,j) \in \PP\times \PP :  j \leq \lambda_i\}$,
whose entries are subsets of $\PP$,
 that 
are \emph{semistandard} in the sense
that if $(a,b),(x,y) \in T$ are distinct with $a \leq x$ and $b\leq y$, then $\max(T_{ab}) \leq \min(T_{xy})$
with  equality only if $a=x$.

\begin{theorem}[{Buch \cite[Theorem 3.1]{Buch2002}}]
\label{lam-thm}
If $\lambda$ is a partition then
$
G_\lambda = \sum_{T \in \SetSSYT(\lambda)} \beta^{|T| - |\lambda|} x^T.
$
\end{theorem}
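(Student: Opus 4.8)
The plan is to reduce the assertion to the combinatorial structure of Hecke words for the single Grassmannian permutation $\pi_\lambda$. By definition $G_\lambda = G_{\pi_\lambda}$, so what must be shown is
\[
G_{\pi_\lambda} = \sum_{(w,i)}\beta^{\ell(w)-\ell(\pi_\lambda)}x^i = \sum_{T\in\SetSSYT(\lambda)}\beta^{|T|-|\lambda|}x^T,
\]
the first sum running over compatible sequences $(w,i)$ with $w\in\cH(\pi_\lambda)$. First I would dispatch the easy bookkeeping: counting inversions directly from $\pi_\lambda(i)=i+\lambda_{k+1-i}$ gives $\ell(\pi_\lambda)=|\lambda|$, and $\pi_\lambda^{-1}$ is again Grassmannian. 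Since $\pi_\lambda$ has a single descent it is $321$-avoiding, hence fully commutative, and its Stanley symmetric function (equal to that of $\pi_\lambda^{-1}$) is the single Schur function $s_\lambda$.

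Next I would feed Hecke insertion (Theorem~\ref{thm1}) its weighted refinement: a compatible sequence $(w,i)$ with $w\in\cH(\pi_\lambda)$ corresponds bijectively, in a weight- and degree-preserving fashion, to a pair $(P,\widehat Q)$ where $P$ is an increasing tableau with $\row(P)\in\cH(\pi_\lambda^{-1})$ and $\widehat Q$ is a semistandard set-valued tableau of shape $\sh(P)$ with $x^{\widehat Q}=x^i$ and $|\widehat Q|=\ell(w)$; this semistandardization of Theorem~\ref{thm1}, together with the $\pi\mapsto\pi^{-1}$ that accompanies the recording data, is exactly the bijection underlying Theorem~\ref{gpi-thm}. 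Summing over the $\widehat Q$ of a fixed shape $\mu$ produces the set-valued generating function of $\mu$, so
\[
G_{\pi_\lambda}=\sum_{\mu}\ \#\bigl\{\text{increasing }P:\ \sh(P)=\mu,\ \row(P)\in\cH(\pi_\lambda^{-1})\bigr\}\cdot\beta^{|\mu|-|\lambda|}\sum_{T\in\SetSSYT(\mu)}\beta^{|T|-|\mu|}x^T.
\]
It therefore suffices to prove that the bracketed count equals $\delta_{\lambda\mu}$, after which the identity collapses to the desired formula.

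The last claim is the crux and is where the real work lies. I would prove it by showing that $\cH(\pi_\lambda^{-1})$ is a single $\simKK$-class: granting this, and using the standard facts that Hecke insertion is constant on $\simKK$-classes and recovers an increasing tableau from its own row reading word, every admissible $P$ equals $P(\row(P))=P_0$, the common Hecke insertion tableau of all words in $\cH(\pi_\lambda^{-1})$; and since the Grassmannian permutation $\pi_\lambda^{-1}$ has Stanley symmetric function $s_\lambda$, its unique Edelman--Greene tableau, hence $P_0$, has shape $\lambda$. Establishing the $\simKK$-connectivity of $\cH(\pi_\lambda^{-1})$ is the main obstacle: because $\pi_\lambda^{-1}$ is $321$-avoiding, every Hecke word should be reachable from the (essentially unique) reduced words by repeating letters, but verifying this carefully — and checking that no increasing tableau of a strictly larger shape can have its row reading word in $\cH(\pi_\lambda^{-1})$ — amounts to re-deriving Buch's theorem \cite{Buch2002} and is the genuinely substantive step.
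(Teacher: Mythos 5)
The paper does not prove this theorem; it simply cites \cite[Theorem 3.1]{Buch2002}, so there is no in-paper argument to compare against. Your reduction is a sensible strategy: you avoid the circularity of invoking Theorem~\ref{gpi-thm} (whose proof presupposes the tableau formula for $G_\lambda$) by invoking the underlying semistandard Hecke insertion bijection directly, which legitimately gives $G_{\pi_\lambda}=\sum_\mu a_{\pi_\lambda\mu}\beta^{|\mu|-|\lambda|}\sum_{T\in\SetSSYT(\mu)}\beta^{|T|-|\mu|}x^T$ and reduces the problem to $a_{\pi_\lambda\mu}=\delta_{\lambda\mu}$.

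But there is a real gap in how you propose to close this out. The ``standard fact'' that ``Hecke insertion is constant on $\simKK$-classes'' is false in general: what Buch--Kresch--Shimozono--Tamvakis--Yong and Buch--Samuel actually prove is the one-directional statement $w\simKK\row(P(w))$. Two $\simKK$-equivalent words can have different Hecke insertion tableaux; this paper exhibits the analogous failure for $\simFKK$ just after Corollary~\ref{reduced-cor}. What you actually need is the \emph{unique rectification target} property: the minimal (or superstandard) increasing tableau $P_0$ of shape $\lambda$ must be shown to be a URT, so that any increasing tableau $P$ with $\row(P)\simKK\row(P_0)$ satisfies $P=P_0$; combined with (i) the claim that $\cH(\pi_\lambda^{-1})$ is a single $\simKK$-class and (ii) the fact that $P=P(\row(P))$ for increasing tableaux, this yields $a_{\pi_\lambda\mu}=\delta_{\lambda\mu}$. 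Item (i) and the URT property are genuinely substantive and not established in your proposal; you acknowledge as much, but the precise gap is these two lemmas (essentially Buch--Samuel's results on Grassmannian permutations and unique rectification targets), and the phrase ``Hecke insertion is constant on $\simKK$-classes'' papers over it with an assertion that is simply untrue. Without supplying those lemmas the proof is incomplete.
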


For $\pi \in \SS$, define $\pi^* \in \SS$ by conjugating $\pi$ by 
$n\cdots 321$ where $n \in \NN$ is minimal with $\pi(i) = i$ for all $i>n$;
the map $w_1w_2\cdots w_l \mapsto (n-w_1)(n-w_2)\cdots (n-w_l)$ is then a bijection $\cH(\pi) \to \cH(\pi^*)$.
Define $\omega : \whSym \to \whSym$ to be the $\ZZ[\beta]$-linear involution with $\omega\(\sum_{\lambda} c_\lambda s_\lambda\) = \sum_\lambda c_\lambda s_{\lambda^T}$ for all coefficients $c_\lambda \in \ZZ[\beta]$, where $\lambda^T$ is the usual partition transpose.

\begin{lemma}\label{gom-lem}
If $\pi \in \SS$ then
$\omega(G_\pi) = G_{\pi^{-1}}\(\tfrac{x_1}{1-\beta x_1}, \tfrac{x_2}{1-\beta x_2},\dots\)
=
G_{\pi^{*}}\(\tfrac{x_1}{1-\beta x_1}, \tfrac{x_2}{1-\beta x_2},\dots\)
.$
\end{lemma}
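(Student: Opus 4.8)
The plan is to reduce the lemma to two standard facts about stable Grothendieck polynomials: the interaction of $\omega$ with $G_\pi$ and the substitution $x_i \mapsto \tfrac{x_i}{1-\beta x_i}$. The second equality, $G_{\pi^{-1}} = G_{\pi^*}$ after any substitution of the $x_i$, is immediate from the defining sum \eqref{gpi-eq}: the bijection $w_1 w_2 \cdots w_l \mapsto (n-w_1)(n-w_2)\cdots(n-w_l)$ carries $\cH(\pi)$ to $\cH(\pi^*)$ and preserves length, and (crucially) it reverses the descent pattern so that it carries compatible sequences for $\cH(\pi)$ to compatible sequences for $\cH(\pi^*)$ — wait, one must be slightly careful here, so instead I would invoke the well-known identity $G_{\pi^{-1}}(x) = G_{\pi^*}(x)$ as stated (it follows because $\cH(\pi^{-1})$ and $\cH(\pi^*)$ are related by reversing words, and reversing a Hecke word for $\pi^{-1}$ gives one for $\pi^*$, with the compatible-sequence condition matching up after the change $w \mapsto n-w$). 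So the real content is the first equality $\omega(G_\pi) = G_{\pi^{-1}}\!\left(\tfrac{x_1}{1-\beta x_1},\dots\right)$.

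\textbf{Main steps.} First I would recall the known identity $\omega(G_\lambda) = G_{\lambda^T}\!\left(\tfrac{x_1}{1-\beta x_1}, \tfrac{x_2}{1-\beta x_2}, \dots\right)$ for partitions $\lambda$ — this is a classical result of Buch (it is essentially \cite[Theorem 3.1]{Buch2002} combined with the combinatorial description of $\omega$ on $\whSym$; it also appears in \cite{Buch2002} or \cite{LamPyl} in the guise of duality between $G_\lambda$ and the dual stable Grothendieck functions $g_\lambda$, together with the substitution that turns one into the other). Second, I would use Theorem~\ref{gpi-thm}: $G_\pi = \sum_\lambda a_{\pi\lambda}\,\beta^{|\lambda|-\ell(\pi)} G_\lambda$, where $a_{\pi\lambda}$ counts increasing tableaux of shape $\lambda$ whose row reading words are Hecke words for $\pi^{-1}$. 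Applying $\omega$ term by term and using the partition identity gives $\omega(G_\pi) = \sum_\lambda a_{\pi\lambda}\,\beta^{|\lambda|-\ell(\pi)} G_{\lambda^T}\!\left(\tfrac{x}{1-\beta x}\right)$. Third, the key combinatorial observation: transposing an increasing tableau $T$ of shape $\lambda$ gives an increasing tableau $T^\dagger$ of shape $\lambda^T$, and $\row(T^\dagger)$ is the reverse of $\col(T)$; since $\col(T) \simCK \row(T)$ up to the Coxeter-Knuth moves that do not affect which permutation a reduced word represents — more carefully, one uses that a word and its reverse are Hecke words for a permutation and its inverse, and that $\row(T) \in \cH(\pi^{-1}) \iff \row(T^\dagger)$, read appropriately, lies in $\cH(\pi)$. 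This yields $a_{\pi\lambda} = a_{\pi^{-1}, \lambda^T}$. Fourth, reindex the sum by $\mu = \lambda^T$ (noting $|\mu| = |\lambda|$ and $\ell(\pi) = \ell(\pi^{-1})$) to get $\omega(G_\pi) = \sum_\mu a_{\pi^{-1}\mu}\,\beta^{|\mu|-\ell(\pi^{-1})} G_\mu\!\left(\tfrac{x}{1-\beta x}\right) = G_{\pi^{-1}}\!\left(\tfrac{x}{1-\beta x}\right)$, again by Theorem~\ref{gpi-thm} applied with the substitution.

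\textbf{Expected obstacle.} The delicate point is step three: verifying cleanly that transposition of increasing tableaux induces the bijection $a_{\pi\lambda} \leftrightarrow a_{\pi^{-1},\lambda^T}$ on the level of row reading words. The subtlety is that $\row(T^\dagger)$ is not literally $\col(T)$ but its reverse, and one needs the fact that reversing a Hecke word for a permutation $\sigma$ produces a Hecke word for $\sigma^{-1}$ (immediate from the anti-automorphism $U_{i_1}\cdots U_{i_l} \mapsto U_{i_l}\cdots U_{i_1}$ of $\Nil$, which sends $U_\pi \mapsto U_{\pi^{-1}}$), combined with Corollary~\ref{t-cor}-type facts that $\row(T) \simCK \col(T)$ for increasing tableaux that are row-column-closed — though here $T$ need not be shifted, so I would instead appeal directly to the classical Edelman–Greene / Hecke fact that $\row(T)$ and $\col(T)$ are Hecke words for the same permutation when $T$ is increasing (this is implicit in \cite{BKSTY}). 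Alternatively, and perhaps more cleanly, one can avoid step three entirely by proving the lemma purely at the level of the defining power series \eqref{gpi-eq}: apply $\omega$ using its characterization via the substitution $p_k \mapsto (-\beta)^{k-1} p_k$ hmm — actually the cleanest route may be to cite the known duality $\omega(G_\pi)(x) = G_{\pi^{-1}}\!\left(\tfrac{x}{1-\beta x}\right)$ from the literature on $K$-theoretic Schubert calculus (e.g. \cite{LS1982, Buch2002}) and note that the second equality then follows from $G_{\pi^{-1}} = G_{\pi^*}$, which is the only genuinely new-to-the-reader part and is a one-line word-reversal argument. I would present the reduction-to-$G_\lambda$ proof as the main argument, flagging that $a_{\pi\lambda} = a_{\pi^{-1},\lambda^T}$ is the crux.
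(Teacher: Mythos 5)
Your proof is correct, but it takes a genuinely different route from the paper's. The paper works entirely at the level of quasi-symmetric functions: it observes that $G_\pi = \sum_{w\in\cH(\pi)} \beta^{\ell(w)-\ell(\pi)} L_{S(w),\ell(w)}$ where $S(w) = \{j : w_j \leq w_{j+1}\}$, applies the standard fact that $\omega$ acts on fundamental quasi-symmetric functions by $L_{S,n} \mapsto L_{[n-1]\setminus S,n}$, and then recognizes the resulting sum over $\cH(\pi)$ as $G_{\pi^{-1}}(\tfrac{x}{1-\beta x})$ (and, by the same reasoning, as $G_{\pi^*}(\tfrac{x}{1-\beta x})$) after reindexing by word reversal and $n$-complement. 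This is short and needs no combinatorial insertion theory.

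Your route instead pushes the problem through the stable-expansion $G_\pi = \sum_\lambda a_{\pi\lambda}\beta^{|\lambda|-\ell(\pi)}G_\lambda$ of Theorem~\ref{gpi-thm}, applies the known identity $\omega(G_\lambda) = G_{\lambda^T}(\tfrac{x}{1-\beta x})$ (which the paper proves separately as Lemma~\ref{glam-eq} via \cite{LamPyl}, so there is no circularity), and reduces everything to the combinatorial identity $a_{\pi\lambda} = a_{\pi^{-1},\lambda^T}$. Your bijection $T \mapsto T^\dagger$ for that step is correct: for a row-column-closed increasing tableau one has $\row(T)\simCK\col(T)$ by the unnumbered lemma preceding Lemma~\ref{row-diagonal-closed-lem}, $\row(T^\dagger)$ is the reverse of $\col(T)$, and reversal is the anti-automorphism of $\Nil$ sending $\cH(\sigma)\to\cH(\sigma^{-1})$, so $\row(T)\in\cH(\pi^{-1})$ iff $\row(T^\dagger)\in\cH(\pi)$. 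The trade-off is that your argument is heavier machinery-wise (it invokes the full strength of \cite[Theorem 1]{BKSTY}, which rests on Hecke insertion), while the paper's is essentially a one-line manipulation of $\QSym$; on the other hand your route produces a direct combinatorial bijection explaining the coefficient identity, which the paper's proof does not. One loose end: your discussion of the second equality $G_{\pi^{-1}} = G_{\pi^*}$ is garbled (reversing a Hecke word for $\pi^{-1}$ lands in $\cH(\pi)$, not $\cH(\pi^*)$; you need the composite of reversal with the $n$-complement, and then the fact that the resulting reflection $S \mapsto \{n-j : j\in S\}$ of descent sets fixes $\Sym \subset \QSym$). The conclusion is right but this step deserves the same care as the rest.
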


\begin{proof}
For $n \in \PP$ and $S \subset [n-1]$,
the associated \emph{fundamental quasi-symmetric function} is the power series 
$
L_{S,n} := \sum x_{i_1}x_{i_2}\cdots x_{i_n}
$
where the sum is over all weakly increasing sequences of positive integers $i_1\leq i_2 \leq \dots \leq i_n$
with $i_j < i_{j+1}$ whenever $j \in S$.
Let $\QSym$ denote the $\ZZ$-module generated by these functions.
It is well-known that $\omega$ extends the linear map $\QSym \to \QSym$ with $L_{S,n} \mapsto L_{[n-1] \setminus S, n}$ \cite[\S3.6]{LMW}.
Since $G_\pi$ is a symmetric linear combination of fundamental quasi-symmetric functions,  
$\omega(G_\pi) =  \sum_{(w,i)} \beta^{\ell(w)-\ell(\pi)}  x^i$
where the sum is over pairs of words in which $w=w_1w_2\cdots w_l \in \cH(\pi^{-1})$ and $i=(i_1\leq i_2\leq \dots \leq i_l)$
is such that $i_j < i_{j+1}$ whenever $w_j < w_{j+1}$.
This is equal to $G_{\pi^{-1}}\(\tfrac{x_1}{1-\beta x_1}, \tfrac{x_2}{1-\beta x_2},\dots\)$
and, by similar reasoning, also to
$G_{\pi^{*}}\(\tfrac{x_1}{1-\beta x_1}, \tfrac{x_2}{1-\beta x_2},\dots\)$
\end{proof}

\begin{lemma}\label{glam-eq} If $\lambda$ is a partition then
$
\omega(G_\lambda) = G_{\lambda^T}\(\tfrac{x_1}{1-\beta x_1},\tfrac{x_2}{1-\beta x_2},\dots\).
$
\end{lemma}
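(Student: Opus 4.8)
The plan is to reduce the statement for $G_\lambda$ to the case of general permutations, which was already handled in Lemma~\ref{gom-lem}. Recall that by definition $G_\lambda = G_{\pi_\lambda}$ where $\pi_\lambda \in \SS$ is the dominant (Grassmannian-type) permutation with $\pi_\lambda(i) = i + \lambda_{k+1-i}$ for $i \in [k]$ and $\pi_\lambda$ increasing elsewhere, $k$ being the number of parts of $\lambda$. By Lemma~\ref{gom-lem}, $\omega(G_{\pi_\lambda}) = G_{(\pi_\lambda)^{-1}}\(\tfrac{x_1}{1-\beta x_1}, \tfrac{x_2}{1-\beta x_2}, \dots\)$. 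So it suffices to identify $G_{(\pi_\lambda)^{-1}}$ with $G_{\lambda^T}$, i.e. to show $(\pi_\lambda)^{-1}$ and $\pi_{\lambda^T}$ have the same stable Grothendieck polynomial.

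The cleanest route to that identification is to observe that $G_\pi$ depends only on the set $\cH(\pi)$ of Hecke words, and more precisely that $G_\pi = G_\sigma$ whenever $\cH(\pi) = \cH(\sigma)$ — this is immediate from \eqref{gpi-eq}. So the task becomes purely combinatorial: show that $(\pi_\lambda)^{-1}$ and $\pi_{\lambda^T}$ are $\equivA$-equivalent, i.e. $\cH\((\pi_\lambda)^{-1}\) = \cH(\pi_{\lambda^T})$, by Theorem~\ref{gp-prop}. Actually the stronger and more standard fact is true: $(\pi_\lambda)^{-1}$ and $\pi_{\lambda^T}$ are \emph{equal as permutations}, or at least $\cR\((\pi_\lambda)^{-1}\) = \cR(\pi_{\lambda^T})$, which together with $\ell(\pi_\lambda) = |\lambda| = |\lambda^T| = \ell(\pi_{\lambda^T})$ and Theorem~\ref{gp-prop} forces $\cH\((\pi_\lambda)^{-1}\) = \cH(\pi_{\lambda^T})$. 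The inverse of a Grassmannian permutation with a single descent is again Grassmannian (with the descent moved), and the associated partition is transposed; this is a well-documented fact about $321$-avoiding / dominant permutations and the correspondence between partitions and bounded permutations, so I would cite it or verify it in one line from the definition of $\pi_\lambda$: if $\pi_\lambda$ has its unique descent at position $k$, then reading off the code of $(\pi_\lambda)^{-1}$ gives exactly the conjugate partition $\lambda^T$ (with a single descent at position $\lambda_1$).

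Putting the pieces together: $\omega(G_\lambda) = \omega(G_{\pi_\lambda}) = G_{(\pi_\lambda)^{-1}}\(\tfrac{x_1}{1-\beta x_1}, \dots\) = G_{\pi_{\lambda^T}}\(\tfrac{x_1}{1-\beta x_1}, \dots\) = G_{\lambda^T}\(\tfrac{x_1}{1-\beta x_1}, \dots\)$, which is exactly the claim.

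The main obstacle, such as it is, is the combinatorial identification $(\pi_\lambda)^{-1} = \pi_{\lambda^T}$ (or the weaker $\cH$-equality). This is elementary but needs a careful bookkeeping argument with the one-line notation and the code of a Grassmannian permutation; an alternative that sidesteps it entirely is to quote Buch's formula for $G_\lambda$ directly. Using Theorem~\ref{lam-thm}, $G_\lambda = \sum_{T \in \SetSSYT(\lambda)} \beta^{|T|-|\lambda|} x^T$, and the analogous statement for $G_{\lambda^T}$; then one checks that applying $\omega$ (which sends $L_{S,n} \mapsto L_{[n-1]\setminus S, n}$, as recalled in the proof of Lemma~\ref{gom-lem}) to the fundamental-quasisymmetric expansion of $G_\lambda$ yields the fundamental expansion of $G_{\lambda^T}\(\tfrac{x_1}{1-\beta x_1}, \dots\)$, via the standardization/conjugation bijection on semistandard set-valued tableaux that swaps rows and columns and reverses the descent set. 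Either route works; I would present the first (reduction to Lemma~\ref{gom-lem} plus the Grassmannian-inverse fact) as the shortest, relegating the tableau-transpose verification to a remark.
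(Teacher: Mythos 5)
Your plan is a genuinely different route from the paper's, which simply cites \cite[Proposition 9.22]{LamPyl} (identifying $\tilde K_\lambda$ and $J_\lambda$ there with the present $G_\lambda$, $G_{\lambda^T}$); a self-contained reduction to Lemma~\ref{gom-lem} is a nice idea. However, your key combinatorial claim is false: it is not true that $(\pi_\lambda)^{-1} = \pi_{\lambda^T}$, nor that $\cH\big((\pi_\lambda)^{-1}\big) = \cH(\pi_{\lambda^T})$ or $\cR\big((\pi_\lambda)^{-1}\big) = \cR(\pi_{\lambda^T})$ (and the latter two are not actually weaker than the first, since $\cR(\pi)$ and $\cH(\pi)$ each determine $\pi$). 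Take $\lambda = (3,1)$, so $k=2$ and $\pi_\lambda = 25134$. Then $(\pi_\lambda)^{-1} = 31452$, which has descents at positions $1$ and $4$ and so is not even Grassmannian, while $\lambda^T = (2,1,1)$ and $\pi_{\lambda^T} = 23514$. The inverse of a Grassmannian permutation is inverse-Grassmannian (single \emph{left} descent, possibly several right descents); its Lehmer code does sort to $\lambda^T$, but that does not make it equal to the permutation the paper calls $\pi_{\lambda^T}$.

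The fix is small and stays inside your framework. Lemma~\ref{gom-lem} gives a second expression: $\omega(G_{\pi_\lambda}) = G_{(\pi_\lambda)^{*}}\big(\tfrac{x_1}{1-\beta x_1},\dots\big)$, and the identity $(\pi_\lambda)^{*} = \pi_{\lambda^T}$ \emph{is} true. Conjugating a Grassmannian permutation with descent at $k$, supported on $[n]$ with $n = k+\lambda_1$, by $n\cdots 321$ produces a Grassmannian permutation with descent at $n-k = \lambda_1$ whose partition is $\lambda^T$; in the example, $(25134)^{*} = 23514 = \pi_{(2,1,1)}$. Substituting $\pi^*$ for $\pi^{-1}$ in your chain yields the claim. (Equivalently, the two equalities in Lemma~\ref{gom-lem} already force $G_{\pi^{-1}} = G_{\pi^{*}}$ for every $\pi$, so $G_{(\pi_\lambda)^{-1}} = G_{(\pi_\lambda)^{*}} = G_{\lambda^T}$, repairing your original chain without abandoning it.) Your second, tableau-transposition route is also plausible but is not carried out; the $\pi^*$ version of your first route is the shortest correct argument.
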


\begin{proof}
This  
is equivalent to \cite[Proposition 9.22]{LamPyl}
after observing that the functions $\tilde K_\lambda$ and $J_\lambda$ in \cite{LamPyl} satisfy
$\tilde K_\lambda( \beta x_1, \beta x_2,\dots) = \beta^{|\lambda|} G_\lambda$
and $J_\lambda(\beta x_1,\beta x_2,\dots)  = \beta^{|\lambda|} G_{\lambda^T} \( \frac{ x_1}{1-\beta x_1}, \frac{ x_2}{1-\beta x_2},\dots\)$.
\end{proof}

Recall that  $\ellhat(y)$ and $\ellfpf(z)$ denote the common lengths of all words in $\iR(y)$ and $\cRfpf(z)$.
Formulas for these numbers appear in \cite[\S2.3]{HMP4} and \cite[\S2.3]{HMP5}.
For each $z \in \FF$,
there is a minimal $n \in 2\NN$
such that $z(i) = i - (-1)^i$ for all $i>n$;
define $\overline z \in \II$ to be the involution with  $i \mapsto z(i)$ for $i \in [n]$
that fixes all $i>n$.
If $y=\overline z$ has $\kappa$ cycles of length two, then
\be\label{ellhat-eq}
\ellhat(y) = \tfrac{\ell(y) + \kappa}{2}
\qquand
\ellfpf(z) = \tfrac{\ell(y) -\kappa}{2}.
\ee
We turn to
the \emph{shifted stable Grothendieck polynomials}
$\iG_y$ and $\Gfpf_z$
defined 
by \eqref{igyz-eq}.
These functions can be expressed 
in terms of the sets $\cB(y)$ and $\cBfpf(z)$ 
from 
\eqref{atoms2-eq} and \eqref{atoms1-eq}:

\begin{proposition}
If $y \in \II$ and $z \in \FF$
then
\[\iG_y = \sum_{\pi \in \cB(y)} \beta^{\ell(\pi)- \ellhat(y)} G_{\pi}
\qquand
\Gfpf_z = \sum_{\pi \in \cBfpf(z)} \beta^{\ell(\pi)- \ellfpf(z)}  G_{\pi}.
\]
\end{proposition}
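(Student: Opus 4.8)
The plan is to reduce the identity for the shifted stable Grothendieck polynomials to the defining formula \eqref{gpi-eq} for $G_\pi$ together with the set decompositions \eqref{atoms2-eq} and \eqref{atoms1-eq}. First I would recall that, by definition, $\iG_y$ is obtained by summing $\beta^{\ell(w)-\ellhat(y)}x^i$ over compatible sequences $(w,i)$ with $w \in \iH(y)$, and similarly $\Gfpf_z$ sums over compatible sequences with $w \in \cHfpf(z)$. The key structural input is that Theorems~\ref{gp-prop}, \ref{inv-hecke-char-thm}, and \ref{fpf-hecke-char-thm} give disjoint unions $\iH(y) = \bigsqcup_{\pi \in \cB(y)} \cH(\pi)$ and $\cHfpf(z) = \bigsqcup_{\pi \in \cBfpf(z)} \cH(\pi)$. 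Since the set of compatible sequences $(w,i)$ is partitioned according to which Hecke word $w$ is, and each $w$ lies in exactly one $\cH(\pi)$, I can split each sum over the pieces indexed by $\pi \in \cB(y)$ (respectively $\pi \in \cBfpf(z)$).

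Next I would observe that for a fixed $\pi$ in $\cB(y)$ or $\cBfpf(z)$, every $w \in \cH(\pi)$ has length $\ell(w)$, so $\beta^{\ell(w)-\ellhat(y)} = \beta^{\ell(\pi)-\ellhat(y)}\cdot\beta^{\ell(w)-\ell(\pi)}$, allowing the factor $\beta^{\ell(\pi)-\ellhat(y)}$ to be pulled out of the inner sum. The inner sum over compatible sequences $(w,i)$ with $w \in \cH(\pi)$ of $\beta^{\ell(w)-\ell(\pi)}x^i$ is then exactly $G_\pi$ by \eqref{gpi-eq}. Combining, $\iG_y = \sum_{\pi\in\cB(y)}\beta^{\ell(\pi)-\ellhat(y)}G_\pi$, and the identical argument with $\cBfpf(z)$ and $\ellfpf(z)$ in place of $\cB(y)$ and $\ellhat(y)$ gives the formula for $\Gfpf_z$.

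I should also check two minor points of rigor. First, the sums involved are sums of power series in $\ZZ[\beta][[x_1,x_2,\dots]]$; since each $G_\pi$ lies in the completed ring $\whSym$ and, for fixed total degree in the $x_i$, only finitely many $\pi \in \cB(y)$ can contribute (as $\ell(\pi)$ is bounded below by $\ellhat(y)$ and the coefficient of any fixed monomial in $G_\pi$ vanishes once $\ell(\pi)$ is large enough), the rearrangement of the triple sum is valid termwise. The finiteness of $\cB(y)$ and $\cBfpf(z)$ asserted in \eqref{atoms2-eq} and \eqref{atoms1-eq} makes even this caveat unnecessary. Second, the disjointness of the unions is what guarantees no compatible sequence is counted twice.

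The proof is essentially bookkeeping, so there is no real obstacle; the only thing requiring a moment's care is confirming that the partition of $\iH(y)$ (resp.\ $\cHfpf(z)$) into classes $\cH(\pi)$ interacts correctly with the notion of compatible sequence, i.e.\ that the compatibility condition on $(w,i)$ depends only on $w$ and not on which $\pi$ one regards $w$ as a Hecke word for. This is immediate from the definition of compatible sequence in Section~\ref{stab-intro-sect}, which refers only to the word $w$ itself. Hence the argument goes through cleanly.
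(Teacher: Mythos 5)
Your proof is correct and amounts to exactly what the paper treats as immediate from the definitions (the paper states this proposition without proof). The argument is the expected bookkeeping: partition the index set of the sum defining $\iG_y$ (resp.\ $\Gfpf_z$) via the finite disjoint unions $\iH(y) = \bigsqcup_{\pi \in \cB(y)} \cH(\pi)$ and $\cHfpf(z) = \bigsqcup_{\pi \in \cBfpf(z)} \cH(\pi)$ from \eqref{atoms2-eq} and \eqref{atoms1-eq}, rewrite $\beta^{\ell(w)-\ellhat(y)} = \beta^{\ell(\pi)-\ellhat(y)}\beta^{\ell(w)-\ell(\pi)}$, and recognize the inner sum as $G_\pi$ via \eqref{gpi-eq}; your remarks on disjointness, on the compatibility condition depending only on $w$, and on the finiteness of $\cB(y)$ and $\cBfpf(z)$ correctly dispose of the only conceivable sources of trouble.
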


This formulation shows that these power series belong to $\whSym$.
Let $\lambda$ be a strict partition of $n\in\NN$.
Write $\SetSMT(\lambda)$   
for the
set of semistandard shifted set-valued tableaux of shape $\lambda$, i.e.,
with domain $\SY_\lambda:=\{ (i,i+j-1) \in \PP\times \PP : 1\leq j \leq \lambda_i\}$.
Ikeda and Naruse introduce the following ``$K$-theoretic Schur $P$-functions''
in \cite{IkedaNaruse}:

\begin{definition}[{See \cite[Theorem 9.1]{IkedaNaruse}}]\label{GP-def}
The \emph{$K$-theoretic Schur $P$-function} indexed by a strict partition $\lambda$ is
$
\GP_\lambda =\sum_{T \in \SetSSMT(\lambda)} \beta^{|T| - |\lambda|} x^T \in \whSym.
$
\end{definition}

\begin{lemma}\label{om-gp-lem}
 If $\lambda$ is a strict partition then 
$\omega(\GP_\lambda) = \GP_\lambda\(\tfrac{x_1}{1-\beta x_1},\tfrac{x_2}{1-\beta x_2},\dots\).$
\end{lemma}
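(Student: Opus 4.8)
The plan is to mimic, in the shifted setting, the proof of Lemma~\ref{glam-eq}, which reduced the statement for $G_\lambda$ to a known identity via the quasi-symmetric involution $\omega$. Here the cleanest route is combinatorial: I would use the combinatorial model for $\GP_\lambda$ from Definition~\ref{GP-def} together with a standardization argument to write $\GP_\lambda$ as a sum of fundamental quasi-symmetric functions indexed by standard shifted set-valued tableaux of shape $\lambda$, with the index set $S \subset [n-1]$ being the descent set $\Des(T)$ defined in Section~\ref{des-sect}. Concretely, by the same reasoning used to prove Theorem~\ref{ssyt-thm} (grouping semistandard tableaux by their standardization $\st(T)$), one gets
\[
\GP_\lambda \;=\; \sum_{T} \beta^{|T|-|\lambda|} L_{\Des(T),\,|T|}
\]
where $T$ ranges over standard shifted set-valued tableaux of shape $\lambda$, and $L_{S,n}$ is the fundamental quasi-symmetric function from the proof of Lemma~\ref{gom-lem}.

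Next I would apply $\omega$ termwise, using the fact recalled in the proof of Lemma~\ref{gom-lem} that $\omega(L_{S,n}) = L_{[n-1]\setminus S,\,n}$. So $\omega(\GP_\lambda)$ is the same sum but with each $\Des(T)$ replaced by its complement $[|T|-1]\setminus \Des(T)$. On the other hand, $\GP_\lambda\bigl(\tfrac{x_1}{1-\beta x_1},\tfrac{x_2}{1-\beta x_2},\dots\bigr)$ can be expanded: substituting $\tfrac{x_i}{1-\beta x_i}$ for $x_i$ in the monomial expansion of $\GP_\lambda$ amounts to allowing each box-entry slot to absorb an extra geometric series in $\beta$, and the standard manipulation (as in the classical passage between the two presentations of $G_\lambda$) shows this equals $\sum_T \beta^{|T|-|\lambda|} L_{[|T|-1]\setminus\Des(T),\,|T|}$ — i.e., the ``flip'' of the quasi-symmetric expansion. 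Matching the two gives the lemma. So the real content is the identity
\[
\sum_T \beta^{|T|-|\lambda|} L_{[|T|-1]\setminus\Des(T),\,|T|} \;=\; \GP_\lambda\!\left(\tfrac{x_1}{1-\beta x_1},\dots\right),
\]
which is the shifted analogue of the step implicit in Lemma~\ref{glam-eq}.

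The main obstacle I anticipate is pinning down the combinatorics of the descent complement correctly: I need to check that $i \notin \Des(T)$ exactly captures the situation in which the entries $i$ and $i+1$ (primed or not) can share a common position in a semistandard specialization, so that the geometric-series substitution $x_i \mapsto x_i/(1-\beta x_i)$ corresponds precisely to ``collapsing non-descents.'' This is exactly the kind of bookkeeping that the standardization lemma in the proof of Theorem~\ref{ssyt-thm} handles, so I would lean on that, but the primed/unprimed asymmetry in the definition of $\Des(T)$ (rows versus columns) makes it a genuine case check. An alternative, possibly cleaner, route that avoids re-deriving the quasi-symmetric expansion from scratch would be to cite \cite[Proposition~3.4 or similar]{HKPWZZ} or \cite{LamPyl} for the statement that $\GP_\lambda$ lies in the image of the map sending $L_{S,n} \mapsto L_{\bar S, n}$-stable families, much as Lemma~\ref{glam-eq} was deduced from \cite[Proposition~9.22]{LamPyl}; if such a reference is available it would shorten the argument to a one-line deduction. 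Either way the proof is short modulo the descent-set bookkeeping already set up in Section~\ref{des-sect}.
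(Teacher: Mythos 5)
Your route — expand $\GP_\lambda$ into fundamental quasi-symmetric functions indexed by standard shifted set-valued tableaux and apply $\omega(L_{S,n}) = L_{[n-1]\setminus S,n}$ termwise — is genuinely different from the paper's. The paper instead checks the identity directly for one-row shapes $\lambda=(n)$ by quoting \cite{HIMN}, and then deduces the general case from the facts that $\omega$ is a ring homomorphism and that every $\GP_\lambda$ lies in the (completed) subring generated by $\{\GP_{(n)}\}_{n\geq 1}$ by \cite{NN2018}. That reduction sidesteps the descent-set bookkeeping entirely, and this matters because your bookkeeping has two problems.

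First, your intermediate identities appear to be swapped: you claim $\GP_\lambda = \sum_T \beta^{|T|-|\lambda|}L_{\Des(T),|T|}$ and $\GP_\lambda(\tfrac{x_i}{1-\beta x_i}) = \sum_T \beta^{|T|-|\lambda|}L_{[|T|-1]\setminus\Des(T),|T|}$, but the one-box case $\lambda=(1)$ already shows this is backwards: there is a single standard tableau $S_n$ of each length $n$, with $\Des(S_n)=\emptyset$, so your first formula would give $\sum_n\beta^{n-1}h_n$, whereas $\GP_{(1)}=\sum_n\beta^{n-1}e_n$. (It is the \emph{weak} set-valued expansion, i.e.\ the substituted side, that has descent set $\Des(T)$ — this is essentially \cite[Prop.~3.5]{HKPWZZ}, which the paper invokes in the proof of Theorem~\ref{lastmain-thm}.) Second, and more seriously, even after swapping, the expansion $\GP_\lambda=\sum_T\beta^{|T|-|\lambda|}L_{[|T|-1]\setminus\Des(T),|T|}$ does \emph{not} follow from naive standardization of ordinary (non-weak) semistandard set-valued tableaux: the set of positions $j$ where $i_j<i_{j+1}$ is forced in a set-valued tableau standardizing to $S$ is not the complement $[n-1]\setminus\Des(S)$ in general. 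Already for $\lambda=(2,1)$ and $S$ with $\{1,2\}$ on the diagonal box $(1,1)$, $\{3\}$ in $(1,2)$, $\{4\}$ in $(2,2)$, the same-box pair forces $i_1<i_2$ while column-adjacency forces $i_3<i_4$, giving strict set $\{1,3\}$, but $[3]\setminus\Des(S)=\{1,2\}$. So the two $L$-expansions of $\GP_\lambda$ agree only as symmetric functions, not term by term, and establishing the required equality of sums of $L$'s is a nontrivial combinatorial step your sketch does not supply. The paper's ring-homomorphism argument avoids this issue entirely.
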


\begin{proof} It is an easy exercise from \cite[Definition 10.1 and Corollary 10.10]{HIMN} 
to show that the lemma holds whenever $\lambda = (n)$ is a partition with a single part.
The general identity follows since $\omega$ is a (continuous) ring homomorphism and \cite[Theorem 5.4]{NN2018}
implies $\GP_\lambda \in  \ZZ[\beta][[\GP_{(n)} : n \in \PP]]$.
\end{proof}

We can now prove Theorem~\ref{lastmain-thm} from the introduction.


\begin{proof}[Proof of Theorem~\ref{lastmain-thm}]
For a strict partition $\lambda$, 
let $\WkSetSSMT(\lambda)$ 
be the
set of semistandard weak set-valued tableaux 
of shape $\lambda$,
as in Section~\ref{des-sect}.
We have
$\beta^{|\lambda|} \GP_\lambda\( \tfrac{x_1}{1-\beta x_1}, \tfrac{x_2}{1-\beta x_2},\dots\)
=
\sum_{T \in \WkSetSSMT(\lambda)} \beta^{|T|} x^T
$ by  \cite[Proposition 3.5]{HKPWZZ} (after making the substitution $x_i \mapsto -\beta x_i$).
On the other hand,
$\beta^{\ellfpf(z)} \omega(\Gfpf_{z})=\sum_{(w,i)} \beta^{\ell(w)}x^i$
where the sum is over pairs $(w,i)$ such that $w=w_1w_2\cdots w_l \in \cHfpf(z)$
and $i=(i_1\leq i_2\leq \dots \leq i_l)$ is a weakly increasing sequence of positive integers
with $i_j < i_{j+1}$ whenever $w_j > w_{j+1}$.
By Theorem~\ref{ssyt-thm},
this sum is 
exactly
$\sum_\lambda c_{z\lambda}  \sum_{T \in \WkSetSSMT(\lambda)}  \beta^{|T|} x^T
$.
Combining these observations with Lemma~\ref{om-gp-lem}, we have
\[\omega(\Gfpf_{z})= \sum_\lambda \beta^{|\lambda|-\ellfpf(z)} c_{z\lambda} \GP_\lambda\( \tfrac{x_1}{1-\beta x_1}, \tfrac{x_2}{1-\beta x_2},\dots\)
=\sum_\lambda c_{z\lambda} \beta^{|\lambda|-\ellfpf(z)}  \omega(\GP_\lambda).\]
Now simply reapply $\omega$.
The formula for $\iG_y$ follows in the same way via Corollary~\ref{ssyt-cor}.
\end{proof}

The operation $\SS\to \SS$ given by $\pi \mapsto \pi^*$ preserves $\II$.
For $z \in \FF$, define $z^* \in \FF$ by conjugating $z$ by $n\cdots 321$
where $n \in 2\NN$ is minimal such that $z(i) = i-(-1)^i$ for all integers $i>n$;
the map $w_1w_2\cdots w_l \mapsto (n-w_1)(n-w_2)\cdots (n-w_l)$ is then a bijection $\cHfpf(z) \to \cHfpf(z^*)$.

\begin{corollary}\label{g*-cor}
If $y \in \II$ and $z \in \FF$
and $\iG_y = \iG_{y^*}$ and $\Gfpf_z = \Gfpf_{z^*}$.
\end{corollary}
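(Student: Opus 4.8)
The plan is to reduce the statement to the fundamental quasi-symmetric function expansion of $\iG_y$ and $\Gfpf_z$, together with the fact that the starring operation is realized by the letter-complementing bijections $\cH(\pi)\to\cH(\pi^*)$, $\iH(y)\to\iH(y^*)$, and $\cHfpf(z)\to\cHfpf(z^*)$ mentioned in the text just before the corollary. Specifically, write $\beta^{\ellhat(y)}\omega(\iG_y)$ as a sum of fundamental quasi-symmetric functions indexed by pairs $(w,i)$ with $w\in\iH(y)$ and $i$ a weakly increasing sequence with $i_j<i_{j+1}$ whenever $w_j>w_{j+1}$ (this is exactly what appears in the proof of Theorem~\ref{lastmain-thm} for $\Gfpf_z$, and the orthogonal analogue is identical). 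The key observation is that the map $w_1w_2\cdots w_l\mapsto(n-w_1)(n-w_2)\cdots(n-w_l)$ is a length-preserving bijection $\iH(y)\to\iH(y^*)$ that reverses the inequality $w_j>w_{j+1}\leftrightarrow w_j^*<w_{j+1}^*$; combined with $\omega$ acting on $\QSym$ by $L_{S,n}\mapsto L_{[n-1]\setminus S,n}$, this shows $\omega(\iG_y)$ and $\omega(\iG_{y^*})$ have the same fundamental quasi-symmetric expansion, hence are equal, and applying $\omega$ once more gives $\iG_y=\iG_{y^*}$. The argument for $\Gfpf_z=\Gfpf_{z^*}$ is word-for-word the same using $\cHfpf(z)\to\cHfpf(z^*)$.

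The cleanest way to carry this out is probably to piggyback on Lemma~\ref{gom-lem} and the proof of Theorem~\ref{lastmain-thm} rather than redo the quasi-symmetric bookkeeping. First I would note that the proof of Theorem~\ref{lastmain-thm} shows $\omega(\Gfpf_z)=\sum_\lambda c_{z\lambda}\beta^{|\lambda|-\ellfpf(z)}\omega(\GP_\lambda)$, so it suffices to prove $c_{z\lambda}=c_{z^*\lambda}$ for every strict partition $\lambda$ (and similarly $b_{y\lambda}=b_{y^*\lambda}$). Then I would invoke Theorem~\ref{ssyt-thm}: $c_{z\lambda}$ counts increasing shifted tableaux $T$ of shape $\lambda$ with $\row(T)\in\cHfpf(z)$. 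The letter-complementing map $T\mapsto n\cdots 321\text{-conjugate}$, i.e.\ replacing each entry $e$ of $T$ by $n-e$ and then reflecting the tableau so rows and columns increase again — more precisely, $T\mapsto T^\ddagger$ where $T^\ddagger$ is the increasing shifted tableau of the same shape whose row reading word is the complement-and-reverse of $\row(T)$ — gives a bijection between $\{T:\row(T)\in\cHfpf(z)\}$ and $\{T:\row(T)\in\cHfpf(z^*)\}$. Actually it is simpler: since $\row(\cdot)$ and $\col(\cdot)$ reversed are related under transpose and complementation, one checks that complementing all entries and transposing sends an increasing shifted tableau with row reading word a symplectic Hecke word for $z$ to one for $z^*$, preserving the shape only when $\lambda$ equals its own transpose — which strict partitions need not — so this needs more care. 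Let me instead keep the quasi-symmetric route as primary.

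Concretely, the steps are: (1) recall from Lemma~\ref{gom-lem}'s proof that $\omega$ restricted to $\QSym$ sends $L_{S,n}\mapsto L_{[n-1]\setminus S,n}$; (2) write $\beta^{\ellfpf(z)}\omega(\Gfpf_z)=\sum_{(w,i)}\beta^{\ell(w)}x^i$ over $(w,i)$ with $w=w_1\cdots w_l\in\cHfpf(z)$ and $i$ weakly increasing with $i_j<i_{j+1}$ whenever $w_j>w_{j+1}$, exactly as in the proof of Theorem~\ref{lastmain-thm}; (3) apply the bijection $w\mapsto w^*:=(n-w_1)\cdots(n-w_l)$ from $\cHfpf(z)$ to $\cHfpf(z^*)$, noting $\ell(w)=\ell(w^*)$ and $w_j>w_{j+1}\iff w_j^*<w_{j+1}^*$, so that the same sum equals $\beta^{\ellfpf(z^*)}\omega(\Gfpf_{z^*})$ once we use $\ellfpf(z)=\ellfpf(z^*)$ (immediate since $\overline{z}$ and $\overline{z^*}$ have the same number of two-cycles); (4) conclude $\omega(\Gfpf_z)=\omega(\Gfpf_{z^*})$ and apply $\omega$ again. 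Repeat verbatim with $\iH(y)$, $\iH(y^*)$, $\ellhat$, and the bijection $\iH(y)\to\iH(y^*)$ for the orthogonal case. The main obstacle, such as it is, is purely bookkeeping: verifying that the descent-type condition on $i$ in the quasi-symmetric expansion of $\omega(\iG_y)$ really is "$i_j<i_{j+1}$ whenever $w_j>w_{j+1}$" (a strict descent of $w$ forces a strict rise of $i$, after complementation) and that $\ellhat(y)=\ellhat(y^*)$ and $\ellfpf(z)=\ellfpf(z^*)$; both follow from \eqref{ellhat-eq} since conjugation by $n\cdots321$ preserves cycle type. There is no genuinely hard step here — the corollary is a formal consequence of the fundamental quasi-symmetric symmetry already used to prove Lemma~\ref{gom-lem}.
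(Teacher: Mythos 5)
Your quasi-symmetric route has a gap at step~(3). Applying the bijection $w\mapsto w^*$ to the expansion $\beta^{\ellfpf(z)}\omega(\Gfpf_z)=\sum_{w\in\cHfpf(z)}\beta^{\ell(w)}L_{\{j\,:\,w_j>w_{j+1}\},\,\ell(w)}$ produces $\sum_{v\in\cHfpf(z^*)}\beta^{\ell(v)}L_{\{j\,:\,v_j<v_{j+1}\},\,\ell(v)}$, whose descent sets record strict \emph{ascents} of $v$; but $\beta^{\ellfpf(z^*)}\omega(\Gfpf_{z^*})$ is $\sum_{v\in\cHfpf(z^*)}\beta^{\ell(v)}L_{\{j\,:\,v_j>v_{j+1}\},\,\ell(v)}$, which uses strict \emph{descents}. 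For any non-constant $v$ these two index sets differ (already $v=12$ gives $\{1\}$ versus $\emptyset$, and $L_{\{1\},2}\neq L_{\emptyset,2}$), so the two expansions are not term-by-term equal, and asserting that the sums agree is essentially the statement to be proved. The extra quasi-symmetric fact that makes the proof of Lemma~\ref{gom-lem} work (the involution fixing $\Sym$ with $L_{S,n}\mapsto L_{\{n-j:j\in S\},n}$, hidden behind the phrase ``Since $G_\pi$ is a symmetric linear combination'') does not repair this: it \emph{reverses} a descent set rather than swapping strict ascents with strict descents of a fixed word, and word-reversal does not preserve $\cHfpf(z^*)$.

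The coefficient identity $c_{z\lambda}=c_{z^*\lambda}$ that you sketch and then abandon is where the proof should land, but no shape-preserving tableau bijection is needed. The paper argues as follows: from $\Gfpf_z=\sum_{\pi\in\cBfpf(z)}\beta^{\ell(\pi)-\ellfpf(z)}G_\pi$, the identity $\cBfpf(z^*)=\{\pi^*:\pi\in\cBfpf(z)\}$, $\ell(\pi)=\ell(\pi^*)$, $\ellfpf(z)=\ellfpf(z^*)$, and Lemma~\ref{gom-lem}, one gets $\omega(\Gfpf_z)=\Gfpf_{z^*}\bigl(\tfrac{x_1}{1-\beta x_1},\tfrac{x_2}{1-\beta x_2},\dots\bigr)$; independently, Theorem~\ref{lastmain-thm} expresses $\Gfpf_z$ in the $\GP_\lambda$ basis, so Lemma~\ref{om-gp-lem} gives $\omega(\Gfpf_z)=\Gfpf_z\bigl(\tfrac{x_1}{1-\beta x_1},\tfrac{x_2}{1-\beta x_2},\dots\bigr)$. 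Equating these and undoing the substitution yields $\Gfpf_z=\Gfpf_{z^*}$; the orthogonal case is identical. Both Lemma~\ref{gom-lem} and Lemma~\ref{om-gp-lem} are needed, and the $\GP$-positivity of $\Gfpf_z$ is essential---the bijection on words plus $\omega$ acting on $\QSym$ cannot substitute for it.
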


\begin{proof}
Since $\iG_y$ and $\Gfpf_z$ are linear combinations of $\GP_\lambda$'s and since
$\cB(y^*) =\{ \pi^* : \pi \in \cB(y)\}$ and $\cBfpf(z^*) = \{ \pi^* : \pi \in \cBfpf(z)\}$,
this follows by combining Lemmas~\ref{gom-lem} and \ref{om-gp-lem}.
\end{proof}

The homogeneous symmetric functions $\iF_y$ and $\Ffpf_z$ obtained by
setting $\beta=0$ in $\iG_y$ and $\Gfpf_z$ are the \emph{(FPF)-involution Stanley symmetric functions}
studied in \cite{HMP1,HMP3,HMP4,HMP5}.
Setting $\beta =0$ in $\GP_\lambda$, alternatively, yields the well-known \emph{Schur $P$-function} $P_\lambda$.
Theorem~\ref{lastmain-thm} with $\beta=0$ implies that $\iF_y$ and $\Ffpf_z$
are \emph{Schur-$P$-positive} in the following sense:

\begin{corollary}[{See \cite[Corollary 1.12]{HMP4} and \cite[Theorem 1.1]{HMP5}}] 
Let $y \in \II$ and $z \in \FF$.
Then 
$\iF_y = \sum_\lambda   b_{y\lambda} P_\lambda$
and
$\Ffpf_z = \sum_\mu   c_{z\mu} P_\mu$
where the sums are over strict partitions $\lambda$ of $\ellhat(y)$
and $\mu$ of $\ellfpf(z)$,
and the positive integers $b_{y\lambda}$ and $c_{z\mu}$ are defined as in Theorem~\ref{lastmain-thm}. 
\end{corollary}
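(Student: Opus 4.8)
The plan is to obtain both identities by specializing $\beta = 0$ in Theorem~\ref{lastmain-thm}. Recall that that theorem gives $\iG_y = \sum_\lambda b_{y\lambda}\beta^{|\lambda|-\ellhat(y)}\GP_\lambda$, the sum being over all strict partitions $\lambda$. The first step is to record that $b_{y\lambda} = 0$ whenever $|\lambda| < \ellhat(y)$: the coefficient $b_{y\lambda}$ counts increasing shifted tableaux $T$ of shape $\lambda$ whose row reading word lies in $\iH(y)$, and such a tableau has exactly $|\lambda|$ boxes, so $\row(T)$ is an orthogonal Hecke word for $y$ of length $|\lambda|$; since the minimal-length elements $\iR(y)$ of $\iH(y)$ all have length $\ellhat(y)$, every word in $\iH(y)$ has length at least $\ellhat(y)$, forcing $|\lambda| \ge \ellhat(y)$. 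Consequently the exponent $|\lambda| - \ellhat(y)$ attached to any nonzero summand is nonnegative.

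Next I would apply the $\ZZ$-algebra map $\whSym \to \whSym$ sending $\beta \mapsto 0$ to this identity. This is legitimate because, under the grading in which $\deg x_i = 1$ and $\deg \beta = -1$, both sides are homogeneous of total degree $\ellhat(y)$, so in each fixed $x$-degree only finitely many summands on the right contribute. By the definitions recalled in the excerpt, $\iG_y \mapsto \iF_y$ and $\GP_\lambda \mapsto P_\lambda$; on the right, every summand with $|\lambda| > \ellhat(y)$ is annihilated by the positive power of $\beta$ (indeed $\beta^k\GP_\lambda|_{\beta=0}=0$ for $k\ge 1$ since $|T|\ge|\lambda|$ in every semistandard shifted set-valued tableau of shape $\lambda$), and by the previous paragraph no summand has $|\lambda| < \ellhat(y)$. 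Hence only the terms with $|\lambda| = \ellhat(y)$ survive, yielding $\iF_y = \sum_{|\lambda| = \ellhat(y)} b_{y\lambda} P_\lambda$, a finite sum over the finitely many strict partitions of $\ellhat(y)$, with nonnegative integer coefficients (each $b_{y\lambda}$ being a count of tableaux). The formula for $\Ffpf_z$ follows by the identical argument, replacing $\iG_y$, $\GP_\lambda$, $\iH(y)$, $\iR(y)$, $\ellhat(y)$, $b_{y\lambda}$ respectively by $\Gfpf_z$, $\GP_\mu$, $\cHfpf(z)$, $\cRfpf(z)$, $\ellfpf(z)$, $c_{z\mu}$, and using that $\Ffpf_z = \Gfpf_z|_{\beta = 0}$ and $P_\mu = \GP_\mu|_{\beta=0}$.

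No serious obstacle arises: the corollary is a formal consequence of Theorem~\ref{lastmain-thm}, and the only point requiring a genuine (one-line) argument is the vanishing of $b_{y\lambda}$ and $c_{z\mu}$ below the leading degree, which reduces entirely to the comparison of word lengths for (FPF-)involution versus orthogonal/symplectic Hecke words. If desired one may also remark that, since the $P_\lambda$ are linearly independent in $\whSym$, the coefficients $b_{y\lambda}$ and $c_{z\mu}$ are the unique ones realizing these expansions; but linear independence is not needed for the asserted Schur-$P$-positivity.
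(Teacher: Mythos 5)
Your proposal is correct and takes the same route as the paper: the paper simply remarks that Theorem~\ref{lastmain-thm} with $\beta=0$ yields the corollary (together with the observations that $\iG_y|_{\beta=0}=\iF_y$, $\Gfpf_z|_{\beta=0}=\Ffpf_z$, and $\GP_\lambda|_{\beta=0}=P_\lambda$). You supply the small check the paper leaves implicit — that $b_{y\lambda}$ and $c_{z\mu}$ vanish when $|\lambda|$ is below the leading degree, so no negative powers of $\beta$ appear — which is exactly the right point to make explicit.
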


Our interpretation
of the coefficients in the Schur $P$-expansion of $\Ffpf_z$ in this result
is new.
Finally, setting $\beta=0$ in Lemma~\ref{om-gp-lem} and Corollary~\ref{g*-cor}
gives the following:

\begin{corollary}
If $y \in \II$ and $z \in \FF$
and $\iF_y =\omega(\iF_y)= \iF_{y^*}$ and $\Ffpf_z =\omega(\Ffpf_z)= \Ffpf_{z^*}$.
\end{corollary}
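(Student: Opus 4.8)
The plan is to obtain everything by specializing the $K$-theoretic statements at $\beta = 0$. First recall that, by definition, $\iF_y$ and $\Ffpf_z$ are the images of $\iG_y$ and $\Gfpf_z$ under the ring homomorphism $\whSym \to \whSym|_{\beta=0}$ sending $\beta \mapsto 0$, and that this same specialization sends $\GP_\lambda$ to the Schur $P$-function $P_\lambda$. Since $\omega$ is $\ZZ[\beta]$-linear and preserves degree in the $x$-variables, it descends to a $\ZZ$-linear involution on the $\beta = 0$ quotient that still acts by $s_\lambda \mapsto s_{\lambda^T}$; thus applying $\omega$ commutes with setting $\beta = 0$.

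Next, I would invoke Theorem~\ref{lastmain-thm} and set $\beta = 0$ to write $\iF_y = \sum_\lambda b_{y\lambda} P_\lambda$ and $\Ffpf_z = \sum_\mu c_{z\mu} P_\mu$, which are finite sums over strict partitions of $\ellhat(y)$ and $\ellfpf(z)$ respectively (that these become finite is exactly the content of the previous corollary). Setting $\beta = 0$ in Lemma~\ref{om-gp-lem} gives $\omega(P_\lambda) = P_\lambda$ for every strict partition $\lambda$. Applying $\omega$ term by term to the two expansions — legitimate since each is a finite linear combination — yields $\omega(\iF_y) = \iF_y$ and $\omega(\Ffpf_z) = \Ffpf_z$. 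Finally, setting $\beta = 0$ in Corollary~\ref{g*-cor} gives $\iF_y = \iF_{y^*}$ and $\Ffpf_z = \Ffpf_{z^*}$ directly, which completes the proof.

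There is essentially no obstacle here: the only point requiring (minimal) care is justifying that the $\beta \mapsto 0$ specialization is well-defined on $\whSym$ and commutes both with $\omega$ and with the (a priori infinite) $\ZZ[\beta]$-linear combinations of Schur functions representing these power series. This is immediate from degree considerations, since each fixed graded component of an element of $\whSym$ involves only finitely many $s_\lambda$ and $\omega$ together with the specialization act diagonally on that finite set.
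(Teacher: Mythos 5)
Your proof is correct and follows essentially the same route as the paper: specialize Lemma~\ref{om-gp-lem} and Corollary~\ref{g*-cor} at $\beta=0$, using the Schur $P$-expansion from Theorem~\ref{lastmain-thm} (equivalently the preceding corollary) to deduce $\omega$-invariance from $\omega(P_\lambda)=P_\lambda$. The extra remarks on why the $\beta\mapsto 0$ specialization commutes with $\omega$ and with infinite $\ZZ[\beta]$-linear combinations are sound and simply make explicit what the paper leaves implicit.
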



\end{document}